\documentclass[10.6pt, reqno]{amsart}
\usepackage{tabular}
\usepackage{caption}
\usepackage{graphicx}
\usepackage{cancel}
\usepackage{mathtools}
\usepackage{subfigure}
\usepackage{slashed}
\usepackage{amsfonts}
\usepackage{amssymb}
\usepackage{accents}
\usepackage{amsmath}
\usepackage{amsxtra}
\usepackage{epstopdf}
\usepackage{latexsym}
\usepackage{mathrsfs}
\usepackage{leftidx}
\usepackage{tensor}
\usepackage{enumitem}
%\usepackage{mathptmx}
%\newtheorem{theorem}{Theorem}[section]
%\newtheorem{lemma}{Lemma}[section]
%\newtheorem{Assumption}{Assumption}[section]
%\newtheorem{corollary}{Corollary}[section]
%\newtheorem{proposition}{Proposition}[section]

%\theoremstyle{definition}
%\newtheorem{definition}[theorem]{Definition}
%\newtheorem{example}[theorem]{Example}
%\newtheorem{xca}[theorem]{Exercise}

%\theoremstyle{remark}
%\newtheorem{remark}[theorem]{Remark}

%\numberwithin{equation}{section}

\newtheorem{theorem}{Theorem}[section]
\newtheorem{lemma}[theorem]{Lemma}
\newtheorem{corollary}[theorem]{Corollary}
\newtheorem{proposition}[theorem]{Proposition}
\newtheorem{definition}[theorem]{Definition}

\theoremstyle{remark}
\newtheorem{remark}[theorem]{Remark}

\numberwithin{equation}{section}

%    Absolute value notation

%    Blank box placeholder for figures (to avoid requiring any
%    particular graphics capabilities for printing this document).

\def\uda#1{\underaccent{#1}}
\def\ude#1{\underaccent{e}#1}
\def\nabe{\underaccent{e}{\nab}}
\def\ne{\underaccent{e}N}
\def\sf#1{\stackrel{\frown}{#1}}
\def\sJ{\mathscr{J}}
\def\sR{\mathscr{R}}
\def\rp#1{^{\!(#1)}}
\def \up#1{{}^{(#1)}\!}
\def\sPs{{}^\star \Psi}

\def\pih{\hat{\pi}}
\def\pt {{}^{(\bT)}\pi}

\def\bj{{\bf j}}
\def\bi{{\bf i}}
\def\dn{{\Delta_0}}
\def\fm{\mathfrak{m}^2}

\def\pr{{}^{(\sR)}\pi}
\def\px{\pi\rp{X}}

\def\phr{{}^{(\sR)}\pih}

\def\Hb{\underline{H}}
\def\Eb{\underline{E}}
\def\loc{{\mbox{\textit{loc}}}}
\def\Eeb{\underline{\E}}

\def\gb{\underline{g}}

\def\T{\mathcal{T}}

\def\bb{{\mathbf{b}}}

\def\sn{{\slashed{\nabla}}}
\def\Q{\mathcal{Q}}
\def\sQ{\mathscr{Q}}

\def\ab{{\underline{\a}}}

\def\zb{{\underline{\zeta}}}

\def\J{{\mathcal{J}}}

\def\Ab{{\underline{A}}}

\def\M{{\mathcal{M}}}
\def\bT{{\textbf{T}}}

\def\bR{{\textbf{R}}}

\def\bd{{\textbf{D}}}
\def\ti{\tilde}

\def\bg{\mathbf{g}}

\def\hk{{\hat{k}}}
\def\I{{\mathcal I}}

\def\beaa{\begin{eqnarray*}}
\def\eeaa{\end{eqnarray*}}

\def\ba{\begin{array}}
\def\ea{\end{array}}
\def\d{\delta}
\def\be#1{\begin{equation} \label{#1}}
\def \eeq{\end{equation}}

\newcommand{\nn}{\nonumber}

\def\l{\langle}
\def\r{\rangle}

\def\pih{\hat{\pi}}
\def\cir{\overset\circ}
\def\nn{\nonumber}
\def\S{{\mathcal S}}

\def\ud#1{\underline{#1}}
\def\zb{\ud{Z}}

\def\S2{{\mathbb S}^2}

%%%%%%%%%%%%%%

\def\E{{\mathcal E}}

\def\K{{\mathcal{K}}}

\def\W{{\mathcal W}}

\def\Ephin{\E_\phi}
\def\Ephi#1{\E^{(#1)}_\phi}
\def\Edphin{\E_{d\phi}}
\def\Edphi#1{\E^{(#1)}_{d\phi}}
\def\ub{\underline{u}}
\def\Lb{\underline{L}}
\def\Lie{{\mathcal L}}
\def\hLie{\hat{\mathcal{L}}}
\def\tr{\mbox{tr}}

\def\H{{\mathcal H}}

\def\c{\cdot}
\def\hot{\widehat{\otimes}}
\def\sig{\sigma}
\def\s{\sigma}
\def\a{\alpha}
\def\b{\beta}

\def\ep{{\epsilon}}
\def\ve{{{\textbf{$\varepsilon$}}}}
\def\l{\langle}
\def\r{\rangle}
\def\ga{\gamma}
\def\Ga{\Gamma}

\def\O{\mathcal{O}}

\def\p{\partial}

\def\nab{\nabla}

\def\Kb{\underline{\K}}
\def\Wb{\underline{\W}}

\def\C{{\mathcal C}}

\def\Lb{{\underline{L}}}

\def\div{\mbox{\,div\,}}
\def\curl{\mbox{\,curl\,}}

\def\tr{\mbox{tr}}
\def\Tr{\mbox{Tr}}

\def\tir{{\tilde r}}

\def\itt{{\mbox{Int}}}

%%%%%

\def\f14{\frac{1}{4}}
\def\f12{{\frac{1}{2}}}

\def\t1a{t^{-\frac{1}{a}}}

\def\bm{{\bf m}}

\def\sl{\slashed}
%%%%

\def\sn{{\slashed{\nabla}}}

\def\ab{{\underline{\a}}}

\def\zb{{\underline{\zeta}}}

\def\J{{\mathcal{J}}}

\def\Ab{{\underline{A}}}

\def\M{{\mathcal{M}}}
\def\bT{{\emph{\bf{T}}}}

\def\bR{{\emph{\bf{R}}}}

\def\bd{{\emph{\bf{D}}}}
\def\ti{\tilde}

\def\hk{{\hat{k}}}
\def\I{{\mathcal I}}

\def\beaa{\begin{eqnarray*}}
\def\eeaa{\end{eqnarray*}}

\def\ba{\begin{array}}
\def\ea{\end{array}}
\def\be#1{\begin{equation} \label{#1}}
\def \eeq{\end{equation}}
\def\nn{\nonumber}

\def\l{\langle}
\def\r{\rangle}

\def\pih{\hat{\pi}}
\def\cir{\overset\circ}
\def\nn{\nonumber}
\def\S{{\mathcal S}}

\def\S2{{\mathbb S}^2}

%%%%%%%%%%%%%%

\def\bo{\mathbf{o}}

\def\E{{\mathcal E}}

\def\ub{\underline{u}}
\def\udb{\underline{\b}}
\def\Lb{\underline{L}}
\def\tr{\mbox{tr}}

\def\H{{\mathcal H}}

\def\Nb{{\underline{N}}}

\def\Nba{{\acute{\Nb}}}

\def\c{\cdot}
\def\hot{\widehat{\otimes}}
\def\sig{\sigma}

\def\s{\sigma}
\def\a{\alpha}
\def\b{\beta}

\def\l{\langle}
\def\r{\rangle}
\def\ga{\gamma}
\def\Ga{\Gamma}
\def\la{\lambda}

\def\p{\partial}

\def\nab{\nabla}
\def\nabb{\underline{\nabla}}

\def\Lb{{\underline{L}}}

\def\div{\mbox{\,div\,}}
\def\curl{\mbox{\,curl\,}}

\def\tr{\mbox{tr}}
\def\Tr{\mbox{Tr}}

\def\tir{{\tilde r}}

\def\itt{{\mbox{Int}}}

%%%%%

\def\f14{\frac{1}{4}}
\def\f12{{\frac{1}{2}}}

\def\t1a{t^{-\frac{1}{a}}}

\def\bm{{\bf m}}

\def\sl{\slashed}
%%%%

\def\ckk{\check}

\def\dum{\mbox{ }}
\newcommand{\bea}{\begin{eqnarray}}
\newcommand{\eea}{\end{eqnarray}}

\def\nn{\nonumber}

\def\hLW#1{\hLie^{{(#1)}}_{\sR}W}

\newcommand{\chih}{\hat{\chi}}
\newcommand{\chib}{\underline{\chi}}

\newcommand{\chibh}{\underline{\hat{\chi}}\,}
\newcommand{\les}{\lesssim}

\def\fB{\mathfrak{B}}
\def\bN{{\mathbf{N}}}
\def\S{\mathcal{S}}

\def\bt{\l t\r}

\def\brho{\l \rho\r}
\def\cir#1{\stackrel{\circ}{#1}}

\def\gs{{\bg_s}}

\def\tf{{t_\flat}}

\def\rf{{r_\flat}}

\def\sE{\mathscr{E}}

\def\bI{{\mathbf{I}}}
\def\bJ{{\mathbf{J}}}

\def\sQ{{\mathscr{Q}}}

\def\spE{{}^{\!\sharp} \E}
\def\ud#1{\underline{#1}}

\def\dfa{{}^\dag\mathfrak{a}}
\def\pib{\pmb{\pi}}
%\newtheorem{assumption}{Assumption}

%\oddsidemargin 7mm
%\evensidemargin 7mm
%\textheight=8.9 true in
%  \textwidth=5.7 true in
%  \topmargin 3pt

\begin{document}
\title[]{An intrinsic hyperboloid approach for \\Einstein Klein-Gordon equations}
\author{Qian Wang}
\address{
Oxford PDE center, Mathematical Institute, University of Oxford, Oxford, OX2 6GG, UK}
  \email{qian.wang@maths.ox.ac.uk}
  \date{\today}
\begin{abstract}
In \cite{KKG} Klainerman introduced the hyperboloidal method to prove the global existence
results for nonlinear Klein-Gordon equations by using commuting vector fields. In this paper,
we extend the hyperboloidal method from Minkowski space to Lorentzian spacetimes. This approach
is developed in \cite{Wang15} for proving, under the maximal foliation gauge,  the global nonlinear stability of
Minkowski space for Einstein equations with  massive scalar fields, which  states that,
 the sufficiently small  data in a compact domain, surrounded by a Schwarzschild metric, leads to a unique,
 globally hyperbolic, smooth and geodesically complete solution to the Einstein Klein-Gordon system.

In this paper, we set up the geometric framework of the intrinsic hyperboloid approach in the curved spacetime.
By performing a thorough geometric comparison between the radial normal vector field induced by the intrinsic
hyperboloids and the canonical $\p_r$, we manage to control the hyperboloids when they are close to their
asymptote, which is a light cone in the Schwarzschild zone. By using such geometric information,  we not only
obtain the crucial boundary information for running the energy method in \cite{Wang15}, but also  prove that
the intrinsic geometric quantities including the Hawking mass all converge to their Schwarzschild values when
approaching the asymptote.
\end{abstract}
\maketitle
% This is file SMR_EKG_19.tex
%\tableofcontents

\section{\bf Introduction}

We introduce the intrinsic hyperboloid approach in the dynamic, Lorentzian spacetime. This approach is
developed in \cite{Wang15} to prove, under the maximal foliation gauge,  the global stability of Minkowski
space for Einstein equations with  massive scalar fields, which reads as
 \begin{equation*}
\bR_{\mu\nu}-\frac{1}{2}\bg_{\mu\nu} \bR=\T_{\mu\nu}
\end{equation*}
with the stress-energy tensor\begin{footnote}{ We fix the convention that, in the Einstein summation convention, a Greek letter
is used for index taking values $0, 1, 2, 3$ and a little Latin letter is used for index taking values $1,2,3$.}\end{footnote}
\begin{equation*}
\T_{\mu\nu}=\p_\mu \phi\p_\nu \phi-\f12 \bg_{\mu\nu} \left(\bg^{\a\b}\p_\a \phi\p_\b \phi+\fm\phi^2\right), \quad\quad \mathfrak{m}=1,
\end{equation*}
where $\bR_{\mu\nu}$ and $\bR$ denote the Ricci curvature tensor and the scalar curvature of the Lorenzian metric $\bg$
respectively. Applying the conservation law $\bd^\mu\T_{\mu\nu}=0$, which is due to the Bianchi identity, gives the
Einstein Klein-Gordon system
\begin{align}
\bR_{\a\b}&=\p_\a \phi \c \p_\b \phi+\f12 \, \bg_{\a\b} \phi^2, \label{ricci}\\
\Box_\bg \phi&=\fm\phi.\label{gkg}
\end{align}
It is obvious that $({\Bbb R}^{3+1},\bm, \phi\equiv0)$, with $\bm$ being Minkowski, trivially solves the system.
To construct nontrivial global solutions of (\ref{ricci})-(\ref{gkg}), it is natural to consider the Cauchy
problems with the initial data set being small perturbations of the trivial one.

We first briefly review the framework for studying the Cauchy problem of the Einstein equations.
Let $(\mathbf{M}, \bg)$  be globally hyperbolic which means that there is a Cauchy hypersurface,
which is a spacelike hypersurface with the property that any causal curve intersects it at precisely
one point. This allows  $\mathbf{M}$ to be foliated by the level surfaces $\Sigma_t$ of a time function $t$.
Let $\bT$ be the future directed unit normal to $\Sigma_t$. Let $\pib$ be the second fundamental form
of $\Sigma_t$ in $\mathbf{M}$ defined by
\begin{equation}\label{6.6.2.16}
\pib(X, Z): =-{\bf g} ({\bf D}_X {\bf T}, Z), \quad X, Z\in \T\Sigma_t,
\end{equation}
where ${\bf D}$ denotes the covariant differentiation of  $\bf{g}$ in $\mathbf{M}$.

Let $g$ be the induced metric of $\bg$ on $\Sigma_t$. We decompose
$$
\p_t =n \bT +Y,
$$
where $n$ is the lapse function and $Y\in \T \Sigma_t$ is the shift vector field.
Assuming $Y=0$, then the metric $\bg$ can be written as
\begin{equation}\label{gg}
\bg=-n^2 d t^2+g_{ij}d x^i d x^j,
\end{equation}
and the Einstein equations are equivalent to
the evolution equations
\begin{align}
& \partial_t g_{ij}=-2n {\pib}_{ij}, \label{bg}\\
& \partial_t \pib_{ij}=-\nabla_i \nabla_j n+n(-\bR_{ij}+R_{ij}+\mbox{Tr} \pib\,  \pib_{ij}-2
\pib_{ia}\pib^a_j) \label{intro04}
\end{align}
together with the constraint equations
\begin{align}
&R-|\pib|^2+(\mbox{Tr} \pib)^2=2 \bR_{\bT\bT}+\bR, %\label{intr.1}\\
\qquad \nabla^j \pib_{ji}-\nabla_i \mbox{Tr} \pib=\bR_{\bT i}, \label{intr.2}
\end{align}
where $\Tr \pib:=g^{ij} \pib_{ij}$ is the mean curvature of $\Sigma_t$ in ${\bf M}$, $\nabla$ denotes
the covariant differentiation of $g$,  $R_{ij}$ and $R$ are the Ricci curvature and the scalar curvature
of $g$ on $\Sigma_t$.

The maximal foliation gauge imposes
\begin{equation}\label{5.13.3.16}
Y=0 \quad \mbox{ and } \quad \Tr \pib=0 \quad \mbox{ on } \Sigma_t.
\end{equation}
This implies $n$ satisfies the elliptic equation
\begin{equation}\label{lapse}
\Delta_g n-|\pib|^2 n = n \bR_{\bT\bT}\qquad \mbox{on } \Sigma_t,
\end{equation}
and the second fundamental form $\pib$ satisfies the Codazzi equation
\begin{equation}\label{codazziT}
\div \pib=\bR_{\bT i}, \quad \curl \pib=H,
\end{equation}
where $H$ is  the magnetic part of the Weyl curvature, defined in (\ref{7.23.8}).

The first proof of the global stability of Minkowski spacetime for generic, asymptotically flat data
is provided in the monumental work \cite{CK}, where the Einstein vacuum Bianchi equation is thoroughly
and systematically treated. Heuristically, we regard the nonlinear wave equation verifying the standard
null condition as the vastly simplified model for the  Einsteinian Bianchi equation. Then (\ref{ricci})-(\ref{gkg})
is a coupled system  between such nonlinear wave equations and the Klein-Gordon equation in the Einsteinian
background. Due to the presence of the massive scalar field, the approach we introduce in this paper
is to twist the hyperboloidal energy method devised in the flat spacetime in \cite{KKG} for the
Klein-Gordon equations to the Lorentzian spacetime, in the sense of incorporating it to the intrinsic
energy scheme devised in \cite{CK}. Such generalization triggers fundamental changes to the geometry
of the intrinsic framework in \cite{CK} for the Einstein equations, which  by itself is very challenging
even merely for the vacuum case. Our approach is robust for treating both the scalar field and the
Einstein part of the equation system. This will be fully confirmed in \cite{Wang15}.

In what follows, we will use the linear Klein-Gordon equation to motivate the use of the intrinsic hyperboloids.
To begin with, let us recall some basics of the invariant vector fields  for the free
wave $\Box_\bm \phi=0$.\begin{footnote}{We assume the initial data for $\phi$  have compact support.}\end{footnote}
We denote by $Z$ a set of vector fields, which consists of the translation  $\p_\mu, $ the scaling vector
field $S=x^\mu\p_\mu$ and the generator of Lorentz group
\begin{equation}\label{3.30.6.16}
\Omega_{\mu\nu}=x_\mu\p_\nu-x_\nu\p_\mu, \quad \mu, \nu=0, 1,2,3 \mbox{ where }x_\mu=\bm_{\mu\nu}x^\nu.
\end{equation} This set of vector fields is named as commuting vector fields  due to
the fundamental property
\begin{equation}\label{3.30.3.16}
[\Box_\bm, Z]=0 \mbox{ or } 2\Box_\bm
\end{equation}
with the second identity   occurring only when  $Z=S$.

In order to get the decay estimate $(t+1)|\phi|\les 1$ by the energy approach, we rely on two ingredients:
one is the boundedness of the energy or the generalized energy; the other is the Klainerman-Sobolev inequality.

The standard Klainerman-Sobolev inequality
\begin{equation}\label{3.30.5.16}
\bt (1+|t-r|)^\f12 | f|\les \|Z^{(\le 2)}f(t,\cdot)\|_{L^2({\Bbb R}^3) }
\end{equation}
relies on the full set of $Z$ derivatives, where $r=(\sum_{i=1}^3 |x^i|^2)^{1/2}$, $\bt=t+1$ and
$Z^{(\le m)} f$ denotes the application of the differential operators in $Z$ to $f$ up to $m$ times.
For the free wave equation  $\Box_\bm \phi=0$, by using $\p_t$ as a multiplier, one can obtain
the conserved energy
\begin{equation}\label{6.4.2.16}
\|\p_t \phi(t,\cdot)\|^2_{L^2({\mathbb R}^3)}+\sum_{i=1}^3\|\p_{x^i}\phi(t,\cdot)\|^2_{L^2({\mathbb R}^3)}.
\end{equation}
By using the canonical Morawetz vector field, $K=2t S-(t^2-r^2) \p_t, $ as a multiplier, one can obtain the
conserved generalized energy, which is uniformly comparable for $t \ge 0$ to
\begin{equation}\label{3.30.4.16}
\|Z\phi(t,\cdot)\|^2_{L^2({\Bbb R}^3)}+\|\phi(t, \cdot)\|^2_{L^2({\Bbb R}^3)}.
\end{equation}
In view of (\ref{3.30.3.16}), one can see that (\ref{6.4.2.16}) and (\ref{3.30.4.16}) hold with $\phi$ replaced
by\begin{footnote}{For a differential operator $P$, we use $P\rp{n} f$ to mean the $m$-time application of $P$
to $f$.}\end{footnote} $Z\rp{n}\phi$. These estimates together with (\ref{3.30.5.16}) imply that
 \begin{equation}\label{6.4.4.16}
 \bt (1+|t-r|)^\f12 |\phi|\les \|Z\rp{\le 2}\phi(0,\cdot)\|_{L^2({\Bbb R}^3)}\les 1
\end{equation}
which gives more information for $\phi$ than desired.

To see the difference in the treatment for the Klein-Gordon equation, we consider the linear Klein-Gordon
equation
\begin{equation}\label{3.30.2.16}
\Box_\bm \phi= \phi
\end{equation}
in the Minkowski spacetime.  Due to (\ref{3.30.3.16}) there holds  $[(\Box_\bm-1), S]=2 \Box_{\bm} \neq 2 (\Box_{\bm}-1)$.
Thus the scaling vector field $S$ can not be used as a commuting vector field for (\ref{3.30.2.16}).
Similar to (\ref{6.4.2.16}), we can obtain the conserved energy
\begin{equation*}
\|\p_t \phi(t,\cdot)\|^2_{L^2({\mathbb R}^3)}+\sum_{i=1}^3\|\p_{x^i}\phi(t,\cdot)\|^2_{L^2({\mathbb R}^3)}
+\|\phi(t,\cdot)\|^2_{L^2({\mathbb R}^3)}
\end{equation*}
which stays conserved if $\phi$ is replaced by $Z\rp{n}\phi$ except $Z=S$. In contrast to the case of the free wave,
the boundedness of energy does not hold for the full set of the commuting vector fields in $Z$. The Klainerman-Sobolev
inequality (\ref{3.30.5.16}) can not be used directly. To get the decay estimates for the Klein-Gordon equations,
in \cite{KKG} the Klainerman-Sobolev inequality is applied on the canonical hyperboloids
${\Bbb H}_\rho=\{t^2-\sum_{i=1}^3 |x^i|^2=\rho^2\}$ which are the surfaces orthogonal to $S$. The Klainerman-Sobolev
inequality on hyperboloids merely relies on the Lorentz boosts $\{R_i=t \p_{x^i}+x^i \p_t, i=1,2,3\}$ which are commuting
vector fields of (\ref{3.30.2.16}) and tangent to the hyperboloids. By virtue of this tool, the standard sharp decay
estimate\begin{footnote}{The area element of $\mathbb{H}_\rho$ is $d \mu_{\mathbb{H}_\rho}=\frac{\rho}{t} d\mu_{\mathbb{R}^3}$.}\end{footnote}
\begin{equation}\label{3.30.7.16}
\bt^\frac{3}{2}| \phi|\les \left\|\left(\frac{t}{\rho}\right)^\f12 R^{(\le 2)}\phi(\rho,\cdot)\right\|_{L^2(\mathbb{H}_{\rho})}
\les \left\|\p\rp{\le 1}  R^{(\le 2)}\phi(0,\cdot)\right\|_{L^2(\mathbb{R}^3)}.
\end{equation}
can be derived from the boundedness of energies on hyperboloids. Thus, in order to get the sharp decay for the
solutions of (\ref{3.30.2.16}), the same order of commuting vector fields are applied and energies have to
be controlled up to one order higher compared with the free wave case. This coincides with the case when
we treat  Klein-Gordon equation (\ref{gkg}) coupled with the Einstein Bianchi equations, for which (\ref{3.30.2.16})
and the free wave are the simplest toy models for each part.

We also observe that the two weighted multipliers, $S$ and $K$, can not be used to obtain bounded generalized
energy for (\ref{3.30.2.16}). This fact together with the fact that the scaling $S$ is not a commuting vector
field for (\ref{3.30.2.16}), demonstrates that decomposing  $\p\phi$ in terms of the null frame
$\{L=\p_t+\p_r, \Lb=\p_t-\p_r\}$ does not improve the decay along the good direction $L$. This is another
difference compared with the free wave. Contributed by the commuting vector fields $R$, $\p\phi$ exhibits much
stronger decay along the tangential directions of ${\Bbb H}_\rho$;  however, $\p\phi$ has the weakest decay along  $B:=\frac{t}{\rho}\p_t+\frac{r}{\rho}\p_r=\rho^{-1}S$, the future directed unit normal of ${\Bbb H}_{\rho}$.
The weakest decay is much weaker than that a free wave exhibits along its only bad direction $\Lb$.

Figure \ref{fig1}(a) depicts the method in \cite{KKG}, where the data with compact support in $\{r\le R\}$
are given at $t=0$. The energy argument is divided into two steps. The first step is the local energy
propagation from $t=0$ to the initial hyperboloidal slice ${\Bbb H}_{\rho_0}$, with $\rho_0\approx 1$.
The second step is to propagate energy on hyperboloids ${\Bbb H}_{\rho}$ from ${\Bbb H}_{\rho_0}$ to
the last slice ${\Bbb H}_{\rho_*}$, in the region  enclosed by a Minkowskian light cone as the boundary,
along which the solution varnishes due to finite speed of propagation. This figure gives us the blue-print
of treating the Einstein-Klein-Gordon system.
\begin{figure}[h]
%\hfill
\subfigure[]{\includegraphics[width = 0.49\textwidth, height= 1.6in]{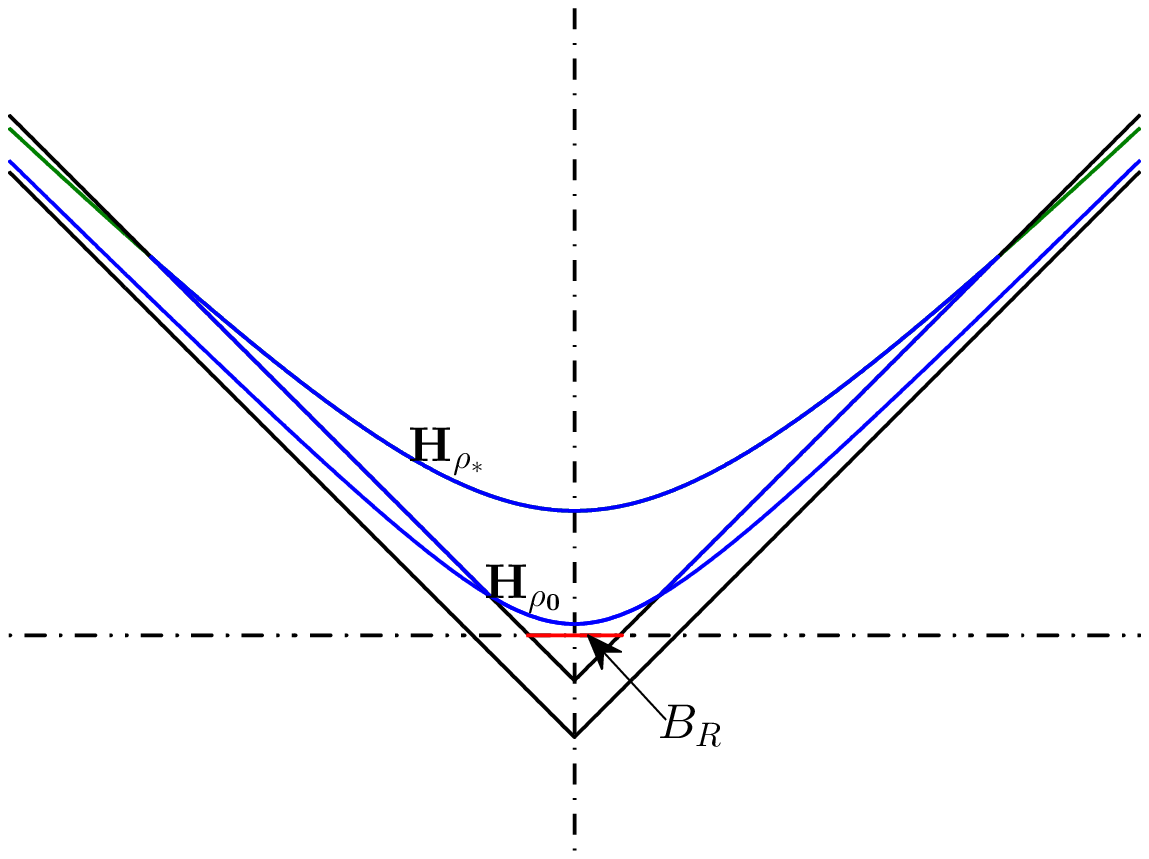}}
%\hfill
\subfigure[]{\includegraphics[width = 0.49\textwidth, height= 1.6in]{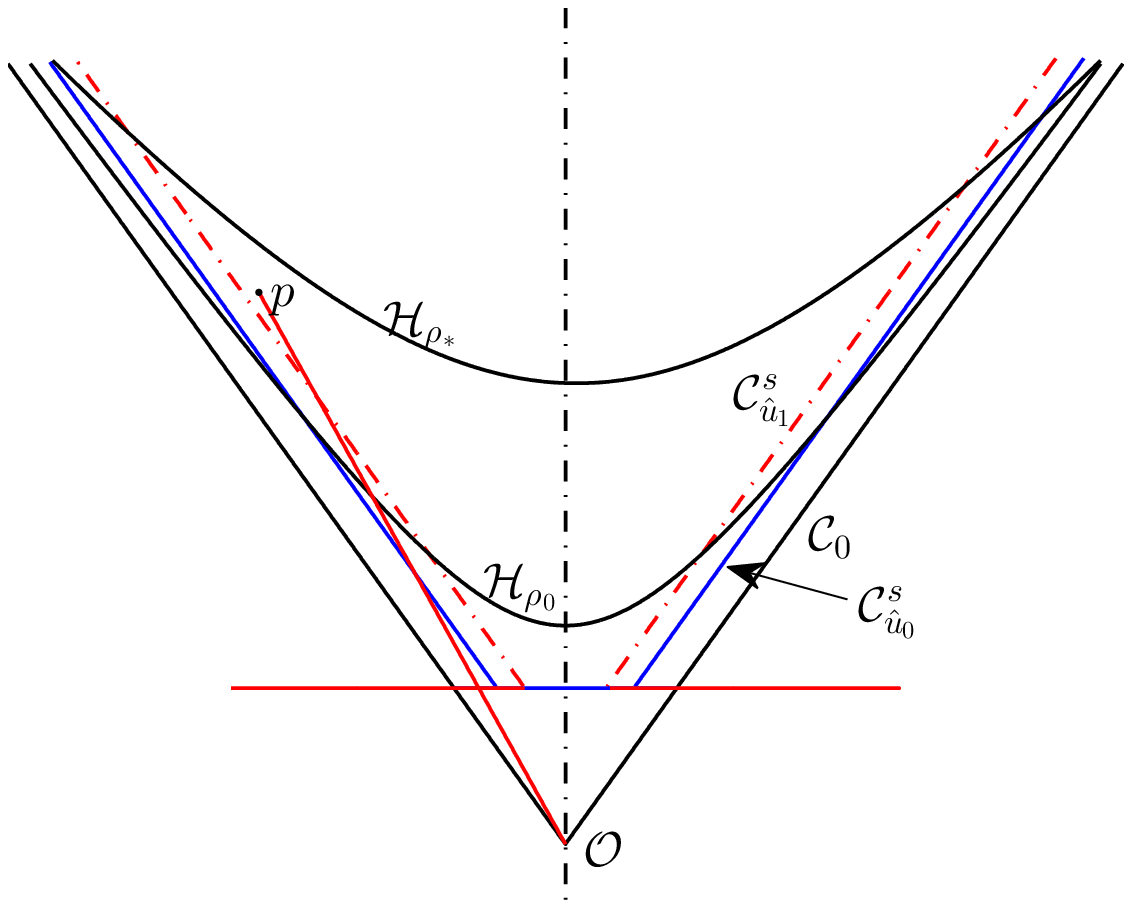}}
%\hfill
\vskip -0.5cm
\caption{}\label{fig1}
\end{figure}

In order to set up the Cauchy problem for the Einstein-Klein-Gordon system (\ref{ricci})-(\ref{gkg})
appropriately, to match with, in particular, the scenario that the data for Klein-Gordon equation have
compact support, we consider the initial data set $(g_0, \pib_0,\phi[0])$ for (\ref{ricci})-(\ref{gkg}),
which verify the Einstein constraint equations (\ref{intr.2}) and $\phi[0]$ is compactly supported
within $B_1$, the unit Euclidian ball. Outside of the co-centered Euclidean ball of radius $2$, there
glues a surrounding Schwarzschild metric specified at Theorem \ref{2.15.1}. See Figure \ref{fig1}(b).
We will call the region with $t\ge 0$, exterior to the Schwarzschild outgoing light cone $\C^s_{\hat u_1}$
as the Schwarzschild zone $Z^s$, where $\C^s_{\hat u_1}$ initiates from the Euclidean sphere $\{r=2\}$
with the value of $\hat u_1$ specified in Section \ref{8.07.1}. We still  need to determine the foliation
of hyperboloids in the curved spacetime.

There are two options at this point. One way is based
entirely on the symmetry and geometry in Minkowski space. This  method has been developed in \cite{LRCMP} and \cite{LRANA}
for the Einstein equations under the wave coordinates. The philosophy of the regime is to  close the
energy argument without aiming at achieving sharp decay for geometric quantities.  This allows  the stability result to be achieved within a much smaller framework compared with \cite{CK}.  However it is less
precise on the asymptotic behavior of the solution (see \cite[Page 47]{LRCMP}).

In this paper and \cite{Wang15}, we take the other option which constructs intrinsic hyperboloids adapted
to the curved spacetimes. We not only prove the global nonlinear stability, but also give a comprehensive,
analytic, global-in-time depiction of the solution.
The goal of this paper is to introduce the geometric framework, which equips the nonlinear analysis with
sets of tetrads, recovering the symmetry and playing the role of coordinates, all of which are adapted
to the dynamical spacetime.  The global existence of such tetrads will be justified simultaneously with
the quantitative depiction of the spacetime.

When setting up the geometric framework, it is necessary to discriminate, among all the symmetry in
the Minkowski space, the most crucial geometric information that needs precision from those allowing
error to be controlled analytically. For this purpose, we run a simple energy argument for
\begin{equation}\label{3.30.8.16}
\Box_\bg\phi=\phi
\end{equation}
by taking the approach  as in \cite{LRCMP}, that is to  consider
\begin{equation}\label{3.30.9.16}
\Box_\bg R\rp{n}\phi=R\rp{n}\phi+[\Box_\bg, R\rp{n}]\phi.
\end{equation}
The error integral contributed by one term contained in the commutator on the right hand side
of (\ref{3.30.9.16}) takes the form
\begin{align*}
&\int^\rho_1\int_{{\Bbb H}_{\rho'}} {}^{(R)}\pi^{\a\b}  \underbrace{R^{(n-1)}\bd^2_{\a\b}\phi}_{\in L^2}
\c\underbrace{ \bd_\bT R^{(n)} \phi }_{\in L^2}d\mu_{{\Bbb H}_{\rho'}} d\rho',
\end{align*}
where the deformation tensor is defined by
\begin{equation}\label{6.6.1.16}
{}^{(X)}\pi_{\a\b} =\bd_\a X_\b+\bd_\b X_\a
\end{equation}
with $\bd$ denoting the connection induced by the metric $\bg$. Here  $X=R$ is the Lorentz boost in Minkowski
space, ${}^{(R)}\pi^{\a\b}\neq 0$ since $\bg$ is not the Minkowski metric. With $\a,\b=B$ in ${}^{(R)}\pi^{\a\b}$,
the derivative of $\phi$ contracted with this term is evaluated at the bad direction $B$ along which it does
not decay strongly. Under local coordinates the expression of ${}^{(R)}\pi^{BB}$ contains $B$ derivatives of the metric $\bg$,
paired with large weights. The best decay for ${}^{(R)}\pi^{BB}$ expected by the approach from \cite{LRCMP} is below
the borderline  $C\ep/\rho$ for applying Gronwall inequality to control energies.  To salvage the energy
argument, we construct a set of approximate Lorentz boosts $\sR$ and the intrinsic hyperboloids $\H_\rho$,
adapted to the Einstein background, so that ${}^{(\sR)}\pi^{\fB\fB}=0$, where $\fB$ denotes the unit normal of
the constructed foliation of hyperboloids. These $\sR$ and $\fB$ can be viewed as the corresponding
replacements of $R$ and $B$ in the curved spacetime. The construction of these $\sR$ and $\fB$ needs to
preserve the following features:
\begin{enumerate}[leftmargin=0.7cm]
\item  $[\fB, \sR]=0$  and $\sR=\{\sR_i, i =1,2,3\}$ are tangent to $\H_\rho$.
\item $\cup_{\rho=0}^\infty\H_\rho$ exhausts the chronological  future of an origin $\O$, with the origin to be
appropriately chosen. All the hyperboloids are asymptotic to the outgoing light cone $\C_0$ emanating from $\O$.
\end{enumerate}

The origin $\O$ is chosen at $t=-T$, which can be done due to the local extension of the solution.
We may choose $T$ so that $\C_0$ intersects $\{t=0\}$ outside of $B_4$. The freedom of such choice is
fixed in Section 2, which is crucial for the proof of the main results of this paper.
We leave the details of the constructions to  Section 2-3. See Proposition \ref{2.20.18.16}
for using the first feature to prove  ${}^{(\sR)}\pi_{\fB\fB}=0$ and more results on ${}^{(\sR)}\pi$.

The geometric constructions equip us with the approximate Lorentz boost, scaling, and translation
vector fields. With them we can run the commuting vector field approach to the Einstein Bianchi equations, which
can be viewed as an extension of the approach in \cite{CK}, where the regime is based on the construction
of the rotation vector fields and the intrinsic null cones. The task, in our situation, is  much more
involved, due to the difficulties caused by the massive scalar field, the geometry of the hyperboloids,
as well as the complexity of the analytic control on the Lorentz boosts.  In what follows we focus on
addressing the following two basic issues.

\begin{enumerate}[leftmargin = 0.7cm]
\item For the Weyl part of curvature, we will run the regime of Bel-Robinson energies defined by the
weighted multipliers $S$ and $K$. For closing the top order energy, we encounter the issue of requiring
higher order $\sR$-energy for the massive scalar field. However, in order to close the energy estimates,
we have to control the energies of the Weyl tensors and the massive scalar field, up to the same order
in terms of the $\sR$-derivatives.

\item  The intrinsic hyperboloids, in principle, are defined from the Minkowskian counterparts
by a global diffeomorphism, which needs to be justified simultaneously with the proof of the global
existence of the solution. In Minkowski space, the density of the foliation of the canonical hyperboloids
approaches infinity near the causal boundary. The control on the intrinsic foliation is considerably
more delicate since, analytically, terms of $1/\rho$ type appear frequently, with $\rho\rightarrow 0$
when approaching the causal boundary $\C_0$.
\end{enumerate}
To solve the first issue, it is crucial to use the Einstein Bianchi equations, see Lemma \ref{7.23.12}, which allows
us to perform the integration by parts when treating the worst type of terms. We then take advantage of the null forms
in the Einstein Bianchi equations, together with the expected
strong decay from the scalar  field. This enables the top order energies to be closed at a sharp level.
We will sketch briefly the energy scheme in Section 4.

The second issue is connected to  the set-up of the wave zone, the region where we run the energy estimates.
We have to take account of the gravitational influence to the causal structure of the foliation of the
intrinsic hyperboloids. In this paper, we focus on controlling the intrinsic geometry of the chronological future
$\I^+(\O)$ for $t\les 1$ and for all $t$ in the Schwarzschild zone.  This geometric control is significant
for dealing with the problem of leakage, for demonstrating that a constructed function is almost optical,
and for justifying an excision procedure on the wave zone for the energy scheme. These aspects will be
explained in the sequel.

In the Minkowskian set-up (Figure \ref{fig1}(a)),  a light cone is  used to  cut the family of hyperboloids,
as the boundary of the wave zone. The cone needs to be uniformly away from the asymptote. The set-up of such
boundary in the curved spacetime is more subtle. First of all, this boundary should be chosen in the Schwarzschild
region, to guarantee the dynamical part of the solution is contained in its interior. It ought to be a canonical
Schwarzschild light cone $\C^s_{\hat u_0}$\begin{footnote}{The value of $\hat u_0$  can be found in
Section \ref{8.07.1}.}\end{footnote} for the ease of analysing energy flux therein. More importantly, we need
a function measuring the ``distance" from any point in the entire wave zone to $\C_0$, which is the
asymptote of the hyperboloids. This nonnegative function needs to be bounded uniformly away from zero in the wave zone, for the purpose of running the energy estimates. This task intuitively could be achieved if $\C^s_{\hat u_0}$  is spaced away uniformly  from $\C_0$ in terms of a canonical optical function in $Z^s$.

In Lorentzian spacetime the light cones are usually characterized by the level sets of an optical function $u$ (see \cite{CK})
which is defined as the solution of the eikonal equation
\begin{equation}\label{5.8.1.16}
\bg^{\a\b}\p_\a u\p_\b u=0
\end{equation}
with prescribed boundary or initial conditions. Then the optical function $u$  naturally measures the distance to
the causal boundary. To obtain the information of $u$ would require geometric controls on the foliations of light cones $C_u$.
However, since such light cones are not involved in our analysis, we do not use the actual optical function.
In our framework, $\H_\rho$  conceptually replaces the role usually played by $C_u$. The geometric control on
the hyperboloids $\H_\rho$ lies at the core of our analysis. To achieve the desired analytic feature,
we choose $\rho$ to be the proper time to $\O$, where $\rho$ verifies the eikonal equation
$$
\bg^{\a\b}\p_\a \rho \p_\b \rho=-1, \quad \rho(\O)=0.
$$
Throughout the chronological future $(\I^+(\O), \bg)$, we define an alterative function, still denoted by $u$,
which does not verify (\ref{5.8.1.16}), yet taking the role of measuring the distance to $\C_0$.
In particular, we can show that this function $u$, vanishing on $\C_0$,  is sufficiently close to the
canonical optical function near $\C_0$ in $Z^s$. To show such property, we perform in Section \ref{almost}
a full analytic comparison between the radial normal of the Schwarzschild frame and the normal vector
field on $\Sigma_t$, induced by the foliation of the intrinsic hyperboloid $\cup_{\rho}S_{t,\rho}$.
The main estimates are established in Theorem \ref{4.10.6} throughout $Z^s\cap \{\rho\le \rho_*\}$, which
is the major building block of this paper. These estimates and their higher order counterparts will be
used in the main energy scheme in \cite{Wang15}.

Next we address the issue of the leakage. Let $p$ be a point inside the wave zone, near the
boundary $\C^s_{\hat u_0}$. The distance maximizing timelike geodesic connecting $p$ and $\O$ is not
entirely contained in the wave zone. See Figure \ref{fig1}(b).  This phenomenon can be easily seen
in Minkowski space. In  Minkowskian case the deformation tensors of the boosts vanish and the deformation
tensor of the scaling vector field has a standard value. However, in the dynamical spacetime, deformation
tensors ${}^{(\sR)}\pi$ and ${}^{(S)}\pi$ need to be analyzed, which is done by integrating along the
aforementioned time-like  geodesics with the help of  the structure equations which contain both the
curvature components under the hyperboloidal frame and the second fundamental forms; see Section \ref{5.10.1.16}.
Whether the path of the integration is contained in the wave zone determines how to control the integrand.
The geometric information outside of the wave zone can not be provided by the energy estimates. Such
information is obtained  simultaneously with the main estimates in Section 5 by geometric comparisons
and bootstrap arguments.

Now we explain, as part of the energy scheme, the excision of a region which is related to the so-called
last slice of hyperboloids, denoted by $\H_{\rho_*}$. As a standard method for proving global results
of non-linear dynamical problem, one can suppose a set of bootstrap assumptions hold till certain maximal
life-span. Due to various concerns, we set the maximal life-span in terms of the proper time, labeled by
$\rho_*$. Once the bootstrap assumptions can be improved for all $\rho\le \rho_*$, by the principle
of continuation, the solution and the quantitative control can be extended beyond $\rho_*$.  The wave zone
is a region which is enclosed by the initial slice $\{t=0\}$, the last slice $\H_{\rho_*}$ as well as the
cone $\C^s_{\hat u_0}$. Consider the energy estimates on $\Sigma_t$, which are crucial for controlling $\pt$.
When $t\ge t_*$ where $t_*:=\min\{ t: S_{\rho_*, \hat u_0}\}$, we no longer expect a regular subset of $\Sigma_t$
within the wave zone to do the energy estimates (see Figure \ref{fig4} in Section \ref{engsch}). The subset of
wave zone with $t\ge t_*$ will be excised for obtaining the $\Sigma_t$-energy. This may lead to the loss of
control of $\pt$ in a region with $t$ large within the wave zone, which would fail the energy control
on $\H_\rho$. Our strategy is to show that the region of excision is fully contained in $Z^s$, where $\pt$
and other geometric quantities can be controlled by the main estimates. This proof has to be done merely
depending on local energy estimate, and the assumption that the foliation of $\H_\rho$ exists up
to $\rho\le \rho_*$, which is the case in this paper (see Section 6).

As the other application of the main estimates, we show that the Hawking mass is convergent to the ADM mass
of the surrounding Schwarzschild metric along every hyperboloid.

Finally, we comment on the analysis of the intrinsic geometry in $Z^s$. This analysis is independent of the
long-time energy estimates in the wave zone. The idea is to use the transport equations to perform the long-time
geometric comparison. We define a set of quantities which encode the deviation between the intrinsic and the extrinsic
tetrads, and derive the transport equations for them along well-chosen paths. In order to prove the
function $u$ is almost optical, we uncover a series of cancelations, contributed by the Schwarzschild metric
and the structure equations of the hyerboloidal foliation. It necessitates
delicate bootstrap arguments and weighted estimates\begin{footnote}{ The primitive version of such weighted
estimates can be seen in  \cite{Wangthesis}.}\end{footnote}. The obtained main estimates are crucial for
the applications in Section 6-7.

The paper is organized as follows. In Sections 2-3 we carefully set up the analytic framework of the
foliation of intrinsic hyperboloids, and provide the geometric construction of the intrinsic frame of the
Lorentz boosts, since such set-up and construction have never appeared in the literature. In Section \ref{engsch}
we sketch the energy scheme  in the proof  of global stability of Minkowski space for (\ref{ricci})-(\ref{gkg}).
In Section \ref{almost}, by assuming the foliation of the intrinsic hyperboloids and the maximal foliation
exist till the last slice of hyperboloid, we provide a thorough depiction of the intrinsic geometry
in the Schwarzschild zone, presented in Theorem \ref{4.10.6}, as the main estimates  of this paper. The
region considered there is the most sensitive region for having the geometric control on hyperboloids.
The set of main estimates depends merely on local-in-time energy estimates and the smallness of the given data
on the initial maximal slice. We then give applications of the main estimates. The one in Section \ref{wza}
is to control the region of the excision.  In Section \ref{mass}, we give the asymptotic behavior of
the Hawking mass along all hyperboloids.

\section{\bf Construction of the boost vector fields}\label{sec_c}

By standard energy and iteration argument, we first solve  the Cauchy problem of EKG back to
the past to certain fixed $t\le -T$. Let $\bo$ be the spacial origin of the given initial slice.
We denote by $\Ga(t)$ the geodesic through $\bo$ with velocity $-\bT$, where $\bT$ is the future-directed
time-like unit normal of the initial slice $\Sigma_0$.  The geodesic is extended (back-in-time)
within the radius of injectivity of $\bo$, intersecting $\{t=-T\}$   at $\O$. $T$ is chosen so that
the given Cauchy data at the initial slice is fully contained in $\sJ^+(\O)\cap \{t=0\}$, where $\sJ^+(\O)$
denotes the causal future of $\O$.   Hence $T$ depends on the size of the support of Cauchy data, and is
comparable to $1$. To be more precise, $T$ is chosen  such that $\C_0$ intersects at $t=0$ outside of $B_4$.
Now by the shift of  $t'= t+T$, as well as an abuse of notation, $t=0$ at $\O$ and the initial data is
prescribed at $t=T$, according to the time coordinate after the shift.

We use $\I^+(\O)$ to denote the chronological future of $\O$.
Let $i_*$ be the time-like radius of injectivity of $\O$, which is defined to be the supremum
over all the values $s_0>0$ for which the exponential map
\begin{equation}\label{12.29.4}
\exp_\O: (\rho, V)\to \Upsilon_V(\rho),  \quad V\in \mathbb{H}_1
\end{equation}
is a global diffeomorphism from $(0, s_0)\times \mathbb{H}_1$ to its
image in $\I^+(\O)$, where
$$
\mathbb{H}_1:= \left\{ V=(V^0, V^1, V^2, V^3): (V^0)^2 - \sum_{i=1}^3 (V^i)^2 =1\right\}
$$
is the canonical hyperboloid in ${\mathbb R}^{3+1}$ and $\Upsilon_V$ is the time-like geodesic with
$\Upsilon_V(0)=\O$ and $\Upsilon_V'(0) = V$. We use $\I_*^+(\O)$ to denote the part of $\I^+(\O)$ within the time-like
radius of injectivity. In \cite{Wang15} we will prove that the time-like radius of injectivity is $+\infty$
simultaneously when we prove the global well-posedness for EKG, provided the Cauchy data is sufficiently small.
Thus we will have $\I_*^+(\O) = \I^+(\O)$ once this result is established.

For a point $p$ in $\I^+(\O)$, we use $\rho$ to denote its geodesic distance to $\O$. Then $\rho$ is a smooth function
on $\I_*^+(\O)$ satisfying $\l \bd \rho, \bd \rho\r = -1$ with $\rho(\O)=0$.
We introduce the vector field
\begin{equation}\label{fb1}
\fB = -\bd \rho.
\end{equation}
Then $\fB$ is geodesic, i.e. $\bd_{\fB} \fB =0$ and $\l \fB, \fB\r = -1$. Using this $\fB$ we
define the lapse function $\bb$ by
\begin{equation}\label{eq_1}
\l \fB, \bT\r = -\bb^{-1}\frac{t}{\rho}
\end{equation}
Let
$$
\H_\rho:=\exp_{\O}(\rho {\mathbb H}_1).
$$
Clearly $\{\H_\rho\}$ are the level sets of $\rho$ which give a foliation of $\I_*^+(\O)$ in terms
of hyperboloids. Moreover, by the Gauss lemma we can see that $\fB$ is the future directed normal
to $\H_\rho$ and
\begin{equation}\label{2016.5.10}
\fB_p = \left(d \exp_\O\right)_{\rho V} (\p_\rho)
\end{equation}
for any $p\in \I_*^+(\O)$, where $(\rho, V)$ is the unique point in $(0, i_*) \times {\mathbb H}_1$
such that $p=\exp_{\O}(\rho V)$.

Using $\fB$ we may introduce the second fundament form $k$ of $\H_\rho$ defined by
\begin{equation*}
k(X, Y) =\l\bd_X \fB, Y\r
\end{equation*}
where $X, Y$ are vector fields tangent to $\H_\rho$. Clearly $k$ is an $\H_\rho$ tangent,
symmetric $(0,2)$ tensor. We will use $\tr k$ and $\hk$ to denote the trace and traceless part of $k$
respectively.
\begin{footnote}{ In general, for a $\H_\rho$-tangent symmetric 2-tensor $F$, with $\gb$ the induced metric on $\H_\rho$,
its trace and traceless part can be defined by $\tr F=\gb^{ij} F_{ij}$ and
$\hat F_{ij}=F_{ij}-\frac{1}{3}\tr F \gb_{ij}$ respectively.}\end{footnote}

According to the expression of $\bg$, we can derive that the future directed unit normal $\bT$ of $\Sigma_t$
takes the form
\begin{equation}\label{gt}
\bT = n^{-1} \p_t.
\end{equation}
This together with $\bd t = -n^{-2} \p_t$ and (\ref{eq_1}) implies that
\begin{equation}\label{bt}
\fB(t)=\l \fB, \bd t\r = \frac{\bb^{-1}n^{-1} t}{\rho}.
\end{equation}
For future reference, we set
\begin{equation}\label{4.5.1}
 \tf:=(\bb^{-1}t)(\Ga(t));\quad \rf=\sqrt{\tf^2-\rho^2}.
\end{equation}
%By (\ref{4.5.1}), $\rf(0,t)=0$.

According to the definition of $\I_*^+(\O)$, for any $p\in \I_*^+(\O)$ there corresponds a unique
$(\rho, V)\in (0, i_*) \times {\mathbb H}_1$ with $V = (V^0, V^1, V^2, V^3)$ such that
\begin{equation}\label{coord1}
p=\exp_{\O}(\rho V).
\end{equation}
We set
\begin{equation}\label{n_coord}
y^0=\tau:=\rho\sqrt{1+\sum_{i=1}^3 (V^i)^2} \quad \mbox{and} \quad y^i=\rho V^i \mbox{ for } i=1, 2, 3.
\end{equation}
Then $\{y^\mu, \mu=0,\cdots 3\}$ gives the geodesic normal coordinates for $\I_*^+(\O)$.
%For future reference we denote by
%$$
%V^0=\sqrt{1+\sum_{i=1}^3 (V^i)^2}, \quad \vs=\sqrt{\sum_{i=1}^3 {y^i}^2}.
%$$

%Now we  prove

\begin{lemma}\label{inida}
For any $V\in {\mathbb H}_1$ there hold
\begin{equation}\label{init}
\lim_{\rho\rightarrow 0} \frac{\tau}{t}(\rho V)=n(\O),\quad
\lim_{\rho\rightarrow 0} \frac{\bb\tau}{t}(\rho V)=1,\quad
\lim_{\rho\rightarrow 0} \bb^{-1}(\rho V)=n(\O).
\end{equation}
\end{lemma}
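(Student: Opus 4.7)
The plan is to carry out a first-order Taylor expansion at $\O$ using the geodesic normal coordinates $\{y^\mu\}$ introduced in (\ref{n_coord}). First I would make the frame choice at $\O$ explicit: the parametrization $V\in\mathbb{H}_1$ in (\ref{n_coord}) implicitly uses an orthonormal frame $\{e_\mu = \partial_{y^\mu}|_\O\}$, and matching $y^0$ with the $\Sigma_0$-adapted time function near $\O$ forces $e_0=\bT|_\O$ (and hence $e_i\in T_\O\Sigma_0$). With this choice, $\tau=y^0=\rho V^0$ is the time component of the geodesic point $p=\exp_\O(\rho V)$ in the normal chart.

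For the first limit, I would Taylor expand $t$ around $\O$. Since $t(\O)=0$ after the time shift, $t(y) = e_\mu(t)|_\O\, y^\mu + O(|y|^2)$. Under the maximal gauge $\p_t = n\bT$ together with $\bd t = -n^{-2}\p_t$, one has $e_\mu(t) = -n^{-1}\l e_\mu,\bT\r$, so that $e_0(t)|_\O = n(\O)^{-1}$ and $e_i(t)|_\O=0$. Substituting $y^\mu(\rho)=\rho V^\mu$ along the geodesic $\Upsilon_V$ then gives $t(\rho V) = n(\O)^{-1}\tau + O(\rho^2)$, and dividing through yields $\tau/t(\rho V) \to n(\O)$, which is the first claim.

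For the third limit, I would invoke (\ref{2016.5.10}) and the elementary fact $(d\exp_\O)_0 = \mathrm{id}_{T_\O\mathbf{M}}$: as $\rho\to 0$, $\fB_{\exp_\O(\rho V)} = (d\exp_\O)_{\rho V}(\p_\rho)$ tends to $V$ in $T_\O\mathbf{M}$. By continuity of $\bT$, $\bT_{\exp_\O(\rho V)}\to \bT|_\O = e_0$, so that
\[
\lim_{\rho\to 0}\l\fB,\bT\r_{\rho V} = \l V, e_0\r_\O = -V^0.
\]
Combining with (\ref{eq_1}) written as $\bb^{-1}(\rho V) = -\frac{\rho}{t}\l\fB,\bT\r$ and using $\rho/t(\rho V) \to n(\O)/V^0$ from the first step, I obtain $\bb^{-1}(\rho V)\to n(\O)$, which is the third claim. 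The second then follows immediately from $\bb\tau/t = (\tau/t)/\bb^{-1} \to n(\O)/n(\O) = 1$.

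The only mild obstacle is pinning down the identification $e_0 = \bT|_\O$ implicit in (\ref{n_coord}); beyond that, every step is a straightforward first-order expansion plus continuity of the exponential map, with no delicate estimate required.
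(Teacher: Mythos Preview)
Your proof is correct. Both your argument and the paper's rest on a first-order expansion at $\O$, but they proceed differently in detail. The paper expands the products $\bb^{-1}t=\frac{1}{2}\bd_\mu(\rho^2)\bT^\mu$ and $n^{-1}t=\frac{1}{2}\bT(t^2)$ directly, invoking the Synge world-function identity $\frac{1}{2}\bd_\nu\bd_\mu(\rho^2)|_\O=-\bg_{\mu\nu}(\O)$ to obtain $\bb^{-1}t=\tau+O(\tau^2)$ and $n^{-1}t=n(\O)^{-2}\tau+O(\tau^2)$; it deduces the second identity first, then the first, then the third. Your route instead Taylor-expands $t$ itself and handles $\bb^{-1}$ via the continuity $\fB_{\exp_\O(\rho V)}\to V$ (equivalently $(d\exp_\O)_0=\mathrm{id}$), which is slightly more elementary since it avoids the Hessian formula for $\rho^2$. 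The frame identification $e_0=\bT|_\O$ that you flag is in fact used implicitly by the paper as well: it is precisely what makes $-\bg_{\mu\nu}\bT^\mu|_\O\, V^\nu=V^0$ hold in their step $\bb^{-1}t=\tau+O(\tau^2)$, so this is not an extra assumption on your side.
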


\begin{proof}
By using (\ref{eq_1}) we can consider the local expansion of $\bb^{-1}t=\f12 \bd_\mu (\rho^2)\bT^\mu$ at $\O$
as follows
\begin{align*}
\bb^{-1}t&=\left.\bb^{-1}t\right|_\O+ \left.\f12\bd_\nu(\bd_\mu (\rho^2) \bT^\mu)\right|_\O \rho V^\nu +O(\tau^2)\\
& = \left.-(\bg_{\mu\nu}\bT^\mu)\right|_\O\rho V^\nu- \left.\pt_{\mu\nu}\right|_\O\rho^2 V^\mu V^\nu+O(\tau^2)\\
& = \tau+O(\tau^2),
\end{align*}
where we employed  \cite[Page 50]{Eric}  to get $\left.\f12 \bd_\nu \bd_\mu(\rho^2)\right|_\O=-\bg_{\mu\nu}(\O)$.
This implies the second identity in (\ref{init}). Similarly, for the function $n^{-1} t= \f12 \bT(t^2)$ we have
the local expansion
\begin{align*} %\label{4.21.2.16}
n^{-1}t&= \left.\f12\bd_\nu (\bT(t^2))\right|_\O \rho V^\nu +O(\tau^2)
= \bd_\nu (t\bT^\a \bd_\a t)\big|_\O \rho V^\nu+O(\tau^2)
\end{align*}
Note that
\begin{align*}
\bd_\mu(t\bT^\a \bd_\a t) &=t(\bd_\mu \bT^\a \bd_\a t+\bT^\a \bd_\mu\bd_\a t)+\bT^\a \bd_\a t \bd_\mu t,
\end{align*}
which, in view of (\ref{gt}), implies that
\begin{equation*} %\label{4.21.1.16}
\bd_\mu(t\bT^\a \bd_\a t)\big|_\O = n^{-1}\bd_\mu t\big|_\O=-(n^{-2}\bg_{\mu\b}\bT^\b)\big|_\O.
\end{equation*}
Therefore we can obtain $n^{-1} t=n^{-2}(\O) \tau+O(\tau^2)$ which gives the first identity in (\ref{init})
as $\rho\rightarrow 0$. The last identity follows as a consequence of the first two.
\end{proof}

\subsection{Construction of the boost vector fields}

Recall that in Minskowski space, in terms of the geodesic coordinates introduced by (\ref{n_coord}), the boost vector
fields are defined by
\begin{equation}\label{3.18.11}
\cir{\sR}_i=y^i\p_\tau+\tau\p_i, \qquad i=1,2,3.
\end{equation}
Note that $\rho= \sqrt{\tau^2 -\sum_{i=1}^3 (y^i)^2}$ and $\p_\rho = \frac{1}{\rho} (\tau \p_\tau + y^i \p_i)$.
It is straightforward to show that
\begin{align}\label{lse}
[\p_\rho, \cir{\sR}_i]=0, \qquad i =1, 2, 3.
\end{align}
By using the exponential map to lift vector fields, this leads to introduce boost vector fields
\begin{equation}\label{boost}
\sR_i := (d\exp_\O)_{\rho V}(\cir{\sR_i}), \qquad i =1, 2, 3.
\end{equation}
defined on $\I_*^+(\O)$.

\begin{lemma}\label{4.1.1.16}
The boost vector fields $\sR_i$, $i=1,2,3$ are tangent to $\H_\rho$ and
\begin{equation}\label{lse3}
[\fB,\sR_i]=0, \qquad  \fB(\tau)=\frac{\tau}{\rho}.
\end{equation}
%\item There holds  $\vs=\sqrt{\tau^2-\rho^2}$ and
%\begin{equation}\label{inv.1}
%\p_\rho(\frac{\vs}{\rho})=0.
%\end{equation}
%\end{enumerate}
\end{lemma}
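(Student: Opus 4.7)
All three assertions reduce, essentially, to the fact that $\exp_\O$ restricts to a diffeomorphism from $(0,i_*)\times \mathbb{H}_1$ onto $\I_*^+(\O)$ (by definition of $i_*$), together with equations (\ref{lse}) and (\ref{2016.5.10}). The plan is to exploit the compatibility of pushforward with Lie brackets and with differentiation of functions, so that properties known in the flat tangent space $T_\O\mathbf{M}$ transfer immediately to $\I_*^+(\O)$.

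For the tangency of $\sR_i$ to $\H_\rho$: on $T_\O\mathbf{M}$ with the linear coordinates $y^\mu$, the Minkowskian boost $\cir{\sR}_i = y^i\p_\tau + \tau\p_i$ annihilates the Minkowski norm-squared $\tau^2 - \sum_j (y^j)^2$, hence is tangent to every level set $\rho_0\mathbb{H}_1$. Since $\H_\rho = \exp_\O(\rho\mathbb{H}_1)$ by definition and $\sR_i$ is the pushforward of $\cir{\sR}_i$ under the diffeomorphism $\exp_\O$, the tangency is transported to $\H_\rho$.

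For the bracket identity $[\fB,\sR_i] = 0$: by (\ref{2016.5.10}) and (\ref{boost}), the vector fields $\fB$ and $\sR_i$ on $\I_*^+(\O)$ are the pushforwards of $\p_\rho$ and $\cir{\sR}_i$ under $\exp_\O$. Since the pushforward under a diffeomorphism preserves Lie brackets, $[\fB,\sR_i]$ equals the pushforward of $[\p_\rho,\cir{\sR}_i]$, which vanishes by (\ref{lse}).

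For $\fB(\tau) = \tau/\rho$: by (\ref{n_coord}), viewing $\tau$ as a function on $\I_*^+(\O)$, one has $\tau(\exp_\O(\rho V)) = \rho\sqrt{1+\sum_i(V^i)^2}$, and by (\ref{2016.5.10}) the field $\fB$ corresponds to $\p_\rho$ at fixed $V\in\mathbb{H}_1$. Differentiating with respect to $\rho$ while holding $V$ fixed gives $\fB(\tau) = \sqrt{1+|V|^2} = \tau/\rho$. There is no real obstacle to overcome here; the whole argument is book-keeping about the exponential map, and the only care needed is to consistently distinguish objects living on $T_\O\mathbf{M}$ from their pushforwards on $\I_*^+(\O)$, together with the invocation of the diffeomorphism property built into the definition of $\I_*^+(\O)$.
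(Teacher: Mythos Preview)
Your proposal is correct and follows essentially the same approach as the paper: tangency of $\sR_i$ to $\H_\rho$ via the tangency of $\cir{\sR}_i$ to $\rho\mathbb{H}_1$ pushed forward by $\exp_\O$, the bracket identity via naturality of Lie brackets under pushforward together with (\ref{lse}) and (\ref{2016.5.10}), and $\fB(\tau)=\tau/\rho$ by direct differentiation of $\tau=\rho\sqrt{1+|V|^2}$ in $\rho$ at fixed $V$. Your write-up is slightly more explicit than the paper's, but the ideas are identical.
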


\begin{proof}
Since $\cir{\sR}_i$ are tangent to ${\mathbb H}_\rho:=\rho {\mathbb H}_1$ in the Minkowski spacetime,
by the definition of $\H_\rho$ and $\sR_i$, we can conclude that $\sR_i$ are tangent to $\H_\rho$.
In view of (\ref{2016.5.10}), (\ref{boost}) and (\ref{lse}) we have
$$
[\fB, \sR_i] = \left[ (d\exp_\O)_{\rho V} (\p_\rho), (d\exp_\O)_{\rho V}(\cir{\sR}_i)\right]
= (d\exp_\O)_{\rho V} \left(\left[\p_\rho, \cir{\sR}_i\right]\right) = 0.
$$
From the definition of $\tau$ we can obtain $\fB(\tau)=\tau/\rho$ by direct calculation.
%The remaining identities can be checked directly.
\end{proof}

\begin{proposition}\label{2.20.18.16}
Let $\sR$ denote one of the boost vector fields $\sR_i$, $i=1,2,3$. Then
\begin{equation}\label{2.20.17.16}
\Lie\rp{n}_\sR \pr(\fB, \fB)=0, \quad \Lie\rp{n}_\sR\pr(\fB, \sR_i)=0.
\end{equation}
\end{proposition}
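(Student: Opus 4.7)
The plan is to prove both identities in parallel by induction on $n$, with a slightly strengthened hypothesis: $(\Lie_\sR^{(n)} \pr)(\fB, \fB) = 0$ and $(\Lie_\sR^{(n)} \pr)(\fB, X) = 0$ for every vector field $X$ tangent to $\H_\rho$. The strengthening is forced by the inductive step, which produces commutators $[\sR, \sR_i]$; in the Minkowskian model these are rotation-type vector fields, not boosts, so one cannot close the induction by staying inside the family $\{\sR_i\}$. Instead, they are generic $\H_\rho$-tangent vector fields, which is exactly what the strengthened hypothesis handles.

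For the base case $n=0$, I would compute $\pr(\fB, \fB) = 2\l \bd_\fB \sR, \fB\r$ from \eqref{6.6.1.16}, then use $[\fB, \sR] = 0$ from Lemma \ref{4.1.1.16} to replace $\bd_\fB \sR$ by $\bd_\sR \fB$, and conclude via $\sR \l \fB, \fB\r = \sR(-1) = 0$. For $\pr(\fB, X)$ with $X$ tangent to $\H_\rho$, I would expand to $\l \bd_\fB \sR, X\r + \l \bd_X \sR, \fB\r$; the first summand becomes $k(\sR, X)$ after applying $[\fB, \sR] = 0$, and the second becomes $-k(X, \sR)$ upon differentiating $\l \sR, \fB\r = 0$ (which holds because $\sR$ is tangent and $\fB$ normal) in direction $X$. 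These cancel by the symmetry of $k$, which follows from $\fB = -\bd \rho$: in coordinates $k_{\mu\nu} = \bd_\mu \fB_\nu = -\bd_\mu \bd_\nu \rho$ is manifestly symmetric. Specializing $X = \sR_i$ recovers the $n=0$ case of the second identity.

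For the inductive step, I would apply the Leibniz rule
\begin{equation*}
(\Lie_\sR T)(Y, Z) = \sR(T(Y, Z)) - T([\sR, Y], Z) - T(Y, [\sR, Z])
\end{equation*}
to $T = \Lie_\sR^{(n-1)} \pr$ with $(Y, Z) = (\fB, \fB)$ and $(\fB, X)$. The first term on the right vanishes by the inductive hypothesis; any term containing $[\sR, \fB]$ drops out by Lemma \ref{4.1.1.16}; and the remaining term $T(\fB, [\sR, X])$ vanishes because $\sR$ and $X$ are both tangent to the submanifold $\H_\rho$, so $[\sR, X]$ is tangent as well, and the inductive hypothesis applies. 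Taking $X = \sR_i$ closes the induction and yields the proposition.

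I do not anticipate a serious obstacle beyond correctly identifying the strengthened induction statement; once that is in place, the whole argument rests on three geometric inputs already provided: $[\fB, \sR]=0$ from Lemma \ref{4.1.1.16}, the tangentiality of $\sR_i$ to $\H_\rho$, and the hypersurface-orthogonality of $\fB$ as the negative gradient of the geodesic distance $\rho$, which delivers the symmetry of $k$.
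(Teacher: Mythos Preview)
Your proposal is correct and follows essentially the same induction as the paper's proof. The only cosmetic difference is that the paper does not state the strengthened hypothesis $F(\fB,X)=0$ for all $\H_\rho$-tangent $X$ explicitly; instead it phrases the inductive step as ``$\Lie_\sR^{(n)}\pr$ is still a symmetric $(0,2)$, $\H_\rho$-tangent tensor,'' which is equivalent to your version because $\{\sR_i\}_{i=1}^3$ is a frame on $T\H_\rho$, so $F(\fB,\sR_i)=0$ for all $i$ already forces $F(\fB,X)=0$ for every tangent $X$ and in particular for $[\sR,\sR_i]$.
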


\begin{proof}
We prove (\ref{2.20.17.16}) by induction.
 First, we consider $n=0$. By using the first identity in (\ref{lse3}), we can obtain
\begin{equation*}
\pr(\fB, \fB)=2\l \bd_\fB\sR, \fB\r=2\l \bd_\sR \fB, \fB\r=0
\end{equation*}
and
\begin{align*}
\pr(\fB, \sR_i)&=\l\bd_\fB \sR, \sR_i\r+\l\bd_{\sR_i}\sR,\fB\r = \l\bd_\sR \fB, \sR_i\r-\l \bd_{\sR_i}\fB, \sR\r\\
&=k(\sR, \sR_i)-k(\sR_i, \sR)=0.
\end{align*}
Now consider $n\ge 1$. For a symmetric $(0,2)$ tensor $F$, suppose
\begin{equation*} %\label{2.20.19.16}
F(\fB, \fB)=0,  \qquad F(\fB, \sR_i)=0,
\end{equation*}
we can obtain from the first equality in (\ref{lse3}) that
\begin{equation}\label{2.20.20.16}
\Lie_\sR F(\fB, \fB)= \sR (F(\fB, \fB))-2F(\Lie_\sR\fB, \fB)=0
\end{equation}
and
\begin{equation}\label{2.20.21.16}
\Lie_\sR F(\fB, \sR_i)=\sR (F(\fB, \sR_i))-F(\Lie_\sR \fB, \sR_i)-F(\fB, \Lie_\sR  \sR_i)=0.
\end{equation}
Since each $\Lie\rp{n}_\sR \pr$ is still a symmetric $(0,2)$, $\H_\rho$-tangent tensor,
which can be regarded as $F$, then (\ref{2.20.20.16}) and (\ref{2.20.21.16}) imply that
(\ref{2.20.17.16}) holds for $n+1$. Thus the proof of Proposition \ref{2.20.18.16} is
complete by induction.
\end{proof}

\section{\bf Intrinsic  hyperboloids}

%\subsection{Foliations of Hyperboloids and the maximal time function }

We will use $g$ to denote the induced metrics on $\Sigma_t$ and
let $\nabla$ be the covariant differentiation. It is known that
$$
\nabla^\mu = \Pi_{\nu \eta} \bg^{\mu\eta} \bd^\nu,
$$
where
$$
\Pi_{\nu\eta} = \bg_{\nu\eta} + \bT_\nu \bT_\eta
$$
denote the tensor of projection to $\Sigma_t$.

Let $S_{t, \rho}:= \Sigma_t \cap \H_\rho$. Then for fixed $t$, $\{S_{t, \rho}\}_\rho$ gives a foliation
of $\Sigma_t$. Let $\gamma$ be the induced metric on $S_{t, \rho}$ and let $\sn$ be the associated covariant
differentiation. Since $\fB$ is normal to $S_{t, \rho}$, we have
\begin{equation}\label{split}
g = a^2 d\rho^2+\ga_{AB} d\omega^A d\omega^B
\end{equation}
where $a$ is the lapse function given by $a^{-1} =|\nabla \rho|_g$. By using $\l \bd \rho, \bd \rho\r = -1$
and (\ref{eq_1}) we have
\begin{align*}
-1=\bg^{\mu\nu}\p_\mu \rho\p_\nu\rho &=-(\bT(\rho))^2 + |\nabla \rho|_g^2
=-\frac{\bb^{-2} t^2}{\rho^2}+ |\nabla \rho|_g^2. %\label{3.22.6}
\end{align*}
This shows that $\rho\le \bb^{-1}t$ on $\Sigma_t$ and the lapse function $a$ is given by
\begin{equation*} %\label{aa}
a^{-2}=|\nabla \rho|_g^2 = \frac{(\bb^{-1} t)^2 -\rho^2}{\rho^2}.
\end{equation*}
Therefore $a^{-1} = \frac{\tir}{\rho}$, where
\begin{equation*} %\label{a_7.4.1}
\tir =\sqrt{\bb^{-2}t^2-\rho^2}.
\end{equation*}
%By (\ref{init}), at the time axis $\Ga(t)$, $\tir(0,t)=0$.
Let $\bN$ denote the outward unit normal of $S_{t,\rho}$ in $\Sigma_t$. Then, according to (\ref{split})
we have
\begin{equation}\label{4.1.2.16}
\bN=-\frac{\nab\rho}{|\nab \rho|_g}=-a^{-1} \p_\rho  \quad \mbox{ on } \Sigma_t.
\end{equation}
%where $\p_\rho$ is the differential operator on $\Sigma_t$.

Similarly let $\gb$ be the induced metric on $\H_\rho$ and let $\nabb$ be the corresponding covariant
differentiation. Then
$$
\nabb^\mu = \bar \Pi_{\nu\eta} \bg^{\mu\eta} \bd^\nu,
$$
where $\bar \Pi$ denotes the tensor of projection to $\H_\rho$ given by
$$
\bar \Pi_{\nu\eta} = \bg_{\nu\eta} + \fB_\nu \fB_\eta.
$$
Note that for fixed $\rho$, $\{S_{t, \rho}\}_t$ gives the radial foliation of $\H_\rho$. Since $\bT$ is
normal to $S_{t, \rho}$, we have
\begin{equation}\label{split1}
\gb = |\nabb t|_{\gb}^{-2} dt^2 + \ga_{AB} d\omega_A d\omega_B = (an)^2 dt^2+\ga_{AB} d\omega_A d\omega_B
\end{equation}
where for the second equality we used
\begin{equation}\label{nabt}
|\nabb t|_{\gb} = (an)^{-1}.
\end{equation}
The equation (\ref{nabt}) follows from the fact
\begin{equation*}
-n^{-2} = \l \bd t, \bd t\r = \bg^{\mu\nu} \p_\mu t\p_\nu t = -(\fB(t))^2 + |\nabb t|_\bg^2
=-\left(\frac{\bb^{-1}n^{-1} t}{\rho}\right)^2 + |\nabb t|_\bg^2
\end{equation*}
which also shows that $t\ge \bb\rho$ on $\H_\rho$. Let $\Nb$ denote the outward normal vector field of
$S_{t, \rho}$ in $\H_\rho$. Then
\begin{equation}\label{nb}
\Nb = \frac{\nabb t}{|\nabb t|_{\gb}} = an\nabb t.
\end{equation}

According to (\ref{split}) and (\ref{split1}), the volume form $d\mu_g$ on $\Sigma_t$ and the volume form
$d\mu_{\bg}$ on $\H_\rho$ are given respectively by
\begin{equation*}
d \mu_g=a\sqrt{|\ga|}d\rho d\omega, \qquad
d \mu_{\gb}=an\sqrt{|\ga|} dt d\omega.
\end{equation*}

%On the other hand, the volume form of the spacetime metric $d\mu_\bg$ can be written in the given coordinates $(t, x^1, x^2, x^3)$ by using $t$-foliation
%\begin{equation*}
%d\mu_\bg= n \sqrt{|g|} dt dx,
%\end{equation*}
%and by $\rho$-foliation,
%\begin{equation*}
%d\mu_\bg=\sqrt{|\gb|} d\rho dx.
%\end{equation*}
%In view of
%\begin{align*}
%d\mu_\bg(\bT, \p_{x^1},& \p_{x^2}, \p_{x^3})=\sqrt{|\gb|} d\rho dx(\bT, \p_{x^1}, \p_{x^2}, \p_{x^3})\\
%&=n \sqrt{|g|} dt dx(\bT, \p_{x^1}, \p_{x^2}, \p_{x^3})
%\end{align*}
%and using
%\begin{equation*}
%d\rho(\bT)=\bT(\rho)=\frac{\bb^{-1} t}{\rho}
%\end{equation*}
 %under coordinates ${x^1, x^2, x^3}$, due to $\l \bT, \p_{x^i}\r_{\bg}=0$,
  %we conclude that
%\begin{equation}\label{eq_2}
%\sqrt{|\gb|}/\sqrt{|g|}=\bb\rho/t.
%\end{equation}
%We can similarly derive in $(\T_O\M,\bm) $ with $\cir{g}$ and $\cir{\gb}$ the induced metrics on the level sets of $\tau$ and $\rho$ respectively, that
%\begin{equation}\label{eq_3}
% \frac{\rho}{\tau}=\frac{\sqrt{|\gbc|}}{\sqrt{|\cir{g}|}}.
 %\end{equation}

\subsection{Decomposition of frames}\label{sec_frm}
Using $\bT$ and $\bN$ we define
\begin{equation}\label{nf}
L=\bT+\bN, \quad \Lb=\bT-\bN.
\end{equation}
It is easy to see that
\begin{equation*}
\l L, L\r=\l \Lb, \Lb\r=0, \qquad \l L, \Lb\r=-2.
\end{equation*}
Thus if $\{e_A, A=1,2\}$ is an orthonormal frame on $S_{t, \rho}$, then $\{L, \Lb, e_A, A=1,2\}$ form
a null frame.

We define a pair of functions
\begin{equation}\label{opti}
u:=\bb^{-1}t-\tir, \quad \ub:=\bb^{-1}t+\tir.
\end{equation}
which can be regarded as the counterparts for``$t-r, t+r$" in the Minkowski spacetime.
Due to the construction, there hold the two fundamental facts:\begin{footnote}{From now on, for convenience, $\I^+(\O)$ is understood to be $\I^+_*(\O)$.}\end{footnote}
\begin{enumerate}
\item $u> 0 $ in $\I^+(\O)$.  $u=0$ if and only if $\rho=0$, which holds only on $\C_0$,  the causal boundary of $\sJ^+(\O)$.
\item  Assuming $\bb^{-1}\ge C$ for some fixed constant $C>0$, \begin{footnote}{
This property can be found in  Proposition \ref{prl_1}, which can be quickly proved. }
\end{footnote} any $\H_\rho$ is asymptotically  approaching $\C_0$ as $t\rightarrow \infty$. This can be seen by using
\begin{equation}\label{2.21.1.16}
\rho^2=u \ub
\end{equation}
and $\ub\ge \bb^{-1}t $ in $\I^+(\O)$.
\end{enumerate}

\begin{lemma}\label{frames}
There hold
\begin{eqnarray}
\fB=\frac{b^{-1}t}{\rho}\bT+\frac{\tir}{\rho}\bN, && \Nb=\frac{\tir}{\rho} \bT+\frac{\bb^{-1} t}{\rho}\bN, \label{fbtn}\\
 2\rho\fB=\ub L+u\Lb,  && 2\rho \Nb=\ub L-u\Lb, \label{dcp_2}\\
 \rho \bT=\bb^{-1} t \fB-\tir \Nb,  && \rho \bN=\bb^{-1} t\Nb-\tir \fB. \label{dcp_3}
\end{eqnarray}
\end{lemma}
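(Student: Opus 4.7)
The key structural observation is that $S_{t,\rho}=\Sigma_t\cap \H_\rho$ is a codimension-two spacelike surface, so at each of its points the normal bundle in $(\mathbf{M},\bg)$ is a Lorentzian $2$-plane. Both $\{\bT,\bN\}$ and $\{\fB,\Nb\}$ are orthonormal frames of this $2$-plane (each consisting of one future-directed timelike and one spacelike unit vector), because $\bT\perp \Sigma_t$ and $\bN$ is normal to $S_{t,\rho}$ inside $\Sigma_t$, while $\fB\perp \H_\rho$ and $\Nb$ is normal to $S_{t,\rho}$ inside $\H_\rho$. Consequently the two frames differ by a Lorentz boost, and the whole lemma reduces to computing this boost explicitly and then translating the result into the null frame $\{L,\Lb\}$ and the ``optical'' functions $u$, $\ub$.

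To obtain (\ref{fbtn}), decompose $\fB=\alpha \bT+\beta \bN$. From (\ref{eq_1}) one reads off $\alpha=-\l \fB,\bT\r=\bb^{-1}t/\rho$. To compute $\beta$, combine $\fB=-\bd\rho$ with (\ref{4.1.2.16}), which gives $\bN=-a^{-1}\p_\rho$ and hence $\bN(\rho)=-a^{-1}$; therefore $\beta=\l \fB,\bN\r=-\bN(\rho)=a^{-1}=\tir/\rho$. For the second identity, write $\Nb=\gamma \bT+\delta \bN$. Since $\Nb\in T\H_\rho$ while $\fB\perp \H_\rho$, the orthogonality $\l \Nb,\fB\r=0$ together with the already-computed decomposition of $\fB$ yields $\gamma\,\bb^{-1}t=\delta\,\tir$; the normalization $\l \Nb,\Nb\r=1$, combined with the identity $\tir^{2}=(\bb^{-1}t)^{2}-\rho^{2}$ built into the definition of $\tir$, then forces $(\gamma,\delta)=\pm(\tir/\rho,\,\bb^{-1}t/\rho)$. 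The $+$ sign is selected by the outward/future orientation of $\Nb$ prescribed in (\ref{nb}).

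Once (\ref{fbtn}) is in hand, the remaining two pairs are purely algebraic. For (\ref{dcp_2}), substitute $L=\bT+\bN$, $\Lb=\bT-\bN$, $u=\bb^{-1}t-\tir$ and $\ub=\bb^{-1}t+\tir$ into
\[
2\rho\fB=2(\bb^{-1}t)\bT+2\tir\,\bN,\qquad 2\rho\Nb=2\tir\,\bT+2(\bb^{-1}t)\bN,
\]
and collect the two terms to recognise $\ub L+u\Lb$ and $\ub L-u\Lb$ respectively. For (\ref{dcp_3}) simply invert the $2\times 2$ linear system (\ref{fbtn}); the determinant of the change-of-basis matrix is $\rho^{-2}\bigl((\bb^{-1}t)^{2}-\tir^{2}\bigr)=1$ by the very definition of $\tir$, which makes the inversion trivial and delivers exactly the stated formulas $\rho\bT=\bb^{-1}t\,\fB-\tir\,\Nb$ and $\rho\bN=\bb^{-1}t\,\Nb-\tir\,\fB$.

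The main (and essentially only) subtlety is bookkeeping of signs: one must be careful that $\bN(\rho)=-a^{-1}$ rather than $+a^{-1}$, so that $\beta>0$, and that the outward/future orientation of $\Nb$ picks out the $+$ branch of the square root in the quadratic for $(\gamma,\delta)$. Both signs are unambiguously fixed by the conventions already adopted in (\ref{4.1.2.16}) and (\ref{nb}), so no further input is required and the proof is a short linear-algebra computation in the normal plane of $S_{t,\rho}$.
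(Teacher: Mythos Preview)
Your proof is correct and, for the decomposition of $\fB$ and for (\ref{dcp_2})--(\ref{dcp_3}), follows essentially the same line as the paper. The only genuine difference is in how you obtain the decomposition of $\Nb$: the paper computes $\nabb t$ explicitly via the projection $\bar\Pi^{\mu\nu}\bd_\mu t$, expands it using (\ref{gt}), (\ref{bt}) and the already-established decomposition of $\fB$, and then normalises using (\ref{nb}); you instead exploit the Lorentz-boost structure of the normal $2$-plane, solving the two equations $\l\Nb,\fB\r=0$ and $\l\Nb,\Nb\r=1$ and then fixing the sign by orientation. Your route is slightly more conceptual and avoids the explicit projection calculation, while the paper's route is more direct and makes the link to (\ref{nb}) and (\ref{nabt}) transparent; both are short and equivalent in substance.
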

\begin{proof}
Since $\fB$ is normal to $S_{t, \rho}$, it can be decomposed using $\bT$ and $\bN$.
The component along $\bT$ follows directly from (\ref{eq_1}).
By using (\ref{eq_1}) and (\ref{4.1.2.16}) we have
\begin{align*}
-\l \fB, \nab\rho\r&=-\fB^\nu {\Pi}_{\mu\nu}\bd^\mu \rho= \fB^\nu\fB^\mu(\bg_{\mu\nu}+\bT_\mu \bT_\nu)
= \l \fB, \fB\r + \l \fB, \bT\r^2\\
&=-1+\l \fB, \bT\r^2=\frac{\bb^{-2}t^2-\rho^2}{\rho^2}=a^{-2}.
\end{align*}
This shows that $\l \fB, \bN\r = a^{-1} = \tir/\rho$ and hence the component along $\bN$ is obtained.
We therefore obtain the decomposition of $\fB$ in (\ref{fbtn}).

In view of (\ref{gt}), (\ref{bt}) and the decomposition of $\fB$, we have
\begin{align*}
(\nabb t)_\nu  &=\bar \Pi^{\mu\nu}\bd^\mu t=\bd_\nu t+\fB_\nu\fB(t)
  =-n^{-1}\bT_\nu +\frac{\bb^{-1} n^{-1} t}{\rho} \fB_\nu\\
%&=-n^{-1}(\bT_\nu-\frac{\bb^{-1}t}{\rho}(a^{-1} \bN_\nu+\frac{\bb^{-1}t}{\rho}\bT_\nu))\\
&=n^{-1}\left(\frac{(\bb^{-2} t^2-\rho^2)}{\rho^2}\bT_\nu+\frac{a^{-1}\bb^{-1}t}{\rho}\bN_\nu\right)\\
& =n^{-1}\left(a^{-2}\bT_\nu+\frac{a^{-1}\bb^{-1}t}{\rho}\bN_\nu \right).
\end{align*}
This together with (\ref{nb}) shows the decomposition for $\Nb$ in (\ref{fbtn}).

By using  (\ref{nf}) and (\ref{opti}) we obtain (\ref{dcp_2}) from (\ref{fbtn}) directly.
 (\ref{dcp_3}) follows from (\ref{fbtn}) by a simple algebra.
\end{proof}

%For future reference, we derive the following result by using Lemma \ref{frames}.
%\begin{footnote}{For the $\pi^{(\bT)}_{\a\b}$ in the sequel, $\pi^{(\bT)}_{\a\b}=\f12(\l \bd_\a \bT, e_\b\r+\l \bd_\b \bT, e_\a\r)$.}\end{footnote}
Recall the definitions (\ref{6.6.2.16}) and (\ref{6.6.1.16}). With $e_\bi, \bi=1,2,3$ the orthonormal basis on $\T \Sigma_t$,  there holds
\begin{equation}\label{6.6.3.16}
 \pt(e_\bi, e_\bj)=-2\pib(e_\bi, e_\bj), \quad \pt(\bT, e_\bi)=\nab_{\bi} \log n.
\end{equation}
\begin{lemma}
There hold
\begin{align}
&\l\bd_\fB \bT, \fB\r=-a^{-2} \pib_{\bN\bN}+\frac{\bb^{-1}t}{\rho}a^{-1}\l\bd_\bT \bT, \bN\r, \label{3.15.1} \displaybreak[0]\\
&\l \bd_\fB \bT, \Nb\r=\frac{\bb^{-2}t^2}{\rho^2}\l \bd_\bT \bT, \bN\r-\frac{\bb^{-1}t}{\rho} a^{-1} \pib_{\bN\bN}, \label{3.15.2} \displaybreak[0]\\
&\l \bd_\fB\bT, e_A\r= \frac{\bb^{-1}t}{\rho}\l \bd_\bT \bT, e_A\r-a^{-1} \pib_{\bN A}, \label{3.15.3} \displaybreak[0]\\
&\fB(\tir)=\frac{\tir}{\rho}+\bb^{-1}t \left(a^{-1} \pib_{\bN\bN}-\frac{\bb^{-1}t}{\rho}\l \bd_\bT \bT, \bN\r\right). \label{3.15.4}
\end{align}
\end{lemma}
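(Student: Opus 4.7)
The entire lemma is about computing components of $\bd_\fB\bT$ in the adapted null/orthonormal frame, and the derivative $\fB(\tir)$. My plan is to leverage the decomposition $\fB=\frac{\bb^{-1}t}{\rho}\bT+\frac{\tir}{\rho}\bN$ from Lemma \ref{frames}, together with the identity $a^{-1}=\tir/\rho$, so that $\bd_\fB\bT$ is expanded by $\bg$-linearity of the connection into a $\bT$-part and $\bN$-part:
\begin{equation*}
\bd_\fB\bT=\frac{\bb^{-1}t}{\rho}\bd_\bT\bT+\frac{\tir}{\rho}\bd_\bN\bT.
\end{equation*}
The only non-trivial inputs are (i) $\langle \bd_X\bT,\bT\rangle=0$ for every $X$ (since $|\bT|^2=-1$), and (ii) the definition $\pib(X,Z)=-\langle\bd_X\bT,Z\rangle$ for $X,Z\in\T\Sigma_t$, which gives $\langle\bd_\bN\bT,\bN\rangle=-\pib_{\bN\bN}$ and $\langle\bd_\bN\bT,e_A\rangle=-\pib_{\bN A}$.

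For the first three identities I would simply pair the expansion above against $\fB$, $\Nb$ and $e_A$, using the corresponding decompositions $\fB=\frac{\bb^{-1}t}{\rho}\bT+\frac{\tir}{\rho}\bN$ and $\Nb=\frac{\tir}{\rho}\bT+\frac{\bb^{-1}t}{\rho}\bN$ from (\ref{fbtn}). Expanding bilinearly and discarding the vanishing $\langle\bd_X\bT,\bT\rangle$ terms immediately yields (\ref{3.15.1})--(\ref{3.15.3}), once one rewrites $\tir/\rho$ as $a^{-1}$ and $\tir^2/\rho^2$ as $a^{-2}$. These are three routine frame computations.

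The fourth identity (\ref{3.15.4}) is the one requiring genuine input. From $\tir^2=(\bb^{-1}t)^2-\rho^2$ and $\fB(\rho)=-\langle\bd\rho,\bd\rho\rangle=1$, I get
\begin{equation*}
\tir\,\fB(\tir)=\bb^{-1}t\,\fB(\bb^{-1}t)-\rho,
\end{equation*}
so the task reduces to evaluating $\fB(\bb^{-1}t)$. Here I would exploit the fact that by definition of $\bb$ (equation (\ref{eq_1})) one has $\bb^{-1}t=-\rho\langle\fB,\bT\rangle$, then differentiate:
\begin{equation*}
\fB(\bb^{-1}t)=-\fB(\rho)\langle\fB,\bT\rangle-\rho\bigl(\langle\bd_\fB\fB,\bT\rangle+\langle\fB,\bd_\fB\bT\rangle\bigr).
\end{equation*}
Since $\fB$ is geodesic, $\bd_\fB\fB=0$, so the middle term drops out. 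Plugging in $\langle\fB,\bT\rangle=-\bb^{-1}t/\rho$ and substituting (\ref{3.15.1}) for $\langle\bd_\fB\bT,\fB\rangle$ produces a closed-form expression for $\fB(\bb^{-1}t)$. Re-inserting into $\tir\,\fB(\tir)=\bb^{-1}t\,\fB(\bb^{-1}t)-\rho$ and simplifying via $\tir^2/\rho=\rho\, a^{-2}$ and $\tir/\rho=a^{-1}$ gives (\ref{3.15.4}).

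The main point to watch is the self-consistency of the reduction: (\ref{3.15.4}) is proved by invoking (\ref{3.15.1}), so the order of proof must be (\ref{3.15.1}) $\Rightarrow$ (\ref{3.15.4}), with (\ref{3.15.2}) and (\ref{3.15.3}) treated independently in parallel. No further obstacle is expected, since the geodesic property of $\fB$ and the $\pib$-identification are the only structural facts used, all the rest being bookkeeping of the frame decompositions already established.
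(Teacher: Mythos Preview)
Your proposal is correct and follows essentially the same route as the paper: expand $\bd_\fB\bT$ via the decomposition $\fB=\frac{\bb^{-1}t}{\rho}\bT+a^{-1}\bN$ from (\ref{fbtn}), pair against $\fB$, $\Nb$, $e_A$ using $\l\bd_X\bT,\bT\r=0$ and the definition of $\pib$ to get (\ref{3.15.1})--(\ref{3.15.3}), then differentiate $\tir^2=(\bb^{-1}t)^2-\rho^2$, compute $\fB(\bb^{-1}t)$ from $\bb^{-1}t=-\rho\l\fB,\bT\r$ using $\bd_\fB\fB=0$, and substitute (\ref{3.15.1}) to obtain (\ref{3.15.4}). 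The dependency order you flag---(\ref{3.15.1}) before (\ref{3.15.4})---matches the paper exactly.
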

\begin{proof}
In view of (\ref{fbtn}), we have
\begin{equation*}
\l \bd_\fB \bT, \fB\r=a^{-1}\left\l a^{-1} \bd_\bN \bT+\frac{\bb^{-1}t}{\rho}\bd_\bT \bT, \bN \right\r
=-a^{-2}\pib_{\bN \bN}+\frac{\bb^{-1}t}{\rho} a^{-1} \l \bd_\bT \bT, \bN\r.
\end{equation*}
Similarly, by using (\ref{fbtn}) we can (\ref{3.15.2}) and (\ref{3.15.3}).

To obtain (\ref{3.15.4}), we may use $\tir^2 =(\bb^{-1}t)^2-\rho^2$, (\ref{eq_1}) and $\bd_\fB \fB=0$ to derive that
\begin{align*}
2 \tir \fB(\tir) &= \fB(\tir^2) = 2 \bb^{-1}t \fB(\bb^{-1}t)-2 \rho \fB(\rho)=-2\bb^{-1}t\fB(\rho\l \bT,\fB\r)-2\rho\\
&=- 2\rho+2 \bb^{-1}t \left(-\rho \l \bd_{\fB} \bT, \fB\r +\frac{\bb^{-1} t}{\rho}\right).
\end{align*}
This shows that
\begin{equation*}
\fB(\tir)=\frac{\bb^{-2} t^2 -\rho^2}{\rho\tir }-\frac{\bb^{-1}t \rho}{\tir} \l \bd_{\fB} \bT, \fB\r
= \frac{\tir}{\rho}-\frac{\bb^{-1}t \rho}{\tir} \l \bd_{\fB} \bT, \fB\r.
\end{equation*}
In view of (\ref{3.15.1}), we therefore obtain (\ref{3.15.4}).
\end{proof}

\begin{lemma}
There hold
\begin{align}
\Nb(\bb^{-1}t)&=\rho \left(a^{-1} k_{\Nb \Nb}+\frac{\bb^{-1} t\tir }{\rho^2}\pib_{\bN\bN}
 -\frac{\tir^2}{\rho^2}\l \bd_\bT \bT, \bN\r \right), \label{3.5.1} \displaybreak[0]\\
t\sn (\bb^{-1})&=\tir \left(k_{A\Nb}+\pib_{A\bN} \right),\label{3.5.3}\\
\sn(\tir)&=\bb^{-1} t \left(k_{A\Nb}+\pib_{A\bN}\right),\label{3.6.4} \displaybreak[0]\\
\Nb(\tir) &=\frac{\bb^{-1}t}{\rho}+\bb^{-1}t \left(\hk_{\Nb\Nb}+\frac{1}{3}\left(\emph{\tr}\, k-\frac{3}{\rho}\right)\right)
  +\frac{\bb^{-2} t^2}{\rho}\pib_{\bN \bN} \nn \\
& \quad \, -\frac{\bb^{-1} t\tir}{\rho}\l \bd_\bT \bT, \bN\r. \label{6.26.8}
\end{align}
\end{lemma}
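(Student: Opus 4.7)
The plan is to exploit two structural facts: the level-set identity $\Nb(\rho)=0$ and $e_A(\rho)=0$ (since $\Nb$ is tangent to $\H_\rho$ and $e_A$ is tangent to $S_{t,\rho}\subset\H_\rho$), together with the identity $\bb^{-1}t=-\rho\l\bT,\fB\r$ coming from (\ref{eq_1}) and the algebraic relation $\tir^2=(\bb^{-1}t)^2-\rho^2$. Then every unknown derivative can be peeled off using the orthogonal decompositions in Lemma \ref{frames} and the definitions of $k$ and $\pib$.

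For (\ref{3.5.1}), I first write $\Nb(\bb^{-1}t)=-\rho\,\Nb\l\bT,\fB\r=-\rho\bigl(\l\bd_\Nb\bT,\fB\r+\l\bT,\bd_\Nb\fB\r\bigr)$. In the first term I substitute $\fB=\frac{\bb^{-1}t}{\rho}\bT+\frac{\tir}{\rho}\bN$ and then $\Nb=\frac{\tir}{\rho}\bT+\frac{\bb^{-1}t}{\rho}\bN$ to express $\l\bd_\Nb\bT,\bN\r$ in terms of $\l\bd_\bT\bT,\bN\r$ and $\pib_{\bN\bN}$, using $\l\bd_X\bT,\bT\r=0$. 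In the second term I expand $\bT$ via $\rho\bT=\bb^{-1}t\,\fB-\tir\Nb$; then $\l\fB,\bd_\Nb\fB\r=\tfrac12\Nb\l\fB,\fB\r=0$ and $\l\Nb,\bd_\Nb\fB\r=k_{\Nb\Nb}$ by definition. Collecting all pieces (and using $a^{-1}=\tir/\rho$) gives (\ref{3.5.1}).

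For (\ref{3.5.3}), the same scheme applies with $\Nb$ replaced by $e_A$: since $e_A(t)=0$ one has $e_A(\bb^{-1}t)=t\sn_A(\bb^{-1})$, and $e_A(\rho)=0$ still lets me compute $e_A(\bb^{-1}t)=-\rho\bigl(\l\bd_A\bT,\fB\r+\l\bT,\bd_A\fB\r\bigr)$. Expanding $\fB$ in the first bracket gives $-\tfrac{\tir}{\rho}\pib_{A\bN}$, while expanding $\bT$ in the second bracket produces $-\tfrac{\tir}{\rho}k_{A\Nb}$ (using the symmetry of $k$ and $\l\fB,\bd_A\fB\r=0$). Summing yields (\ref{3.5.3}). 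Identity (\ref{3.6.4}) is then an immediate consequence of differentiating $\tir^2=(\bb^{-1}t)^2-\rho^2$ in the $e_A$-direction, invoking $e_A(\rho)=0$ and substituting (\ref{3.5.3}).

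Finally for (\ref{6.26.8}) I differentiate the same relation $\tir^2=(\bb^{-1}t)^2-\rho^2$ in the $\Nb$-direction: since $\Nb(\rho)=0$, $\Nb(\tir)=\tfrac{\bb^{-1}t}{\tir}\Nb(\bb^{-1}t)$, and I plug in (\ref{3.5.1}). The only cosmetic step left is to split $k_{\Nb\Nb}=\hk_{\Nb\Nb}+\tfrac13\tr k$ and rewrite $\tfrac13\tr k\cdot\bb^{-1}t=\tfrac{\bb^{-1}t}{\rho}+\tfrac{\bb^{-1}t}{3}(\tr k-\tfrac{3}{\rho})$ so that the Minkowskian principal value $\bb^{-1}t/\rho$ is isolated. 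The only real obstacle is bookkeeping the numerous decomposition substitutions without sign error; the computations themselves are essentially forced once one commits to decomposing every vector appearing inside $\l\bd_{\cdot}\bT,\cdot\r$ and $\l\bd_{\cdot}\fB,\cdot\r$ into the $\{\bT,\bN\}$ (resp.\ $\{\fB,\Nb\}$) basis and using the definitions of $k$, $\pib$, together with $\l\bd_X\bT,\bT\r=\l\bd_X\fB,\fB\r=0$.
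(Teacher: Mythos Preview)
Your proposal is correct and follows essentially the same approach as the paper: both use $\bb^{-1}t=-\rho\l\bT,\fB\r$, differentiate in the $\Nb$ and $e_A$ directions (exploiting $\Nb(\rho)=e_A(\rho)=0$), and then expand $\l\bd_{\cdot}\bT,\fB\r$ and $\l\bd_{\cdot}\fB,\bT\r$ via the frame decompositions in Lemma~\ref{frames} to land on $k$, $\pib$, and $\l\bd_\bT\bT,\bN\r$; the last two identities are obtained in both cases by differentiating $\tir^2=(\bb^{-1}t)^2-\rho^2$ and substituting the earlier ones.
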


\begin{proof}
By using (\ref{eq_1}), (\ref{fbtn}) and (\ref{dcp_3}) we have
\begin{align*}
\rho^{-1}\Nb(\bb^{-1}t)& = - \Nb(\l \fB, \bT\r) = -\l \bd_{\Nb} \fB, \bT\r - \l\bd_{\Nb} \bT, \fB\r\\
%=a^{-1} (k_{\Nb \Nb}-\pi^{(\bT)}_{\Nb \bN})\\
&=a^{-1} \left(k_{\Nb \Nb}+\frac{\bb^{-1}t}{\rho}\pib_{\bN\bN}-a^{-1}\l \bd_\bT \bT, \bN\r\right),\\
\rho^{-1} \sn_A(\bb^{-1}t)&=-\l \sn_A \fB, \bT\r -\l \bd_A\bT, \fB\r \\
& =-\l \bd_A \fB,-a^{-1} \Nb\r - \l \bd_A \bT, a^{-1} \bN\r\\
& = a^{-1}\left(k_{A\Nb}+\pib_{A\bN} \right).
\end{align*}
We therefore obtain (\ref{3.5.1}) and (\ref{3.5.3}).
By using $\tir^2 = \bb^{-2} t^2 -\rho^2$ we have
$$
\tir \sn\tir = \bb^{-1} t^2 \sn(\bb^{-1}) \quad \mbox{ and } \quad
\tir \Nb(\tir) = \bb^{-1} t \Nb(\bb^{-1} t).
$$
These two equations together with (\ref{3.5.3}) and (\ref{3.5.1})
show (\ref{3.6.4}) and (\ref{6.26.8}); for deriving (\ref{6.26.8}) we also used the fact
$k_{\Nb\Nb} = \hat k_{\Nb\Nb} + \frac{1}{3}\tr k$.
\end{proof}

\begin{lemma}\label{4.3.6}
Let $\ckk k=k-\frac{1}{\rho}\gb$. Then
\begin{align}
\bT(u)&=1+u \left(a^{-1} \ckk k_{\Nb\Nb}+\l \bd_\bT \bT, \bN\r\right), \label{4.3.4}\\
 \bN(u)&=-1+u \left(\l\bd_\bN\bT, \bN\r-\frac{\bb^{-1} t }{\rho}\ckk k_{\Nb \Nb}\right), \label{4.3.5}\\
\bN(\bb^{-1}) & =\frac{\tir}{t} \left(\frac{\bb^{-1}t}{\rho}\ckk k_{\Nb \Nb} +\pib_{\bN \bN}\right). \label{4.5.6}
\end{align}
\end{lemma}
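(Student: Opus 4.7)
The three identities are directional derivatives of $u=\bb^{-1}t-\tir$ and $\bb^{-1}$ along the $\Sigma_t$-frame $\{\bT,\bN\}$. The strategy is to convert them into derivatives along the already-understood hyperboloidal frame $\{\fB,\Nb\}$ and then apply the preceding lemmas. The bridge is the decomposition (\ref{dcp_3}), which reads $\rho\bT=\bb^{-1}t\fB-\tir\Nb$ and $\rho\bN=\bb^{-1}t\Nb-\tir\fB$, so that $\bT(u),\bN(u)$ are linear combinations of $\fB(\bb^{-1}t)$, $\Nb(\bb^{-1}t)$, $\fB(\tir)$ and $\Nb(\tir)$.

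Three of these four quantities have already been computed: $\Nb(\bb^{-1}t)$ is (\ref{3.5.1}), $\Nb(\tir)$ is (\ref{6.26.8}) and $\fB(\tir)$ is (\ref{3.15.4}). For the remaining piece $\fB(\bb^{-1}t)$, I would use $\bb^{-1}t=-\rho\l\fB,\bT\r$ from (\ref{eq_1}) together with the elementary facts $\fB(\rho)=1$ (immediate from $\fB=-\bd\rho$ and $\l\bd\rho,\bd\rho\r=-1$) and $\bd_\fB\fB=0$, obtaining $\fB(\bb^{-1}t)=-\l\fB,\bT\r-\rho\l\bd_\fB\bT,\fB\r$; the bracket on the right is then (\ref{3.15.1}). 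Substituting and recalling $a^{-1}=\tir/\rho$ gives $\fB(\bb^{-1}t)=\bb^{-1}t/\rho+\tir^2\pib_{\bN\bN}/\rho-\bb^{-1}t\tir\l\bd_\bT\bT,\bN\r/\rho$.

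Now I would assemble $\bT(u)$ and $\Nb(u)$ (and then $\bN(u)$) by plugging these four formulas into the decomposition of $\rho\bT$ and $\rho\bN$, and simplify using the algebraic identities $\tir^2=(\bb^{-1}t)^2-\rho^2$, $\ub=\bb^{-1}t+\tir$ and $u\ub=\rho^2$ from (\ref{2.21.1.16}). The crucial renormalization is that (\ref{6.26.8}) contains $\hk_{\Nb\Nb}+\tfrac{1}{3}(\tr k-3/\rho)=k_{\Nb\Nb}-1/\rho=\ckk k_{\Nb\Nb}$, which is precisely how $\ckk k$ enters; this absorbs the would-be singular $1/\rho$ term against the $-\bb^{-1}t/\rho$ coming from $\Nb(\bb^{-1}t)$, after which the remaining $\pib_{\bN\bN}$ and $\l\bd_\bT\bT,\bN\r$ factors combine through $\tir^2-\bb^{-2}t^2=-\rho^2$ into a clean multiple of $u$. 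The identity $\l\bd_\bN\bT,\bN\r=-\pib_{\bN\bN}$ from (\ref{6.6.2.16}) then rewrites (\ref{4.3.5}) in the stated form. For (\ref{4.5.6}), I would use that $\bN\in\T\Sigma_t$ annihilates $t$, so $\bN(\bb^{-1})=t^{-1}\bN(\bb^{-1}t)$, and compute $\bN(\bb^{-1}t)=(\bb^{-1}t/\rho)\Nb(\bb^{-1}t)-(\tir/\rho)\fB(\bb^{-1}t)$ from the $\bN$-decomposition; the same $\tir^2-\bb^{-2}t^2=-\rho^2$ cancellation produces the factor $\tir$, while the $-\bb^{-1}t\tir/\rho^2$ remainder combines with the $k_{\Nb\Nb}$ term to yield $\ckk k_{\Nb\Nb}$. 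The main obstacle is not conceptual but combinatorial: keeping the cancellations of $1/\rho$-singular terms bookkept correctly as the renormalization $k\mapsto\ckk k$ is introduced.
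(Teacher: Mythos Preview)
Your proposal is correct and uses the same ingredients as the paper (the frame relations of Lemma~\ref{frames}, the identity $\bb^{-1}t=-\rho\l\fB,\bT\r$, and the structure equations (\ref{3.15.1}), (\ref{3.15.4}), (\ref{3.5.1}), (\ref{6.26.8})), but the order of operations differs slightly. You route everything through the hyperboloidal pair $\{\fB,\Nb\}$: express $\bT(u)$, $\bN(u)$ as linear combinations of the four derivatives $\fB(\bb^{-1}t)$, $\fB(\tir)$, $\Nb(\bb^{-1}t)$, $\Nb(\tir)$ via (\ref{dcp_3}), and then substitute. The paper instead computes the $\bT$- and $\bN$-derivatives of $\bb^{-1}t$ \emph{directly} by differentiating $-\rho\l\fB,\bT\r$ along $\bT$ (respectively $\bN$), deduces $\bT(\tir)$ from $\tir^2=(\bb^{-1}t)^2-\rho^2$ by the chain rule, and for (\ref{4.3.5}) uses the inverse relation $\bN=a\bigl(\fB-\tfrac{\bb^{-1}t}{\rho}\bT\bigr)$ from (\ref{fbtn}) together with the just-proved (\ref{4.3.4}) and a fresh computation of $\fB(u)$. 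Your route is arguably more systematic since it reuses the previous lemma wholesale; the paper's route is a bit shorter for (\ref{4.3.4}) and (\ref{4.5.6}) because it avoids assembling four pieces and then recancelling. Either way the key algebraic steps---the $\ckk k$ renormalization absorbing the $1/\rho$, and the collapse via $(\bb^{-1}t)^2-\tir^2=\rho^2$, $u\ub=\rho^2$---are the same, and you have identified them correctly.
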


\begin{proof}
By using $\tir^2 = (\bb^{-1} t)^2 -\rho^2$ and (\ref{eq_1}) we obtain
$
\bT(\tir) = \frac{\bb^{-1} t}{\tir} \left(\bT(\bb^{-1} t) -1\right).
$
Thus, for $u = \bb^{-1} t - \tir$ we have
\begin{align} \label{2016.5.13}
\bT(u) = \left(1-\frac{\bb^{-1} t}{\tir}\right) \bT(\bb^{-1} t) + \frac{\bb^{-1} t}{\tir}.
\end{align}
In view of (\ref{eq_1}) and Lemma \ref{frames} we can derive that
\begin{align*}
\bT(\bb^{-1} t)&=- \bT(\rho \l\fB, \bT\r)=-\bT(\rho) \l \fB, \bT\r - \rho \left(\l \bd_\bT \fB, \bT\r + \l \fB, \bd_\bT \bT\r \right) \\
&=\frac{(\bb^{-1} t)^2}{\rho^2}-\left(\frac{\tir^2}{\rho} k_{\Nb \Nb}+a^{-1}\rho\l\bd_\bT \bT, \bN\r\right)\\
&=1-\rho\left(a^{-2} \ckk k_{\Nb\Nb}+a^{-1} \l \bd_\bT \bT, \bN\r\right).
\end{align*}
Plugging this equation into (\ref{2016.5.13}) shows (\ref{4.3.4}).

To see (\ref{4.3.5}), from (\ref{fbtn}) we note that
\begin{equation}\label{2016.5.13.2}
\bN(u)= a \left(\fB(u)-\frac{\bb^{-1} t}{\rho} \bT(u)\right).
\end{equation}
In view of (\ref{eq_1}), (\ref{3.15.1}) and (\ref{3.15.4}) we have
\begin{align*}
\fB(u)&=\fB(\bb^{-1}t)- \fB(\tir)= -\l \fB, \bT\r - \rho \l \fB, \bd_\fB \bT\r - \fB(\tir) \\
& = \frac{u}{\rho}- u a^{-1} \pib_{\bN\bN} + \frac{\bb^{-1} t u}{\rho} \l \bd_\bT \bT, \bN\r.
\end{align*}
Combining this and (\ref{4.3.4}) with (\ref{2016.5.13.2}), we obtain (\ref{4.3.5}).

Finally, noting that $\bN(\rho) = -|\nabla \rho|=-a^{-1}$, it follows from (\ref{eq_1}) and
Lemma \ref{frames} that
\begin{align*}
 \bN(\bb^{-1} t) & = \bN(-\rho \l \fB, \bT\r)= a^{-1} \l \fB, \bT\r -\rho\l \bd_\bN \fB, \bT\r-\rho\l \fB, \bd_\bN \bT\r\\
& = -\frac{\bb^{-1} t}{\rho} a^{-1} + \frac{\bb^{-1} t}{\rho} \tir k_{\Nb\Nb} + \tir \pib_{\bN\bN}\\
& = \tir\left(\frac{\bb^{-1}t}{\rho}\ckk k_{\Nb\Nb}+\pib_{\bN \bN}\right)
\end{align*}
which together with the fact $\bN(\bb^{-1} t) = t \bN(\bb^{-1})$ shows (\ref{4.5.6}).
\end{proof}

For future reference, we define
\begin{equation*} %\label{3.25.1.16}
\theta_{AC}:=\l\sn_A \bN, e_C\r, \quad \underline{\theta}_{AC}:=\l \sn_A \Nb, e_C\r
\end{equation*}
We use $\tr \underline{\theta}:= \ga^{AC} \underline{\theta}_{AC}$ and $ \underline{\hat \theta}
:= \underline{\theta}-\frac{1}{2} \tr \underline{\theta} \ga$ to denote the trace and traceless part of $\ud\theta$ respectively.
Similarly we use $\tr \theta$ and $\hat \theta$ to denote the trace and traceless part of $\theta$ respectively.

\begin{lemma}\label{nbb}
There hold
\begin{align}
&\l \nabb_\Nb \Nb, e_A\r = -\sn_A \log a-\sn_A\log n, \label{nabba}\\
&\l \nab_\bN \bN, e_A\r=-\sn_A \log a, \label{nabn}\\
&\sn \log a=-\frac{\bb^{-1}t}{\tir} \left(\pib_{A\bN}+k_{A\Nb}\right),\label{3.6.7}\\
&\underline{\theta}_{AC} = \frac{\bb^{-1}t}{\tir} k_{AC}+\frac{\rho}{\tir}\pib_{AC},\label{3.8.1} \\
& \tr \ud \theta  =-\frac{\bb^{-1}t}{\tir}\ud \delta+\frac{\rho}{\tir}\delta'+\frac{2}{3}\tr k\frac{\bb^{-1}t}{\tir}, \label{2.3.3.16}\\
&\hat {\ud\theta}_{AC}=\frac{\bb^{-1}t}{\tir}(\hk_{AC}+\f12\ud\delta \ga_{AC})+\frac{\rho}{\tir}(\pib_{AC}-\f12 \ga_{AC}\delta'),\label{2.3.4.16}
\end{align}
where $\ud\delta=\hk_{\Nb\Nb}$ and $\delta'=-\pib_{\bN\bN}$.
\end{lemma}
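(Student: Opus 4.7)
The plan is to derive all six identities by purely algebraic manipulations built on Lemma \ref{frames}, the already-established identity \eqref{3.6.4}, and the fact that $\bN$ and $\Nb$ are (up to sign) normalized gradients.

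First I would dispose of \eqref{nabn} and \eqref{nabba} via a general observation: if $V=\eps\,\nabla f/|\nabla f|_g$ is a unit vector and $X$ is tangent to the level set $\{f=\text{const}\}$, then commuting covariant derivatives on the scalar $f$ gives $\l\nab_V V,X\r = X(\log|\nabla f|_g)$. Applied with $f=\rho$, $V=\bN$, using $|\nabla\rho|_g = a^{-1}$, this yields \eqref{nabn}. Applied on $(\H_\rho,\gb)$ with $f=t$, $V=\Nb$, using \eqref{nabt} (that is $|\nabb t|_\gb = (an)^{-1}$), this yields \eqref{nabba}. For \eqref{3.6.7} I would observe that on $S_{t,\rho}$ the function $\rho$ is constant, so $a^{-1}=\tir/\rho$ implies $\sn\log a = -\sn\log\tir$, and then substitute \eqref{3.6.4} directly.

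For the second fundamental form identities the key is to expand everything in the $\{\bT,\bN,e_A\}$ frame using the decompositions \eqref{fbtn}. Since $\bT,\bN\perp e_C$, the tangential derivatives of the coefficients drop out, and I get the two linear relations
\[
k_{AC}=-\tfrac{\bb^{-1}t}{\rho}\pib_{AC}+\tfrac{\tir}{\rho}\theta_{AC},\qquad
\ud\theta_{AC}=-\tfrac{\tir}{\rho}\pib_{AC}+\tfrac{\bb^{-1}t}{\rho}\theta_{AC},
\]
where I have used $\l \bd_A\bT,e_C\r = -\pib_{AC}$ (by \eqref{6.6.2.16}) and $\l \bd_A\bN,e_C\r=\theta_{AC}$. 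Solving this $2\times 2$ system for $\ud\theta_{AC}$, the identity $(\bb^{-1}t)^2-\tir^2=\rho^2$ produces exactly the cancellation yielding \eqref{3.8.1}.

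Finally, for \eqref{2.3.3.16} and \eqref{2.3.4.16} I take traces/traceless parts of \eqref{3.8.1} using $\ga^{AC}$. The sphere trace of $k$ is $\tr_\ga k = \tr k - k_{\Nb\Nb} = \tfrac{2}{3}\tr k - \ud\delta$, using $k_{\Nb\Nb}=\hk_{\Nb\Nb}+\tfrac13\tr k$, while the sphere trace of $\pib$ is $\tr_\ga\pib = -\pib_{\bN\bN}=\delta'$ by the maximal gauge condition $\Tr\pib=0$. Substituting these into the trace of \eqref{3.8.1} gives \eqref{2.3.3.16} immediately; subtracting $\tfrac12(\tr\ud\theta)\ga_{AC}$ and using the same identities to rewrite $k_{AC}-\tfrac12\tr_\ga k\,\ga_{AC}=\hk_{AC}+\tfrac12\ud\delta\,\ga_{AC}$ and $\pib_{AC}-\tfrac12\tr_\ga\pib\,\ga_{AC}=\pib_{AC}-\tfrac12\delta'\,\ga_{AC}$ yields \eqref{2.3.4.16}. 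There is no genuine obstacle; the only delicate point is keeping the signs and the $\bb^{-1}t$ versus $\tir$ weights straight while inverting the linear system, and correctly invoking the maximal foliation gauge for the $\tr_\ga\pib$ computation.
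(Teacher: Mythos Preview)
Your proof is correct and follows essentially the same approach as the paper. The only minor difference is in \eqref{3.8.1}: you set up a $2\times 2$ system from the decompositions \eqref{fbtn} and invert it, whereas the paper uses the first identity in \eqref{dcp_3} directly to write $\Nb=\tir^{-1}(\bb^{-1}t\,\fB-\rho\,\bT)$ and reads off $\ud\theta_{AC}$ in one line; both are equally valid and your gradient argument for \eqref{nabba}--\eqref{nabn} is a clean alternative to the paper's commutator computation $[e_A,\Nb](t)$.
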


\begin{proof}
(\ref{3.6.7}) follows from (\ref{3.6.4}) and $a^{-1} = \tir/\rho$, (\ref{3.8.1}) can be derived by using the first
equation in (\ref{dcp_3}), and (\ref{2.3.3.16}), (\ref{2.3.4.16}) are direct consequences of (\ref{3.8.1}) and $\bg^{\bi\bj} \pib_{\bi\bj}=0$ in (\ref{5.13.3.16}).
In view of (\ref{nabt}), we have $\Nb(t)=(an)^{-1}$. Thus, by using $e_A(t)=0$ and (\ref{nb}) we have
\begin{align*}
\sn_A((an)^{-1})&=[e_A, \Nb](t)= \l [e_A, \Nb], \nabb t\r =(an)^{-1}\l \sn_A \Nb-\nabb_{\Nb} e_A, \Nb\r\\
& = -(an)^{-1} \l \nabb_{\Nb} e_A, \Nb\r = (an)^{-1}\l \nabb_{\Nb} \Nb, e_A\r
\end{align*}
which gives (\ref{nabba}). (\ref{nabn}) can be similarly proved.
\end{proof}
%For three $\H_{\rho}$ tangent symmetric 2-tensor $F, V, G$, we define a symmetric $2$-tensor,
%\begin{equation}
%(F*V*G)_{ij}=V^{lc}(F_{il}G_{c j}+F_{jl} G_{ci})
%\end{equation}

\subsection{Structure equations}\label{5.10.1.16}

For $\H_\rho$-tangent symmetric  2-tensors $F_{ij}$ and $G_{ij}$, we set
\begin{align}
(F*G)_{ij} &=F_i^l G_{lj}+F_{jl}G^l_i, \label{3.14.2}\\
(F\hot F)_{ij}& =\f12(F*F)_{ij}-\frac{1}{3} |F|^2_{\gb} \gb_{ij} \label{hotd}
\end{align}
which define two $\H_\rho$-tangent symmetric 2-tensors.
%where $|F|^2_{\gb}=\f12 \gb^{ij} (F* F)_{ij}$.
%It is straightforward to have
%\begin{align}
%&\Lie_\sR( F^{ij})=\Lie_\sR F^{ij}-(\pr* F)^{ij}\label{3.14.3}
%&\Lie_\sR (F*G)_{ij}=(\Lie_\sR F*G)_{ij}+(F*\Lie_\sR G)_{ij}-(F*\pr*G)_{ij}\label{3.14.6}
%\end{align}
%
%\begin{footnote}{
%We define for a one-form $F$ that  $\sl{\bd}_\fB F_i=e_i^\nu\bd_\fB (F_\mu \Pi^\mu_\nu) $ with $\{e_i\}$ the orthonormal
%frame on $\H_\rho$. We observe that if $F$ is $\H_\rho$ tangent, $\sl{\bd}_\fB F_i=\bd_\fB F_i$, due to $\bd_\fB \fB=0$.
%This observation holds true for any $\H_\rho$ tangent tensor field $F$.  }
%\end{footnote}
We now derive the following structure equations.

\begin{proposition}
\begin{align}
&\left(\fB+\frac{n^{-1}\bb^{-1}}{\rho}\right)(n-\bb^{-1})=-\frac{\tir}{t} a^{-1}\pib_{\bN\bN}
  +\frac{\tir\bb^{-1}}{\rho}\l \bd_\bT \bT, N\r+\p_\rho n,  \label{Bb1}\\
&\fB \left(\log\frac{t}{\tau}\right)=\frac{\bb^{-1}n^{-1}-1}{\rho}, \label{ctt}\\
%&\fB\log \frac{\tir}{\varsigma}=-\frac{\bb^{-1}t}{\rho}\l \bd_\bN \bT, \bN\r-\frac{(\bb^{-1}t)^2}{\rho \tir} \l \bd_\bT \bT,\bN\r\label{4.7.1}\\
%&\fB\log \frac{t}{\rho}=\frac{-1+\bb^{-1}n^{-1}}{\rho}\label{4.7.2}\\
&\fB ( \emph{\tr}\, k)+\frac{1}{3}(\emph{\tr}\, k)^2=-\bR_{\fB\fB}- \hat k\c \hat k, \label{s1}\displaybreak[0]\\
&\fB\left(\emph{\tr}\, k-\frac{3}{\rho}\right)+\frac{2}{\rho} \left(\emph{\tr}\, k-\frac{3}{\rho}\right)
  =-\frac{1}{3} \left(\emph{\tr}\, k-\frac{3}{\rho}\right)^2-\bR_{\fB\fB}-|\hk|^2, \label{3.14.1}\\
&\bd_\fB {\hat k}_{ij}+\frac{2}{3} \emph{\tr}\, k \, {\hat k}_{ij}=-\widehat\bR_{\fB i \fB j}-\hat k\hot \hat k,\label{s1.1}\\
&\fB (\emph{\tr} \pr) =2\sR \left( \emph{\tr}\, k-\frac{3}{\rho}\right), \label{btrpr}\\
&\bd_\fB \phr_{ij}+({\hk}*\phr)_{ij}=2\Lie_\sR\hk_{ij}-\frac{2}{3}\emph{\tr}\,\pr \hk_{ij}, \label{bpr0}
\end{align}
where $\hat \bR_{\fB i\fB j}=\bR_{\fB i \fB j}-\frac{1}{3} \bR_{\fB \fB} \gb_{ij}$.
\end{proposition}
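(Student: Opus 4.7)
The seven identities split into three groups, which I would address in order of increasing algebraic complexity.

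For the Raychaudhuri-type equations (\ref{s1}), (\ref{3.14.1}), and (\ref{s1.1}), the starting point is the Riccati equation for the geodesic congruence $\fB=-\bd\rho$. Since $\bd_\fB\fB=0$ and $k_{ij}=-\bd_i\bd_j\rho$ (from $\fB_\mu=-\bd_\mu\rho$ and the symmetry of the Hessian), commuting derivatives via the Ricci identity applied to $\bd\rho$ yields
$$\bd_\fB k_{ij}+k_i{}^l k_{lj}=-\bR_{\fB i\fB j}.$$
Tracing with $\gb^{ij}$ in a frame parallel-propagated along $\fB$ and using $|k|^2_{\gb}=\frac{1}{3}(\tr k)^2+|\hk|^2$ gives (\ref{s1}). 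Equation (\ref{3.14.1}) then follows algebraically from (\ref{s1}) by writing $\tr k=3/\rho+(\tr k-3/\rho)$ and using $\fB(\rho)=1$. For (\ref{s1.1}) I split $k=\frac{1}{3}(\tr k)\gb+\hk$ inside the Riccati equation; the cross term contributes $\frac{2}{3}\tr k\,\hk_{ij}$, while the $\hk_i{}^l\hk_{lj}$ piece produces $(\hk\hot\hk)_{ij}$ by the definition (\ref{hotd}) after removing the $\frac{1}{3}|\hk|^2\gb_{ij}$ trace, which combines with $\bR_{\fB i\fB j}-\frac{1}{3}\bR_{\fB\fB}\gb_{ij}=\widehat\bR_{\fB i\fB j}$ to give the stated form.

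For the deformation-tensor evolutions (\ref{btrpr}) and (\ref{bpr0}), the crucial input is the commutation relation $[\fB,\sR]=0$ from Lemma \ref{4.1.1.16}. Combined with $\pr=\Lie_\sR\bg$ this gives
$$\Lie_\fB\pr=\Lie_\fB\Lie_\sR\bg=\Lie_\sR\Lie_\fB\bg.$$
On $\H_\rho$-tangent slots $\Lie_\fB\bg=2k$, and by Proposition \ref{2.20.18.16} the tensor $\pr$ is itself $\H_\rho$-tangent, so the identity restricts to $(\Lie_\fB\pr)_{ij}=2(\Lie_\sR k)_{ij}$. For (\ref{btrpr}) I take the $\gb^{ij}$-trace, using $\Lie_\fB\gb^{ij}=-2k^{ij}$ and $\Lie_\sR\gb^{ij}=-\pr^{ij}$, so that the cross terms $k^{ij}\pr_{ij}$ cancel, leaving $\fB(\tr\pr)=2\sR(\tr k)=2\sR(\tr k-3/\rho)$ by $\sR(\rho)=0$. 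For (\ref{bpr0}) I decompose $\pr=\frac{1}{3}(\tr\pr)\gb+\phr$ and $k=\frac{1}{3}(\tr k)\gb+\hk$ in $(\Lie_\fB\pr)_{ij}=2(\Lie_\sR k)_{ij}$; (\ref{btrpr}) cancels the $\gb_{ij}$-terms, and converting the Lie derivative to the covariant derivative through $(\Lie_\fB\phr)_{ij}=(\bd_\fB\phr)_{ij}+\phr_{lj}k_i{}^l+\phr_{il}k_j{}^l=(\bd_\fB\phr)_{ij}+\frac{2}{3}\tr k\,\phr_{ij}+(\hk*\phr)_{ij}$ produces (\ref{bpr0}) once the $\frac{2}{3}\tr k\,\phr$ contributions cancel between the two sides.

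For the lapse equations (\ref{Bb1}) and (\ref{ctt}), the latter is immediate: Lemma \ref{4.1.1.16} gives $\fB(\tau)/\tau=1/\rho$, (\ref{bt}) gives $\fB(t)/t=\bb^{-1}n^{-1}/\rho$, and subtraction yields (\ref{ctt}). For (\ref{Bb1}) I differentiate the defining relation $\bb^{-1}t/\rho=-\l\fB,\bT\r$ along $\fB$, using $\bd_\fB\fB=0$, identity (\ref{3.15.1}) for $\l\bd_\fB\bT,\fB\r$, and $\fB(t)=\bb^{-1}n^{-1}t/\rho$, to obtain
$$\fB(\bb^{-1})=\frac{n^{-1}\bb^{-1}}{\rho}(n-\bb^{-1})+\frac{\rho}{t}a^{-2}\pib_{\bN\bN}-\bb^{-1}a^{-1}\l\bd_\bT\bT,\bN\r.$$
Substituting into the left-hand side of (\ref{Bb1}) cancels the $(n^{-1}\bb^{-1}/\rho)(n-\bb^{-1})$ damping term, and $a^{-1}=\tir/\rho$ matches the two curvature-type contributions to the first two right-hand-side summands; the residual identity reduces to the kinematic relation $\fB(n)=\p_\rho n$ in the $(t,\rho,\omega)$ coordinate system adapted to both the $\Sigma_t$- and $\H_\rho$-foliations, which is read off from the decomposition $\fB=\frac{\bb^{-1}t}{\rho}\bT+\frac{\tir}{\rho}\bN$ together with $\bN=-a^{-1}\p_\rho$.

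The main obstacle is the bookkeeping in (\ref{Bb1}): one must precisely identify how $\p_\rho n$ emerges from the splitting of $\fB$ into its $\bT$- and $\bN$-components and ensure that the $\pib_{\bN\bN}$- and $\l\bd_\bT\bT,\bN\r$-terms line up cleanly. For (\ref{btrpr})--(\ref{bpr0}), the delicate point is the consistent use of Proposition \ref{2.20.18.16} to project out the $\fB$-components of $\pr$, combined with careful separation of the spacetime and induced traces when passing between $\Lie_\fB$ and $\bd_\fB$.
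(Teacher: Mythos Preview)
Your proof is correct and follows essentially the same strategy as the paper: the Riccati identity $\bd_\fB k_{ij}=-\bR_{\fB i\fB j}-k_i{}^lk_{lj}$ for (\ref{s1})--(\ref{s1.1}), the commutation $[\fB,\sR]=0$ feeding into $\Lie_\fB\pr=2\Lie_\sR k$ for (\ref{btrpr})--(\ref{bpr0}), and differentiation of $\bb^{-1}t/\rho=-\l\fB,\bT\r$ combined with (\ref{3.15.1}) for (\ref{Bb1}). Two minor remarks. First, your justification of $\fB(n)=\p_\rho n$ via ``the $(t,\rho,\omega)$ coordinate system \ldots\ together with $\bN=-a^{-1}\p_\rho$'' conflates two different $\p_\rho$'s: in $(t,\rho,\omega)$ coordinates the vector field $\p_\rho$ (with $t,\omega$ fixed) is $\Sigma_t$-tangent and equals $-a\bN$, not $\fB$. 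In the paper's usage (cf.\ (\ref{2016.5.10}) and the computation (\ref{3.21.10})), $\p_\rho$ in (\ref{Bb1}) denotes differentiation along the timelike geodesic with $V\in\mathbb{H}_1$ fixed, so $\fB(n)=\p_\rho n$ is a tautology and needs no separate argument. Second, for the Riccati identity you appeal to the Ricci identity applied to the Hessian of $\rho$, whereas the paper derives it by computing $\fB(k(\sR_i,\sR_j))$ directly in the boost frame using $[\fB,\sR_i]=0$ and $\bd_\fB\fB=0$; the two routes are equivalent.
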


\begin{proof}
By using (\ref{bt}) we have
\begin{equation*} %\label{3.6.1}
\fB \left(\frac{\rho}{t}\right)=\frac{1}{t}-\frac{\rho}{t^2}\fB(t)=\frac{1}{t} \left(1-\bb^{-1}n^{-1}\right).
\end{equation*}
In view of (\ref{eq_1}) we then obtain
\begin{align*}
\fB \left(-\bb^{-1}\right)&= \fB\left(\frac{\rho}{t} \l \fB, \bT\r\right)
  = \frac{\rho}{t}\l \fB, \bd_\fB \bT\r+\fB\left(\frac{\rho}{t}\right)\l \fB, \bT\r\nn\\
&=\frac{\rho}{t}\l \fB, \bd_\fB \bT\r-\frac{\bb^{-1}}{\rho}(1-\bb^{-1}n^{-1}) %\label{3.6.2}
\end{align*}
which together with (\ref{3.15.1}) then implies (\ref{Bb1}).
(\ref{ctt}) is an immediate consequence of (\ref{bt}) and  $\fB(\tau)=\frac{\tau}{\rho}$ in (\ref{lse3}).
 %(\ref{4.7.1}) can be obtained by using (\ref{3.15.4}), (2) in Lemma \ref{4.1.1.16} and (\ref{fbtn}).
 %\begin{equation*}
%\frac{d}{d\rho}(\frac{t}{\tau})=\frac{t}{\rho\tau}(\bb^{-1}n^{-1}-1)
%\end{equation*}

Now we consider (\ref{btrpr}) and (\ref{bpr0}).  Note that
\begin{equation*}
\Lie_\fB \Lie_\sR \gb_{ij}=\Lie_\sR \Lie_\fB \gb_{ij}, \quad \Lie_\sR \gb_{ij}=\pr_{ij}, \quad \Lie_\fB \gb_{ij}=2k_{ij}
\end{equation*}
we can obtain $\Lie_\fB \pr_{ij}=2\Lie_\sR k_{ij}$. Hence
\begin{equation}\label{3.14.8}
\bd_\fB \pr_{ij}+k_i^c \pr_{cj}+k_j^c \pr_{ic}=2\Lie_\sR k_{ij}.
\end{equation}
By taking trace of (\ref{3.14.8}) and using $\Lie_\sR \gb^{ij}=-\left(\pr\right)^{ij}$,
we get
\begin{equation*}
\bd_\fB \left(\tr\pr\right)+2k^{ic} \pr_{ic}=2\Lie_\sR \tr k+2\pr_{ic} k^{ic}.
\end{equation*}
This shows $\bd_\fB \left(\tr\pr\right) =2\Lie_\sR \tr k$ which gives (\ref{btrpr}).
Using (\ref{3.14.8}) and (\ref{btrpr}), we have
\begin{align}
\bd_\fB \phr_{ij}+k_i^c \pr_{cj}+k_j^c \pr_{ic}
&=2\Lie_\sR \left(\hk_{ij}+\frac{1}{3}\tr k \gb_{ij}\right)-\frac{2}{3}\Lie_\sR\tr k\gb_{ij}\nn\\
&=2 \Lie_\sR \hk_{ij}+\frac{2}{3}\tr k \c \pr_{ij}\nn
\end{align}
which implies (\ref{bpr0}).

To obtain (\ref{s1})--(\ref{s1.1}), we use the identity
\begin{equation}\label{4.4.1.16}
\bd_\fB k_{ij}=-\bR_{\fB i\fB j} -k_i^l k_{lj}.
\end{equation}
By taking the trace and traceless part of this identity we obtain (\ref{s1}) and (\ref{s1.1}).
(\ref{3.14.1}) is a direct consequence of (\ref{s1}).

Finally we show (\ref{4.4.1.16}). By using the boost vector field $\{\sR_i\}_{i=1}^3$ defined
in (\ref{boost}), we first note that,  in view of  (\ref{lse3})
\begin{align}
\bd_\fB k(\sR_i, \sR_j)&=\fB k(\sR_i, \sR_j)-k(\bd_\fB \sR_i, \sR_j)-k(\sR_i, \bd_\fB \sR_j)\nn\\
&=\fB k(\sR_i, \sR_j)-k(\bd_{\sR_i} \fB, \sR_j)-k(\sR_i, \bd_{\sR_j} \fB)\nn\\
&=\fB k(\sR_i, \sR_j)-2 k_{\sR_i}^l k_{l \sR_j}. \label{4.4.2.16}
\end{align}
For the first term, by using $\bd_\fB \fB=0$ and (\ref{lse3}) again, we have
\begin{align}
\fB \left(k(\sR_i, \sR_j)\right)
&= \fB\left(\l \bd_{\sR_i} \fB, \sR_i\r\right) =\l \bd_\fB \bd_{\sR_i}\fB, \sR_j\r+\l \bd_{\sR_i} \fB, \bd_\fB \sR_j\r\nn\\
&=\l\bd_{\sR_i}( \bd_\fB \fB)+\bR(\fB, \sR_i) \fB+\bd_{[\fB, \sR_i]}\fB, \sR_j\r+\l \bd_{\sR_i}\fB, \bd_\fB \sR_j\r\nn\\
&=\l\bR(\fB, \sR_i)\fB, \sR_j\r +\l \bd_{\sR_i}\fB, \bd_{\sR_j}\fB\r\nn\\
&=\bR_{\sR_j\fB \fB \sR_i} +k_{\sR_i}^l k_{l \sR_j}\label{4.4.3.16}
\end{align}
By combining (\ref{4.4.2.16}) with (\ref{4.4.3.16}), and using the fact that $\{\sR_i\}_{i=1}^3$ forms a frame
on $\T \H_\rho$, we can obtain (\ref{4.4.1.16}).
\end{proof}

\subsection{Radial decompositions on hyperboloids}\label{radiald}

Let $\{e_A\}$ be an orthonormal frame on $S_{t, \rho}$. We define $\zb_A=\l \bd_\fB\Nb, e_A\r$.
In view of (\ref{dcp_3}), we have
\begin{align*}
\l \bd_\fB \Nb, e_A\r = -\frac{\rho}{\tir}\l \bd_\fB \bT, e_A\r
\end{align*}
which together with (\ref{3.15.3}) shows that
\begin{equation}\label{zba}
\zb_A=-\left(\frac{\bb^{-1}t}{\tir}\l\bd_\bT \bT, e_A\r+\l \bd_\bN \bT, e_A\r\right).
\end{equation}
Note that $\l\bd_\fB \Nb, \fB\r=\l\bd_\fB \Nb, \Nb\r=0$,  we then have
\begin{equation}\label{3.14.10}
\bd_\fB \Nb^\mu=\zb^A e_A^\mu.
\end{equation}
%Then we can derive
%\begin{equation}
%\bd_\fB \sl{\Pi}^{ij}=-(\Nb^j\bd_\fB\Nb^i +\Nb^i\bd_\fB\Nb^j)=-\zb^A (e_A^i \Nb^j+e_A^j \Nb^i).
%\end{equation}

Let us define the projection tensor from spacetime to $S_{t,\rho}$,
$$\sl{\Pi}^{\mu\nu}=\bg^{\mu\nu}+\fB^\mu\fB^\nu-\Nb^\mu \Nb^\nu. $$
By the definition of $\gb$, we have
$\sl{\Pi}^{ij}=\gb^{ij}-\Nb^i\Nb^j$.
For any spacetime one form $F^\mu$, we set
\begin{equation*}
\sn_\fB F_A:=e_A^\nu \bd_\fB (F^{\mu} \sl{\Pi}_{\mu\nu}).
\end{equation*}
This definition can be similarly extended to  $\H_\rho$-tangent symmetric two tensor $F_{ij}$.
For $F_{ij}$, we further define  a 1-form and a scalar function by
$$
F_{\Nb j}=F_{ij}\Nb^i,\quad F_{\Nb \Nb}=F_{ij}\Nb^i \Nb^j.
$$

\begin{lemma}\label{rad.1}
\begin{align}
&\sn_\fB F_{AC}=(\bd_\fB F)_{AC}-F_{\Nb C} \zb_A-F_{\Nb A } \zb_C\label{3.14.11}\\
&\sn_\fB F_{\Nb C}=(\bd_\fB F)_{\Nb C}+F_C^A \zb_A-F_{\Nb \Nb} \zb_C\label{3.14.12}\\
&\p_\rho (F_{\Nb\Nb})=(\bd_\fB F)_{\Nb\Nb}+2 F^A_\Nb\zb_A\label{3.14.13}
\end{align}
\end{lemma}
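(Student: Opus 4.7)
The plan is to derive all three identities from the Leibniz rule applied to $\bd_\fB$, using two crucial facts that are already established: $\bd_\fB\fB=0$ (since $\fB$ is geodesic) and $\bd_\fB\Nb^\mu=\zb^A e_A^\mu$ from (\ref{3.14.10}). These combine to give the key preliminary identity
\begin{equation*}
\bd_\fB\sl{\Pi}_{\mu\nu}=-\zb^A\bigl((e_A)_\mu\Nb_\nu+\Nb_\mu(e_A)_\nu\bigr),
\end{equation*}
obtained by differentiating $\sl{\Pi}_{\mu\nu}=\bg_{\mu\nu}+\fB_\mu\fB_\nu-\Nb_\mu\Nb_\nu$ and using $\bd\bg=0$ together with the two facts above.

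For (\ref{3.14.13}), I would simply note that $F_{\Nb\Nb}=F_{ij}\Nb^i\Nb^j$ is a scalar and that $\p_\rho=\fB$, so the Leibniz rule gives $\fB(F_{\Nb\Nb})=(\bd_\fB F)_{\Nb\Nb}+2F_{\Nb}^{\ l}(\bd_\fB\Nb)_l$, and substituting $\bd_\fB\Nb^l=\zb^A e_A^l$ yields the claimed formula. This is the easiest of the three and serves as a warmup.

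For (\ref{3.14.11}), the plan is to interpret $\sn_\fB F_{AC}$ as $e_A^{\nu_1}e_C^{\nu_2}\bd_\fB\bigl(F^{\mu_1\mu_2}\sl{\Pi}_{\mu_1\nu_1}\sl{\Pi}_{\mu_2\nu_2}\bigr)$, the natural extension to 2-tensors of the 1-form definition given just above the lemma. Applying Leibniz produces three kinds of terms. The term where $\bd_\fB$ hits $F$ contributes $(\bd_\fB F)_{AC}$ directly, since $e_A,e_C$ are $S_{t,\rho}$-tangent so $\sl{\Pi}$ acts trivially on them. The two terms where $\bd_\fB$ hits a $\sl{\Pi}$ factor are computed using the preliminary identity; the piece proportional to $(e_B)_\mu\Nb_\nu$ dies after contracting with $e_A^\nu$ (orthogonality $e_A\perp\Nb$), while the piece proportional to $\Nb_\mu(e_B)_\nu$ contracts to $-\zb_A F_{\Nb C}$ (and symmetrically $-\zb_C F_{\Nb A}$ from the other slot). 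For (\ref{3.14.12}), the same scheme is applied after first fixing one slot of $F$ with $\Nb$, i.e.\ treating $G_\nu:=F_{\mu\nu}\Nb^\mu$ as a 1-form and projecting it; the additional Leibniz term here, coming from $\bd_\fB\Nb$ acting on the fixed slot, produces $+F_C^{\ A}\zb_A$, while the $\bd_\fB\sl{\Pi}$ term now yields $-F_{\Nb\Nb}\zb_C$ instead of a symmetric pair.

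The only real care required is index bookkeeping, in particular keeping track of which contractions vanish due to the orthogonality relations $e_A\perp\Nb,\fB$ and which survive to produce $\zb_A$ factors. There is no analytic obstacle; the whole proof is a bookkeeping exercise organized around the single computation of $\bd_\fB\sl{\Pi}$.
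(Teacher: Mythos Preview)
Your proposal is correct and takes essentially the same approach as the paper: both apply the Leibniz rule to the projected tensor and use $\bd_\fB\fB=0$ together with (\ref{3.14.10}) to reduce $\bd_\fB\sl{\Pi}$ to the $\zb$-terms. The only cosmetic difference is that you isolate $\bd_\fB\sl{\Pi}_{\mu\nu}$ as a preliminary identity, whereas the paper differentiates $\sl{\Pi}$ inline within each computation.
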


\begin{proof}
We have
\begin{align}
\sn_\fB F_{AC}&=e_A^i e_C^j \bd_\fB(F_{i'j'} \sl{\Pi}^{i'}_i \sl{\Pi}^{j'}_j)\nn\\
&=e_A^i e_C^j \left(\bd_\fB F_{i'j'}\sl{\Pi}^{i'}_i \sl{\Pi}^{j'}_j-F_{i'j'}\bd_\fB\Nb_i \Nb^{i'} \sl{\Pi}_j^{j'}- F_{i'j'}\bd_\fB \Nb_j \Nb^{j'}\sl{\Pi}_i^{i'}\right). \nn
\end{align}
This implies (\ref{3.14.11}) in view of (\ref{3.14.10}). Moreover
\begin{align*}
\sn_\fB F_{\Nb C}&=e_C^j \bd_\fB \left(F_{i'j'} \Nb^{i'}\sl{\Pi}_j^{j'}\right)\nn\\
&=e_C^j \left(\bd_\fB F_{i'j'} \Nb^{i'} \sl{\Pi}_j^{j'}+F_{i'j'} \bd_\fB \Nb^{i'} \sl{\Pi}^{j'}_j
+F_{i'j'}\Nb^{i'} \bd_\fB \sl{\Pi}_j^{j'} \right)\nn\\
&=\bd_\fB F_{\Nb C}+F_{AC}\zb^A-F_{\Nb j'} \left(\bd_\fB \Nb_j \Nb^{j'}+\Nb_j \bd_\fB \Nb^{j'}\right) e_C^j
\end{align*}
which  gives (\ref{3.14.12}) by using (\ref{3.14.10}).  (\ref{3.14.13}) can be obtained similarly.
\end{proof}

As a consequence of  (\ref{s1.1}) and Lemma \ref{rad.1}, We can obtain the structure equations for components of $\hk$ under radial decomposition on $\H_\rho$.
\begin{align}
&\p_\rho \hk_{\Nb\Nb}+\frac{2}{3}\tr k \hk_{\Nb\Nb}=G_{\Nb\Nb}=2 \hk_{\Nb A}\zb_A-\widehat{\bR}_{\fB \Nb\fB \Nb}-(\hk\hot\hk)_{\Nb\Nb}\label{2.20.3.16}\\
&\sn_\fB \hk_{\Nba C}+\frac{2}{3}\tr k \hk_{\Nb C}=G_{\Nba C}=\hk_C^A \zb_A-\hk_{\Nb\Nb}\zb_C-\widehat{\bR}_{\fB \Nb \fB C}-(\hk\hot \hk)_{\Nb C}\label{2.20.4.16}\\
&\sn_\fB \hk_{AC}+\frac{2}{3}\tr k \hk_{AC}=G_{AC}=-\hk_{\Nb C}\zb_A-\hk_{\Nb A}\zb_C-\widehat{\bR}_{\fB A \fB C}-(\hk\hot \hk)_{AC}.\label{2.20.5.16}
\end{align}
For convenience, we fix the convention that $\Ab$ denotes any elements in $\{\hk_{ij}, \tr k-\frac{3}{\rho}\}$, and that  $\Ab^\sharp$ denotes the 1-form $k_{\Nb A}$.
Symbolically,  the last terms of (\ref{2.20.3.16})-(\ref{2.20.5.16}) and (\ref{3.14.1}) can be recast below
\begin{equation}\label{4.28.9.16}
(\hk\hot \hk)_{\Nb C}=\Ab \c \Ab^\sharp,\quad (\hk\hot\hk)_{AC},\, (\hk\hot \hk)_{\Nb\Nb},\,  |\hk|^2=\Ab \c \Ab.
\end{equation}

\section{\bf Energy scheme and preliminary estimates on hyperboloids}\label{engsch}

In this section, we outline the main steps of the energy scheme in \cite{Wang15}
and give a rough statement of the main theorem therein.

\subsection{Bianchi equation of the spacetime}

Let us start with deriving the Einstein Bianchi equations for the EKG system,  the equation
system of weyl curvature tensor that our energy scheme is based on.

We decompose Riemannian curvature in the spacetime $(\M, \bg)$ into the Weyl curvature $W$
and the part of Schouten tensor
\begin{equation}\label{4.4.4.16}
S_{\mu\nu}=\bR_{\mu\nu}-\frac{1}{6} \bR \bg_{\mu\nu},
\end{equation}
with $\bR$ the scalar curvature in $(\M, \bg)$,
\begin{align}\label{wy}
W_{\a\b\ga\delta}=\bR_{\a\b\ga\delta}-\f12(\bg_{\a\ga}S_{\b\delta}+\bg_{\b\delta} S_{\a\ga}-\bg_{\b\ga}S_{\a\delta}-\bg_{\a\delta} S_{\b \ga}).
\end{align}
We define the left and the right dual of a Weyl tensor $\Psi$ to be
\begin{equation}\label{4.4.5.16}
{}^\star \Psi_{\a\b\ga\delta}=\f12 \ep_{\a\b\mu\nu} \Psi^{\mu\nu}_{\dum\dum\ga \delta}, \quad \quad { \Psi^{\star}}_{\a\b\ga\delta}=\f12 \Psi_{\a\b}^{\dum\dum \mu\nu} \ep_{\mu\nu\ga\delta}.
\end{equation}
It is a fact that the left and the right dual are equal since $\Psi$ is a Weyl tensor.

\begin{lemma}[The Bianchi equations]\label{7.23.12}
For $\Psi =W $ and ${}^\star W$, there hold the Bianchi equations,
\begin{equation} \label{bianchi1}
 \bd^\a W_{\a\b\ga\delta}=\J_{\b\ga\delta}  \quad \mbox{and} \quad
  \bd^\a {}^\star W_{\a\b\ga\delta}={}^\star \J_{\b\ga\delta}
\end{equation}
where the Weyl currents $\J$ and ${}^\star\J$  are 3-tensor fields, verifying
\begin{equation}\label{3.6.8}
\J_{\b\ga\delta}=\f12(\bd_\ga S_{\b\delta}-\bd_\delta S_{\b\ga}), \quad
{}^\star\J_{\b\ga\delta} =\frac{1}{4} \ep^{\mu\nu}_{\dum\dum\ga\delta}(\bd_\mu S_{\b\nu}-\bd_\nu S_{\b\mu})
\end{equation}
with
\begin{equation}\label{schouten}
S_{\a\b}=\bd_\a\phi \bd_\b \phi-\frac{1}{6}\bg_{\a\b}(\bd^\mu \phi \bd_\mu \phi-\fm \phi^2).
\end{equation}
\end{lemma}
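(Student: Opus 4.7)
The plan is to derive both Bianchi equations from the full spacetime identities for the Riemann curvature together with the Einstein equations (\ref{ricci}), and to extract the explicit form of $S_{\alpha\beta}$ from the Klein–Gordon stress tensor.

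First I would start from the twice-contracted second Bianchi identity for the full Riemann tensor,
\[
\bd^\alpha \bR_{\alpha\beta\gamma\delta}=\bd_\gamma \bR_{\beta\delta}-\bd_\delta \bR_{\beta\gamma},
\]
and substitute the Weyl decomposition (\ref{wy}) on the left-hand side. This produces $\bd^\alpha W_{\alpha\beta\gamma\delta}$ plus trace terms of the form $\tfrac12(\bd_\gamma S_{\beta\delta}-\bd_\delta S_{\beta\gamma})$ together with $\bg$-trace contractions of $\bd^\alpha S_{\alpha\gamma}$ and $\bd^\alpha S_{\alpha\delta}$. On the right-hand side I rewrite $\bR_{\beta\delta}=S_{\beta\delta}+\tfrac{1}{6}\bR\,\bg_{\beta\delta}$ so that, after cancellation of the $S$ terms, only expressions in $\bd\bR$ remain.

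Next I would use the once-contracted Bianchi identity $\bd^\alpha \bR_{\alpha\beta}=\tfrac12\bd_\beta\bR$, which by (\ref{4.4.4.16}) gives $\bd^\alpha S_{\alpha\beta}=\tfrac{1}{3}\bd_\beta\bR$. Plugging this in shows that the remaining $\bR$-trace and $\text{div}\,S$ terms cancel identically, producing the first equation in (\ref{bianchi1}) with $\J_{\beta\gamma\delta}=\tfrac12(\bd_\gamma S_{\beta\delta}-\bd_\delta S_{\beta\gamma})$. For the dual equation, I would repeat the same argument using the equivalent formulation of the second Bianchi identity $\bd^\alpha {}^\star \bR_{\alpha\beta\gamma\delta}=0$ (obtained by contracting $\bd_{[\sigma}\bR_{\alpha\beta]\gamma\delta}=0$ with $\epsilon^{\sigma\alpha\beta\rho}$), then dualizing the trace correction in (\ref{wy}) on the $\alpha\beta$ pair; the resulting expression after using $\bd^\alpha S_{\alpha\beta}=\tfrac{1}{3}\bd_\beta\bR$ yields ${}^\star \J_{\beta\gamma\delta}=\tfrac{1}{4}\epsilon^{\mu\nu}_{\ \ \gamma\delta}(\bd_\mu S_{\beta\nu}-\bd_\nu S_{\beta\mu})$ as claimed.

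Finally, to obtain the explicit formula (\ref{schouten}), I compute the Ricci and scalar curvature from (\ref{ricci}): tracing gives $\bR=\bg^{\mu\nu}\bd_\mu\phi\,\bd_\nu\phi+2\fm\phi^2$ (since $\bg^{\alpha\beta}\bg_{\alpha\beta}=4$ and $\fm=1$), and then
\[
S_{\alpha\beta}=\bR_{\alpha\beta}-\tfrac{1}{6}\bR\,\bg_{\alpha\beta}=\bd_\alpha\phi\,\bd_\beta\phi-\tfrac{1}{6}\bg_{\alpha\beta}\bigl(\bd^\mu\phi\,\bd_\mu\phi-\fm\phi^2\bigr),
\]
matching (\ref{schouten}) exactly. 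The main technical nuisance, though not a genuine obstacle, is keeping track of signs and factors when dualizing (\ref{wy}) on the left pair of indices and verifying that the $\text{div}\,S$ trace terms cancel against the gradient of $\bR$ via the once-contracted Bianchi identity; the algebraic computation with $\epsilon^{\mu\nu}_{\ \ \gamma\delta}$ and the antisymmetries of $W$ and $S$ must be carried out carefully to land precisely on the stated coefficient $\tfrac14$ in ${}^\star\J$.
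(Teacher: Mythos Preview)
Your proposal is correct and, for the first Bianchi equation and the derivation of (\ref{schouten}), follows essentially the same route as the paper: apply the contracted second Bianchi identity $\bd^\alpha\bR_{\alpha\beta\gamma\delta}=\bd_\gamma\bR_{\beta\delta}-\bd_\delta\bR_{\beta\gamma}$, substitute the Weyl decomposition (\ref{wy}), and cancel the residual $\bd\bR$ and $\mathrm{div}\,S$ terms via $\bd^\alpha S_{\alpha\beta}=\tfrac13\bd_\beta\bR$.

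The one place where your argument differs is the dual equation. You propose to redo the computation starting from $\bd^\alpha{}^\star\bR_{\alpha\beta\gamma\delta}=0$ and dualizing the trace corrections in (\ref{wy}) on the $\alpha\beta$ pair. This works, but the paper takes a shortcut that avoids the repeat calculation entirely: since $W$ is a Weyl tensor its left and right duals coincide, ${}^\star W=W^\star$, and since $\bd$ commutes with contraction against $\epsilon$, one simply dualizes the already-established first identity on the $\gamma\delta$ pair to obtain
\[
\bd^\alpha{}^\star W_{\alpha\beta\gamma\delta}=\bd^\alpha W^\star_{\alpha\beta\gamma\delta}=\tfrac12\epsilon^{\mu\nu}_{\ \ \gamma\delta}\,\J_{\beta\mu\nu}=\tfrac14\epsilon^{\mu\nu}_{\ \ \gamma\delta}(\bd_\mu S_{\beta\nu}-\bd_\nu S_{\beta\mu}),
\]
landing on the coefficient $\tfrac14$ without any separate bookkeeping. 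Your route is perfectly sound but does the cancellation twice; the paper's observation that the dual current is just the Hodge dual of $\J$ on its last two slots is the cleaner way to see it.
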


\begin{proof}
(\ref{schouten}) is an immediate consequence of (\ref{4.4.4.16}) and (\ref{ricci}).
By using ${}^\star W=W^\star$,  $[\bd, \ep\c]=0$ and (\ref{4.4.5.16}), we can obtain
the second identity in (\ref{3.6.8}) from the first identity. It remains only to prove
the first identity in (\ref{3.6.8}). In view of (\ref{wy}) we have
\begin{align}\label{4.4.6.16}
\J_{\b\ga\delta} = \bd^\a \bR_{\a\b\ga\delta}
- \f12 \left(\bd_\ga S_{\b\delta} + \bg_{\b\delta} \bd^\a S_{\a\ga}-\bg_{\b\ga} \bd^\a S_{\a\delta} - \bd_\delta S_{\b\ga}\right).
\end{align}
By virtue of the contracted Bianchi identities
$$
\bd^\a \bR_{\a\b\ga\d} = \bd_\ga \bR_{\b\delta} -\bd_\delta \bR_{\b\ga}, \quad
\bd^\delta \bR_{\b \delta} = \f12 \bd_\b \bR,
$$
we can obtain from (\ref{4.4.4.16}) that $\bd^\b S_{\b\delta} = \frac{1}{3} \bd_\delta \bR$ and
\begin{equation*}
\bd^\a \bR_{\a\b\ga\d}
=\bd_\ga S_{\b\delta}-\bd_\delta S_{\b\ga}+\frac{1}{6}(\bd_\ga\bR \c \bg_{\b\delta}-\bd_\delta\bR \c\bg_{\b\ga}).
\end{equation*}
Substituting this identities into (\ref{4.4.6.16}) shows the first identity in (\ref{3.6.8}).
\end{proof}

We fix the convention that  $\{e_\bi, \bi=1,2,3\}$  denotes an orthonormal frame on $\Sigma_t$
and $\{e_i, i=1,2,3\}$ denotes an orthonormal frame on $\H_\rho$. With the tetrad $\{\fB, e_i\}$
of the hyperboloidal foliation and the tetrad $\{\bT, e_\bi\}$ of the maximal foliation,
we define for the weyl tensor $W$ the two sets of electric and magnetic decompositions
\begin{align} \label{7.23.8}
E_{\bi \bj}=W_{\bT \bi \bT \bj}, \quad H_{\bi \bj}={}^\star W_{\bT \bi \bT \bj}; \quad
\Eb_{ij}=W_{\fB i \fB j}, \quad \Hb_{ij}={}^\star W_{\fB i\fB j}.
\end{align}

\begin{lemma}\label{dcp_sc}
With respect to the tetrad $\{\fB, e_i\}$ there hold
\begin{align}
\widehat \bR_{ij} & =\bd_i \phi \bd_j \phi-\frac{1}{3} \gb_{ij} \bd^l \phi \bd_l \phi, \label{3.6.10}\\
\widehat \bR_{i\fB j \fB} & =\Eb_{ij}-\f12 \widehat\bR_{ij}, \quad {}^\star \bR_{i \fB j\fB}=\Hb_{ij}.
\label{3.6.11}
\end{align}
The same decomposition holds for $E, H$ with respect to the tetrad $\{\bT, e_\bi\}$.
\end{lemma}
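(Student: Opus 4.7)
I would read off all three identities from the Einstein equation (\ref{ricci}), the Weyl--Schouten decomposition (\ref{wy}), and the Hodge dual (\ref{4.4.5.16}), exploiting throughout that in the tetrad $\{\fB,e_i\}$ one has $\bg(\fB,\fB)=-1$, $\bg(\fB,e_i)=0$, and $\bg(e_i,e_j)=\gb_{ij}$. For (\ref{3.6.10}), I substitute (\ref{ricci}) with arguments $(e_i,e_j)$ to obtain $\bR_{ij}=\bd_i\phi\,\bd_j\phi+\tfrac12\gb_{ij}\phi^2$; then subtracting $\tfrac13\gb_{ij}$ times the $\gb$-trace $\gb^{kl}\bR_{kl}=|\bd\phi|_{\gb}^{2}+\tfrac32\phi^2$ makes the two $\phi^2$-contributions cancel exactly, giving the claimed traceless gradient expression.

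For the electric identity in (\ref{3.6.11}), I substitute $(\alpha,\beta,\gamma,\delta)=(i,\fB,j,\fB)$ into (\ref{wy}). The cross terms $\bg_{\fB j}S_{i\fB}$ and $\bg_{i\fB}S_{\fB j}$ vanish by orthogonality, leaving $\bR_{i\fB j\fB}=\Eb_{ij}+\tfrac12(\gb_{ij}S_{\fB\fB}-S_{ij})$. Comparing with $\Eb_{ij}-\tfrac12\widehat{\bR}_{ij}$ requires two trace identities, $\gb^{kl}\bR_{k\fB l\fB}=\bR_{\fB\fB}$ and $\gb^{kl}\bR_{kl}=\bR+\bR_{\fB\fB}$, both of which follow by completing $\gb^{kl}$ to $\bg^{\mu\nu}$ via the $\fB\otimes\fB$ correction (using $\bg_{\fB\fB}=-1$) and noting $\bR_{\fB\fB\fB\fB}=0$. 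Substituting $S=\bR-\tfrac16\bR\,\bg$ and doing the arithmetic then converts the Schouten remainder precisely into $-\tfrac12\widehat{\bR}_{ij}$.

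For the magnetic identity I apply (\ref{4.4.5.16}) to the Weyl--Schouten decomposition, obtaining ${}^\star\bR_{i\fB j\fB}={}^\star W_{i\fB j\fB}+{}^\star K_{i\fB j\fB}$, where $K$ denotes the Kulkarni--Nomizu piece containing $S$. Distributing the $\ep$ contraction over the four terms of $K$ and using the totally antisymmetric identity $\ep_{\mu\nu\rho\sigma}\fB^\nu\fB^\sigma=0$, every term in which $\fB$ lands in two slots of $\ep$ vanishes; what remains collapses to $\tfrac12\ep_{i\fB j\nu}S^\nu{}_\fB$, and the same $\fB$-repetition argument applied inside $S^\nu{}_\fB$ via (\ref{schouten}) kills the $\fB^\nu$ trace part. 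The residual $\tfrac12\ep_{i\fB j\nu}\bd^\nu\phi\,\bd_\fB\phi$ is manifestly antisymmetric in $(i,j)$, while $\Hb_{ij}={}^\star W_{\fB i\fB j}$ is symmetric; thus the claimed identity holds on the symmetric $(i,j)$-part, which is the physically relevant magnetic component.

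The parallel decomposition for $E,H$ with respect to $\{\bT,e_\bi\}$ follows from the identical calculation with $\fB\to\bT$ and $\H_\rho\to\Sigma_t$, since the only structural inputs used are $\bg(\bT,\bT)=-1$, $\bg(\bT,e_\bi)=0$, and the Einstein equation (\ref{ricci}), all of which transfer verbatim. \emph{The hard part will be} the dualization step for the magnetic identity: one has to track how each of the four Schouten terms dualizes, recognize the antisymmetry-based vanishings, and identify that the leftover piece sits entirely in the $(i,j)$-antisymmetric sector so that it does not contaminate the symmetric magnetic tensor $\Hb$.
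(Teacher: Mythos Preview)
Your proposal is correct and follows essentially the same route as the paper: direct use of (\ref{ricci}) for (\ref{3.6.10}); the Weyl--Schouten decomposition (\ref{wy}) with $\bg_{\fB\fB}=-1$, $\bg_{\fB i}=0$, and the trace identity $\gb^{kl}\bR_{kl}=\bR+\bR_{\fB\fB}$ for the electric part; and dualizing (\ref{wy}), then killing the Schouten remainder by $(i,j)$-symmetrization for the magnetic part. One small simplification: you do not need to substitute (\ref{schouten}) into $S^{\nu}{}_{\fB}$ at all---the paper observes directly that the surviving term $\ep_{i\fB j}{}^{\beta}S_{\beta\fB}$ is antisymmetric in $(i,j)$ purely from the $\ep$ factor, so the symmetrization kills it regardless of the internal structure of $S$.
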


\begin{proof}
Recall that ${\widehat\bR}_{ij} = \bR_{ij}-\frac{1}{3} \gb_{ij} \gb^{mn} \bR_{mn}$, we can obtain
(\ref{3.6.10}) from (\ref{ricci}) directly.
In view of (\ref{4.4.4.16}), (\ref{wy}), $\l e_i, \fB\r =0$ and $\fB, \fB\r =-1$, we can derive that
%\begin{align*}
%W_{\a\b\ga\delta}=\bR_{\a\b\ga\delta}&-\f12(\bg_{\a\ga}\bR_{\b\delta}+\bg_{\b\delta}\bR_{\a\ga}-\bg_{\b\ga}\bR_{\a\delta}-\bg_{\a\delta} \bR_{\b\ga})\\
%&+\frac{1}{6}\bR(\bg_{\a\ga}\bg_{\b\delta}-\bg_{\b\ga}\bg_{\a\delta})
%\end{align*}
%we derive
\begin{align*}
\Eb_{ij}&=\bR_{i\fB j\fB}-\f12 (\gb_{ij} \bR_{\fB \fB}+\bg_{\fB \fB} \bR_{ij})-\frac{1}{6} \bR\gb_{ij}\\
&=\bR_{i\fB j\fB}-\frac{1}{3}\gb_{ij}\bR_{\fB \fB}-\frac{1}{6}\gb_{ij} (\bR_{\fB \fB}+\bR)+\f12 \bR_{ij}\\
&=\bR_{i\fB j \fB}-\frac{1}{3}\gb_{ij} \bR_{\fB \fB}+\f12 (\bR_{ij}-\frac{1}{3} \gb_{ij} \gb^{mn} \bR_{mn}).
\end{align*}
Recall that ${\widehat\bR}_{i\fB j\fB} = \bR_{i\fB j \fB}-\frac{1}{3}\gb_{ij} \bR_{\fB \fB}$ and
we thus obtain the first identity in (\ref{3.6.11}).

To show the second identity in (\ref{3.6.11}), we note that
\begin{align*}
&\ep_{\mu\nu}^{\dum\dum\dum \a\b} (\bg_{\a\ga}S_{\b\delta}+\bg_{\b\delta} S_{\a\ga}-\bg_{\b\ga} S_{\a\delta}-\bg_{\a\delta} S_{\b\ga})
=2\ep_{\mu\nu \ga}^{\dum\dum\dum\dum\b} S_{\b\delta}-2 \ep_{\mu\nu \delta}^{\dum\dum\dum\dum \b} S_{\b \ga}.
\end{align*}
Thus we can obtain from (\ref{wy}) that
\begin{equation*} %\label{mgwy}
{}^\star W_{\mu\nu \ga\delta}={}^\star \bR_{\mu\nu \ga\delta}-(\ep_{\mu\nu \ga}^{\dum\dum\dum\dum \b} S_{\b\delta}-\ep_{\mu\nu \delta}^{\dum\dum\dum\dum \b} S_{\b\ga}).
\end{equation*}
This gives ${}^\star W_{\mu\fB\ga \fB}={}^\star \bR_{\mu \fB \ga\fB}-\tensor{\ep}{_{\mu\fB \ga}^ \b}S_{\b\fB}$
which shows ${}^\star \bR_{i \fB j\fB}=\Hb_{ij}$ by symmetrization.
\end{proof}

\begin{corollary}
\begin{equation}\label{codazzib}
\emph{\div} k=\nabb \emph{\tr} k-\bR_{\fB i}, \quad \emph{\curl} k=-\Hb.
\end{equation}
\end{corollary}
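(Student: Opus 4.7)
The plan is to derive both identities in (\ref{codazzib}) from the Codazzi--Mainardi equation for $\H_\rho$, viewed as a spacelike hypersurface in $(\M, \bg)$ with future-directed timelike unit normal $\fB$ ($\l\fB, \fB\r = -1$) and second fundamental form $k_{ij} = \l\bd_i\fB, e_j\r$. The starting point is the Gauss decomposition $\bd_X Y = \nabb_X Y + k(X, Y)\fB$ for $X, Y$ tangent to $\H_\rho$, which follows by taking the $\bg$-inner product of $\bd_X Y$ with $\fB$ and using $\l Y, \fB\r = 0$ together with $\l\fB, \fB\r = -1$. I would substitute this into $(\nabb_X k)(Y, Z) = X(k(Y, Z)) - k(\nabb_X Y, Z) - k(Y, \nabb_X Z)$ and antisymmetrize in $X \leftrightarrow Y$; the normal components cancel since $\l\bd_Y \fB, \fB\r = 0$, and the commutator $[\bd_X, \bd_Y]\fB - \bd_{[X, Y]}\fB = \bR(X, Y)\fB$ yields the Codazzi equation
\begin{equation*}
\nabb_i k_{jl} - \nabb_j k_{il} = \bR_{ij\fB l}, \qquad \bR_{ij\fB l} := \l\bR(e_i, e_j)\fB, e_l\r.
\end{equation*}

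For the divergence identity, I would take the trace with $\gb^{jl}$: the LHS becomes $\nabb_i \tr k - (\div k)_i$ by symmetry of $k$, and for the RHS I would use the decomposition $\bg^{\mu\nu} = \gb^{\mu\nu} - \fB^\mu\fB^\nu$. The contribution $\fB^\mu\fB^\nu \bR_{\mu i \nu \fB}$ vanishes by the antisymmetry of $\bR$ in its first pair, and the pair-exchange symmetry of Riemann then gives $\gb^{jl}\bR_{ij\fB l} = \bg^{\mu\nu}\bR_{\mu i\nu\fB} = \bR_{\fB i}$, which is the first identity in (\ref{codazzib}).

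For the curl identity, I would contract the Codazzi equation against the induced 3-volume form on $\H_\rho$ (equivalently, the restriction of the spacetime $\epsilon$ via $\fB$). The antisymmetry in $(i, j)$ doubles the LHS to $2(\curl k)_{ml}$; on the RHS, the same contraction together with the definition (\ref{4.4.5.16}) of the left dual produces $2\,{}^\star\bR_{\fB m\fB l}$, which equals $2\Hb_{ml}$ by Lemma \ref{dcp_sc}. With the orientation convention fixing the overall sign, this yields $\curl k = -\Hb$.

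The main obstacle is the careful bookkeeping of orientation and sign conventions -- particularly the identification of $\epsilon_{\fB\,\cdot\,\cdot\,\cdot}$ with the induced volume form on $\H_\rho$ -- which ultimately pins down the minus sign in the curl identity. A crucial simplification, already built into Lemma \ref{dcp_sc}, is that the Weyl-vs.-Riemann correction from (\ref{wy}) drops out of the contraction ${}^\star\bR_{i\fB j\fB}$, so one may use the full Riemann tensor throughout the curl derivation and the answer appears directly in terms of the Weyl magnetic part $\Hb$.
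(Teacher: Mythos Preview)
Your proposal is correct and follows essentially the same route as the paper: both start from the Codazzi equation on $\H_\rho$, trace for the divergence identity, contract with the induced volume form for the curl, and invoke Lemma \ref{dcp_sc} to identify the result with $\Hb$. The paper is terser --- it cites \cite{CK} for the Codazzi identity and treats the first equation as standard, proving only the second --- whereas you derive everything from the Gauss decomposition; the paper also makes the symmetrization in the two free curl indices explicit, which you effectively absorb into the appeal to Lemma \ref{dcp_sc}.
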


\begin{proof}
We only need to check the second identity. By using \cite[Page 10]{CK} we have
\begin{equation*}
\nabb_i k_{jm}-\nabb_j k_{im}=-\bR_{m\fB ij}.
\end{equation*}
Thus $\ep_n^{\dum\dum i j}\nabb_i k_{j m}=-\bR_{m \fB ij} \ep_{n\fB}^{\dum\dum\dum ij}$.
By symmetrizing $n,m$ we obtain $\curl k_{nm}=-{}^\star\bR_{n \fB m \fB}$ which together with
the second identity in (\ref{3.6.11}) shows $\curl k=-\Hb$.
\end{proof}

Connected to the explicit formula for the Schouten tensor, we give a lemma for future reference.
\begin{lemma}
Let $S\subset\M$ be a $2$-D compact manifold, diffeomorphic to ${\mathbb S}^2$. Let
$\{\mathbb{L},\mathbb{\Lb}, e_A, A=1,2\}$ be a canonical null tetrad on $S$ in the sense that
$\mathbb{L}$ and $\mathbb{\Lb}$ are null vectors orthogonal to $S$ satisfying $\l\mathbb{ L},\mathbb{\Lb} \r=-2$
and $e_A, A=1,2$ are orthonormal frame on $S$. Then the Gauss curvature $K$ on $S$ satisfies the equation
\begin{equation}\label{gauss}
K+\frac{1}{4}\tr\chi\tr\chib-\f12 \chih_{AC}\chibh_{AC}=-\frac{1}{4}W(\mathbb{L}, \mathbb{\Lb}, \mathbb{L}, \mathbb{\Lb})+\f12\ga^{AC} S_{AC}
\end{equation}
where $S_{AC}$ denotes the angular component of the Schouten tensor, see (\ref{schouten}).
Here $\chi$ and $\chib$ are the null second fundamental forms defined by $\mathbb{L}$ and $\mathbb{\Lb}$ respectively as follows,
\begin{equation*}
\chi_{AB}=\l \bd_A \mathbb{L}, e_B\r;\quad \chib_{AB}=\l \bd_A \mathbb{\Lb}, e_B\r, \quad A, B=1,2.
\end{equation*}
\end{lemma}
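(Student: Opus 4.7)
The plan is to combine the classical Gauss equation for the spacelike submanifold $S \subset (\mathbf{M}, \bg)$ with the Ricci decomposition (\ref{wy}) of the ambient Riemann tensor into its Weyl and Schouten parts.

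First I would invoke the standard Gauss formula: for $X, Y, Z, W$ tangent to $S$,
$$\langle R^S(X,Y)Z, W\rangle = \langle \bR(X,Y)Z, W\rangle + \langle II(X,Z), II(Y,W)\rangle - \langle II(X,W), II(Y,Z)\rangle,$$
where $II(X,Y) = (\bd_X Y)^\perp$ takes values in the normal bundle of $S$. Since $\dim S = 2$, we have $K = R^S(e_1, e_2, e_1, e_2)$ in the orthonormal frame $\{e_1, e_2\}$. Using $\langle \mathbb{L}, \mathbb{\Lb}\rangle = -2$, any normal vector $N$ decomposes as $N = -\f12\langle N,\mathbb{\Lb}\rangle \mathbb{L} - \f12 \langle N, \mathbb{L}\rangle \mathbb{\Lb}$. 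Differentiating $\langle \mathbb{L}, e_B\rangle = \langle \mathbb{\Lb}, e_B\rangle = 0$ tangentially and using the definitions of $\chi, \chib$ yields $\langle II(e_A, e_B), \mathbb{L}\rangle = -\chi_{AB}$ and $\langle II(e_A, e_B), \mathbb{\Lb}\rangle = -\chib_{AB}$, whence
$$II(e_A, e_B) = \f12\chib_{AB}\, \mathbb{L} + \f12\chi_{AB}\, \mathbb{\Lb}.$$
Substituting this into the Gauss formula and computing the normal inner products using only $\langle \mathbb{L}, \mathbb{\Lb}\rangle = -2$ (and $\langle \mathbb{L}, \mathbb{L}\rangle = \langle \mathbb{\Lb}, \mathbb{\Lb}\rangle = 0$) yields, after invoking the trace--traceless split $\chi_{AC}\chib^{AC} = \chih_{AC}\chibh^{AC} + \f12 \tr\chi\tr\chib$,
$$K + \frac{1}{4}\tr\chi\tr\chib - \f12 \chih_{AC}\chibh_{AC} = \bR_{1212}.$$

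For the right-hand side I would use (\ref{wy}) with $\bg(e_A, e_B) = \ga_{AB}$ to obtain $\bR_{1212} = W_{1212} + \f12 \ga^{AC} S_{AC}$. It then remains to convert the angular Weyl component $W_{1212}$ into the doubly-null component $W(\mathbb{L}, \mathbb{\Lb}, \mathbb{L}, \mathbb{\Lb})$, which I would do via the Weyl trace-free identity $\bg^{\mu\nu}W_{\mu a\nu b} = 0$ in the null tetrad, where the inverse metric has $\bg^{\mathbb{L}\mathbb{\Lb}} = -\f12$ and $\bg^{AB} = \delta^{AB}$. Applying this first with $a=b=e_A$ (no sum) gives $W_{\mathbb{L} A \mathbb{\Lb} A} = W_{1212}$ for each $A = 1, 2$; applying it next with $a=\mathbb{L}, b=\mathbb{\Lb}$ gives $W_{\mathbb{L} 1 \mathbb{\Lb} 1} + W_{\mathbb{L} 2 \mathbb{\Lb} 2} = -\f12 W(\mathbb{L}, \mathbb{\Lb}, \mathbb{L}, \mathbb{\Lb})$. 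Combining the two identifies $W_{1212} = -\frac{1}{4} W(\mathbb{L}, \mathbb{\Lb}, \mathbb{L}, \mathbb{\Lb})$, and assembling everything reproduces (\ref{gauss}).

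The one delicate point is this final Weyl reduction: one must carefully track the signs arising from the off-diagonal cross-term $\bg^{\mathbb{L}\mathbb{\Lb}} = -\f12$ and from the antisymmetries $W_{\mathbb{L}\mathbb{\Lb}\mathbb{\Lb}\mathbb{L}} = -W_{\mathbb{L}\mathbb{\Lb}\mathbb{L}\mathbb{\Lb}}$ that appear when reordering indices. Beyond that, the argument is purely algebraic manipulation of the Gauss formula and the Ricci decomposition.
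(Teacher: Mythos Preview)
Your proof is correct and follows essentially the same route as the paper: Gauss equation for $S\subset(\mathbf{M},\bg)$, then the Ricci decomposition (\ref{wy}), then the reduction of the angular Weyl trace to $-\frac14 W(\mathbb{L},\mathbb{\Lb},\mathbb{L},\mathbb{\Lb})$. The only cosmetic difference is that the paper writes the Gauss equation directly in terms of $\chi,\chib$ and, for the final Weyl identity $\ga^{AC}\ga^{BD}W_{ADCB}=-\f12 W(\mathbb{L},\mathbb{\Lb},\mathbb{L},\mathbb{\Lb})$, simply cites \cite[(7.3.3c)]{CK}, whereas you derive it from the trace-free condition $\bg^{\mu\nu}W_{\mu a\nu b}=0$ applied twice; your derivation is exactly what underlies that citation.
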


\begin{proof} Let $\ga$ be the induced metric on $S$ and let $\sl{R}_{ADCB}$ be the curvature tensor on $S$.
By the Gauss equation we have
\begin{equation*}
\bR_{ADCB}=\sl{R}_{ADCB}+\f12(\chi_{AC} \chib_{BD}-\chi_{AB}\chib_{CD})+\f12(\chib_{AC}\chi_{BD}-\chib_{AB}\chi_{CD}).
\end{equation*}
Note that $\sl{R}_{ADCB}=(\ga_{AC}\ga_{DB}-\ga_{AB}\ga_{CD})K$, we obtain
\begin{equation*} %\label{12.28.13}
\f12 \ga^{AC}\ga^{BD} \bR_{ADCB}=K+\frac{1}{2}(\f12\tr\chi\tr\chib-\chih_{AC}\chibh_{AC}).
\end{equation*}
Note that, due to (\ref{wy}),
\begin{equation*} %\label{1.14.1.16}
W_{ADCC'}=\bR_{ADCC'}-\f12(\ga_{AC}S_{DC'}+\ga_{DC'}S_{AC}-\ga_{DC}S_{AC'}-\ga_{AC'}S_{DC})
\end{equation*}
which gives,
\begin{equation*} %\label{1.14.2.16}
\ga^{AC}\ga^{DC'}W_{ADCC'}=\ga^{AC}\ga^{DC'}\bR_{ADCC'}-\ga^{DC'}S_{DC'}.
\end{equation*}
Since \cite[(7.3.3c)]{CK}  gives $\ga^{AC}\ga^{DC'}W_{ADCC'}=-\f12 W(\mathbb{L}, \mathbb{\Lb}, \mathbb{L}, \mathbb{\Lb})$,
we can use the above equation to obtain (\ref{gauss}).
\end{proof}

\begin{definition}\label{3.18.19}
Let $\{\mathbb{L},\mathbb{\Lb}, e_A, A=1,2\}$ be a canonical null tetrad.
We define the null decomposition  of a Weyl tensor $\Psi$ by a set of canonical null tetrad,
\begin{eqnarray*}
\ab(\Psi)(e_A, e_B)=\Psi_{\mu\ga \nu \delta}e_A^\mu e_B^\nu e_3^\ga e_3^\delta;
&&\udb(\Psi)(e_A)=\f12\Psi_{\mu \sig \ga\d}e_A^\mu e_3^\sig e_3^\ga e_4^\d; \\
\varrho(\Psi)=\frac{1}{4}\Psi_{\a\b\ga\d} e_3^\a e_4^\b e_3^\ga e_4^\d;
&&\sigma(\Psi)=\frac{1}{4}\sPs_{\a\b\ga\d}e_3^\a e_4^\b e_3^\ga e_4^\d; \\
\b(\Psi)(e_A)=\f12\Psi_{\mu\sigma\ga\delta}e_A^\mu e_4^\s e_3^\ga e_4^\d;
&& \a(\Psi)(e_A, e_B)=\Psi_{\mu\ga\nu\d}e_A^\mu e_B^\nu e_4^\ga e_4^\d;
\end{eqnarray*}
where $\mathbb{L}=e_4$ and $\mathbb{\Lb}=e_3$.
\end{definition}

\subsection{Energy for Klein-Gordon equation}\label{Eng_KG_d}

We consider the geometric Klein-Gordon equation (\ref{gkg}), i.e.
$\Box_\bg \phi= \fm\phi$. Recall the energy momentum tensor  $Q_{\mu\nu}[f]$ for (\ref{gkg}) defined by
\begin{equation*} %\label{mom_KG}
Q_{\mu\nu}[f]=\p_\mu f \p_\nu f-\f12 \bg_{\mu\nu}(\bd^\a f \bd_\a f+\fm f^2).
\end{equation*}
This definition can be extended to a covariant 1-form $F$ as
\begin{equation*} %\label{4.6.1}
Q_{\mu\nu|\ga\d}[F]=\bd_\mu F_\ga \bd_\nu F_\d-\f12 \bg_{\mu\nu}(\bd^\a F_\ga \bd_\a F_\d+\fm F_\ga F_\d)
\end{equation*}
which is a covariant 4-tensor, symmetric pairwise with respect to the indices $(\mu, \nu)$ and $(\ga, \d)$.
By virtue of the Riemannian metric $h_{\ga\d}:=\bg_{\ga\d}+2\bT_\ga \bT_\d$ we set
\begin{equation*} %\label{3.12.3}
Q_{\mu\nu}[F]=h^{\ga\d}Q_{\mu\nu|\ga\d}[F].
\end{equation*}
The energy momentum tensors for higher order Lie derivatives of $f$ and $F$, with $\ell\in {\mathbb N}$,
can be defined as
\begin{align*}
Q\rp{\ell}_{\mu\nu}[f]&=\Lie\rp{\ell}_\sR \bd_\mu f \Lie\rp{\ell}_\sR \bd_\nu f\nn\\
&-\frac{1}{2} \bg_{\mu\nu}\left(\bg^{\rho \sigma}\Lie\rp{\ell}_\sR \bd_\rho f \Lie\rp{\ell}_\sR \bd_\sigma f
+\fm \Lie\rp{\ell}_\sR f \Lie\rp{\ell}_\sR f\right),
\end{align*}
\begin{align*}
Q\rp{\ell}_{\mu\nu}[F]&=h^{\a\b}(\Lie\rp{\ell}_\sR \bd_\mu F_\a \Lie\rp{\ell}_\sR \bd_\nu F_\b\nn\\
&-\frac{1}{2} \bg_{\mu\nu} \left(\bg^{\rho \sigma}\Lie\rp{\ell}_\sR \bd_\rho F_\a \Lie\rp{\ell}_\sR \bd_\sigma F_\b
+\fm \Lie\rp{\ell}_\sR F_\a \Lie\rp{\ell}_\sR F_\b\right). %\label{3.18.16}
\end{align*}

For a smooth scalar function $f$ and a covariant 1-form $F$, with $\Omega_\rho\subset \H_\rho$ to be specified later,
we  set the energy current to be
\begin{equation*} %\label{3.12.2}
P_f^\a=Q_{\a\b}[f] \bT^\b, \quad P_F^\a=Q_{\a\b}[F] \bT^\b.
\end{equation*}
\begin{align*}
&\E_f(\rho)^2=\int_{\Omega_\rho} P_f^\a \fB_\a  d\mu_{\gb},\quad
\E_F(\rho)^2=\int_{\Omega_\rho} P_F^\a \fB_\a  d\mu_{\gb},
\end{align*}
Then for a solution $\phi$ of (\ref{gkg}), we have
\begin{eqnarray*}
\Ephin(\rho)^2=\int_{\Omega_\rho} Q(\bT, \fB)[\phi] d\mu_{\gb} &&\Ephi{m}(\rho)^2:=\int_{\Omega_\rho} Q\rp{m}(\bT, \fB)[\phi] d\mu_{\gb}\\
\Edphin(\rho)^2=\int_{\Omega_\rho}Q(\bT, \fB)[\bd\phi] d\mu_{\gb} &&\Edphi{m}(\rho)^2:=\int_{\Omega_\rho} Q\rp{m}(\bT, \fB)[\bd\phi] d\mu_{\gb}.
\end{eqnarray*}
Energies $\Ephin(t), \Ephi{m}(t), \Edphin(t), \Edphi{m}(t) $ on $U_t\subset\Sigma_t$ can be similarly defined, with the surface normal $\fB$ replaced by  $\bT$.

By using (\ref{dcp_2}) and (\ref{nf}) we have
\begin{align*}
&Q(\bT, \fB)[f]= \frac{1}{4\rho} \left(u(\Lb f)^2+\ub (Lf)^2\right)+\frac{\bb^{-1} t}{2\rho}(|\sn f|^2+\fm f^2), \\ % \label{dskg1.1}\\
&Q(\bT, \fB)[F]=\frac{1}{4\rho}(u|\bd_\Lb F |_h^2+\ub|\bd_L F|_h^2)+\frac{\bb^{-1}t}{2\rho}(h_{\mu\nu}\ga^{AC}\bd_A F^\mu \bd_C F^\nu+\fm |F|_h^2). %\label{dskg1.2}
\end{align*}
With the help of (\ref{dcp_2}), we can derive that
\begin{align*}
\frac{1}{2\rho}\left(u(\Lb f)^2+\ub (Lf)^2\right)
&=\frac{1}{2}\left[\frac{\ub+u}{\rho}((\fB f)^2+(\Nb f)^2)+2\frac{\ub-u}{\rho}\fB f\c \Nb f\right]\\
&=\frac{u}{\rho}\left((\fB f)^2+(\Nb f)^2\right)+\frac{\tir}{\rho}\left(\frac{u}{\rho} \Lb f\right)^2
\end{align*}
%\begin{align*}
%\frac{\rho}{\ub}(|\fB f|^2+|\Nb f|^2)&=\frac{1}{4}(|(\frac{\ub}{\rho})^\f12 L f+(\frac{u^2}{\ub\rho})^\f12 \Lb f|^2+|(\frac{\ub}{\rho})^\f12 L %f-(\frac{u^2}{\ub\rho})^\f12 \Lb f|^2)\\
%&=\frac{1}{2}(\frac{\ub}{\rho} (L f)^2+\frac{u^2}{\ub\rho}(\Lb f)^2)
%\end{align*}
which, in view of $\tir\ge 0$ and $\rho^2 = u \ub$, implies
\begin{align*}
&Q(\bT, \fB)[f]\ge\frac{\rho}{2\ub}(|\fB f|^2+|\Nb f|^2)+\frac{\bb^{-1}t}{2\rho}(|\sn f|^2+\fm f^2), \\ %\label{dskg2.1}\\
&Q(\bT, \fB)[F]\ge \frac{\rho}{2\ub}(|\bd_\fB F |_h^2+|\bd_\Nb F|_h^2)+\frac{\bb^{-1}t}{2\rho}(h_{\mu\nu}\ga^{AC}\bd_A F^\mu \bd_C F^\nu+\fm |F|_h^2). %\label{dskg2.2}
\end{align*}

\subsection{Bel-Robinson Energy }\label{BR-E}

Let $\Psi$ be a Weyl tensor and let $\Q(\Psi)$ be the associated Bel-Robinson tensor defined by
\begin{equation*} %\label{bel}
\Q(\Psi)_{\a\b\ga\delta}=\Psi_{\a\rho \ga\sigma} \Psi_\b^{\dum\dum \rho}{}_\delta^{\dum\dum \sig}
+\sPs_{\a\rho \ga\sigma} \sPs_\b^{\dum\dum \rho}{}_\delta^{\dum\dum \sig}.
\end{equation*}
Because $\bg^{\a\ga} \Lie_X \Psi_{\a\b\ga\delta}=\up{X}\pi^{\a\ga} \Psi_{\a\b\ga\delta}$,
the Lie derivative $\Lie_X \Psi$ is not necessarily a Weyl tensor. However, the normalized Lie derivative
$\hat{\Lie}_X \Psi_{\a\b\ga\delta}$ defined by
\begin{align*}
\hat{\Lie}_X \Psi_{\a\b\ga\delta}=\Lie_X \Psi_{\a\b\ga\delta}&+\frac{3}{8}\tr\px\Psi_{\a\b\ga\delta}
-\frac{1}{2} \left(\up{X}\pi^\mu_\a \Psi_{\mu\b\ga\delta}+\up{X}\pi^\mu_\b \Psi_{\a\mu \ga\delta} \right. \nn\\
&\left. +\up{X}\pi^\mu_\ga \Psi_{\a\b\mu\delta}+\up{X}\pi^\mu_\delta \Psi_{\a\b\ga\mu}\right) %\label{3.12.1}.
\end{align*}
is a Weyl tensor ( see \cite[Page 139]{CK}).

Let $U_t\subset \Sigma_t$ and $\Omega_\rho\subset\H_\rho$, which will be further specified shortly.
For the Weyl part $W$ of the Riemann curvature tensor $\bR_{\a\b\ga\d}$ we now introduce for each integers $m\ge 0$
the following  set of energies
\begin{equation}\label{5.13.2.16}
\begin{split}
&\Wb^{(m)}(\rho)^2=\int_{\Omega_\rho} \Q(\hLW{m})(S,S,\bT, \fB) d\mu_{\gb},  \quad m\le 3\\
&\Kb^{(m)}(\rho)^2=\int_{\Omega_\rho}\Q(\hLW{m})(K_1, \bT, \bT, \fB) d\mu_{\gb},\quad m \le 1\\
&\Eeb^{(m)}(\rho)^2=\int_{\Omega_\rho} \Q(\hLW{m})(S,S,S, \fB) d\mu_{\gb}, \quad m\le 2\\
&\W^{(m)}(t)^2=\int_{U_t} \Q(\hLW{m})(S,S,\bT, \bT) d\mu_g, \quad m\le 2\\
&\K^{(m)}(t)^2=\int_{U_t} \Q(\hLW{m})(K_1,\bT,\bT, \bT) d\mu_g, \quad m\le 1\\
&\spE\rp{m}(t)^2=\int_{U_t}\Q(\hLW{m})(S, S, S, \bT) d\mu_g, \quad m\le 2
\end{split}
\end{equation}
where $S=\rho \fB$ and $K_1=\vartheta(\rho/t) \tir^2 L$ with $\vartheta$ being a smooth function on
$[0, \infty)$ taking values in $[0,1]$ and
\begin{equation*} %\label{10.19.1}
\vartheta(s)=\left\{
\begin{array}{lll}
1 & \mbox{ if } s\le 1/3, \\
0 & \mbox{ if }  s\ge 2/3.
\end{array}
\right.
\end{equation*}
For tensor fields $F$ we define the energy
\begin{equation*} %\label{5.9.1.16}
\sE[F](t)^2=\int_{\Sigma_t\cap\I^+(\O)} \sQ[F](\bT, \bT) d\mu_g,
\end{equation*}
where
\begin{equation*}
\sQ[F](\bT, \bT)=\frac{1}{2}\left(|\bd_\bT F|_{h}^2+h^{\bI \bJ}g^{ij}\bd_i F_\bI \bd_j F_\bJ \right).
\end{equation*}
We also record here the canonical Bel-Robson energy
\begin{equation*} %\label{4.8.1.16}
\sE[W](t)^2=\int_{\Sigma_t\cap {\I^+(\O)}} \Q[W](\bT, \bT,\bT, \bT)d \mu_g\approx\int_{\Sigma_t\cap \I^+(\O)} \{|E|^2+|H|^2\} d\mu_g.
\end{equation*}

\subsection{A rough statement of the main theorem in \cite{Wang15} and the sketch of the proof}\label{8.07.1}

We will give a brief statement of the result in \cite{Wang15}. We emphasize that the main result of
this paper is included in Theorem \ref{4.10.6} and Proposition \ref{1.2.1.16}. These results do not
depend on the global result and the long-time estimates stated below and in Theorem \ref{2.15.2}.
Instead they rely on Theorem \ref{5.13.1.16}, which can be proved in a rather standard way (see \cite{CK}, \cite{KR2}, \cite{Wang10}), together
with a natural assumption that the foliation of the hyperboloids exists till certain proper time $\rho_*$.

\begin{theorem}[The first statement of main theorem in \cite{Wang15}]\label{2.15.1}
Consider the Einstein Klein-Gordon system (\ref{ricci})-(\ref{gkg}) under the maximal foliation
gauge (\ref{5.13.3.16}). Let  $(g_0, \pib_0, \phi[T])$ be a maximal data set, which is smooth
and satisfying (\ref{intr.2}).
Suppose that\begin{footnote}{The existence of such data can be justified by \cite{Corvino} and \cite{Chrusciel}.}\end{footnote}
the data $\phi[T]$ of the Klein-Gordon equation (\ref{gkg}) are compactly supported within $B_1$,
the Euclidean ball of radius $1$ centered at origin on $\{t=T\}$. Suppose also that on $\{t=T, r>2\}$
the metric $\bg$ coincides with the Schwarzschild metric which in terms of the polar coordinates $(t, r, \theta, \varphi)$ given by
\begin{equation}\label{3.20.lap}
\bg=-\frac{r-2M}{r+2M} dt^2+ \frac{r+2M}{r-2M} dr^2 +(r+2M)^2 (d\theta^2+\sin^2 \theta d\varphi^2)
\end{equation}
and $\pib=0$. The data $(g_0, \pib_0, \phi[T])$ is assumed to satisfy the smallness condition
\begin{equation*}
\|\p_x\rp{\le 7}({g_0}_{ij}-\delta_{ij})\|_{L^2(B_2)}+\|\nab_{g_0}\rp{\le 6} \pib_0\||_{L^2(B_2)}+\|\phi[T]\|_{H^7\times H^6}+M<\ve
\end{equation*}
where $H^s$ denotes the Sobolev space $W^{s,2}(\mathbb{R}^3)$ and $\phi[T]=(\phi(T, \cdot), \p_t \phi(T, \cdot))$.

If $\ve>0$ is sufficiently small, then there exists a unique, globally hyperbolic, smooth and geodesic complete solution  $(\M, \bg, \phi)$ foliated with level sets of a maximal time function $t$ and level sets of a proper time $\rho$, on which various sets of  energy are controlled in terms of $\ve$ as specified in Theorem \ref{2.15.2}.
\end{theorem}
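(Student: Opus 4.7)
The plan is a bootstrap/continuity argument in the proper time $\rho$: one assumes that the intrinsic hyperboloidal foliation $\{\H_\rho\}$ of Section 2 together with the maximal gauge (\ref{5.13.3.16}) exists up to some $\rho_*<\infty$, derives a hierarchy of weighted energy estimates that strictly improves a set of bootstrap assumptions on pointwise decay and on the deformation tensors $\pr$, $\pt$, ${}^{(S)}\pi$, and concludes $\rho_*=\infty$ via the continuation principle. This in turn gives $\I^+_*(\O)=\I^+(\O)$ and, since $\bb$ is bounded from below, every future-directed timelike geodesic from $\O$ is extendible indefinitely in $\rho$, yielding geodesic completeness. First I would invoke local well-posedness (Theorem \ref{5.13.1.16}) to propagate the Cauchy data from $\{t=T\}$ to the initial hyperboloid $\H_{\rho_0}$, $\rho_0\approx 1$; finite speed of propagation together with the Schwarzschild exterior (\ref{3.20.lap}) confines the perturbation inside the cone $\C^s_{\hat u_1}$, so the induced data on $\H_{\rho_0}$ is still of size $O(\ve+M)$ and in particular the Klein-Gordon datum is compactly supported inside $\C^s_{\hat u_1}\cap\H_{\rho_0}$.

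The main body of the argument runs the Bel-Robinson energies (\ref{5.13.2.16}) for the Lie-modified Weyl tensors $\hLW{m}W$ with the multipliers $S=\rho\fB$ and $K_1$, coupled with the Klein-Gordon energies $\Ephi{m}$, $\Edphi{m}$ of Section \ref{Eng_KG_d} for $\sR^{(\ell)}$-commuted scalar fields. Two features from this paper are essential in controlling the error currents. The identity ${}^{(\sR)}\pi_{\fB\fB}=0$ of Proposition \ref{2.20.18.16}, together with its higher Lie-derivative analogues, eliminates the borderline error along the bad direction $\fB$ that would otherwise defeat Gronwall. The main estimates of Theorem \ref{4.10.6} furnish quantitative control of the intrinsic frame in the sensitive region $Z^s\cap\{\rho\le\rho_*\}$, in particular a uniform positive lower bound on the function $u$ of (\ref{opti}) throughout the wave zone, so that the structure equations of Section \ref{5.10.1.16} propagate $k$, $\pt$ and ${}^{(S)}\pi$ along the timelike geodesics from $\O$ with quantitative weights. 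Pointwise decay is recovered from the hyperboloidal Klainerman--Sobolev inequality applied to the null components of $\hLW{m}W$ in Definition \ref{3.18.19} and to $\sR^{(\ell)}\phi$, giving the $\bt^{-3/2}$ rate for $\phi$ and the appropriate improved rates for the tangential Weyl components, which suffices to close all but the top-order estimates.

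The main obstacle will be the closure of the top-order energy $\Wb^{(3)}$. Because $\hLW{3}W$ is sourced through (\ref{bianchi1})--(\ref{schouten}) by a Weyl current $\J$ involving $\sR^{(3)}\bd^2\phi$, a naive bound would demand $\Edphi{4}$, which is one order above what one can propagate for the scalar field. The strategy, in line with Lemma \ref{7.23.12}, is to exploit the Bianchi identity $\bd^\a W_{\a\b\ga\d}=\J_{\b\ga\d}$ to integrate by parts, shifting one derivative from $\phi$ onto the curvature, and to invoke the null-form structure between $\J$ and the Weyl components in Definition \ref{3.18.19}; the strong decay of $\bd\phi$ along the directions tangent to $\H_\rho$ produced by the boosts $\sR$ is what makes the residual integrals summable in $\rho$. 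A second subtlety is the excision near the last slice described in Section \ref{wza}: using the geometric comparison of Section \ref{almost}, I would verify that the excised region for $t\ge t_*$ lies entirely inside the Schwarzschild zone, so that the needed control on $\pt$ and the frame coefficients is supplied directly by (\ref{3.20.lap}) and Theorem \ref{4.10.6} rather than by a hyperboloidal energy flux that is no longer available. Once the full hierarchy is closed with constants strictly smaller than in the bootstrap, $\rho_*$ extends beyond any finite value and the theorem follows; the asymptotic convergence of the Hawking mass promised in the abstract is then read off, as in Section \ref{mass}, as a byproduct of the main estimates along each $\H_\rho$.
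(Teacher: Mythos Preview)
Your proposal tracks the paper's own six-step sketch (Section~\ref{12.10.02}) quite closely: local extension to $\H_{\rho_0}$ (Step~1), bootstrap assumptions on energies and deformation tensors (Step~2), the three-tier energy hierarchy with the top-order closure via integration by parts against the Bianchi system and the null structure of the Weyl current (Step~3), control of deformation tensors (Step~4), boundary and leakage control in $Z^s$ via Theorem~\ref{4.10.6} (Step~5), and extension of the timelike injectivity radius (Step~6). The essential ideas---Proposition~\ref{2.20.18.16} to kill the $\fB\fB$-component of $\pr$, Theorem~\ref{4.10.6} to secure a uniform lower bound on $u$ and to handle the excised region $Z^\sharp$ via Proposition~\ref{12.29.2}, and the integration-by-parts trick of Lemma~\ref{7.23.12} to avoid losing a derivative on $\phi$---are all correctly identified.

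One inaccuracy worth flagging: you write that the structure equations of Section~\ref{5.10.1.16} propagate $k$, $\pt$ and ${}^{(S)}\pi$ along the timelike geodesics from $\O$. That is how $k$ and $\pr$ are handled (and ${}^{(S)}\pi$ is just $k$, cf.\ the footnote in Step~4), but $\pt$ is \emph{not} controlled by transport along $\fB$. In the paper's scheme (Step~4), $\pt_{\bi\bj}=-2\pib_{\bi\bj}$ is obtained from the Codazzi equation~(\ref{codazziT}) and the Morawetz energies via elliptic estimates on $\Sigma_t$, while $\pt_{0\bi}=\nab_\bi\log n$ comes from the elliptic equation~(\ref{lapse}) for the lapse. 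This is precisely why the excision of $Z^\sharp$ is delicate: once $t>t_*$ there is no annular region in $\Sigma_t\cap Z^\flat$ on which to run these $\Sigma_t$-elliptic estimates, and one must instead appeal to Proposition~\ref{12.29.2} and the explicit Schwarzschild values. You also slightly understate Step~5: beyond the lower bound on $u$, one needs the curvature flux bounds on $\C^s_{\hat u_0}$ (the boundary data for the hyperboloidal energy scheme) and the pointwise Weyl control in the leakage zone $Z^\diamond$, both of which are supplied by the comparison of Section~\ref{almost} and Proposition~\ref{3.18.1.16}. With these two adjustments your outline coincides with the paper's.
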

\begin{remark}
To obtain the results in Theorem \ref{2.15.2} in wave zone, we only need to  propagate energies from $\H_{\rho_0}$ to the last slice $\H_{\rho_*}$ of  two-order less than the given data at the initial maximal slice.
\end{remark}

\subsubsection{Sketch of the main steps of the proof of Theorem \ref{2.15.1}}\label{12.10.02}
%By argument of geometric comparison, we will prove the following result,
%\begin{lemma}\label{c_scw}
% With $\ep_0$ sufficiently small, at $\Sigma_T$, with $0<\rho\le \rho_0= \frac{T}{4}$,
 %$$r(q)> \frac{T}{\sqrt{2}}, \quad \quad \forall q\in S_{T, \rho_0},$$
 %where $r$ denotes the Euclidean distance to $x=0$ in $t=T$.
%\end{lemma}
%(We can change this one to a more precise version in Section \ref{sec_loc}.)
We define
\begin{equation}\label{geosch}
\hat u(t, r) = t - \gamma(r),
\end{equation}
where
$$
\gamma(r) = r + 4M \ln (r-2M), \quad \mbox{ for } r>2M.
$$
Consider $t\ge T$, for each $R\ge 2$ let $\C_{\hat u}^s$ denote the level set of $\hat u$ with $\hat u = T- \gamma(R)$.
  This $\C_{\hat u}^s$, which is called the schwarzschild cone,  is a ruled surface  by the outgoing null geodesics initiating
from $\{r=R, t= T\}$. We use $\itt(\C^s_{\hat u})$ to denote the interior of the region enclosed by $\C^s_{\hat u}$.
For the following exposition, we choose $R_0= 5/2$, $R_1 =2$, set $\hat u_0=T-\ga(R_0), \hat u_1 =T-\gamma(R_1)$,
and consider $\C^s_{\hat u_i}$ and $\itt(\C^s_{\hat u_i})$ for $i=0,1$.
Let $\rho_*$ be a large number. We set $S_{\rho, \hat u}=\C^s_{\hat u}\cap \H_\rho$ and define
\begin{equation}\label{rhob}
t^*=\max\{t: S_{\rho_*,\hat u_0}\},\quad t_*=\min\{t: S_{\rho_*, \hat u_0}\}.
\end{equation}

  %\begin{proposition}\label{12.29.1} Let $\rho_*>0$ be sufficiently large.
  %$$\hat u_0\le \hat u(S_{t_*,\rho_*})< \hat u_1,\quad  t_*\ge t_{\max}(S_{\rho_*, \hat u_1}).$$
  %The set $\{t\ge t_*, \rho=\rho_*\}$ is fully contained in the Schwarzschild zone $Z^s$. \begin{footnote}{See the definition in Definition %\ref{2.28.2.16}.}\end{footnote}
  % \end{proposition}

\begin{definition}\label{2.28.2.16}
\begin{enumerate}[leftmargin = 0.7cm]
\item Given a set of points $E$, we use $\sf{E}$ to denote the collection of time-like distance maximizing
geodesics connecting $\O$ and every point in $E$. For convenience, we write $\sf{q}:=\sf{\{q\}}$.

\item We define the wave zone by
$$
\hat{I}_0^+ = \overline{\itt (\C^s_{\hat u_0})}\cap\{\rho\le \rho_*, t\ge T\}.
$$

\item We define $I_0^+$ to be a truncated communication zone, where
$$
I_0^+=\overline{\itt(\sf{S_{\rho_*, \hat u_0}})}\cap \{\rho\le \rho_*, t\ge T\}.
$$

\item  We set $Z^\diamond = I_0^+\backslash \overline{\itt(\C^s_{\hat u_0})}$ which is called the zone of leakage.

\item The set $Z^s=\left({\{t\ge T\}\cap \I^+(\O)}\right)\backslash\itt(\C^s_{\hat u_1})$ is called the Schwarzschild zone.

\item We denote by $\C_0$  the outgoing light cone emanating from $\O$.
\end{enumerate}
\end{definition}

\begin{remark}\label{4.9.2.16}
{It is important to point out that for $p\in Z^s$, $\sf{p}\cap \{t\ge T\}$ is fully contained in $Z^s$.
%Since, when $t\ge T$, for each $q$ in $\sf{p}$, the part of $\sf{p}$ with $t\le t(q)$ is contained in the interior of the backward lightcone from $q$, %which is fully contained in $Z^s$ due to the property of finite speed of propagation.
Indeed, because $\sf{p}$ is a timelike geodesic reaching $p$, which has to be in the interior of
the backward light cone initiated at $p$. Such light cone is completely outside of $\itt (\C^s_{\hat u_1})$
due to the finite speed of propagation.}
\end{remark}

\begin{definition}
For $\rho_0\ge 2 T$ we define the region\begin{footnote}{For convenience, we consider $\rho_0=2T$, such that $\H_{\rho_0}$
can be proved to be  fully above $t=T$ due to Proposition \ref{prl_1}. }\end{footnote}
\begin{equation}\label{wz}
Z^+=\hat{I}^+_0\cap \{\rho\ge \rho_0\}
\end{equation}
We can split $Z^+$ as $Z^+=Z^\flat\cup Z^\sharp$, where
\begin{align*}
&Z^\flat:=\left(\bigcup_{\rho_0\le\rho\le \rho_* } \H_\rho\right) \cap\{ t\le  t_*\} \cap\overline{\itt (\C^s_{\hat u_0})},\\
%&Z^{\flat}: \cup_{0\le t\le \hat t^*} \Sigma_t \mbox{ with boundary } C^s_{\hat u_0}\\
&Z^\sharp :=  \left(\bigcup_{\rho_0\le\rho\le \rho_* } \H_\rho\right)\cap\{t_* < t\le t^*\} \cap\overline{\itt (\C^s_{\hat u_0})}.
\end{align*}
\end{definition}

In order to set up the energy scheme appropriately, we will rely on the following property,
which will be proved in Proposition \ref{12.30.1}.

\begin{proposition}\label{12.29.2}
There holds $Z^\sharp \subset\overline{\itt(\C^s_{\hat u_0})}\backslash\overline{\itt(\C^s_{\hat u_1})}$.
Consequently $Z^\sharp\subset Z^s$.
\end{proposition}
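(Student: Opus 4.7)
The first inclusion $Z^\sharp\subset\overline{\itt(\C^s_{\hat u_0})}$ is built into the definition of $Z^\sharp$, so the substance of the proposition is the disjointness $Z^\sharp\cap\overline{\itt(\C^s_{\hat u_1})}=\emptyset$. I would argue by contradiction: assume that some $p\in Z^\sharp$ satisfies $p\in\overline{\itt(\C^s_{\hat u_1})}$, so that $p\in\H_\rho$ for some $\rho\in[\rho_0,\rho_*]$, $t(p)>t_*$, and $p$ lies on or inside $\C^s_{\hat u_1}$. The strategy is a quantitative comparison with the surrounding Schwarzschild background: the ``shell'' between the two cones $\C^s_{\hat u_0}$ and $\C^s_{\hat u_1}$ has a macroscopic width while the deviation of the intrinsic geometry from Schwarzschild in $Z^s$ is only of order $\ep$.

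In exact Schwarzschild, a spherically symmetric hyperboloid $\H_\rho$ meets each cone $\C^s_{\hat u}$ in a round $2$-sphere at a unique Schwarzschild time $t^{\mathrm{Sch}}(\rho,\hat u)$. Using $\hat u=t-\gamma(r)$ with $\gamma'(r)=1+\frac{4M}{r-2M}>0$, together with the spacelike condition on $\H_\rho$ (which forces $|dt/dr|_{\H_\rho}<\gamma'(r)$), one checks that $\partial_{\hat u}t^{\mathrm{Sch}}|_{\rho}<0$ and $\partial_\rho t^{\mathrm{Sch}}|_{\hat u}>0$, whence
\[
\Delta:=t^{\mathrm{Sch}}(\rho_*,\hat u_0)-t^{\mathrm{Sch}}(\rho_*,\hat u_1)>0
\]
is a positive constant determined only by $R_0=5/2$, $R_1=2$ and $M$, independent of $\ep$. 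Theorem~\ref{4.10.6} then supplies the perturbation bound $|t-t^{\mathrm{Sch}}(\rho,\hat u)|\lesssim\ep$ on every $S_{\rho,\hat u}\subset Z^s$ with $\rho\le\rho_*$ and $\hat u\in[\hat u_0,\hat u_1]$. Combining these two ingredients gives $t_*\ge t^{\mathrm{Sch}}(\rho_*,\hat u_0)-C\ep$ and, for every $\rho\le\rho_*$,
\[
\max_{S_{\rho,\hat u_1}} t\ \le\ t^{\mathrm{Sch}}(\rho_*,\hat u_1)+C\ep\ \le\ t_*-\Delta+2C\ep\ <\ t_*
\]
for $\ep$ sufficiently small, which rules out the case $p\in\C^s_{\hat u_1}$.

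It remains to rule out the case $p\in\itt(\C^s_{\hat u_1})$, where Theorem~\ref{4.10.6} is not directly available. The key observations are: (i) formula (\ref{nabt}) gives $|\nabb t|_{\gb}=(an)^{-1}\neq 0$ throughout $\H_\rho$, so $t$ has no critical points on $\H_\rho$; and (ii) the set $\H_\rho\cap\overline{\itt(\C^s_{\hat u_1})}$ is compact, because $\H_\rho$ is asymptotic as $t\to\infty$ to the causal boundary $\C_0$ (on which $\hat u\to 0<\hat u_1$), so $\H_\rho$ exits $\itt(\C^s_{\hat u_1})$ at large $t$ and does not re-enter by the monotonicity of $t$ in $\hat u$ on $\H_\rho\cap Z^s$. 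Consequently the maximum of $t$ on the compact set $\H_\rho\cap\overline{\itt(\C^s_{\hat u_1})}$ is attained on its boundary $S_{\rho,\hat u_1}\subset Z^s$, which by the previous paragraph is strictly less than $t_*$; this contradicts $t(p)>t_*$. The consequence $Z^\sharp\subset Z^s$ then follows from $\overline{\itt(\C^s_{\hat u_0})}\setminus\overline{\itt(\C^s_{\hat u_1})}\subset\{t\ge T\}\cap\I^+(\O)\setminus\itt(\C^s_{\hat u_1})=Z^s$. The principal obstacle is item (ii): certifying that $\H_\rho$ exits $\itt(\C^s_{\hat u_1})$ and stays out, without invoking control on the intrinsic geometry beyond $Z^s$; this reduces ultimately to applying Theorem~\ref{4.10.6} along the outer part of $\H_\rho$ near $\C_0$, where the main estimates remain valid.
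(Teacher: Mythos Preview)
Your proposal has the right qualitative picture but contains a genuine gap in each of its two steps.

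\textbf{First step (points on $\C^s_{\hat u_1}$).} You invoke Theorem~\ref{4.10.6} to claim $|t-t^{\mathrm{Sch}}(\rho,\hat u)|\lesssim\ep$ on $S_{\rho,\hat u}$. This does not follow. Theorem~\ref{4.10.6} gives $|u-\hat u|\lesssim\ep$, but since $\ub=\rho^2/u$ and $\bb^{-1}t=\tfrac12(u+\ub)$, an $O(\ep)$ perturbation of $u$ produces an $O(\ep\,\ub/u)\approx O(\ep t)$ perturbation of $t$ in the region $\hat u\approx 1$. Moreover, there is no canonical ``Schwarzschild hyperboloid'' here---the vertex $\O$ lies in the dynamical region, so the intrinsic $\H_\rho$ are not spherically symmetric even in $Z^s$. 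The paper instead proves a sharp oscillation bound $\mathrm{osc}_{S_{\rho,\hat u}}(t)\lesssim\ep^{1/2}$ by an explicit computation of $|{}^\dag\sn t|_{\bg}\lesssim\ep^{1/2}\langle\rho\rangle^{-2}$ (Lemma~\ref{12.28.31}, using a cancellation between $\tilde r^2/(n\rho)^2$ and ${}^\dag\Nb(t)^2$) paired with a diameter estimate $\mathrm{diam}(S_{\rho,\hat u})\lesssim r$ obtained from a Gauss curvature computation (Proposition~\ref{gcomp_1}). It then integrates ${}^\dag\Nb(t)\,\dfa\approx\tilde r/u\gtrsim\rho^2$ in $\hat u$ to produce the separation $t_{\min}(S_{\rho,\hat u_0})-t_{\max}(S_{\rho,\hat u_1})\gtrsim\rho^2(\hat u_1-\hat u_0)$.

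\textbf{Second step (points in $\itt(\C^s_{\hat u_1})$).} Your maximum-principle argument needs the compactness of $\H_\rho\cap\overline{\itt(\C^s_{\hat u_1})}$, and the justification you offer is circular: to see that $\H_\rho$ eventually lies in $\hat u<\hat u_1$ you appeal to Theorem~\ref{4.10.6}, but that theorem is only available in $Z^s$, i.e.\ precisely where $\hat u\le\hat u_1$ already. The paper avoids this entirely by working on $\Sigma_t$ rather than on $\H_\rho$: having shown $\hat u(S_{t,\rho_*})<\hat u_1$ for $t_*<t<t^*$ via the $\hat u$-monotonicity of $t$ on $\H_{\rho_*}$, it uses $\bN(\hat u)=-n^{-2}\varpi<0$ (valid in $Z^s$ by (\ref{4.9.8})) and a continuity argument along the integral curves of $\bN$ on $\Sigma_t$. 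Starting at $\rho_*$ with $\hat u<\hat u_1$ and decreasing $\rho$, as long as $\hat u<\hat u_1$ the curve stays in $Z^s$ and $\hat u$ keeps decreasing; hence it can never reach $\hat u_1$. This finite-length continuation argument replaces your compactness hypothesis and requires no control inside $\itt(\C^s_{\hat u_1})$.
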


\begin{figure}[ht]
\centering
\includegraphics[width = 0.48\textwidth]{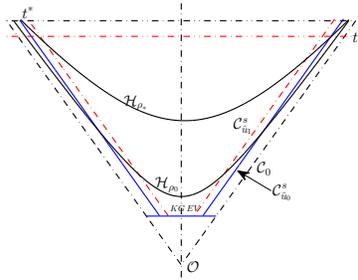}
  \vskip -0.7cm
  \caption{Illustration of wave zone}\label{fig4}
\end{figure}

Next, we sketch the main steps of the proof of Theorem \ref{2.15.1} the details will be given in \cite{Wang15}.
We consider the initial data set given in Theorem \ref{2.15.1}, at the maximal level set $t=T$,
due to the trivial shift of time stated at the beginning of  Section \ref{sec_c}.

{\bf Step 1: Local extension.} Initiated from $\{t=T,r=r_0>\frac{5}{2}\}$, we solve the Einstein-Klein-Gordon
system backward to a certain time $t<0$ and forward to $t=C^* T$ with
\begin{equation}\label{5.13.5.16}
C^*T \ge t_{\max}(S_{{\rho_0},{\hat u_0}} ).
\end{equation}
Our choice of $T$ is sufficiently large to guarantee\begin{footnote}{ Here $r(p)$ denotes the Euclidean distance
of the point $p$ to the center $(t_p,0)$.}\end{footnote} $r(\Sigma_T \cap \C_0)\ge 4$.
We establish in Theorem \ref{5.13.1.16} a set of energy estimates on $\Sigma_t$ with $0<t\le C^* T$ for the Weyl
tensor fields and the scalar fields. This allows us to control the geometry of the hyperboloids $\H_\rho$ in
$Z^{\loc}=\{0<\rho\le\rho_0, 0<t\le C^*T\}\cap \I^+(\O)$, see Proposition \ref{4.27.1.26}. The set of energy estimates
on $\H_{\rho_0}\cap \hat I^+_0$ is used  as the initial energies for the energy scheme in $Z^+$.

{\bf Step 2: Bootstrap assumptions.}   The goal of the energy scheme is to control the Bel-Robinson energies for
the Weyl part of the curvature and the energies on the scalar field $\phi$. These are achieved by a delicate
bootstrap argument. For a fixed but arbitrary number $\rho_*>0$, we make a set of bootstrap assumptions
on various sets of energies in the wave zone $\hat I^+_0$ up to the last slice $\H_{\rho_*}$. The deformation
tensors of $\bT, \fB$ and the boost vector fields $ \sR_a$, $a=1,2,3$, as the most crucial geometric quantities
that influence the propagation of energies, are also included in the bootstrap assumptions and need to be proved
simultaneously with the energy estimates. We also assume the radius of injectivity verifies  $i_*\ge \rho_*$.

{\bf Step 3: Boundedness theorem and the energy hierarchy.}
Establishing the boundedness theorem for various types of energies, undoubtedly, is the core part of the proof.
The analysis is based on the system (\ref{bianchi1}) and (\ref{gkg}). In the sequel, we only explain our strategy
in controlling the Weyl components, which already mirrors our treatment for the massive scalar fields. As stated in
Theorem \ref{2.15.2}, we will establish the boundedness theorem for three types of energies on the hyperboloids $\H_\rho$
and the maximal slice $\Sigma_t$ contained in $Z^+$. The three types of energies are called the standard energies, the
Morawetz energies and the CMC energies respectively, which form an energy hierarchy. There are two factors which need
to be balanced when constructing the hierarchy. One factor is the control in terms of weights, namely, the scalar
factors of $\rho$ or $t$ paired to the Weyl components. These weights, in particular, form the main factor that
determines the rate of decay for the Weyl components. The other factor is the control of the order of derivatives.
Among the three types of the energies, the standard energies give the control up to the third order derivatives
for the Weyl components. However, for certain components of the Weyl curvature and of the deformation tensor $\pt$,
the weights paired with do not provide sufficiently fast decay. To compensate such weakness, the other two types
of energies are created and bounded simultaneously with the standard energies.

One difficulty we encounter quite often is due to the incompatibility between the maximal frame and the hyperboloidal
frame, which constantly causes a loss of a weight of $\frac{t}{\rho}$, identical to a growth of $t^\f12$ in $\hat I^+_0$.
The other difficulty, not surprisingly, comes from the fact that the $\fB$ derivative does not take weight.
In both scenarios,  certain weights, being functions of $t,\rho$,  can not be bounded together with derivatives
of either the Weyl components or the scalar field.  To close the top order standard energies, by using the Einstein
Bianchi equations and the null condition exhibited in the Weyl currents, we perform the integration by part.
Such procedure, technically very involved though, is also used for closing other energy estimates. It
  gets the energy estimates closed at a very sharp level, which maximizes the benefit due to the null
forms that the Einstein Bianchi equations exhibit under the intrinsic tetrad.

In the boundedness theorem, we control energies on all hyperboloidal slices in $Z^+$ as well as on the part
of maximal slices  in $Z^\flat$. In particular, when $t_{\min}(\H_{\rho_*})< t<t_*$, the set where we consider
the $\Sigma_t$-energies is a family of annuli, with the inner boundary $S_{t, \rho_*}$ and the outer boundary
$S_{t, \hat u_0}$. When $t\ge t_*$, there is no such annuli region to obtain the $\Sigma_t$ energies. Hence
the region $Z^\sharp$ is excised from the wave zone, when consider the energy control on maximal slices.
However the energy argument in $Z^+$ relies  on the control of $\pt$ throughout $Z^+$. In the region $Z^\flat$,
we will control $\pt$  by combining energy estimates with the elliptic estimates provided by the Codazzi equations
(\ref{codazziT}). The Morawetz type energies are particularly important for obtaining sufficient control on $\pt$
in $Z^\flat$. Such energies are supposed to provide stronger control in terms of the weights for the Weyl components,
with a compromise on the order of derivatives. In $Z^\sharp$ the deformation tensor $\pt$ will be controlled in a
different way. In Proposition \ref{12.29.2} we show $Z^\sharp$ is contained in the Schwarzschild zone. Then $\pt$
can be analysed by using the information provided by the Schwarzschild metric and the geometric comparison established
in Theorem \ref{4.10.6}.
%\begin{remark} Not sure if keeping the following. For $t_*(\rho_0)\le t< t_*$, we can regard $t=\hat t_*(t')$.
%\begin{equation*}
%\cup_{\rho_*(\hat t_*(t'))< \rho\le \rho(0, t)}\H_\rho \cap C^s_{\hat u_0}.
%\end{equation*}
%For $\rho\ge  \rho_* (t')$, $S_{t, \rho}\subset \Sigma_{t, u> \hat u_0}.$
%\end{remark}

{\bf Step 4: Control of deformation tensors.} The proof of the boundedness theorem for the three types of energies
relies crucially on the control of the second fundamental form\begin{footnote}{$\up{S}\pi$ is fully represented
by $k$.}\end{footnote} $k$, $\pt$, $\up{\sR}\pi$ and their derivatives. The control of all these deformation
tensors are established simultaneous with all types of energy estimates, via a rather delicate bootstrap argument.
\begin{enumerate}[leftmargin = 0.7cm]
\item  To control $\pt_{\bi\bj}$, we use the Morawetz energies, the Codazzi equation (\ref{codazziT}) and the
Sobolev embedding on maximal slices. The control on the lapse function, which gives the control of $\pt_{0\bi}$,
is obtained by a set of elliptic estimates due to (\ref{lapse}).

\item To control $k_{ij}$ and $\up{\sR}\pi$, we use the transport equations for $k$ and $\up{\sR}\pi$;
see (\ref{s1})-(\ref{bpr0}). To obtain stronger control on $k_{i\Nb}$ and $\up{\sR}\pi_{i\Nb}$, we use
the Codazzi equation (\ref{codazzib}) on $\H_\rho$  with the boundary $S_{\rho, \hat u_0}$.
\end{enumerate}

{\bf Step 5: Boundary value and control of the leakage.}
In order to establish the long-time energy estimates inside the wave zone $\hat I_0^+$ enclosed by
$\C^s_{\hat u_0}$, we first need to derive two types of estimates on Weyl curvature components in $Z^s$.
One is  the bound of the curvature fluxes of various types of energy momentums along the Schwarzschild
cone $\C^s_{\hat u_0}$, which are as important as the bound on the initial data.  The other type is to
control the  Weyl components in the zone of leakage $Z^\diamond$. Such estimates are crucially used for
controlling the geometric quantities $k$ and $\up{\sR}\pi$ in the entire truncated communication zone
$I_0^+$ via the transport equations. Both types of estimates are derived by brutal force. This means that
they are based on a comprehensive comparison between the intrinsic hyperboloidal foliation with the
canonical Schwarzschild geometry in $Z^s$. In Proposition \ref{3.18.1.16}, we  provide the control of
the $0$-order Weyl curvature components in $Z^s$, which actually gives a set of very precise asymptotic
behavior of  the Weyl components when they approach the null infinity of the light cone $\C_0$ along all
the hyperboloids $\H_\rho$. More and higher order estimates of these types  are provided in \cite{Wang15}.

{\bf Step 6: Completion of the geometric argument.}
Finally, we extend the radius of injectivity beyond $\rho_*$. This is based on the control of curvature and
$k$ in $\I^+(\O)$ for $\rho\le \rho_*$. The control of curvature is obtained by the energy estimates in
the wave zone and the geometric comparison in $Z^s$ as explained in Step 5. The control of $k$ relies on the
transport equations and the control on curvature. The local-in-time estimates  and long-time estimates
in $Z^s$ for $k$ are proved in Proposition \ref{4.27.1.26} and Theorem \ref{4.10.6}.

Below we list the boundedness theorem and its consequence.

\begin{theorem}[main theorem of \cite{Wang15}: results in wave zone]\label{2.15.2}
Let the conditions in Theorem \ref{2.15.1} hold. Consider the energies defined in (\ref{5.13.2.16}) with
$U_t=Z^\flat\cap \Sigma_t$ and $\Omega_\rho=\hat I^+_0\cap \H_\rho$. Then for $\rho_0<\rho\le \rho_*$
and $T<t\le t_*$ with $\rho_*>0$ a fixed arbitrary large number and $t_*$ defined in (\ref{rhob}), there hold
\begin{enumerate}[leftmargin=0.7cm]
\item  the standard energy estimates:
\begin{equation*}
\Wb^{(\le 3)}(\rho)\les  \ve \rho^{C\ve},\quad \W^{(\le 2)}(t)\les \ve,
\end{equation*}
\item  the CMC energy estimates:
\begin{equation*}
\Eeb^{(\le 2)}(\rho)\les \ve \rho^{C\ve},\quad \spE^{(\le 2)}(t)\les \ve t^{C\ve},
\end{equation*}
\item  the Morawetz energy estimates:
\begin{equation*}
\Kb^{(\le 1)}(\rho)\les \ve \rho^{C\ve}, \quad \K^{(\le 1)}(t)\les \ve t^{C\ve},
\end{equation*}
\item the energy estimates for $\phi$:
\begin{equation*}
\Ephi{\le 3}(\rho)+\Ephi{\le 3}(t)\les \ve,\quad  \Edphi{\le 3}(\rho) \les \ve \rho^{C\ve},
\quad \Edphi{\le 3}(t)\les \ve t^{C\ve},
\end{equation*}
where  $C>0$ is a universal constant\begin{footnote}{For universal constant, we mean the constant
depends on the initial data in Theorem \ref{2.15.1} }\end{footnote}.
\item  In the sequel, we give results on the asymptotic behavior of the Weyl components and the deformation
tensors\begin{footnote}{Norms are taken by the appropriate induced metrics, i.e. $\gb$ for $\H_\rho$-tangent
tensor fields, $g$ for $\Sigma_t$-tangent tensor fields, the induced metric $\ga$ on $S_{t,\rho}$ for
$S_{t,\rho}$-tangent tensor.}\end{footnote}.  There is a universal constant  $c>0$ such that for $\delta=c\ve$
the following results hold\begin{footnote}{We may assume $\delta<\frac{1}{6}$ which can be achieved because
$\ve$ can be sufficiently small.}\end{footnote}
\begin{enumerate}
 \item For the Weyl components in Definition  \ref{3.18.19} with $\Psi=W$, we list two sets of asymptotic behavior
 in the following table
\begin{table}[ht] %\label{table}
\begin{center}
\begin{tabular}{ | c | c| c | c | c |  }\hline
		 $\ab $& $\udb$ &$(\varrho, \sigma)$&$\b$&$\a$ \\[0.9ex]
 \hline
 $\ve\rho^\d t^{-1}u^{-2}$ & $\ve \rho^\d t^{-\frac{3}{2}}u^{-\frac{3}{2}}$&$\ve \rho^\d t^{-2}u^{-1}$& $\ve\rho^\d t^{-\frac{5}{2}} u^{-\f12}$& $\ve\rho^\d t^{-3}$ \\[0.8ex]
 $\ve t^{-1} u^{-\frac{3}{2}}$ & $\ve t^{-\frac{3}{2}} u^{-1}$&$ \ve t^{-2}u^{-\f12} $& $\ve t^{-\frac{5}{2}}$&$ \ve t^{-\frac{5}{2}}$ \\[0.8ex]
 \hline
\end{tabular}
\end{center}
%\caption{}
%\label{table}
 \end{table}

\item For the scalar field $\phi$ there hold
\begin{equation}\label{4.30.3.16}
\bt^\frac{3}{2}\left|\phi, \sn \phi, L\phi, \frac{\rho}{t} \Lb \phi\right|+\bt^{\frac{3}{2}-\d} |\Lb \phi|\les \ve.
\end{equation}
\end{enumerate}
\item For $\pt$ and $k$ there hold the estimates
\begin{align}
&\sup_{\Omega_\rho} \left|t\rho (\hk, \tr k -\frac{3}{\rho}) \right|\les \ve\l\rho\r^\d, \label{hk1}\\
&\sup_{\Omega_\rho} \left|t^\frac{3}{2}\hk_{j\Nb}\right|\les \ve, \label{hk2}\\
&\sup_{U_t}\left|t^{\frac{3}{2}}\bd n \right|\les \ve t^{3\d}, \label{lap}\\
&\sup_{U_t}\left|t u^\f12 \pt_{\bi\bj}\right|\les \ve t^{3\d}, \label{pt1}\\
& \sup_{U_t}\left|t^\frac{3}{2}\pt_{\bj\bN}\right|\les  \ve t^{3\d}. \label{pt2}
%&\sup_{\H_\rho}|t^\f12 \rho (\Lie_\sR \hk, \sR\tr k)|\les \dn \rho^{3\d}\label{hliek}\tag{DB2}
\end{align}
\end{enumerate}

%Consistent with (\ref{BAS2}), we make the bootstrap assumption on curvature components,
%\begin{equation}\label{sweyl}\tag{SWEYL}
%\bt^\frac{5}{2}|\a, \b|+\bt^{\frac{3}{2}}\brho|\varrho, \sigma|+\bt^\f12 \brho^2 |\udb|+\bt^{-\f12}\brho^3 |\ab|\le C\dn
%\end{equation}
%\begin{equation}\label{4.25.9}\tag{TL2}
%\|t^{-2\d}(\pt, \bd n )\|_{L^2(\Sigma_t)}\les \dn
%\end{equation}
\end{theorem}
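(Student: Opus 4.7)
The plan is to execute the six-step strategy outlined in Section \ref{engsch} by continuous induction on the proper time $\rho$. First I would invoke Theorem \ref{5.13.1.16} to propagate the given Cauchy data from the maximal slice $\{t=T\}$ up to the initial hyperboloid $\H_{\rho_0}$, producing the required initial energies on $\H_{\rho_0}\cap\hat{I}^+_0$ and, through Proposition \ref{4.27.1.26}, control of the intrinsic foliation and of $k,\pt$ throughout $Z^{\loc}$. For arbitrary fixed $\rho_*>0$ I would then bootstrap the full list of estimates in items (1)--(6) at a constant twice the universal constant $C$, with the goal of improving each bound by a factor $\tfrac{1}{2}$ and closing the argument by continuity in $\rho$ and $t$.

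The heart of the argument is the long-time energy hierarchy. For the Weyl tensor $W$ one contracts the Bianchi equation (\ref{bianchi1}) with the Bel-Robinson tensor $\Q(\hLW{m})$ against the multiplier triples $(S,S,\bT)$, $(K_1,\bT,\bT)$ and $(S,S,S)$, integrating by parts over the region bounded by $\H_\rho$, $\H_{\rho_0}$, $\C^s_{\hat u_0}$ and $\{t=T\}$. The resulting error integrals split into a current piece generated by $\J,{}^\star\J$ of Lemma \ref{7.23.12}, which carry the scalar field through (\ref{schouten}), and a deformation piece driven by $\up{S}\pi$, $\up{K_1}\pi$ and, after commuting with $\sR_a$, by $\up{\sR}\pi$. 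Proposition \ref{2.20.18.16} already eliminates the borderline $\up{\sR}\pi^{\fB\fB}$ and $\up{\sR}\pi^{\fB\sR}$ contributions identified in the introduction as the obstruction for the approach of \cite{LRCMP}. The remaining borderline term arises only at top order and is treated by a second integration by parts exploiting the null structure in Lemma \ref{7.23.12}; this shifts a derivative off the Weyl sector onto $\phi$, where the bootstrapped decay $\bt^{-3/2+\delta}|\Lb\phi|\les\ve$ from (\ref{4.30.3.16}) supplies precisely the integrable $\rho^{-1-\delta}$ factor needed to apply Gronwall. The Morawetz energies $\Kb^{(\le 1)},\K^{(\le 1)}$ and CMC energies $\Eeb^{(\le 2)},\spE^{(\le 2)}$ are propagated in parallel in order to compensate the weak weights paired with $\ab,\udb,(\varrho,\sigma)$, at the cost of fewer derivatives; the scalar energies are propagated by the multiplier identities of Section \ref{Eng_KG_d}, with the coupling back to Weyl entering through $\bR_{\mu\nu}$ in (\ref{gkg}).

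The main obstacle, and what genuinely separates this argument from the vacuum scheme of \cite{CK}, is the simultaneous control of the deformation tensors $\hk,\,\tr k-\tfrac{3}{\rho},\,\up{\sR}\pi,\,\pt$ throughout the bifurcated zone $Z^+=Z^\flat\cup Z^\sharp$. The transport equations (\ref{s1})--(\ref{bpr0}) must be integrated along the timelike geodesics $\sf{p}$, and for $p$ near the boundary $\C^s_{\hat u_0}$ a portion of $\sf{p}$ leaks out of the wave zone where the energy estimates have no direct reach. Proposition \ref{12.29.2} confines the leakage to $Z^s$, so the integrand on $\sf{p}\cap Z^\diamond$ is supplied not by the energy hierarchy but by the Schwarzschild comparison established in Theorem \ref{4.10.6}; this is the precise reason why Theorem \ref{4.10.6} of the present paper is a prerequisite for Theorem \ref{2.15.2}. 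Inside $Z^\flat$, $\pt_{\bi\bj}$ is recovered by pairing the Morawetz energies with the elliptic estimate from Codazzi (\ref{codazziT}) on maximal slices, $n$ from the lapse equation (\ref{lapse}), and $k_{i\Nb},\,\up{\sR}\pi_{i\Nb}$ from the Codazzi equation (\ref{codazzib}) on $\H_\rho$ with boundary values on $S_{\rho,\hat u_0}\subset Z^s$ again provided by Theorem \ref{4.10.6}. Once all bootstrap quantities have been improved, the pointwise asymptotics of item (5) and (\ref{4.30.3.16}) follow from the Klainerman--Sobolev inequality on the hyperboloids $\H_\rho$ using the boosts $\sR_a$ as commutators, and the time-like radius of injectivity $i_*$ extends past $\rho_*$, closing the continuity argument and yielding the global result.
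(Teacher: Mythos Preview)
Your proposal is correct and follows essentially the same six-step strategy that the paper itself only sketches in Section \ref{12.10.02}; the full proof is deferred to \cite{Wang15}, and what you have written is an accurate elaboration of that sketch, correctly identifying the roles of Proposition \ref{2.20.18.16}, the top-order integration by parts via Lemma \ref{7.23.12}, the leakage control through Proposition \ref{12.29.2} and Theorem \ref{4.10.6}, and the Codazzi/elliptic estimates for the deformation tensors.
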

Below we list the results concerning the local energy estimates.

\begin{theorem}[Local-in-time estimates]\label{5.13.1.16}
For $0<t\le C^* T$ with the fixed constant $C^*$ specified  in (\ref{5.13.5.16}),  there hold
\begin{equation}\label{lpt1}
\sup_{\Sigma_t\cap \I^+(\O)} \left| \pt_{\bi\bj}, \bd n, n-1 \right|\les \ve
\end{equation}
and
\begin{equation*}
\sE[W](t)+\sE[\bd\rp{\le 4}\bR](t)+\sE[\bd\rp{\le 6} \phi](t)\les \ve,
\end{equation*}
which, together with the Sobolev embedding, implies give the following estimates
\begin{equation}\label{baloceng}
\sup_{\Sigma_t\cap \I^+(\O)} \left|\bd \phi, \phi, \bR_{\a\b\ga\delta}\right|\les \ve,
\end{equation}
where for the norm $|\cdot|$ of the Riemann curvature, we mean $|\cdot|_h$.
\end{theorem}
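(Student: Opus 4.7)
The argument will proceed by a standard continuation bootstrap on the bounded time interval $(0, C^*T]$. First I would invoke local well-posedness for the coupled Einstein--Klein-Gordon system in the maximal foliation gauge to extend the data from $t=T$ both forward to $t = C^*T$ and backward to the origin $\O$. Since $T$ and $C^*$ are universal constants of order $1$, sharp decay in $t$ is not required; only smallness propagation matters. I would then fix a large constant $A>0$ and make the bootstrap assumptions
\[
\sE[W](t) + \sE[\bd\rp{\le 4}\bR](t) + \sE[\bd\rp{\le 6}\phi](t) + \|\pt\|_{L^\infty(\Sigma_t)} + \|\bd n\|_{L^\infty(\Sigma_t)} + \|n-1\|_{L^\infty(\Sigma_t)} \le A\ve
\]
on a maximal subinterval of $(0, C^*T]$, and aim to improve each piece to $\le A\ve/2$.

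The energy improvement splits into three pieces. For the Weyl part, I would use the Bel-Robinson tensor $\Q(W)$ as the flux and $\bT$ as both the multiplier and the base for commutation, combining with the Bianchi identity from Lemma \ref{7.23.12} to obtain a divergence identity of schematic form
\[
\frac{d}{dt}\sE[W]^2 \les \int_{\Sigma_t} \left(\Q(W)\cdot \pt + W \cdot \J \cdot \bd W\right) d\mu_g,
\]
where the Weyl current $\J$ is quadratic in $(\phi, \bd\phi)$ through the Schouten tensor (\ref{schouten}). Under the bootstrap the integrand is cubic in the small quantities, and a Grönwall inequality on a bounded interval gives $\sE[W](t) \les \ve$. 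For the higher-derivative piece I would use the normalized Lie derivatives $\hat\Lie_{\bT}\rp{\le 4} W$, producing commutator error terms paired with derivatives of $\pt$ and with up to five derivatives of $\phi$ in the modified Weyl current; these close at the chosen regularity. For the scalar field I would use $\bT$ as multiplier and commutator in $\Box_\bg \phi = \fm \phi$ up to six times, with $[\Box_\bg, \bd\rp{k}]$ generating terms involving $\bd\rp{\le k}\bR$ already controlled by the previous step.

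For the pointwise bounds I would invoke the elliptic structure of the maximal gauge. The Codazzi equations (\ref{codazziT}) together with the bootstrap provide $\|\bd\rp{\le k}\pib\|_{L^2(\Sigma_t)} \les \|\bd\rp{\le k-1}\bR_{\bT\cdot}\|_{L^2(\Sigma_t)}$ plus lower-order terms, while the lapse equation (\ref{lapse}), supplemented by the boundary condition $n\to 1$ outside a compact set (prescribed by the surrounding Schwarzschild metric in Theorem \ref{2.15.1}), yields $\|\bd\rp{\le k+1}(n-1)\|_{L^2(\Sigma_t)}$ bounds via standard $L^p$ elliptic estimates. Sobolev embedding $H^2(\Sigma_t) \hookrightarrow L^\infty(\Sigma_t)$ then supplies the pointwise estimates on $\pt_{\bi\bj}, \bd n, n-1$, and applied to $\bd\rp{\le 4}\bR$ and $\bd\rp{\le 6}\phi$ yields the pointwise estimates on $\bR_{\a\b\ga\d}, \bd\phi, \phi$. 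Combining these improved bounds with the energy improvements closes the bootstrap.

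The main technical obstacle is the careful book-keeping of the commutator hierarchy: one must match the regularity counts so that every top-order term arising when commuting $\bd$ with $\Box_\bg$ or with the Bianchi operator $\bd^\a W_{\a\b\ga\d}$ lands in an already-controlled norm without derivative loss. The asymmetric orders (six for $\phi$, four for $\bR$) reflect precisely that each commutator $[\Box_\bg, \bd]$ costs one derivative of the metric, hence of $\bR$ after one more differentiation, so the scalar hierarchy must sit two derivatives above the curvature hierarchy to close. On a bounded time interval this book-keeping is purely algebraic; the universal constant in the final estimate depends on $C^*T$, but not on $\ve$.
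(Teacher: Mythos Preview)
Your proposal is essentially correct and is precisely the kind of standard argument the paper has in mind: the paper does not actually give a proof of this theorem but simply remarks that it ``can be proved in a rather standard way (see \cite{CK}, \cite{KR2}, \cite{Wang10})'', and your bootstrap outline---energy estimates via Bel-Robinson and $\bT$-multiplier for the Weyl part, standard multiplier estimates for $\phi$, and elliptic estimates from (\ref{codazziT}) and (\ref{lapse}) for $\pib$ and $n$, closed by Sobolev embedding on a bounded time interval---is exactly the content of those references adapted to the present setting. One small discrepancy: for the higher-order curvature energy the paper works with $\sE[\bd\rp{\le 4}\bR]$, i.e.\ covariant derivatives $\bd$ rather than the normalized Lie derivatives $\hat\Lie_{\bT}$ you propose, but on a compact time interval either commutation scheme closes and the difference is cosmetic.
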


As a consequence of (\ref{baloceng}), we will prove the following result at the end of this section.

\begin{proposition}\label{4.27.1.26}
In  $\I^+(\O)\cap \{t\le C^* T\}$, there hold
\begin{align}
&\left|\frac{\rho}{t}(\tr k-\frac{3}{\rho}, \hk )\right|\les \ve, \label{lhk1}\\
&\left|\hk_{\Nb A}\right|\les \ve.\label{lkn}
\end{align}
\end{proposition}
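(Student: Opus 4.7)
The plan is to integrate the transport equations (\ref{3.14.1}) for $A := \tr k - \frac{3}{\rho}$ and (\ref{s1.1}) for $\hk$ along the time-like geodesics emanating from $\O$, taking advantage of the fact that $\fB$ is geodesic with $\fB(\rho)=1$. Multiplying by $\rho^2$ to absorb the singular $\frac{2}{\rho}$ coefficient converts the equations into
\begin{align*}
\fB(\rho^2 A) &= -\rho^2\left(\tfrac{1}{3}A^2 + \bR_{\fB\fB} + |\hk|^2\right),\\
\bd_\fB(\rho^2 \hk_{ij}) &= -\rho^2\left(\widehat{\bR}_{\fB i\fB j} + (\hk\hot\hk)_{ij} + \tfrac{2}{3}A\,\hk_{ij}\right).
\end{align*}
In a frame on $\T\H_\rho$ obtained by parallel transport along each integral curve of $\fB$ (which preserves orthogonality to $\fB$, and hence tangency to $\H_\rho$), these become scalar ODEs. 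The initial condition is read off the geodesic normal expansion at $\O$: since $\bg = \bm + O(\rho^2)$ at $\O$, the second fundamental form of $\H_\rho$ satisfies $k = \frac{1}{\rho}\gb + O(\rho\ve)$ near $\O$, so $\rho^2 A(\rho),\, \rho^2 \hk(\rho) \to 0$ as $\rho \to 0$.

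Using the pointwise curvature bound $|\bR|_h \lesssim \ve$ from Theorem \ref{5.13.1.16}, integration from $\O$ yields
\[
\rho^2(|A|+|\hk|)(\rho) \lesssim \ve\, \rho^3 + \int_0^\rho \rho'^2 \left(|A|^2 + |\hk|^2\right) d\rho'.
\]
Under the bootstrap assumption $\rho(|A|+|\hk|) \le M\ve\, t$ along the geodesic, the integrand is bounded by $M^2\ve^2 t'^2$, and since $t$ is monotone along the geodesic, $\int_0^\rho t'^2\, d\rho' \le t(\rho)^2 \rho$. Thus $\rho(|A|+|\hk|) \le C\ve \rho^2 + CM^2\ve^2 t^2$. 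Using $\rho \lesssim t \le C^*T$ in the local region and choosing $M$ large (depending on $C^*T$) together with $\ve$ sufficiently small, this closes the bootstrap at $\le M\ve\, t$, establishing (\ref{lhk1}).

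For (\ref{lkn}) we use the radial decomposition equation (\ref{2.20.4.16}), which upon multiplication by $\rho^2$ reads
\[
\sn_\fB(\rho^2 \hk_{\Nb C}) = \rho^2\left(\hk_C^A \zb_A - \hk_{\Nb\Nb}\zb_C - \widehat{\bR}_{\fB\Nb\fB C} - (\hk\hot\hk)_{\Nb C} - \tfrac{2}{3}A\, \hk_{\Nb C}\right).
\]
From (\ref{zba}) and the identity $\pt(\bT, e_\bi) = \nab_\bi \log n$ in (\ref{6.6.3.16}), $\zb_A = -\frac{\bb^{-1}t}{\tir}\sn_A \log n + \pib_{\bN A}$, so (\ref{lpt1}) provides $|\zb_A| \lesssim \ve(1 + \bb^{-1}t/\tir)$. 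On a geodesic with direction $V$ satisfying $V^0 \ge 1+\delta$, the factor $\bb^{-1}t/\tir$ remains bounded, and integrating from $\O$ (where $\rho^2 \hk_{\Nb A} \to 0$) with the bounds from (\ref{lhk1}) gives $|\hk_{\Nb A}| \lesssim \ve$. The main obstacle is the near-$\Gamma$ region, where the radial frame $\{\Nb, e_A\}$ degenerates and the factor $\bb^{-1}t/\tir$ blows up; the resolution is that $t/\rho$ remains bounded near $\Gamma$ (since $t/\rho \to V^0/n(\O)$ with $V^0 \to 1$), so the tensor bound (\ref{lhk1}) directly yields $|\hk_{\Nb A}| \le |\hk| \lesssim \ve t/\rho \lesssim \ve$. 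The combination of the two estimates provides uniform control throughout $\I^+(\O)\cap\{t\le C^*T\}$.
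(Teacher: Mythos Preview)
Your overall strategy coincides with the paper's: integrate (\ref{3.14.1}) and (\ref{s1.1}) from the vertex $\O$ along timelike geodesics with vanishing initial data, run a bootstrap on $\rho|\Ab|$, and handle (\ref{lkn}) by splitting according to the geodesic direction (the paper uses the threshold $V^0=3$ where you use $V^0=1+\delta$, which is cosmetic).

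There is, however, a genuine error in your curvature estimate. The bound $|\bR|_h\lesssim\ve$ from (\ref{baloceng}) is relative to the \emph{maximal} frame $\{\bT,e_\bi\}$; the source terms $\bR_{\fB\fB}$ and $\widehat\bR_{\fB i\fB j}$ in (\ref{3.14.1})--(\ref{s1.1}) are contractions with the hyperboloidal vectors $\fB,\Nb$, which by (\ref{fbtn}) carry $h$-norm $\approx t/\rho$, not $1$. One therefore only has
\[
|\bR_{\fB\fB}|,\ |\widehat\bR_{\fB i\fB j}|_\gb\ \lesssim\ \ve\,(t/\rho)^2,
\]
not $\lesssim\ve$; this is precisely the content of (\ref{2.20.9.16}), whose derivation exploits the null expansion (\ref{dcp_2}) of $\fB,\Nb$ in $L,\Lb$. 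Consequently the integrated curvature contribution is not $\ve\rho^3$ but $\ve\int_0^\rho t'^2\,d\rho'\le\ve\,t^2\rho$. Since $t\le C^*T\lesssim 1$ in the local region, the bootstrap still closes once this correction is made (your quadratic term already produces $M^2\ve^2 t^2\rho$, the same shape), so the conclusion survives; but the displayed inequality as written is false near $\C_0$ where $t/\rho\to\infty$. The same issue recurs for $\widehat\bR_{\fB\Nb\fB C}$ in the $\hk_{\Nb A}$ equation, where you need the sharper bound $\lesssim\ve\,t/\rho$ from (\ref{2.20.9.16}).
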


As a complement of the corresponding set of estimates in Theorem \ref{2.15.2}, we will prove the following
result in Section \ref{almost}. It is worthy to point out that the intrinsic null tetrad $\{L, \Lb, e_A, A=1,2\}$
is not spherically symmetric in $Z^s$.

\begin{proposition}\label{3.18.1.16}
In $Z^s\cap \{\rho\le \rho_*\}$ we have $r \approx t$,  $\a=\ab$, $\b=\udb$ and $\sigma =0$. Moreover
\begin{equation}\label{3.18.0}
|\a, \ab|\les \ve \bt^{-7},\quad \left|\varrho+\frac{4M}{(r+2M)^3}\right|\les\ve \bt^{-7},  \quad |\beta, \udb|\les \ve \bt^{-5}.
\end{equation}
All these null components are convergent to their schwarzschild value relative to the standard null frame
in $Z^s$.\begin{footnote}{For the standard frame and the schwarzschild value, we refer the reader to
Lemma \ref{4.28.17.16}.}\end{footnote}
\end{proposition}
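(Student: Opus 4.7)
The plan is to exploit the fact that on $Z^s$ the metric coincides exactly with the Schwarzschild metric (\ref{3.20.lap}), so that the Weyl tensor is explicit. The statement then reduces to (i) intrinsic algebraic identities for the Schwarzschild Weyl tensor valid in any adapted null frame, and (ii) a quantitative frame comparison supplied by Theorem \ref{4.10.6}.

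First I would observe that $\phi$ vanishes identically on $Z^s$: the Klein--Gordon data $\phi[T]$ is supported in $B_1$, and the cone $\C^s_{\hat u_1}$ initiates from $\{r=2,\, t=T\}$, so finite propagation speed forces $\phi \equiv 0$ in the exterior of $\C^s_{\hat u_1}$, which contains $Z^s$. By (\ref{ricci}) the Ricci and Schouten tensors vanish in $Z^s$, and (\ref{wy}) gives $W = \bR$ computed from the Schwarzschild metric (\ref{3.20.lap}), which carries ADM mass $2M$. Schwarzschild is Petrov type D with vanishing magnetic Weyl part $H = 0$ (the spacetime is static); relative to the principal null frame only $\varrho_s = -4M/(r+2M)^3$ survives. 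For any null frame of the form $L = \bT + \bN$, $\Lb = \bT - \bN$ with $\bN$ a unit spatial vector, combining $W_{0i0j} = E_{ij}$, $W_{0ijk} = 0$ with the pure-electric Weyl identity
$W_{ijkl} = \delta_{ik} E_{jl} - \delta_{il} E_{jk} + \delta_{jl} E_{ik} - \delta_{jk} E_{il}$ (all consequences of $H = 0$) yields the exact identities
\[
\alpha_{AB} = 2 E_{AB} + \delta_{AB} E_{\bN \bN} = \underline{\alpha}_{AB}, \qquad \beta_A = E_{A \bN} = \underline{\beta}_A, \qquad \sigma = 0,
\]
which account for the symbolic part of the statement.

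The quantitative bounds then follow from the comparison between the intrinsic frame $\{L, \Lb, e_A\}$ and a Schwarzschild principal null frame supplied by Theorem \ref{4.10.6}. In $Z^s \cap \{\rho \le \rho_*\}$, that theorem establishes $r \approx t$, that the intrinsic $\bT$ coincides with the Schwarzschild static observer (the maximal gauge with $\pib = 0$ is preserved since $\phi=0$ in $Z^s$), and that the intrinsic unit normal $\bN$ deviates from the Schwarzschild radial unit vector by an angle $\omega \les \ve \bt^{-2}$. Substituting into the identities above and using $|E| \les M\bt^{-3} \les \ve \bt^{-3}$, the linear appearance of $\omega$ in $\beta$ gives $|\beta| = |\underline{\beta}| \les \ve \bt^{-5}$, while the quadratic appearance of $\omega$ in the traceless part of $\alpha$ and in the difference $\varrho - \varrho_s$ gives $|\alpha| = |\underline{\alpha}| \les \ve \bt^{-7}$ and $|\varrho + 4M/(r+2M)^3| \les \ve \bt^{-7}$ (in both cases absorbing superfluous powers of $\ve$).

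The main obstacle is extracting, from Theorem \ref{4.10.6}, the precise decay rate $\omega \les \ve \bt^{-2}$ for the angular deviation of the intrinsic radial frame from the Schwarzschild radial direction, together with the identification $\bT = \bT_s$ throughout $Z^s$. Once these structural inputs are in hand, the stated bounds are a direct algebraic consequence of the pure-electric Schwarzschild Weyl structure, and the identities $\alpha = \underline{\alpha}$, $\beta = \underline{\beta}$, $\sigma = 0$ require no further analytical input beyond the vanishing of $H$ in $Z^s$.
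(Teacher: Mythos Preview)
Your approach is correct and in fact somewhat cleaner than the paper's. The paper expands each of $\varrho,\ab,\a,\udb,\b,\sigma$ separately by substituting the frame decompositions $L=\frac{n^{-1}}{2}[(2-\mu)\hat L+\mu\hat\Lb]+\Sigma N$, $\Lb=\frac{n^{-1}}{2}[\mu\hat L+(2-\mu)\hat\Lb]-\Sigma N$, $e_A=\Sigma e_A+\f12 n^{-2}\sn_A r(\hat L-\hat\Lb)$ and then using that only $\hat\varrho$ survives in the Schwarzschild null frame (Lemma \ref{4.28.17.16}); the identities $\a=\ab$, $\b=\udb$ are then read off a posteriori by comparing the two expansions term by term. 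Your route---observing first that $\bT$ is the static Schwarzschild observer so that $H=0$, and hence $W_{0ijk}=0$ and $W_{ijkl}=\delta_{ik}E_{jl}-\delta_{il}E_{jk}+\delta_{jl}E_{ik}-\delta_{jk}E_{il}$---delivers $\a=\ab$, $\b=\udb$, $\sigma=0$ in one stroke and reduces the quantitative bounds to a spatial frame perturbation of the diagonal tensor $E$. Both routes feed on exactly the same input, namely Theorem \ref{4.10.6}.

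Two small corrections. First, Theorem \ref{4.10.6} gives $|\Sigma N|_\bg+|\sn r|\les \ve^{\f12}\bt^{-2}$, not $\ve\bt^{-2}$; this costs nothing because what enters $\a$ and $\varrho-n^{-4}\hat\varrho$ is $|\Sigma N|^2=1-n^{-2}\varpi^2\les |n-\varpi|\les\ve\bt^{-4}$ (this is the ``quadratic'' structure you allude to, and it comes from $E_{\hat N\hat A}=0$), while $\b$ picks up $E_{A\bN}\sim |E|\cdot|\sn r|\les \ve^{3/2}\bt^{-5}\les\ve\bt^{-5}$. Second, with the sign conventions of Definition \ref{3.18.19} one gets $\b_A=\udb_A=-E_{A\bN}$ rather than $+E_{A\bN}$, which of course does not affect the estimate.
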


\subsection{Preliminary estimates on Hyperboloids}

We first recall the following simple transport lemma.

\begin{lemma}\label{tsp1}
\begin{enumerate}[leftmargin = 0.6cm]
\item Suppose $F$ is an $\H_\rho$-tangent tensor field verifying the transport equation
\begin{equation}\label{3.15.13}
\bd_\fB F+\frac{m}{\rho} F=H\c F+G,
\end{equation}
where $m\in \mathbb Z$,  $G$ is a tensor field of the same type as $F$, and $H$ is a tensor field satisfying
\begin{equation}\label{3.20.5}
\sup_{V\in {\mathbb H}_1}\int_{\rho_1}^\rho |H|(\rho', V) d{\rho'}\les 1,\quad  \forall \rho>\rho_1\ge 0.
\end{equation}
Then, for the weights $\upsilon = \left(\frac{\tau}{\rho}\right)^{\la}\left(\frac{(\tau^2-\rho^2)^{1/2}}{\rho}\right)^{\la'}$
with constants $\la, \la'\in {\mathbb R}$, there holds
\begin{align}\label{3.15.14}
|\rho^m (\upsilon F)|(\rho,V)&\les\lim_{\rho\rightarrow \rho_1} \upsilon \rho^m|F|(\rho, V)
+ \int_{\rho_1}^\rho|{\rho'}^m \upsilon G(\rho', V)| d\rho'.
\end{align}

\item The same result holds when $\bd_\fB F$ is replaced by $\sn_\fB F$ if $F$ is tangent to $S_{t,\rho}$; as well as
when $\frac{m}{\rho}$ in (\ref{3.15.13}) is  replaced by $\tr k$ if (\ref{3.20.5}) holds for $H=\tr k-\frac{3}{\rho}$.
\end{enumerate}
\end{lemma}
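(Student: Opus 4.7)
The key observation that drives the proof is that the weights $\tau/\rho$ and $(\tau^2-\rho^2)^{1/2}/\rho$ are both preserved along the integral curves of $\fB$. Indeed, by (\ref{lse3}) we have $\fB(\tau)=\tau/\rho$ and $\fB(\rho)=1$, whence
\begin{equation*}
\fB\!\left(\frac{\tau}{\rho}\right)=\frac{1}{\rho}\cdot\frac{\tau}{\rho}-\frac{\tau}{\rho^2}=0,
\qquad
\fB\!\left(\frac{\sqrt{\tau^2-\rho^2}}{\rho}\right)=0
\end{equation*}
by a parallel direct computation. Hence $\fB(\upsilon)=0$ for every admissible exponent pair $(\lambda,\lambda')$, and in addition the integral curves of $\fB$ are precisely the radial geodesics $\rho\mapsto\exp_{\O}(\rho V)$ issuing from $\O$ with fixed $V\in\mathbb{H}_1$, so differentiation along $\fB$ is the same as $\partial_{\rho}$ at fixed $V$ in the geodesic normal coordinates (\ref{n_coord}).

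Using $\fB(\upsilon)=0$ together with $\fB(\rho^m)=m\rho^{m-1}$, the transport equation (\ref{3.15.13}) rewrites as
\begin{equation*}
\bd_{\fB}\!\left(\rho^m \upsilon F\right)=\rho^m\upsilon\bigl(H\c F+G\bigr).
\end{equation*}
Since $\bd$ is metric-compatible, $\fB|X|^2=2\l\bd_{\fB}X,X\r$ for any tensor field $X$, and by Cauchy--Schwarz $\fB|X|\le |\bd_{\fB}X|$ (with norms taken by the ambient Riemannian completion of $\bg$, which agrees with $\gb$ on $\H_\rho$-tangent tensors). Applied to $X=\rho^m\upsilon F$, this gives the pointwise differential inequality
\begin{equation*}
\fB\bigl|\rho^m \upsilon F\bigr|
\le |H|\cdot\bigl|\rho^m\upsilon F\bigr|+\rho^m\upsilon\,|G|
\end{equation*}
along each radial geodesic $\rho\mapsto\exp_{\O}(\rho V)$.

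Integrating this ODE from $\rho_1$ up to $\rho$ at fixed $V$ and applying Gronwall's inequality in the form tailored to the bound (\ref{3.20.5}) — which ensures $\exp\!\bigl(\int_{\rho_1}^{\rho}|H|(\rho',V)\,d\rho'\bigr)\les 1$ uniformly in $V$ — yields (\ref{3.15.14}). The main obstacle, modest but the one that must be verified, is simply checking that the boundary term $\lim_{\rho\to\rho_1}\upsilon\rho^m|F|(\rho,V)$ is well-defined, which is a hypothesis built into the statement.

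For part (2), the variant for $\sn_{\fB}F$ when $F$ is $S_{t,\rho}$-tangent is obtained by the same computation after replacing $\bd_{\fB}$ by its projection to $S_{t,\rho}$, using that the projection commutes with taking $|\cdot|$ and produces only tangential error terms which vanish against the tangent tensor $F$. The variant with $\tr k$ in place of $m/\rho$ follows from the first part applied with $m=3$, after splitting $\tr k=\tfrac{3}{\rho}+(\tr k-\tfrac{3}{\rho})$ and absorbing the second summand into the $H\cdot F$ term, which is admissible because the hypothesis (\ref{3.20.5}) is assumed for $H=\tr k-\tfrac{3}{\rho}$.
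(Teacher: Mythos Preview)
Your proof is correct and follows the standard ODE/Gronwall approach that the paper invokes by citation (the paper's own proof reads only ``It follows by a standard ODE argument. See \cite[Lemma 13.1.1]{CK} and \cite[Section 5]{roughgeneral}.''). In particular, your key observation that $\fB(\upsilon)=0$ because both $\tau/\rho$ and $(\tau^2-\rho^2)^{1/2}/\rho$ are constant along the radial geodesics is exactly the reason these weights are admissible, and your reduction of part (2) to part (1) via the splitting $\tr k=\tfrac{3}{\rho}+(\tr k-\tfrac{3}{\rho})$ is the intended mechanism.
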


\begin{proof}
It follows by a standard ODE argument. See \cite[Lemma 13.1.1]{CK} and \cite[Section 5]{roughgeneral}.
\end{proof}

Observe that along the geodesic $\Upsilon_V$ parametrized by $\rho$ we have $d t=\frac{\bb^{-1}n^{-1}t}{\rho}d\rho$
as well as $d\tau=\frac{\tau}{\rho}d\rho$.  This implies that
\begin{equation}\label{3.20.1}
\begin{split}
&\int_{\rho_1}^{\rho_2} f (\Upsilon_V(\rho')) d\rho'
 =\int_{t(\rho_1, V)}^{t(\rho_2, V)} f(\Upsilon_V(\rho')) \frac{\rho'}{\bb^{-1}n^{-1}t'}d t',\\
& \int_{\rho_1}^{\rho_2} f (\Upsilon_V(\rho')) d\rho' = \int_{\tau(\rho_1, V)}^{\tau(\rho_2, V)} f(\Upsilon_V(\rho')) \frac{\rho'}{\tau'}d\tau'.
\end{split}
\end{equation}

\begin{proposition} \label{prl_1}
\begin{enumerate}[leftmargin = 0.7cm]
 \item In $\I^+(\O) \cap \{\rho\le \rho_*\}$ there hold
\begin{equation}\label{3.21.16}
\bb^{-1}\approx 1,\quad t\approx \tau, \quad \frac{t}{\rho}(V)\approx V^0,\quad |n-1|\les \ve.
\end{equation}
\item In $\I^+(\O)\cap \{\rho\le \rho_*\}$ there hold
%\begin{footnote}{We adopt the convention that for a nonnegative function $f$, $\l f\r:=f+2$.  }\end{footnote}
\begin{align}
& \bt^{\frac{1}{2}-3\d}|\bb^{-1}-n|\les \ve, \label{b1}\\
& \left|\log \frac{t}{\tau}-\log n^{-1}(\O)\right|\les \ve,\label{4.15.12.16}
\end{align}
and
\begin{equation}\label{bb2}
 \bt|\bb^{-1}-n|\les \ve\ln\bt \quad  \mbox{ in } Z^s\cap \{\rho\le \rho_*\}.
\end{equation}

%\item If $\bb^{-1} t>3\rho$, there holds $\tir\approx \varsigma$. \begin{footnote}{The full comparison is  a slightly more complicated, which will not %be used in this paper.}\end{footnote}
\end{enumerate}
\end{proposition}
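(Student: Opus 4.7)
The plan is to treat assertions (1) and (2) together via a bootstrap argument based on the transport equations (\ref{Bb1}) for $n-\bb^{-1}$ and (\ref{ctt}) for $\log(t/\tau)$, integrated along the timelike geodesic $\Upsilon_V$ emanating from $\O$. The initial values at $\O$ are supplied by Lemma \ref{inida}, and the error terms are controlled by the local estimate (\ref{lpt1}) and (\ref{baloceng}) together with the wave-zone estimates (\ref{lap})--(\ref{pt2}); in the Schwarzschild zone $Z^s$ the explicit form (\ref{3.20.lap}) gives $\pib\equiv 0$, $n^2=(r-2M)/(r+2M)$, and sharp decay of $\bd n$.

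For part (1), the bound $|n-1|\lesssim\ve$ follows in $\{t\le C^*T\}$ from (\ref{lpt1}); for larger $t$ it follows in $Z^s$ from $n^2=(r-2M)/(r+2M)$ and $M\lesssim\ve$, and in the wave zone from (\ref{lap}) integrated along a radial curve originating from the local region. To obtain $\bb^{-1}\approx 1$ I rewrite (\ref{Bb1}) as
\[
\fB(n-\bb^{-1})+\frac{1}{\rho}(n-\bb^{-1})=\textrm{Err}-\frac{n^{-1}\bb^{-1}-1}{\rho}(n-\bb^{-1}),
\]
where Err collects the three terms on the right of (\ref{Bb1}). By Lemma \ref{inida}, both $n-\bb^{-1}$ and $(n^{-1}\bb^{-1}-1)/\rho$ tend to $0$ as $\rho\to 0$ at rate $O(\tau)$, so Lemma \ref{tsp1} applies with $m=1$ and $\rho_1=0$, yielding
\[
\rho\,|n-\bb^{-1}|(\rho,V)\lesssim \int_0^{\rho}\rho'\,|\textrm{Err}|(\rho',V)\,d\rho'.
\]
Under the bootstrap $\bb^{-1}\approx 1$, controlling Err by the bounds above (with the change of variables (\ref{3.20.1})) improves $|n-\bb^{-1}|\lesssim\ve$ and closes the bootstrap. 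For $t\approx \tau$ I integrate (\ref{ctt}) along $\Upsilon_V$ using the initial value from Lemma \ref{inida}:
\[
\log\frac{t}{\tau}(\rho,V)=\log n^{-1}(\O)+\int_0^{\rho}\frac{\bb^{-1}n^{-1}-1}{\rho'}\,d\rho',
\]
whose integrand is bounded near $\O$ by Lemma \ref{inida} and of size $O(\ve/\rho')$ once $\bb^{-1}\approx n\approx 1$; this proves (\ref{4.15.12.16}), gives $t\approx\tau$, and then $t/\rho\approx V^0$ follows from $V^0=\tau/\rho$ by (\ref{n_coord}).

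For the refined estimates in part (2), the same transport representation applies, but now with sharper pointwise bounds for Err. In the wave zone, (\ref{lap}) gives $|\bd n|\lesssim\ve t^{-3/2+3\delta}$ and (\ref{pt1})--(\ref{pt2}) provide the required decay of $\pib_{\bN\bN}$ and of $\langle\bd_\bT\bT,\bN\rangle=\nabla_\bN\log n$; translating the $d\rho$-integration via (\ref{3.20.1}) and using $t\approx\tau\approx V^0\rho$ along $\Upsilon_V$ yields $|n-\bb^{-1}|\lesssim\ve\,\bt^{-1/2+3\delta}$, i.e.\ (\ref{b1}). In $Z^s\cap\{\rho\le\rho_*\}$, since $\pib\equiv 0$ and $|\bd n|\lesssim\ve t^{-2}$ from (\ref{3.20.lap}), the error density is $\ve t^{-2}$; integrating along $\Upsilon_V$ from the ingress point where it first enters $Z^s$ (controlled by (\ref{b1})) picks up the $\ln\bt$ factor, producing (\ref{bb2}).

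The main obstacle I expect is the weight-bookkeeping between $\Sigma_t$-based estimates and the $\rho$-integration along $\Upsilon_V$: the change of variables (\ref{3.20.1}) involves the factor $t/\rho$, which is itself one of the quantities under control, so the argument must be set up as a simultaneous bootstrap on $\bb^{-1}$, $n$, and $t/\tau$. A second delicate point is ensuring that the perturbative coefficient $(n^{-1}\bb^{-1}-1)/\rho$ in the modified transport equation satisfies the smallness hypothesis (\ref{3.20.5}) of Lemma \ref{tsp1} all the way down to $\rho=0$; this requires the \emph{quantitative} expansions in Lemma \ref{inida}, not merely the limiting values. Finally, one must verify that $\Upsilon_V$ for $p\in \hat I^+_0\cap\{t\le t_*\}$ stays inside the region where the estimates in Theorem \ref{2.15.2} hold, and that for $p\in Z^s$ the portion of $\Upsilon_V$ with $t\ge T$ remains in $Z^s$ (guaranteed by Remark \ref{4.9.2.16}), so that the sharper Schwarzschild bounds are available throughout the integration that produces (\ref{bb2}).
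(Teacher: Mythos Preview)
Your proposal is correct and follows essentially the same approach as the paper: a bootstrap argument along $\Upsilon_V$ using the transport equations (\ref{Bb1}) and (\ref{ctt}), Lemma \ref{tsp1} with $m=1$, initial data from Lemma \ref{inida}, the split into the local region $\{t\le C^*T\}$ (using (\ref{lpt1})) and the long-time region (using (\ref{lap})--(\ref{pt2}) in the wave zone, the explicit Schwarzschild form in $Z^s$), and Remark \ref{4.9.2.16} to keep the $Z^s$ integration inside $Z^s$.

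One point to tighten: in your derivation of (\ref{4.15.12.16}) you say the integrand $(\bb^{-1}n^{-1}-1)/\rho'$ is ``of size $O(\ve/\rho')$ once $\bb^{-1}\approx n\approx 1$,'' but $\int_1^{\rho_*}\ve/\rho'\,d\rho'$ diverges logarithmically and would not give (\ref{4.15.12.16}) for large $\rho_*$. The paper (and implicitly your own part (2)) closes this by first establishing the quantitative decay $|\bb^{-1}n^{-1}-1|\lesssim \ve\,t^{-1/2+3\delta}$ (the bootstrap form of (\ref{b1})), then changing variables via (\ref{3.20.1}) so that the integrand becomes $\ve\,t^{-3/2+3\delta}\,dt$, which is integrable. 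So (\ref{4.15.12.16}) must be proved \emph{after} (or simultaneously with) (\ref{b1}), not from the cruder $O(\ve/\rho')$ bound alone.
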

\begin{remark}
To prove Proposition \ref{4.27.1.26} and to establish the results in Section \ref{almost} and \ref{wza}, we only
employ the results  in Proposition \ref{prl_1} for $t\le C^* T$ or in $Z^s$, which depends merely on Theorem \ref{5.13.1.16}.
\end{remark}

\begin{proof}
We first note that $|n-1|\les \ve$ can be obtained by integrating along the integral curve of $\bT$
and using (\ref{lpt1}), (\ref{lap}) and $\bT n=0$ in $Z^s$. \begin{footnote}{ The way presented here is
for the purpose of completion in the framework of this paper. The actual control on $n$ are based on
elliptic estimates coupled with the control on $\pib_{\bi\bj}$. }\end{footnote}

We next prove (\ref{3.21.16})-(\ref{4.15.12.16}) in the region $\I^+(\O)\cap \{t\le C^* T\}$
by a bootstrap argument. Because of (\ref{init}), we may make the bootstrap assumptions
that in $\I^+(\O)\cap \{t\le C^*T\}$ there hold
\begin{align}
&\left|\log \frac{t}{\tau}-\log n^{-1}(\O)\right|\le \dn, \label{4.7.3}\\
&\int_0^\rho\frac{|\bb^{-1}n^{-1}-1|}{\rho'}d\rho'\le \dn, \label{3.21.13}\\
&|\bb^{-1} n^{-1}-1|\le  \dn, \label{3.21.17}
\end{align}
where $\dn=2c_0\ep$ with a universal constant $c_0>0$ to be specified. We will improve these estimates with
$\dn$ replaced by $\f12 \dn$. By  (\ref{4.7.3}), (\ref{3.21.17}) and $n\approx 1$ we have in
$\I^+(\O) \cap \{t\le C^*T\}$ that
\begin{equation}\label{ctb.1}
  t\approx \tau, \quad \tir\le \bb^{-1}t\approx t.
\end{equation}
We claim that in $\I^+(\O)\cap \{t\le C^*T\}$ there holds
\begin{equation}\label{3.21.bi}
|\tau^{-1}(\bb^{-1} n^{-1}-1)|\les\ve.
\end{equation}
To see this, for the function $f = n - \bb^{-1}$ we may use (\ref{Bb1}) to obtain
\begin{equation}\label{3.21.12}
\p_\rho f+\frac{f}{\rho}=H f+G,
\end{equation}
where $H= -\frac{n^{-1}\bb^{-1}-1}{\rho}$ and
\begin{equation}\label{3.22.1}
G=-\frac{\tir}{t} a^{-1}\pib_{\bN\bN}+\frac{\tir\bb^{-1}}{\rho}\l \bd_\bT \bT, \bN\r+\p_\rho n.
\end{equation}
Noting that (\ref{init}) implies $f(\O)=0$ and (\ref{3.21.13}) implies $\int_0^\rho |H| d\rho'\le \Delta_0$,
we may apply Lemma \ref{tsp1} to obtain (\ref{3.21.bi}) if we can show
\begin{equation}\label{3.21.11}
\rho^{-1}\int_0^\rho \rho' |G|(\rho', V) d\rho'\les\ve\tau.
\end{equation}

Now we prove (\ref{3.21.11}). In view of (\ref{fbtn}) and (\ref{ctb.1}) we have
\begin{equation}\label{3.21.10}
\left|\frac{\rho \p_\rho n }{t}\right|= \left|\frac{\bb^{-1} t\bT n+\tir \bN(n)}{t}\right| \les |\bd n|.
\end{equation}
By virtue of (\ref{3.22.1}), (\ref{3.20.1}) and (\ref{ctb.1}), we then obtain
\begin{align*}
\rho^{-1} \int_0^\rho \rho' |G|(\rho',V) d\rho'
&\les \int_0^\rho\left|\frac{\tir'}{\rho'} \left(-\frac{\tir'}{t'}\pib_{\bN\bN}+ \bb^{-1}\l \bd_\bT \bT, \bN\r\right)
+\p_\rho n\right| d\rho'\\
&\les \int_0^{t(\rho,V)} \left(\frac{\tir'}{t'}\left(\frac{\tir'}{t'}|\pib_{\bN\bN}|+\bb^{-1}|\nab_\bN\log n|\right)
+\left|\frac{\rho'\p_\rho n }{t'}\right|\right) d t',
\end{align*}
where we used the fact that $\l \bd_\bT \bT, e_\bi\r=\nab_{e_\bi} \log n$.
With the help of (\ref{ctb.1}), (\ref{3.21.10}) and (\ref{lpt1}), we thus obtain (\ref{3.21.11}).
Consequently (\ref{3.21.bi}) is proved in the region  $\I^+(\O)\cap \{0<t\le C^*T\}$.

By (\ref{3.20.1}), (\ref{3.21.bi}) and (\ref{ctb.1}),
\begin{equation}\label{4.15.2}
\int_0^\rho \frac{|\bb^{-1}n^{-1}-1|}{\rho'} d\rho'=\int_0^{\tau(\rho, V)} \frac{|\bb^{-1}n^{-1}-1|}{\tau'} d\tau'
\les \tau \ve\le C_1 T \ve.
\end{equation}
Combining this estimate with (\ref{ctt}) and (\ref{init}),  we can obtain
\begin{equation}\label{4.7.4}
\left|\log \frac{t}{\tau}-\log n^{-1}(\O)\right|\le C_1 T\ve.
\end{equation}
Moreover,  by (\ref{3.21.bi}) and (\ref{ctb.1}) we have
\begin{equation}\label{4.15.1}
|\bb^{-1}n^{-1}-1|\les\ve\tau\le C_1 T\ve
\end{equation}
Thus, if we take $\Delta_0 = 2 c_0 \ve$ with $c_0 = C_1 T$, then (\ref{4.7.4}),  (\ref{4.15.2}), and  (\ref{4.15.1})
improve (\ref{4.7.3}), (\ref{3.21.13}) and (\ref{3.21.17}) respectively with $\dn$ replaced by $\f12 \dn$.

Combining the above estimate  with (\ref{ctb.1}) and $n\approx 1$,  (\ref{3.21.16})-(\ref{4.15.12.16}) are proved for $t\le C^*T$.

Next we prove the estimates (\ref{3.21.16})-(\ref{4.15.12.16}) in the region with $t\ge C^* T$.
Due to (\ref{4.7.4}) and (\ref{4.15.1}), we may make the bootstrap assumptions
\begin{align}
&t^{\frac{1}{2}-3\d}|\bb^{-1}n^{-1}-1|\le \dn \label{4.7.5}\\
&\left|\log \frac{t}{\tau}-\log n^{-1}(\O)\right|\le \dn.\label{4.7.6}
\end{align}
for $ t>C^* T$, where $\dn =2 c_1\ve$ with a universal constant $c_1>0$ to be specified.
We will improve these two estimates by showing that $\dn$ on the right hand side can be replaced by $\f12 \dn$.
From (\ref{4.7.5}), (\ref{4.7.6}) and the last estimate in (\ref{3.21.16}), it follows that
\begin{equation}\label{4.7.8}
n\approx 1,\quad  \bb^{-1}\approx 1,\quad t\approx \tau.
\end{equation}
For any $p=\exp_\O(\rho V)$, with $q=\exp_\O(\rho_0 V)\subset \{t=T\}$,  by integrating along the geodesic
from $q$ to $p$,  we may use (\ref{4.7.5}), (\ref{3.20.1}) and (\ref{4.7.8}) to derive that
\begin{equation}\label{4.7.7}
\int_{\rho_0}^\rho \frac{|\bb^{-1}n^{-1}-1|}{\rho'} d\rho'\les \dn\int_{T}^{t(p)}\frac{\l t'\r^{-\frac{1}{2}+3\d}} {\bb^{-1}n^{-1}t'} dt'\les \dn
\end{equation}
Thus, we may apply Lemma \ref{tsp1} to (\ref{3.21.12}) with the weight $\upsilon=\frac{\tau}{\rho}$ to deduce that
%We will repeat the proof for (\ref{3.21.bi}) by using (\ref{3.21.12}) with the same $H$ and $G$. By applying (\ref{4.7.7}) to $H$, with the weight %$\upsilon=\frac{\tau}{\rho}$, we can deduce
\begin{align}
|\tau f(\rho, V)|&\les\ve+\int_{\rho_0}^\rho \tau' |G|(\rho',V) d\rho'\nn\\
&\les\ve+\int_T^{t(p)} \tau'\frac{\rho'}{\bb^{-1}n^{-1} t'}
\left|-\frac{\tir'}{t'} a^{-1}\pib_{\bN\bN}+\frac{\tir'\bb^{-1}}{\rho'}\l \bd_\bT \bT, \bN\r+\p_\rho n \right| d t'\nn\\
&\les \ve+\int_T^{t(p)}\left(\tir'(|\pib_{\bN\bN}|+|\l \bd_\bT \bT, \bN\r|)+|\rho \p_\rho n| \right)d t', \label{6.4.1.16}
\end{align}
where we employed the first identity in (\ref{3.20.1}), (\ref{4.7.8}) and $\tir'\les t'$. In the above derivation, we
used the fact
\begin{equation*}
|\tau f(\rho_0, V)|\les \ve,
\end{equation*}
which is a consequence of (\ref{3.21.bi}) and (\ref{ctb.1}). Now consider the right hand side of (\ref{6.4.1.16})
with the help of (\ref{3.21.10}). Note that in $\hat{I}^+_0$ we can use (\ref{lap}) and (\ref{pt2}) and
in $Z^s$ we have $\pib=0$ and $|\bd n|\les \ve r^{-2}\les \ve \bt^{-2}$.\begin{footnote}{The fact that $r^{-1}\les \bt^{-1}$
is explained in (\ref{4.9.6})}\end{footnote}
Hence we have
\begin{equation*}
|\tau f(\rho,V)|\les \ve+\ve \bt^{\f12+3\d}
\end{equation*}
which, in view of (\ref{4.7.8}), shows that
\begin{equation}\label{4.7.10}
\bt^{\frac{1}{2}-3\d}|\bb^{-1}-n|\le C \ve.
\end{equation}
In particular in $Z^s$, by repeating the bootstrap argument and using Remark \ref{4.9.2.16} we can obtain
\begin{equation*}
\bt|\bb^{-1}-n|\les \ve\ln \bt
\end{equation*}
which is (\ref{bb2}).

By using (\ref{ctt}), (\ref{3.20.1}), (\ref{4.7.8}), (\ref{4.7.4}),  (\ref{4.7.10}) and (\ref{bb2}), we also have
\begin{equation}\label{bb3}
\left|\log (\frac{t}{\tau})-\log n^{-1}(\O)\right|
\les \ve+\int_{T}^{t(\rho, V)}\left|\frac{\bb^{-1}n^{-1}-1}{\rho'}\right|\frac{\rho'}{\bb^{-1}n^{-1}t'} d t'\le C \ve.
\end{equation}
Therefore, if we take $\dn = 2 C \ve$, then (\ref{4.7.10}) and (\ref{bb3}) improve (\ref{4.7.5}) and
(\ref{4.7.6}) respectively in $\hat I^+_0$. Similarly, we can obtain the same estimate in $Z^s$ by using (\ref{bb2}).

Finally, the first three estimates in (\ref{3.21.16}) hold   since (\ref{4.7.5}) and (\ref{4.7.6}) have been
proved in $\I^+(\O)\cap\{\rho\le \rho_*\}$.
\end{proof}

\begin{proof}[Proof of Proposition \ref{4.27.1.26}]
In view of (\ref{baloceng}), (\ref{fbtn}) and $\bb^{-1}\approx 1$ we have for $0< t\le C^*T$ that
\begin{equation}\label{2.20.8.16}
\left|\frac{\rho}{t} \Nb \phi,\frac{\rho}{t} \fB \phi,  \sn \phi, \phi\right|\les \ve.
\end{equation}
Due to the fact that $\widehat \bR_{i\fB j\fB}=\bR_{i \fB j\fB}-\frac{1}{3} \bR_{\fB\fB}\gb_{ij}$,
by virtue of  (\ref{ricci}), (\ref{2.20.8.16}), Lemma \ref{frames} and the curvature estimate in
(\ref{baloceng}), we can obtain  for $0<t\le C^* T$ that
\begin{equation}\label{2.20.9.16}
\left|\left(\frac{\rho}{t}\right)^2 \widehat{\bR}_{A\fB C\fB},\, \left(\frac{\rho}{t}\right)^2 \widehat{\bR}_{\Nb\fB\Nb\fB},
\,\left(\frac{\rho}{t}\right)^2 \bR_{\fB\fB},\,  \frac{\rho}{t} \widehat{\bR}_{\Nb \fB C\fB}\right|\les \ve.
\end{equation}
Note that by local expansion, along $\sf{p}=\exp_{\O}(\rho V)$ for $V\in {\mathbb H}_1$ we have (see \cite[Section 6.2]{Eric})
\begin{equation*} %\label{4.28.2.16}
\bd_\nu S_\mu(p)=\bg_{\mu\nu}(p)+O(\rho^2)
\end{equation*}
which gives
\begin{equation}\label{4.28.1.16}
\tr \, k-\frac{3}{\rho},\   \hk \rightarrow 0 \quad \mbox{ as } \rho\rightarrow 0.
\end{equation}
We will prove (\ref{lhk1}) and (\ref{lkn}) by a bootstrap argument. According to (\ref{4.28.1.16}),
we may make bootstrap assumptions that
\begin{align}
&\left|\frac{\rho}{t}\left(\tr k-\frac{3}{\rho}, \hk\right)\right|\le \dn, \label{4.28.3.16}  \\
&\left|\hk_{\Nb A}\right|(\exp_{\O} (\rho V))\le \dn ,\mbox{ if } V^0>3 \label{4.28.11.16}
\end{align}
for $t\le C^* T$, where $\dn>0$ is a small number to be chosen. We then show that
\begin{align}
&\left|\frac{\rho}{t}\left(\tr k-\frac{3}{\rho}, \hk\right)\right|\le \ti C(\Delta_0^2+\ve),\label{4.28.8.16}\\
&\left|\hk_{\Nb A}\right|\le \ti C(\Delta_0^2+\ve), \mbox{ if } V^0>3\label{4.28.12.16}
\end{align}
which improves (\ref{4.28.3.16}) and (\ref{4.28.11.16})  as long as we choose $\dn=4\ti C \ve<\frac{1}{2\ti C}$.
Then (\ref{lhk1}) is proved due to (\ref{4.28.8.16}), which implies (\ref{lkn}) for the case $V^0\le 3$.
In the region where $V^0=\frac{\tau}{\rho}>3$, (\ref{lkn}) holds true due to (\ref{4.28.12.16}).

Now we prove (\ref{4.28.8.16}). Due to Proposition \ref{prl_1}, we can obtain for $t\le C^*T$ that
\begin{equation}\label{4.28.5.16}
\bb^{-1}\approx 1, \quad t\approx \tau.
\end{equation}
As a direct consequence of (\ref{4.28.3.16}), for any $p=\exp_\O(\rho V) $  with $t(p)\le C^*T$ we have
\begin{equation}\label{4.28.4.16}
\int_0^\rho \left|\tr k-\frac{3}{\rho'}\right|d \rho'\les \dn
\end{equation}
since $\rho\le \bb^{-1}t\les T$ due to (\ref{4.28.5.16}). In view of (\ref{4.28.1.16}) and (\ref{4.28.4.16}),
we may apply Lemma \ref{tsp1} to (\ref{3.14.1}) with $H=-\frac{1}{3}(\tr k-\frac{3}{\rho})$,
$\upsilon=\frac{\tau}{\rho}$ and $m=2$ to obtain
\begin{equation}\label{4.28.13.16}
\begin{split}
\left|\rho\tau \left(\tr k-\frac{3}{\rho}\right)\right| &\les \int_0^\rho (\bR_{\fB \fB}+|\hk|^2) \rho'\tau' d\rho'
\les \int_0^t (\bR_{\fB \fB}+|\hk|^2) \rho' \tau'\frac{\rho'}{\bb^{-1}t'} d t'\\
&\les \int_0^t {t'}^2 (\ve+\Delta_0^2)dt'\les (\ve+\Delta_0^2) t^3,
\end{split}
\end{equation}
where we employed (\ref{4.28.5.16}), (\ref{2.20.9.16}) and (\ref{4.28.3.16}). This implies that
\begin{equation}\label{4.28.6.16}
\left|\frac{\rho}{t}\left(\tr k-\frac{3}{\rho}\right)\right|\les T(\ve+\Delta_0^2)\les \ve+\Delta_0^2.
\end{equation}
Similarly, by repeating the above argument, and using the transport equation  (\ref{s1.1}),
the estimates in  (\ref{2.20.9.16}) and the initial condition (\ref{4.28.1.16}) we can derive that
\begin{equation*} %\label{4.28.7.16}
\left|\frac{\rho}{t} \hk\right|\les \ve+\Delta_0^2
\end{equation*}
which gives the control on $\hk_{ij}$ in (\ref{4.28.8.16}). Thus (\ref{4.28.8.16}) is proved.

Next we prove (\ref{4.28.12.16}). By using (\ref{b1}), (\ref{4.15.12.16}) and the last estimate in (\ref{3.21.16}), we have $|\bb^{-1}\frac{t}{\tau}-1|\les \ep$ in $\I^+(\O)\cap \{t\le C^*T\}$.  Thus  for sufficiently small $\ve$ we have
$\bb^{-1}t>2\rho$ along $\exp_\O (\rho V)$ if $t\le C^*T$ and $V^0>3$. This implies $\tir \approx t$ therein.
Hence, in view of  (\ref{zba}) and (\ref{lpt1}), we can obtain
\begin{equation}\label{4.28.10.16}
|\zb|\les \ve.
\end{equation}
We now apply Lemma \ref{tsp1} to (\ref{2.20.5.16}), with the help of (\ref{lhk1}) and (\ref{2.20.9.16}).
Similar to  (\ref{4.28.13.16}), by integrating (\ref{2.20.4.16}) along $\exp_\O (\rho V)$ with $V^0>3$
and using the initial condition in (\ref{4.28.1.16}), it follows that
\begin{equation*} %\label{4.28.14.16}
\tau^2| \hk_{\Nb A}|\les \int_0^t \rho\tau' \left((|\zb|+|\hk_{\Nb A}|)\c |\Ab|+|\widehat{\bR}_{\fB \Nb \fB A}|\right)d t'.
\end{equation*}
By using (\ref{4.28.8.16}), (\ref{2.20.9.16}), (\ref{4.28.11.16}) and  (\ref{4.28.10.16}), we can obtain
$ %\begin{equation*} %\label{4.28.15.16}
\tau^2 |\hk_{\Nb A}|\les(\ve+\Delta_0^2) t^3
$ % \end{equation*}
which implies (\ref{4.28.12.16}). Thus the proof of Proposition \ref{4.27.1.26} is completed.
%%%% To be added
\end{proof}

\section{\bf Radial comparison in $Z^s$}\label{almost}

In this section, we compare the canonical schwarzschild wave front with the wave fronts formed by
intersection in $\Sigma_t$ by hyperboloids.

The core analysis will be radial comparisons which  takes place in  the initial slice $\{t=T\}$ and in the
Schwarzchild zone $Z^s$. We will use the parametrization $(t, r, \omega)$, where $(r, \omega)$ denotes the
polar coordinates with  $\omega= (\theta, \varphi)\in {\mathbb S}^2$.
The Schwarzschild metric $\bg$ in (\ref{3.20.lap}) can be written as
\begin{align}
\bg &=-n^2 d t^2+n^{-2} dr^2+(r+2M)^2 ( d\theta^2+\sin^2\theta d\varphi^2), \label{scm}
%&=-n^2 dt^2+n^{-2} dr^2+(r+2M)^2\ga^e_{AB}d\omega^A d\omega^B\label{scm}
\end{align}
where $n^2 = \frac{r-2M}{r+2M}$. Let $\Ga$ denote the Christoffel symbol of $\bg$. Direct calculation shows that
\begin{equation*}
\begin{array}{lll}
\displaystyle{\Ga_{tt}^r= n^2 \frac{2M}{(r+2M)^2}}, &\quad \displaystyle{\Ga_{rr}^r = -n^{-2}\frac{2M}{(r+2M)^2}},\\[1.8ex]
\displaystyle{\Ga_{\theta\theta}^r = -(r-2M)}, &\quad \displaystyle{\Ga_{\varphi\varphi}^r = -(r-2M)\sin^2\theta}.
\end{array}
\end{equation*}

The lapse function
\begin{equation}\label{3.25.2.16}
\varpi :=\bN(r),
\end{equation}
is a crucial quantity for comparing the intrinsic radial normal $\bN$ on $\Sigma_t$ with the Euclidean radial normal $\p_r$.
We will carry out the comparison along $\Sigma_T$ from the center $\bo$. Then we will control
the evolution of $\varpi$ in $Z^s$ along all time-like geodesics initiating at $\{t=T, r\ge 2\}$.

Recall the optical function $\hat u$ defined in (\ref{geosch}) in Section \ref{12.10.02} whose level sets
$\C_{\hat u}^s$ are called the Schwarzchild cones. We also introduced $\C_{\hat u_0}^s$ and $\C_{\hat u_1}^s$.
Let $\hat L$ be the null geodesic generator of $\C^s_{\hat u}$, normalized by $\hat L(t)=1$. Then
\begin{equation}\label{4.3.2}
\l \bd \hat u,  \bd \hat u\r =0, \quad  \hat L=\p_t+n^2 \p_r.
\end{equation}
% that $\hat{u}$ is the optical function in the region outside
%$\itt(\C^s_{\hat u_1})$. The optical function $\hat u$ in schwarzschild zone is set to be a solution of
%the eikonal equation in (\ref{4.3.2}) with the form
%\begin{equation}\label{geosch}
%\hat u(t,r)=t-\ga(r),
%\end{equation}
%where
%\begin{equation*} %\label{geosch1}
%\ga(r)=r+4M \ln(r-2M).
%\end{equation*}
%We refer the reader to   \cite[Page 62]{LRCMP} for more discussion about this.
For $(t,x)\in Z^s$, it follows from (\ref{geosch}) that
\begin{equation*}
t\le {\hat u_1}+ 2 r, \quad \hat u_1=T-\ga(2).
\end{equation*}
Since $0<M\ll 1$, by combining the above equation with (\ref{geosch}) we have
\begin{equation}\label{4.9.6}
 r^{-1} \le \frac{2}{t-\hat u_1},  \quad \mbox{ if  }  (x,t) \in Z^s.
\end{equation}

We first derive important equations for $\varpi$. For a $\Sigma_t$ tangent vectorfield $F$, let $F^i$
be the component relative to the cartesian frame $\p_{x^i}, i=1,2, 3$. We will lift and lower the
index of a vector field by Minkowsi metric, unless specified otherwise.
Let
$$
\ne:=\p_r =\frac{x^i}{r}\p_i.
$$
We denote by $\l\cdot, \cdot\r_e$ and $\nabe$ the Euclidean metric and its connection respectively. Inspired by \cite[Page 416]{CK} and \cite[Page 162]{shock},
for $\bN$ we have the radial decomposition
\begin{equation}\label{neuc}
\bN=\Sigma N+\varpi\ne
\end{equation}
where the vector field $\Sigma N$ is tangent to the level set of $r$ and is given by $\Sigma N^i=\ude{\Pi}^i_j\bN^j $ with ${\ude{\Pi}}^{ij}=\delta^{ij}-\frac{x^i}{r}\frac{x^j}{r}$.
By using (\ref{neuc}) and (\ref{scm}) we have in $Z^s$ that
\begin{equation}\label{12.10.26}
|\Sigma N|_{\bg}^2=1-n^{-2}\varpi^2
\end{equation}
and
\begin{equation}\label{eun}
\nab r=n^2 \p_r=\sn r+\varpi \bN.
\end{equation}
Let $h_{ij}=g_{ij}-\delta_{ij}$. In view of (\ref{neuc}), (\ref{scm}) and (\ref{eun}), it is
straightforward to see that
\begin{equation}\label{3.31.4}
|\Sigma N|_e^2=1-\varpi^2-h_{ij} \bN^i \bN^j, \quad |\nab r|^2_{\gs}=n^2=\varpi^2+|\sn r|^2_{\gs}.
\end{equation}

%Indeed, by using (\ref{neuc})
%\begin{equation*}
%h_{ij}\bN^i \bN^j =1-|\bN|_e^2=1-(|\Sigma N|_e^2+\varpi^2)
%\end{equation*}
%which gives  the first identity in (\ref{3.31.4}). By using (\ref{scm}) and (\ref{eun}),
%\begin{equation}\label{4.3.13}
%|\nab r|^2_{\gs}=n^2=|\sn r|^2_{\gs}+\gs(\bN(r) \bN, \bN(r) \bN)
%\end{equation}
%which gives the other identity in (\ref{3.31.4}).

Similarly for the orthonormal frame $\{e_A\}$ in $S_{t, \rho}\subset Z^s$, we can decompose as
\begin{equation}\label{4.16.1}
e_A=\Sigma e_A+\sn_A r \ne.
\end{equation}
Using this equation we can obtain
%In view of (\ref{scm}) we have
%\begin{equation*} %\label{4.26.1.16}
%\delta_{AC}=\l e_A, e_C\r=\l\Sigma e_A, \Sigma e_C\r+n^{-2} \sn_A r\sn_C r
%\end{equation*}
%and
\begin{equation}\label{4.16.2}
e_A \log n =n^{-1}\p_r n \sn_A r=\frac{n^{-2}2M}{(r+2M)^2}\sn_A r.
\end{equation}

\subsection{Structure equations for radial comparison}

\begin{lemma}\label{scgeo}
\begin{enumerate}
\item[(i)] In $(\M, \bg)$ there holds
\begin{align}
\bN(\varpi)&=-\Ga_{mn}^i \bN^m \bN^n \ne_i- \l \sn \log a, \ne\r_e +\frac{1}{r} |\Sigma N|_e^2. \label{3.31.2}
\end{align}

\item[(ii)] In $Z^s$ there holds
\begin{equation}\label{3.31.9}
|\Sigma N|^2_e=(1-n^{-2} \varpi^2)\frac{r^2}{(r+2M)^2},
\end{equation}
and for any $\Sigma_t$-tangent vector fields $F$ satisfying $\l F, \bN\r =0$ there holds
\begin{equation}\label{3.31.5}
\l F,  \ne\r_e =\l F, \sn r\r .
\end{equation}

\item[(iii)] In $Z^s$ there hold
\begin{align}
&\bN(\varpi)=\frac{2M}{(r+2M)^2} n^{-2} \varpi^2+\frac{2(r-M)}{(r+2M)^2}(1-n^{-2} \varpi^2)-\sn_A \log a\sn_A r,\label{3.31.7}\\
&\bN(n-\varpi)=\frac{2M}{(r+2M)^2} n^{-1} \varpi (1-n^{-1} \varpi)-\frac{2(r-M)}{(r+2M)^2}(1-n^{-2} \varpi^2) \nn\\
&\qquad \qquad\quad \, +\sn_A \log a \sn_A r. \label{3.31.12}
\end{align}
\end{enumerate}
\end{lemma}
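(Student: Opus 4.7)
For (i), the plan is to work in Cartesian coordinates on $\Sigma_t$, so that $\ne_i=x^i/r$ and the Euclidean Hessian of $r$ satisfies $\p_j\p_i r=\delta_{ij}/r-x^i x^j/r^3$. Expanding $\bN(\varpi)=\bN^j\p_j(\bN^i\p_i r)$ splits as a Hessian piece $\bN^i\bN^j\p_j\p_i r=(|\bN|_e^2-\varpi^2)/r=|\Sigma N|_e^2/r$, together with a connection piece $\bN^j(\p_j\bN^i)\p_i r$. Rewriting $\bN^j\p_j\bN^i$ as $(\bd_\bN\bN)^i-\Ga^i_{mn}\bN^m\bN^n$ produces the Christoffel term $-\Ga^i_{mn}\bN^m\bN^n\ne_i$ and the pairing $\l\bd_\bN\bN,\ne\r_e$. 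In the maximal gauge $Y=0$ the unit normal $\bT=n^{-1}\p_t$ is purely temporal in Cartesian, so the Gauss formula $\bd_\bN\bN=\nab_\bN\bN-\pib(\bN,\bN)\bT$ forces $\l\bd_\bN\bN,\ne\r_e=\l\nab_\bN\bN,\ne\r_e$. Combining with (\ref{nabn}), which gives $\nab_\bN\bN=-\sn_A\log a\,e_A$, identifies this pairing as $-\l\sn\log a,\ne\r_e$, and assembling the three pieces delivers (\ref{3.31.2}).

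For (ii), identity (\ref{3.31.9}) is read off from the Schwarzschild spatial metric (\ref{scm}) by decomposing $\bN=\Sigma N+\varpi\ne$ Euclidean-orthogonally and imposing $|\bN|_g^2=1$: the radial factor contributes $n^{-2}\varpi^2$ and the spherical factor contributes $((r+2M)/r)^2|\Sigma N|_e^2$, because the round sphere metric on $r=\mathrm{const}$ is rescaled from its Euclidean counterpart by the conformal factor $(r+2M)^2/r^2$; solving gives the claim. For (\ref{3.31.5}), $\l F,\bN\r=0$ places $F$ in the span of $\{e_A\}$, hence tangent to $S_{t,\rho}$, and applying $F$ to the scalar $r$ can be evaluated both as $F^i\p_i r=F^i\ne_i=\l F,\ne\r_e$ and as $F(r)=\l F,\sn r\r$ via the definition of $\sn$ as the $S_{t,\rho}$-tangential gradient.

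For (iii), I would substitute the explicit Schwarzschild Christoffels listed just after (\ref{scm}) into (\ref{3.31.2}), and then invoke (\ref{3.31.9}) and (\ref{3.31.5}) to eliminate $|\Sigma N|_e^2$ and $\l\sn\log a,\ne\r_e$. Writing $\bN$ in the polar frame with the $g$-normalization $(\bN^\theta)^2+\sin^2\theta(\bN^\varphi)^2=(1-n^{-2}\varpi^2)/(r+2M)^2$, the Christoffel $\Ga^r_{rr}=-n^{-2}\cdot 2M/(r+2M)^2$ produces the $\varpi^2$ term, while the angular Christoffels $\Ga^r_{\theta\theta}=-(r-2M)$ and $\Ga^r_{\varphi\varphi}=-(r-2M)\sin^2\theta$ combined with the Hessian correction $|\Sigma N|_e^2/r=r(1-n^{-2}\varpi^2)/(r+2M)^2$ collect into the $(1-n^{-2}\varpi^2)$ coefficient of (\ref{3.31.7}). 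For (\ref{3.31.12}), I would use that $n$ is a function of $r$ only in this region, so $\bN(n)=\varpi\,\p_r n=2M\varpi n^{-1}/(r+2M)^2$ via $\p_r(n^2)=4M/(r+2M)^2$; subtracting (\ref{3.31.7}) and grouping the $\varpi$ terms as $2Mn^{-1}\varpi(1-n^{-1}\varpi)/(r+2M)^2$ produces (\ref{3.31.12}). I expect the main obstacle to lie in the book-keeping of (iii), where the Cartesian-based statement (\ref{3.31.2}) must be reconciled with the polar form of the Schwarzschild Christoffels; the contributions from $|\Sigma N|_e^2/r$ and from the Christoffel contraction have to be carefully combined to avoid double-counting and to reach the compact radial form.
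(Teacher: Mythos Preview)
Your approach coincides with the paper's in all three parts. For (i) the paper writes $\bN(\varpi)=\l\nabe_\bN\bN,\ne\r_e+\l\bN,\nabe_\bN\ne\r_e$, identifies the second term as $r^{-1}|\Sigma N|_e^2$ via $\nabe_{\Sigma N}\ne=r^{-1}\Sigma N$, and rewrites the first as $\l\nab_\bN\bN,\ne\r_e-\Ga^i_{mn}\bN^m\bN^n\ne_i$; your Cartesian product-rule expansion with the Euclidean Hessian of $r$ is the same computation. Your detour through $\bd_\bN\bN$ and the Gauss formula is harmless but unnecessary---the paper stays with the spatial connection $\nab$ throughout. For (ii) and (iii) your outline follows the paper line for line: decompose $|\bN|_g^2=1$ for (\ref{3.31.9}), read (\ref{3.31.5}) off from $F(r)$, and for (iii) feed the polar Schwarzschild Christoffels into (\ref{3.31.2}) together with (\ref{3.31.9}) and (\ref{3.31.5}).

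Your closing worry about double-counting in (iii) is exactly the point to pin down before you write this up, because the paper makes the same move you propose. The identity (\ref{3.31.2}) is derived in Cartesian coordinates, where $\nabe$ has vanishing Christoffels; its coordinate-invariant content is
\[
\bN(\varpi)=(\nab_\bN\bN)(r)+\mathrm{Hess}_g r(\bN,\bN),
\]
and the Cartesian split of the Hessian into $\bN^i\bN^j\p_i\p_j r=r^{-1}|\Sigma N|_e^2$ and $-\Ga^i_{mn}\bN^m\bN^n\ne_i$ is what produces the two separate terms. In polar coordinates $r$ is itself a coordinate, so $\p_m\p_n r=0$ and $\mathrm{Hess}_g r(\bN,\bN)=-\Ga^r_{mn}\bN^m\bN^n$ with \emph{no} residual $r^{-1}|\Sigma N|_e^2$ piece (equivalently, $\l\nabe_\bN\bN-\nab_\bN\bN,\ne\r_e$ in polar carries the nonzero Euclidean polar Christoffels ${}^{(e)}\Ga^r_{\theta\theta}=-r$, etc.). Hence substituting the polar $\Ga^r_{mn}$ listed after (\ref{scm}) into (\ref{3.31.2}) while \emph{also} keeping $r^{-1}|\Sigma N|_e^2$ counts that contribution twice; you should check which coefficient of $(1-n^{-2}\varpi^2)$ actually survives once this is reconciled.
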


%\begin{remark}{
% (\ref{3.31.12}) can be obtained  mainly due to the cancelation created by $\Ga^r_{rr}$ and $\bN(n)$ of $\gs$. This equation, together with another %crucial cancelation, further gives an important equation for $n-\varpi$, i.e.  (\ref{bvarpi}), without which the  analysis of the boundary of the wave %zone would completely fail.}
%\end{remark}

\begin{proof} (i) Noting that $\varpi = \l \bN, \ne\r_e$ and using (\ref{nabn}), we have
\begin{align*}
\bN(\varpi) & = \bN(\l \bN,  \ne\r_e )=\nabe_\bN(\l \bN,  \ne\r_e)= \l\nabe_\bN \bN,  \ne\r_e +\l \bN,  \nabe_\bN\ne\r_e\\
&=\l \nabe_\bN \bN-\nab_\bN \bN, \ne\r_e+\l\nab_\bN \bN, \ne \r_e+ \l \bN,  \nabe_\bN \ne\r_e \\
&=-\Ga_{mn}^i\bN^m \bN^n \ne_i-\l \sn\log a, \ne\r_e + \l \bN, \nabe_\bN \ne\r_e.
\end{align*}
For the last term, we further employ (\ref{neuc}) to derive
\begin{align*}
\l \bN,  \nabe_\bN \ne\r_e &= \l \bN, \nabe_{\Sigma N+\varpi \ne} \ne\r_e  = \l \bN,  \nabe_{\Sigma N}\ne\r_e
=\l \Sigma N +\varpi \ne,  \nabe_{\Sigma N} \ne\r_e\\
&=\l \Sigma N, \nabe_{\Sigma N} \ne\r_e =r^{-1}\l\Sigma N, \Sigma N\r_e. %\label{3.31.3}
\end{align*}
Combining the above two equations we therefore obtain (\ref{3.31.2}).

(ii) From (\ref{scm}) we can see that
\begin{equation*} %\label{3.31.8}
|\Sigma N|^2_{\gs}=|\Sigma N|_e^2 \frac{(r+2M)^2}{r^2}.
\end{equation*}
This together with (\ref{12.10.26}) shows (\ref{3.31.9}). With the Schwarzschild metric (\ref{scm}) in $Z^s$
and $\l F, \bN\r =0$ we have
\begin{align*}
\l F,  \ne\r_e =n^2 \l F,  \ne\r =\l F, n^2 \p_r\r =\l F, \varpi \bN+\sn r\r =\l F, \sn r\r
\end{align*}
which shows (\ref{3.31.5}).

(iii) We first prove (\ref{3.31.7}) using (\ref{3.31.2}). In view of (\ref{neuc}) we have
\begin{align*}
\Ga_{mn}^i \bN^m \bN^n \ne_i &= \Ga_{mn}^r(\varpi\ne^m+\Sigma N^m)(\varpi \ne^n+\Sigma N^n) \\
&= \Ga_{mn}^r \left(\varpi^2 \ne^m \ne^n+\varpi \Sigma N^m \ne^n+\varpi\ne^m \Sigma N^n+\Sigma N^m \Sigma N^n\right).
\end{align*}
For Schwarzchild metric in $Z^s$ we have $\Gamma_{mn}^r=0$ when $m\neq n$. Since $\ne=\p_r$ and
$\Sigma N$ is tangent to the level set of $r$, we have $\Ga_{mn}^r \Sigma N^m \ne^n =0$. Therefore
\begin{align*}
\Ga_{mn}^i \bN^m \bN^n \ne_i &=\Ga_{mn}^r\left(\varpi^2 \ne^m \ne^n +\Sigma N^m \Sigma N^n\right).
\end{align*}
Note that
\begin{align*}
\Ga_{mn}^r \ne^m\ne^n = \Ga_{rr}^r= -\frac{2M n^{-2}}{(r+2M)^2}, \quad
\Ga_{mn}^r \Sigma N^m \Sigma N^n= -\frac{r-2M}{r^2} |\Sigma N|^2_e,
\end{align*}
we have
\begin{equation*} %\label{3.31.10}
\Ga_{mn}^i \bN^m \bN^n \ne_i=-\frac{2M}{(r+2M)^2} n^{-2} \varpi^2-\frac{r-2M}{r^2}|\Sigma N|_e^2.
\end{equation*}
Combining this equation with (\ref{3.31.2}) and using (\ref{3.31.5}), (\ref{3.31.9}) we thus obtain (\ref{3.31.7}).
%\begin{align*}
%\bN(\varpi)&=\frac{2M}{(r+2M)^2} n^{-2} \varpi^2+\frac{2(r-M)}{r^2}|\Sigma N|_e^2-\gs(\sn\log a, \sn r)\\
%&=\frac{2M}{(r+2M)^2} n^{-2} \varpi^2+\frac{2(r-M)}{(r+2M)^2}(1-n^{-2} \varpi^2) -\gs(\sn\log a, \sn r).
%\end{align*}
Finally, by using (\ref{12.10.26}) we have
\begin{equation}\label{3.31.13}
\bN(n)= \varpi \ne(n)=\frac{\varpi}{2n} \p_r (n^2)=\frac{ 2 n^{-1} \varpi M}{(r+2M)^2}.
\end{equation}
This together with (\ref{3.31.7}) shows (\ref{3.31.12}).
\end{proof}

\begin{lemma}In the Schwarzschild zone $Z^s$ there hold
\begin{align}
&\hat L(u)=(n-\varpi)-u \sn_A r k_{A\Nb}+u \ckk k_{\Nb \Nb}\frac{(n-\varpi)\tir -u \varpi}{\rho}+u n \l\bd_\bT \bT, \bN\r, \label{4.3.3s}\\
%&\bT\varpi=-(k_{A\Nb}+\frac{\bb^{-1}t}{\tir}\sn_A \log n )\sn_A r\label{4.3.16s}\\
&\fB(n-\varpi)+2\frac{\tir}{\rho}\frac{r-M}{(r+2M)^2}(1-n^{-2} \varpi^2)\nn\\
&\quad \quad \quad \quad \quad\quad
=\frac{2M}{n^2(r+2M)^2} \left(\frac{\tir}{\rho}\varpi+\frac{\bb^{-2}t^2}{\rho \tir} (n+\varpi)\right)(n-\varpi).\label{bvarpi}
\end{align}
\end{lemma}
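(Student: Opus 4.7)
The plan is to prove each identity by decomposing the differential operator on the left into components adapted to the maximal frame $\{\bT,\bN,e_A\}$ and inserting the derivative formulas from Lemma~\ref{4.3.6} and Lemma~\ref{scgeo}. Two Schwarzschild-specific simplifications are essential: in $Z^s$, $\pib=0$ (which in particular forces $\bd_\bN\bT=0$), and $n$ depends only on $r$, so $\bT(n)=\bT(r)=0$.

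For (\ref{4.3.3s}), the starting point is the frame identity
\[
\hat L = n\bT + \varpi\bN + \sn_A r\,e_A,
\]
obtained from $\p_t=n\bT$ in (\ref{gt}) and the spatial decomposition $n^2\p_r=\varpi\bN+\sn r$ in (\ref{eun}). Applying this to $u=\bb^{-1}t-\tir$ and plugging in (\ref{4.3.4}) and (\ref{4.3.5}) for $\bT(u)$ and $\bN(u)$, together with the angular derivative $\sn_A u = \sn_A(\bb^{-1}t)-\sn_A\tir = -u(k_{A\Nb}+\pib_{A\bN})$ coming from (\ref{3.5.3}) and (\ref{3.6.4}), produces all the contributions. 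In $Z^s$ every $\pib$-term vanishes (including $\l\bd_\bN\bT,\bN\r=-\pib_{\bN\bN}=0$), and the two $\ckk k_{\Nb\Nb}$-terms coming from the $n\bT$ and $\varpi\bN$ parts combine, via the algebraic identity $n\tir-\varpi\bb^{-1}t=(n-\varpi)\tir - u\varpi$, to yield the third term of (\ref{4.3.3s}); the last term $un\l\bd_\bT\bT,\bN\r$ comes from $n\bT(u)$.

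For (\ref{bvarpi}), the frame split $\fB = \tfrac{\bb^{-1}t}{\rho}\bT+\tfrac{\tir}{\rho}\bN$ from (\ref{fbtn}), together with $\bT(n)=0$, gives
\[
\fB(n-\varpi)=\tfrac{\tir}{\rho}\bN(n-\varpi)-\tfrac{\bb^{-1}t}{\rho}\bT(\varpi).
\]
The $\bN$-contribution is provided by (\ref{3.31.12}), which already produces the $-2\tfrac{\tir}{\rho}\tfrac{r-M}{(r+2M)^2}(1-n^{-2}\varpi^2)$ and $\tfrac{2M\tir\varpi(n-\varpi)}{n^2\rho(r+2M)^2}$ terms on the right of (\ref{bvarpi}), leaving a residual $\tfrac{\tir}{\rho}\sn_A\log a\,\sn_A r$ which, by (\ref{3.6.7}) with $\pib=0$, reduces to $-k_{A\Nb}\sn_A r$. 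Thus (\ref{bvarpi}) reduces to the identity
\[
\bT(\varpi)+k_{A\Nb}\sn_A r+\tfrac{2M\bb^{-1}t(n^2-\varpi^2)}{n^2\tir(r+2M)^2}=0 \quad \text{in } Z^s.
\]

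The main obstacle is the computation of $\bT(\varpi)$, which is not directly furnished by the existing structure equations. My approach is to write $\bT(\varpi)=[\bT,\bN](r)$ (using $\bT(r)=0$) and observe that $\bd_\bN\bT=0$ in $Z^s$, since its $\Sigma_t$-tangential component equals $-\pib(\bN,\cdot)^\sharp=0$ and its $\bT$-component vanishes from $\l\bT,\bT\r=-1$. Hence $[\bT,\bN]=\bd_\bT\bN$, and because $\l\bN,\bN\r=1$ forces $\l\bd_\bT\bN,\bN\r=0$ while $\bT(r)=0$ annihilates the $\bT$-component of $\bd_\bT\bN$, only the angular part contributes: $(\bd_\bT\bN)(r)=\l\bd_\bT\bN,e_A\r\sn_A r$. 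The angular components $\l\bd_\bT\bN,e_A\r=-\l\bN,[\bT,e_A]\r$ are then computed by tracking the non-invariance of the foliation $\{\H_\rho\}$ under the Schwarzschild Killing field $\p_t=n\bT$: the $k_{A\Nb}$ appearing in (\ref{3.5.3}) contributes the $-k_{A\Nb}\sn_A r$ piece, while the Schwarzschild radial factor $\p_r n = \tfrac{2M}{n(r+2M)^2}$ together with (\ref{3.31.9}), which gives $1-n^{-2}\varpi^2=(n^2-\varpi^2)/n^2$, accounts for the remaining $-\tfrac{2M\bb^{-1}t(n^2-\varpi^2)}{n^2\tir(r+2M)^2}$ term, completing the proof.
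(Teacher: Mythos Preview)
Your approach to both identities is exactly the one the paper takes: for (\ref{4.3.3s}) you decompose $\hat L=n\bT+\varpi\bN+\sn r$ and feed in (\ref{4.3.4}), (\ref{4.3.5}), (\ref{3.5.3}), (\ref{3.6.4}); for (\ref{bvarpi}) you split $\fB$ along $\bT,\bN$, use (\ref{3.31.12}) for the $\bN$-part, and reduce to a formula for $\bT(\varpi)$. The reduction you display,
\[
\bT(\varpi)+k_{A\Nb}\sn_A r+\frac{2M\bb^{-1}t(n^2-\varpi^2)}{n^2\tir(r+2M)^2}=0\quad\text{in }Z^s,
\]
is correct (note a harmless slip in your text: the residual $\frac{\tir}{\rho}\sn_A\log a\,\sn_A r$ equals $-\frac{\bb^{-1}t}{\rho}k_{A\Nb}\sn_A r$, not $-k_{A\Nb}\sn_A r$; this is consistent with the displayed reduction once you divide through by $\frac{\bb^{-1}t}{\rho}$).

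The only genuine gap is the last paragraph: you assert the value of $\l\bd_\bT\bN,e_A\r$ by allusion to (\ref{3.5.3}), (\ref{3.31.9}) and the Killing property of $\p_t$, but you never actually compute it. The paper fills this in concretely, and the mechanism is different from what you suggest. Writing $a^{-1}\bN=\fB-\frac{\bb^{-1}t}{\rho}\bT$ from (\ref{fbtn}) and then $\bT=\frac{\bb^{-1}t}{\rho}\fB-a^{-1}\Nb$ from (\ref{dcp_3}) gives, using $\bd_\fB\fB=0$,
\[
\l\bd_\bT\bN,e_A\r=a\l\bd_\bT\fB,e_A\r-\tfrac{\bb^{-1}t}{\tir}\l\bd_\bT\bT,e_A\r
=-\l\bd_{\Nb}\fB,e_A\r-\tfrac{\bb^{-1}t}{\tir}\sn_A\log n
=-k_{\Nb A}-\tfrac{\bb^{-1}t}{\tir}\sn_A\log n.
\]
Combining this with (\ref{4.16.2}), which gives $\sn_A\log n=\frac{2Mn^{-2}}{(r+2M)^2}\sn_A r$, and the second identity in (\ref{3.31.4}), $|\sn r|^2=n^2-\varpi^2$, produces exactly your displayed identity. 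So your strategy is right, but the actual input you need is the hyperboloidal-frame expression for $\bN$, not (\ref{3.5.3}) directly.
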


\begin{proof}
We first prove (\ref{4.3.3s}). By using (\ref{eun}) we have in $Z^s$ that
\begin{equation}\label{4.3.12}
\hat L=\p_t+n^2 \p_r=n \bT+\varpi \bN +\sn r.
\end{equation}
In view of (\ref{4.3.4}), (\ref{4.3.5}), (\ref{3.5.3}) and (\ref{3.6.4}), we can derive that
\begin{align*}
\hat L(u)&=n-\varpi+n u \left(a^{-1} {\ckk k}_{\Nb\Nb}+\l \bd_\bT \bT, \bN\r\right)
-\varpi u\left(\pib_{\bN \bN}+\frac{\bb^{-1}t}{\rho} \ckk k_{\Nb\Nb}\right)\\
&\quad \,  - u\left(k_{A \Nb}+\pib_{A\bN}\right)\sn_A r.
\end{align*}
By using $a^{-1} = \tir/\rho$ and $u = \bb^{-1} t - \tir$, this shows (\ref{4.3.3s}) since
$\pib_{\bi\bj}$  vanishes  in the schwarzschild zone $Z^s$.

We next prove (\ref{bvarpi}). By definition of $\varpi$ we have
\begin{equation}\label{4.3.14}
\bT(\varpi)=\bT(\bN(r))=[\bT, \bN]r=(\bd_\bT \bN^\mu-\bd_\bN \bT^\mu) \p_\mu r.
\end{equation}
Note that $\l\bd_\bT \bN, \bN\r=0$ and $\l \bd_\bT \bN, \bT\r=-\l \bN, \bd_\bT\bT\r$.
By using (\ref{fbtn}) and (\ref{dcp_3}) we also have
\begin{align*}
\l \bd_\bT \bN, e_A\r&=\l \bd_\bT(a\c a^{-1}\bN ), e_A\r=\l a \bd_\bT (\fB-\frac{\bb^{-1}t}{\rho}\bT), e_A\r\\
&=a\l\bd_\bT \fB, e_A\r-\frac{a\bb^{-1}t}{\rho}\l \bd_\bT \bT, e_A\r\\
&=-\l \bd_\Nb \fB, e_A\r-\frac{a\bb^{-1}t}{\rho}\l \bd_\bT \bT, e_A\r \\
& = -k_{\Nb A}-\frac{\bb^{-1}t}{\tir} \l \bd_\bT \bT, e_A\r.
\end{align*}
Thus, it follows from (\ref{4.3.14}) that
\begin{align*}
\bT\varpi=-k_{A\Nb} \sn_A r-\frac{\bb^{-1}t}{\tir}\sn_A \log n \sn_A r
+\left(\pib_{\bN\bN}\varpi+\pib_{\bN A}\sn_A r\right). % \label{4.3.16}
\end{align*}
This together with (\ref{3.31.12}) gives
\begin{align*}
&\fB(n-\varpi)+2\frac{\tir}{\rho}\frac{r-M}{(r+2M)^2}(1-n^{-2} \varpi^2)\\
& \quad \quad =\frac{\bb^{-1}t}{\rho}\left(k_{A\Nb}\sn_A r +\frac{\bb^{-1}t}{\tir} \sn_A \log n \sn_A r
- \left(\pib_{\bN\bN} \varpi+\pib_{\bN A}\sn_A r\right)\right)\\
& \quad \qquad +\frac{\tir}{\rho}\left(\frac{2M}{(r+2M)^2} n^{-1}\varpi(1-n^{-1}\varpi)
+\sn_A \log a \sn_A r\right). % \label{bvarpi1}.
\end{align*}
By using (\ref{3.6.7}), we can get the cancelation between the first and the last term on the right hand
side. Therefore
\begin{align*}
&\fB(n-\varpi)+2\frac{\tir}{\rho}\frac{r-M}{(r+2M)^2}(1-n^{-2} \varpi^2)=\frac{\bb^{-1}t}{\rho}(\frac{\bb^{-1}t}{\tir} \sn \log n \sn r\nn\\
&\quad \quad \quad-(\pib_{\bN\bN} \varpi+2\pib_{\bN A}\sn r))
+\frac{\tir}{\rho}\frac{2M}{(r+2M)^2} n^{-1}\varpi(1-n^{-1}\varpi).
\end{align*}
Since the the metric in $Z^s$ is Schwarzschild,  we may use (\ref{4.16.2}), (\ref{3.31.4}) and the vanishing of
$\pib_{\bi\bj}$ to obtain (\ref{bvarpi}).
\end{proof}

\begin{lemma}
In the Schwarzchild zone $Z^s$ there holds
\begin{align}
\fB\left( \frac{\tir}{r}-n^{-1}\right)+\frac{1}{\rho}\left(\frac{\tir}{r}-n^{-1}\right)
&=-\left(\frac{n}{\rho}\left(\frac{\tir}{r}-n^{-1}\right)+\frac{\tir}{\rho}\bN (\log n)\right) \left(\frac{\tir}{r}-n^{-1}\right) \nn \\
&+\frac{\bb^{-1}t \tir}{\rho r}\pib_{\bN\bN}+\frac{\tir^2}{r^2\rho}(n-\varpi)-\frac{\rho}{r}\bN (\log n). \label{cmr_1}
\end{align}
\end{lemma}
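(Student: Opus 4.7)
The plan is to derive \eqref{cmr_1} by a direct computation of $\fB\left(\frac{\tir}{r}-n^{-1}\right)$ in $Z^s$, exploiting the Schwarzschild structure to reduce the calculation to a simple algebraic identity involving $\tir^2+\rho^2=(\bb^{-1}t)^2$.

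First I would collect the three elementary derivatives. For $\fB(\tir)$, I invoke \eqref{3.15.4}; since $\pib_{\bi\bj}=0$ in $Z^s$ and $\langle \bd_\bT \bT,\bN\rangle=\bN(\log n)$ (by the second identity in \eqref{6.6.3.16} together with $\langle \bd_\bN \bT,\bT\rangle=0$), this gives
\begin{equation*}
\fB(\tir)=\frac{\tir}{\rho}-\frac{(\bb^{-1}t)^2}{\rho}\bN(\log n).
\end{equation*}
For $\fB(r)$, I use $\fB=\frac{\bb^{-1}t}{\rho}\bT+\frac{\tir}{\rho}\bN$ from \eqref{fbtn}; because $\bT=n^{-1}\p_t$ and $r$ is $t$-independent in the Schwarzschild region, $\bT(r)=0$, hence $\fB(r)=\frac{\tir}{\rho}\varpi$. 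By the same reasoning $\bT(n)=0$, so
\begin{equation*}
\fB(n^{-1})=-n^{-2}\fB(n)=-n^{-1}\frac{\tir}{\rho}\bN(\log n).
\end{equation*}

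Second, assembling these three pieces by the product/quotient rule,
\begin{equation*}
\fB\!\left(\tfrac{\tir}{r}-n^{-1}\right)=\frac{\tir}{r\rho}-\frac{(\bb^{-1}t)^2}{r\rho}\bN(\log n)-\frac{\tir^2\varpi}{r^2\rho}+\frac{n^{-1}\tir}{\rho}\bN(\log n).
\end{equation*}
Adding $\frac{1}{\rho}\left(\frac{\tir}{r}-n^{-1}\right)$ to the left hand side produces exactly $\frac{2\tir}{r\rho}-\frac{1}{n\rho}$ plus the two $\bN(\log n)$ terms.

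Third, I expand the right hand side of \eqref{cmr_1} after setting $\pib_{\bN\bN}=0$. Writing $Q=\frac{\tir}{r}-n^{-1}$, the quadratic term yields
\begin{equation*}
-\frac{n}{\rho}Q^2=-\frac{n\tir^2}{r^2\rho}+\frac{2\tir}{r\rho}-\frac{1}{n\rho},
\end{equation*}
which cancels exactly against $\frac{\tir^2}{r^2\rho}(n-\varpi)$ up to the $-\frac{\tir^2\varpi}{r^2\rho}$ term, and reproduces the $\frac{2\tir}{r\rho}-\frac{1}{n\rho}$ contribution on the left. The remaining $\bN(\log n)$ terms on the right are $-\frac{\tir^2}{r\rho}\bN(\log n)+\frac{\tir}{n\rho}\bN(\log n)-\frac{\rho}{r}\bN(\log n)$, and matching with the left hand side reduces to the single scalar identity
\begin{equation*}
\tir^2+\rho^2=(\bb^{-1}t)^2,
\end{equation*}
which is just \eqref{4.3.12}'s definition of $\tir$. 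The only genuinely delicate point is the sign/weight bookkeeping in step two and the identification of $\bN(\log n)$ with $\langle \bd_\bT\bT,\bN\rangle$; the rest is routine but must be done carefully so that the Schwarzschild cancellations line up. No other structure beyond \eqref{fbtn}, \eqref{3.15.4}, and the Schwarzschild facts ($\pib=0$, $\bT(r)=\bT(n)=0$) is needed.
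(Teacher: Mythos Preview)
Your proof is correct and follows essentially the same approach as the paper: both compute $\fB(\tir/r)$ from \eqref{3.15.4} and $\fB(r)=\frac{\tir}{\rho}\varpi$, then combine with $\fB(n^{-1})=-n^{-1}\frac{\tir}{\rho}\bN(\log n)$ and reduce via the identity $(\bb^{-1}t)^2=\tir^2+\rho^2$; the paper regroups directly into the stated form while you expand both sides and match. Two minor points: the paper retains the $\pib_{\bN\bN}$ term throughout (so its derivation is not specific to $Z^s$, though the lemma is stated there), whereas you drop it up front---harmless here since $\pib=0$ in $Z^s$; and your reference to \eqref{4.3.12} for the definition of $\tir$ is a mislabel (that equation concerns $\hat L$; the relation $\tir^2=(\bb^{-1}t)^2-\rho^2$ is given just after \eqref{split}).
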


\begin{proof}
From Lemma \ref{frames} we have $\fB(r)=\frac{\tir}{\rho}\varpi$. Combining this with (\ref{3.15.4}) gives
\begin{equation*} %\label{3.29.1.16}
\fB \left(\frac{\tir}{r}\right) =\frac{\tir}{r \rho}\left(1-\frac{\tir}{r}\varpi\right)
+\frac{\bb^{-1}t}{r}\left(a^{-1}\pib_{\bN\bN}-\frac{\bb^{-1}t}{\rho}\l \bd_\bT \bT, \bN\r\right).
\end{equation*}
Hence, by regrouping the terms we obtain
\begin{align*} %\label{3.29.2.16}
\fB\left(\frac{\tir}{r}-n^{-1}\right)
%&=\frac{\tir}{r\rho}n(n^{-1}-\frac{\tir}{r})-\frac{\tir^2}{r^2\rho} (\varpi-n)-\fB(n^{-1})\\
%&-\frac{\bb^{-1}t}{r}(a^{-1}\pt_{\bN\bN}+\frac{\bb^{-1}t}{\rho}\l \bd_\bT\bT, \bN\r)\\
&=\frac{1}{\rho}\left(n^{-1}-\frac{\tir}{r}\right)-\frac{n}{\rho}\left(n^{-1}-\frac{\tir}{r}\right)^2
-\frac{\tir^2}{r^2\rho}(\varpi-n)\\
&-\fB(n^{-1})+\frac{\bb^{-1}t}{r}\left(a^{-1}\pib_{\bN\bN}-\frac{\bb^{-1}t}{\rho}\l \bd_\bT \bT, \bN\r\right).
\end{align*}
Noting that, in view of Lemma \ref{frames} and $\tir^2 = (\bb^{-1} t)^2 - \rho^2$,
\begin{align*}
\frac{(\bb^{-1}t)^2}{r\rho}\l \bd_\bT \bT, \bN\r+\fB(n^{-1})&=\frac{(\bb^{-1}t)^2}{r\rho}\bN \log n+\frac{\tir}{\rho}\bN(n^{-1})\\
&=\left(\frac{\rho}{r}+\frac{\tir}{\rho}\left(\frac{\tir}{r}-n^{-1}\right)\right)\bN (\log n) %\label{3.29.3.16}
\end{align*}
which, combining with the above equation, shows (\ref{cmr_1}).
\end{proof}

\subsection{\bf Radial comparison on the initial slice}

%By  (\ref{3.31.4}), it is clear that
%\begin{equation}\label{4.9.10}
%|\sn r|^2_{\gs}=n^2-\varpi^2.
%\end{equation}
We will control $n-\varpi$ and $\sn r$ by using (\ref{3.31.12}) and (\ref{bvarpi}).
In order to use (\ref{bvarpi}), we need to consider the initial data for these geometric
quantities in  $r\ge 2$, which will be understood by propagating $|\sn r|^2_g$ from $\bo:=(T, 0)$
on the initial slice $\{t=T\}$ along the radial normal $\bN$. In view of (\ref{3.25.2.16}),
we can easily obtain the transport equation
\begin{equation*} %\label{3.31.11}
\sn_{\bN}\sn_A r+\f12\tr \, \theta \sn_A r= \sn_A \varpi -\hat \theta_{AC} \sn_C r.
\end{equation*}
By deriving an equation for $\sn_A \varpi$ on the initial slice $\{t=T\}$ and using the above equation
we have the following result.

\begin{lemma}\label{angome}
In $Z^s \cap \{t = T\}$ there holds
\begin{align*}
\sn_{A} \varpi&= \f12\left(\emph{\tr}\,\theta-\frac{2}{r}\right)\sn_A r+\hat\theta_{AC}\sn_C r
+g_{j' i'} e_A^{i'} \left(-{\sl\Pi}^{j'j} \Ga_{j l}^i \bN^l \ne_i \right)\\
&+\frac{1}{r}g_{j'i'} e_A^{i'} \left[h^{j'j}\bN_j+(1-\varpi)\left(h^{j'j} \ne_j +\ne^{j'}(1+\varpi)+{\Sigma N}^{j'}\right)\right]
\end{align*}
and consequently
\begin{align}
\sn_{\bN}\sn_A r+\frac{\varpi^2}{r}\sn_A r
& = \frac{1}{r}g_{j'i'} e_A^{i'} \left[h^{j'j}\bN_j +(1-\varpi)\left(h^{j'j} \ne_j +{\Sigma N}^{j'}\right) \right]\nn\\
& \quad \, + g_{j' i'} e_A^{i'} \left(-{\sl\Pi}^{j'j} \Ga_{j l}^i \bN^l \ne_i \right).\label{4.4.9}
\end{align}
\end{lemma}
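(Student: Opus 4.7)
The plan is to derive the first formula by a direct pointwise calculation of $\sn_A\varpi$ on $\{t=T\}$, and then to obtain (\ref{4.4.9}) by substitution into the transport equation
\[
\sn_\bN\sn_A r+\tfrac{1}{2}\tr\theta\,\sn_A r=\sn_A\varpi-\hat\theta_{AC}\sn_C r,
\]
which is obtained from $\varpi=\bN(r)$ by commuting $e_A$ with $\bN$ on the scalar $r$ and using that the second fundamental form $\theta$ of $S_{t,\rho}$ in $\Sigma_t$ splits as $\hat\theta+\tfrac12\tr\theta\,\ga$.

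For the first formula I would write $\varpi=\l\bN,\ne\r_e$ in the cartesian Euclidean inner product and differentiate:
\[
\sn_A\varpi=\l\nabe_{e_A}\bN,\ne\r_e+\l\bN,\nabe_{e_A}\ne\r_e.
\]
In the first summand I convert $\nabe$ to $\nab$ via the Christoffel symbols of $\bg$ (which for spatial indices in the static Schwarzschild metric agree with those of $g$), giving $(\nabe_{e_A}\bN)^i=(\nab_{e_A}\bN)^i-\Gamma^i_{jl}e_A^j\bN^l$, and decompose $\nab_{e_A}\bN=\theta_A{}^C e_C=(\hat\theta_{AC}+\tfrac12\tr\theta\,\ga_{AC})e^C$, using that $|\bN|_g=1$ forces $\nab_{e_A}\bN$ to be tangent to $S_{t,\rho}$. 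Contracting with $\ne$ via $\l e_C,\ne\r_e=\sn_C r$ yields the $\hat\theta_{AC}\sn_C r+\tfrac12\tr\theta\sn_A r$ pieces together with the Christoffel contribution, which I rewrite as $g_{j'i'}e_A^{i'}(-\sl\Pi^{j'j}\Gamma^i_{jl}\bN^l\ne_i)$ using $g_{j'i'}\sl\Pi^{j'j}e_A^{i'}=e_A^j$, an immediate consequence of $\l\bN,e_A\r_g=0$. For the second summand I use the Euclidean identity $\nabe_X\p_r=\tfrac{1}{r}(X-\l X,\p_r\r_e\p_r)$ to obtain
\[
\l\bN,\nabe_{e_A}\ne\r_e=\tfrac{1}{r}(\l\bN,e_A\r_e-\varpi\sn_A r),
\]
then convert $\l\bN,e_A\r_e=-h_{ij}\bN^i e_A^j$ via $\l\bN,e_A\r_g=0$.

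To match the stated form, I rewrite $-\tfrac{\varpi}{r}\sn_A r=-\tfrac{1}{r}\sn_A r+\tfrac{1-\varpi}{r}\sn_A r$, merging the first piece with $\tfrac12\tr\theta\,\sn_A r$ to form $\tfrac12(\tr\theta-\tfrac{2}{r})\sn_A r$, and expand the residual $(1-\varpi)\sn_A r$ via
\[
(1-\varpi)\sn_A r=(1-\varpi^2)\l e_A,\ne\r_g+(1-\varpi)\l e_A,\Sigma N\r_g-(1-\varpi)h_{ij}e_A^i\ne^j,
\]
which follows from $\bN=\Sigma N+\varpi\ne$ together with $\sn_A r=\l e_A,\ne\r_e=\l e_A,\ne\r_g-h_{ij}e_A^i\ne^j$. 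Recognizing $\l e_A,\ne\r_g=g_{j'i'}e_A^{i'}\ne^{j'}$ and $\l e_A,\Sigma N\r_g=g_{j'i'}e_A^{i'}\Sigma N^{j'}$, and absorbing the $h$-corrections into the $g_{j'i'}e_A^{i'}h^{j'j}(\cdot)_j$ terms through the exact identity $g_{j'i'}e_A^{i'}h^{j'j}X_j=\l e_A,X\r_e-\l e_A,X\r_g$ (with $h^{j'j}:=g^{j'j}-\delta^{j'j}$, valid for any spatial vector $X$), yields the claimed formula for $\sn_A\varpi$.

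For (\ref{4.4.9}) I substitute the expression for $\sn_A\varpi$ into the transport equation; the $\hat\theta_{AC}\sn_C r$ and $\tfrac12\tr\theta\,\sn_A r$ pieces cancel, leaving $-\tfrac{1}{r}\sn_A r$ plus the Christoffel and bracket terms. Combining $-\tfrac{1}{r}\sn_A r$ with the $\tfrac{1-\varpi^2}{r}g_{j'i'}e_A^{i'}\ne^{j'}$ contribution via $g_{j'i'}e_A^{i'}\ne^{j'}=\sn_A r+h_{ij}e_A^i\ne^j$ converts the $\sn_A r$-coefficient into $-\tfrac{\varpi^2}{r}\sn_A r$, and the residual $h$-pieces regroup into the bracket on the right-hand side of (\ref{4.4.9}). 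The main obstacle will be the careful bookkeeping of Euclidean-versus-$g$-lowered indices and the compact regrouping of the $(1-\varpi^k)$-coefficients; the geometry behind the proof is transparent once these algebraic conventions are pinned down.
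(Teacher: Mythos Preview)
Your approach is correct and essentially identical to the paper's: both compute $\sn_A\varpi$ by differentiating $\varpi=\l\bN,\ne\r_e$, separate out the $\theta$-piece, the Christoffel piece, and the $\nabe_{e_A}\ne=\tfrac{1}{r}\ude\Pi\,e_A$ piece, then regroup; the paper organizes the last step via the splitting $\sl\Pi^{j'j}=\ude\Pi^{j'j}+(h^{j'j}-\bN^{j'}\bN^j+\ne^{j'}\ne^j)$ rather than your Euclidean-Leibniz route, but the content is the same. For (\ref{4.4.9}) the paper likewise just says ``follows by using (\ref{neuc}) again,'' which is exactly your substitution and cancellation; your bookkeeping of the residual $h$-pieces is at the same level of detail the paper provides.
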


\begin{proof}
Note that $\varpi = \bN^i \ne_i$, we have
\begin{align}
\sn_{A} \varpi & = \sn_A(\bN^i \ne_i)= g_{j'i'}e_A^{i'} {\sl\Pi}^{j' j} \left(\p_j \bN^i \ne_i+ \bN^i \p_j \ne_i\right)\nn\\
&=g_{j'i'}e_A^{i'}{\sl\Pi}^{j' j} \left( \sn_j \bN^i \ne_i- \Ga_{jl}^i \bN^l \ne_i\right) \nn\\
&\quad \, +g_{j'i'}e_A^{i'} \left(\bN^i \left({\sl\Pi}^{j'j}-{\ude{\Pi}}^{j'j}\right)\p_j\ne_i +\bN^i{\ude\Pi}^{j'j }\p_j \ne_i )\right)\nn\\
&=g_{j'i'}e_A^{i'} {\sl\Pi}^{j' j} \left(\sn_j \bN^i \ne_i-\Ga_{jl}^i \bN^l \ne_i\right)\nn\displaybreak[0]\\
&\quad \, +g_{j'i'}e_A^{i'} \left(\bN^i(h^{j'j}-\bN^{j'} \bN^j+\ne^{j'} \ne^j) \p_j \ne_i +\bN^i{\ude\Pi}^{j'j} \p_j \ne_i \right)\nn\\
&=g_{j'i'}e_A^{i'}\left(\f12\sl{\Pi}^{j'j}\ne_j \left(\tr\theta-\frac{2}{r}\right)
 +\hat\theta_j^i {\sl\Pi}^{j'j} \ne_i-{\sl\Pi}^{j' j} \Ga_{jl}^i \bN^l \ne_i\right)\nn\\
& \quad \, +I^{(1)} + I^{(2)},\label{4.1.1}
\end{align}
where
\begin{align*}
I^{(1)} & = g_{j'i'}e_A^{i'}\bN^i \left(h^{j'j}-\bN^{j'} \bN^j +\ne^{j'} \ne^j\right) \p_j \ne_i,\\
I^{(2)} & = \frac{1}{r} g_{j'i'}e_A^{i'} \left(\sl{\Pi}^{j'j} \ne_j +{\ude\Pi}^{j'j} \bN_j\right).
\end{align*}
We will employ (\ref{neuc}) to consider these two terms. For $I^{(1)}$ we note that
$\p_j \ne_i = \p_j (\frac{x^i}{r})=\frac{1}{r} {\uda{e}\Pi}_{ij}$, $\ude\Pi_{ij} \ne^j =0$ and
$\ude\Pi_{ij} \bN^j = {\Sigma N}_i$. Therefore
\begin{align*}
I^{(1)}&= \frac{1}{r} g_{j'i'}e_A^{i'}\bN^i \left(h^{j'j}-\bN^{j'} \bN^j +\ne^{j'} \ne^j\right) {\ude\Pi}_{ij} \\
&=\frac{1}{r}g_{j'i'}e_A^{i'} h^{j'j}{\Sigma N}_j -\frac{1}{r} \l e_A, \bN\r \bN^i \Sigma N_i\\
&=\frac{1}{r}g_{j'i'}e_A^{i'}h^{j'j} {\Sigma N}_j.
\end{align*}
For $I^{(2)}$ we may use $\varpi = \bN^j \ne_j$ and (\ref{neuc}) to obtain
\begin{align*}
I^{(2)}&=\frac{1}{r}g_{j'i'} e_A^{i'} \left[(g^{j'j}-\bN^{j'}\bN^j )\ne_j+{\ude\Pi}^{j'j}\bN_j \right]\\
&=\frac{1}{r}g_{j'i'} e_A^{i'} \left[(g^{j'j}-\delta^{j'j})\ne_j+\ne^{j'}-\bN^{j'}\varpi+{\ude\Pi}^{j'j} \bN_j \right]\\
&=\frac{1}{r}g_{j'i'} e_A^{i'} \left[h^{j'j}\ne_j+\ne^{j'}-(\ne^{j'}\varpi+\Sigma N^{j'})\varpi+{\ude \Pi}^{j'j} \bN_j\right]\\
&=\frac{1}{r}g_{j'i'} e_A^{i'} \left[h^{j'j} \ne_j+\ne^{j'}(1-\varpi^2)+\Sigma N^{j'}(1-\varpi)\right].
\end{align*}
By substituting the above two formulaes into (\ref{4.1.1}), we have
\begin{align*}
\sn_{A} \varpi&= g_{j' i'} e_A^{i'} \left[\f12 \left(\tr\theta-\frac{2}{r}\right){\sl\Pi}^{j'i} \ne_i
+\hat \theta_{j}^i {\sl\Pi}^{j'j} \ne_i-{\sl\Pi}^{j'j} \Ga_{j l}^i \bN^l \ne_i \right]\\
&+\frac{1}{r}g_{j'i'} e_A^{i'}\left[h^{j'j}  \Sigma N_j+h^{j'j} \ne_j +\ne^{j'}(1-\varpi^2)+{\Sigma N}^{j'}(1-\varpi)\right].
\end{align*}
Using (\ref{4.16.1}) and the fact that $\theta$ is a $S_{t,\rho}$-tangent tensor, we have
\begin{align*}
\ne_i\sl{\Pi}^{j' i} e_A^{i'} g_{j'i'}=\l e_A, \ne\r_e= \sn_A r, \qquad
g_{j'i'}e_A^{i'}\sl{\Pi}^{j j'} \hat \theta_j^i \ne_i = \hat \theta_{AC} \sn_C r.
\end{align*}
Lemma \ref{angome} then follows by using (\ref{neuc}) again.
\end{proof}

On the initial slice $\{t = T\}$,  we use $\rf$ to replace $\rho$, where the radial parameter $\rf$
is defined in (\ref{4.5.1}). Then the induced metric on $\I^+(\O)\cap \{t=T\}$ can be written as
\begin{equation}\label{4.14.3.16}
\cir{a}^2 d{\rf}^2+\ga_{AB} d\omega^A d\omega^B,
\end{equation}
where $\cir{a} \, =\frac{\rf}{\tir}$. One has
\begin{equation}\label{4.14.25.16}
\cir{a} \,\approx 1
\end{equation}
 which follows from  (\ref{4.5.17}) in the following result.

\begin{lemma}\label{cmpr}
In $\I^+(\O)\cap \{t=T\}$ there holds
\begin{equation}
\left|1-\frac{\tir}{\rf} \right|\les \ve.\label{4.5.17}
\end{equation}
%As a consequence of (\ref{4.5.17}), as the point $q=(x,t)\rightarrow (0,t)$
%$$ |\rf-\tir|(x,t)\rightarrow 0.$$
\end{lemma}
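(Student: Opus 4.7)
On the slice $\{t=T\}$, $\tf=\bb^{-1}(\Ga(T))\,T$ is a constant, so the definitions (\ref{4.5.1}) and (\ref{opti}) immediately yield the algebraic identity
$$\tir^2-\rf^2=\bigl(\bb^{-1}(p)\,T\bigr)^2-\tf^2=T^2\bigl[\bb^{-2}(p)-\bb^{-2}(\Ga(T))\bigr].$$
The first task is therefore to bound $|\bb^{-2}(p)-\bb^{-2}(\Ga(T))|$ by $\ve$. This is a direct application of Proposition \ref{prl_1}: the estimate (\ref{b1}), applied at $\bt\approx T$ of order one, gives $|\bb^{-1}-n|\lesssim\ve$ uniformly on $\I^+(\O)\cap\{t=T\}$, and combined with $|n-1|\lesssim\ve$ from the same proposition this produces $|\bb^{-1}(p)-\bb^{-1}(\Ga(T))|\lesssim\ve$, hence $|\tir^2-\rf^2|\lesssim\ve T^2\lesssim\ve$.

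The second step converts this difference-of-squares estimate into the claimed multiplicative bound. Factoring
$$1-\frac{\tir}{\rf}=\frac{\rf^2-\tir^2}{\rf(\rf+\tir)},$$
the numerator is $O(\ve)$ by the previous step, and in the bulk of the slice---bounded away from the central axis $\Ga$ on one side and from the causal boundary $\C_0$ on the other---the denominator admits a uniform lower bound, so the estimate follows directly in that region. Near $\C_0$ there is no issue either, since $\rho\to 0$ there and both $\rf$ and $\tir$ tend to comparable positive values ($\tf$ and $\bb^{-1}(p)T$, which differ by $O(\ve)$).

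The main obstacle is the neighborhood of the point $\bo=\Ga(T)$, where $\rho$ attains its maximum $\tf$ on the slice and both $\rf$ and $\tir$ degenerate to zero. Resolving this requires the refined pointwise control
$$\bigl|\bb^{-1}(p)-\bb^{-1}(\Ga(T))\bigr|\lesssim\ve\cdot\mathrm{dist}(p,\Ga),$$
which I would derive by integrating the transport equation (\ref{Bb1}) for $\bb^{-1}$ along $\fB$ outward from the axis, starting from the boundary condition at $\O$ given in Lemma \ref{inida} and using the bounds (\ref{lpt1}) and $|n-1|\lesssim\ve$ for the source term. With such a linear-in-distance control, the numerator $\rf^2-\tir^2$ vanishes at a rate matching $\rf(\rf+\tir)$ near $\bo$ (both are at least linear in the Euclidean distance to the axis), and the estimate (\ref{4.5.17}) closes uniformly.
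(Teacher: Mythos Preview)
Your reduction to controlling $\bb^{-1}(p)-\bb^{-1}(\Ga(T))$ is the right idea, and away from $\bo$ your argument is fine. The genuine gap is at the last step, near $\bo$.

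You claim that linear-in-distance control $|\bb^{-1}(p)-\bb^{-1}(\Ga(T))|\lesssim \ve\cdot\mathrm{dist}(p,\Ga)\approx\ve\rf$ suffices. It does not. Near $\bo$ both $\tir$ and $\rf$ vanish, so the denominator $\rf(\rf+\tir)$ is quadratic in $\rf$, while with only linear control the numerator $|(\bb^{-1}T)^2-\tf^2|\lesssim \ve\rf\cdot(\bb^{-1}T+\tf)\approx\ve\rf$ is merely linear. The quotient is then of order $\ve/\rf$, which blows up. What is actually needed is the quadratic bound $|\bb^{-1}t-\tf|\lesssim\ve\rf^2$ near $\bo$ (this is precisely (\ref{4.5.8}), case $\rho>\tfrac{\bb^{-1}t}{3}$, in the paper).

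The paper obtains this quadratic gain by integrating not along $\fB$ but along $\bN$ on the slice $\{t=T\}$, using (\ref{4.5.6}):
\[
\bN(\bb^{-1}t)=\tir\Big(\tfrac{\bb^{-1}t}{\rho}\ckk k_{\Nb\Nb}+\pib_{\bN\bN}\Big).
\]
The crucial point is the explicit factor $\tir$ on the right, which vanishes at $\bo$; under the bootstrap $\tir\approx\rf$ and the metric form (\ref{4.14.3.16}), integration from $\bo$ gives $\int_0^{\rf} r_\flat'\cdot O(\ve)\,dr_\flat'=O(\ve\rf^2)$. Your route via (\ref{Bb1}) is a transport along $\fB$ from $\O$, and comparing two such integrals along nearby geodesics does not produce this vanishing factor; at best it gives linear control (and even that would require Lipschitz dependence of the source $G$, i.e.\ one more derivative than (\ref{lpt1}) provides). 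So the mechanism you invoke is both aimed at the wrong equation and, even if carried out, would fall one power of $\rf$ short.
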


\begin{proof}
We first check that
\begin{equation}\label{4.15.5.16}
\limsup_{\rho\rightarrow \rho(\bo)} \left|1-\frac{\tir}{\rf}\right|\approx  \ve.
\end{equation}
Consider any point $q$ in a small neighborhood of $\bo$. In view of (\ref{split}), $q=(\rho, \omega)$ with $\omega\in {\mathbb S}^2$.
Local expansion around $\bo$ at $t=T$ takes the form
\begin{equation}\label{4.15.6.16}
\bb^{-1} t(\rho)=\bb^{-1}t(\bo)+\p_\rho (\bb^{-1}t )(\bo) (\rho-\rho(\bo))+O((\rho-\rho(\bo))^2)\cdots.
\end{equation}
Due to (\ref{4.1.2.16}) and (\ref{4.5.6}),
\begin{equation*} %\label{4.15.7.16}
-\p_\rho (\bb^{-1}t )=\bb^{-1}{t}\ckk k_{\Nb\Nb}+\rho\pib_{\bN\bN}.
\end{equation*}
As $\rho\rightarrow\rho(\bo)$, noting that $\bb^{-1}t(\bo)=\rho(\bo)$
\begin{equation*} %\label{4.15.8.16}
-\p_\rho (\bb^{-1}t)(\bo)=\rho(\bo)(\ckk k_{\Nb\Nb}(\bo)+\pib_{\bN \bN}(\bo))
\end{equation*}
where  $\Nb(\bo)(\omega)=\bN(\bo)(\omega)$ is a unit vector  at the $\T_\bo\Sigma_T$.
By substituting this equation into (\ref{4.15.6.16}) and noting $\rho(\bo)-\rho= r_{\flat}^2/(\rho(\bo)+\rho)$, we obtain
\begin{equation*} %\label{4.15.9.16}
\bb^{-1} t(\rho)=(\bb^{-1}t)(\bo)+\rho(\bo)(\ckk k_{\Nb\Nb}(\bo)+\pib_{\bN \bN}(\bo))\frac{r_\flat^2}{\rho(\bo)+\rho}+O(r_\flat^4).
\end{equation*}
Therefore
\begin{equation*} %\label{4.15.10.16}
(\bb^{-1}t )^2(\rho)=(\bb^{-1}t)^2(\bo)+ 2(\bb^{-1}t)^2(\bo)(\ckk k_{\Nb\Nb}(\bo)+\pib_{\bN \bN}(\bo))\frac{r_\flat^2}{\rho(\bo)+\rho}+O(r_\flat^3)
\end{equation*}
which implies, in view of (\ref{lhk1}), (\ref{lpt1}) and $\bb^{-1}\approx 1$ in (\ref{3.21.16}),
\begin{equation*} %\label{4.15.11.16}
|\tir^2-r_\flat^2|(\rho, \omega)\les \ve r_\flat^2.
\end{equation*}
This shows (\ref{4.15.5.16}).

We now show (\ref{4.5.17}) by a bootstrap argument. According to (\ref{4.15.5.16}) we can make a bootstrap assumption
\begin{equation}\label{4.7.12}
\left|\frac{\tir}{\rf}-1\right|(\rho, \omega)\le \dn,
\end{equation}
where $0<\dn<\f12$ is a small number to be chosen.  We will show that
\begin{equation}\label{4.7.13}
\left |\frac{\tir}{\rf}-1 \right|(\rho, \omega)\le C  \ve
\end{equation}
with a universal constant $C>0$.  We can choose $\dn=2 C\ve $ and thus (\ref{4.7.13}) improves (\ref{4.7.12}).

In order to derive (\ref{4.7.13}), we will use the fact
\begin{align} \label{4.5.8}
\left|\bb^{-1} t - t_\flat\right| \les \left\{\begin{array}{lll}
\ve r_\flat^2  & \mbox{ if } \rho>\frac{\bb^{-1} t}{3}, \\ [1.2ex]
\ve r_\flat & \mbox{ if } \rho\le \frac{\bb^{-1} t}{3}.
\end{array}\right.
\end{align}
which will be proved shortly. From (\ref{4.7.12}) it follows that $\rf \approx \tir$.
In view of the definition of $\tir$ and $\rf$ we have
\begin{equation}\label{4.5.4}
\tir-\rf=\frac{\bb^{-2}t^2-\tf^2}{\tir+\rf}.
\end{equation}
Therefore, when $\rho> \frac{\bb^{-1}t}{3}$, we may use (\ref{4.5.8}), $\rf\approx \tir$ and
$\bb^{-1}\approx 1$ in (\ref{3.21.16}) to obtain
\begin{equation*} %\label{4.15.2.16}
\left|\frac{\tir-\rf}{\rf}\right|=\left|\frac{\bb^{-1}t-\tf}{(\tir+\rf)\rf}\right| (\bb^{-1}t+\tf)\les \ve (\bb^{-1}t+\tf)\les \ve
\end{equation*}
and, when $\rho\le \frac{\bb^{-1}t}{3}$, we may use (\ref{4.5.8}), $\rf \approx \tir$ and $\tir \approx t$ to obtain
\begin{equation*} %\label{4.15.3.16}
\left|\frac{\tir-\rf}{\rf}\right|\les \ve \frac{\bb^{-1}t+\tf}{\rf}\les \ve
\end{equation*}

Finally we prove (\ref{4.5.8}). We may use (\ref{4.5.6}) to obtain
\begin{align}
\bN \left(\bb^{-1} t-\tf\right) =\tir \left(\frac{\bb^{-1}t}{\rho}\ckk k_{\Nb\Nb}+\pib_{\bN \bN}\right)\label{4.5.2}
\end{align}
If $\rho\ge \frac{\bb^{-1} t}{3}$, then by using (\ref{4.5.2}), (\ref{lhk1}), (\ref{lpt1}) and (\ref{4.14.3.16}) we have
\begin{align*}
\left|\bb^{-1}t -\tf\right|(\rf, \omega)
&=\left|\int_0^\rf r_\flat'\left(\frac{\bb^{-1}t}{\rho}\ckk k_{\Nb\Nb}+\pib_{\Nb \Nb}\right) d r_\flat'\right|\nn\\
&\les \int_0^\rf r_\flat' \left(|\ckk k_{\Nb\Nb}|+|\pib_{\bN\bN}|\right)d r_\flat'\les \ve \rf^2. %\label{4.7.15}
\end{align*}
If $\rho\le \frac{\bb^{-1} t}{3}$, by using (\ref{b1}) in Proposition \ref{prl_1},
the last estimate in (\ref{lpt1}), and (\ref{init}) we can obtain
\begin{align*} %\label{4.7.16}
\left|\frac{\bb^{-1}t-\tf}{t}\right|&= |\bb^{-1}-n+n-n(\Ga(t))-(\bb^{-1}(\Ga(t))-n(\Ga(t)))|\les \ve.
\end{align*}
This together with $\rf\approx \tir \approx t$ shows that $|\bb^{-1} t - \tf|\les \ve \rf$.
\end{proof}

Now we employ Lemma \ref{angome} to obtain the control of $\sn r$ and $\varpi$ on $\{t=T\}$.

\begin{proposition}\label{angur}
Assume that on $\I^{+}(\O) \cap \{t=T\}$ there hold
\begin{align}\label{4.1.2}
\left|\Ga_{ij}^k\right|+\left|g^{ij}-\delta^{ij}\right|\les \ve,
\end{align}
and the bootstrap assumption
\begin{equation}\label{4.14.24.16}
\left|\frac{\rf}{r}-1\right|<\f12,
\end{equation}
then on $\I^+(\O)\cap \{t=T\}$ we have
\begin{equation}\label{3.31.17}
|\sn r|^2_g+|1-\varpi^2|\les \ve
\end{equation}
and
\begin{equation}\label{3.31.18}
|\Sigma N|_e+|\Sigma N|_g\les \ve^\f12.
\end{equation}
Moreover, on $Z^s\cap \{t=T\}$ there holds
\begin{equation}\label{3.28.3.16}
0\le n-\varpi\les \ve.
\end{equation}
\end{proposition}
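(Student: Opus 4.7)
The plan is to reduce all three claims to a single scalar bound, namely $|\sn r|_g \les \ve$ on $\I^+(\O)\cap\{t=T\}$. First I would establish the algebraic identities relating the four quantities appearing in the statement. From the decompositions $\bN=\varpi\ne+\Sigma N$ in (\ref{neuc}) and $\nab r=\sn r+\varpi \bN$ in (\ref{eun}), combined with the hypothesis $g_{ij}-\delta_{ij}=O(\ve)$, one computes
\[
|\bN|_e^2=1+O(\ve),\quad |\Sigma N|_e^2=|\bN|_e^2-\varpi^2=1-\varpi^2+O(\ve),\quad |\nab r|_g^2=g^{ij}\ne_i\ne_j=1+O(\ve),
\]
and hence $|\sn r|_g^2=|\nab r|_g^2-\varpi^2=1-\varpi^2+O(\ve)$. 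Since $|\Sigma N|_g^2$ and $|\Sigma N|_e^2$ also differ by $O(\ve)$, the four quantities $|\sn r|_g^2$, $1-\varpi^2$, $|\Sigma N|_e^2$ and $|\Sigma N|_g^2$ all coincide up to $O(\ve)$, so once $|\sn r|_g^2\les \ve$ is shown both (\ref{3.31.17}) and (\ref{3.31.18}) follow at once.

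To establish $|\sn r|_g\les \ve$ I would run a bootstrap argument along the integral curves of $\bN$ emanating from $\bo$. Parametrising such a curve by arclength $s$ (so $\bN=d/ds$ and $dr/ds=\varpi$), I impose the bootstrap $|1-\varpi^2|\le \Delta$ together with $\varpi\ge 1/2$, the latter being automatic by continuity from $\bo$ where $\varpi\to 1$, together with the bootstrap itself. This yields $r\approx s$ and $|\Sigma N|_g\les \sqrt{\Delta+\ve}$. The crucial step is the substitution $\phi_A:=r\,\sn_A r$, which annihilates the singular $\varpi^2/r$ coefficient in (\ref{4.4.9}): using $\bN(r)=\varpi$ one obtains
\[
\sn_\bN\phi_A=\varpi(1-\varpi)\sn_A r+g_{j'i'}e_A^{i'}\bigl[h^{j'j}\bN_j+(1-\varpi)(h^{j'j}\ne_j+\Sigma N^{j'})\bigr]-r\,g_{j'i'}e_A^{i'}\sl\Pi^{j'j}\Ga_{jl}^i\bN^l\ne_i.
\]
Under the bootstrap and (\ref{4.1.2}), the right-hand side is pointwise $O(\ve+\Delta^{3/2})$. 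The initial condition $\phi_A(0)=0$ follows from boundedness of $\sn_A r$ near $\bo$ together with $r\to 0$, so integration along the curve gives $|\phi_A|\les s(\ve+\Delta^{3/2})$, hence $|\sn_A r|\les \ve+\Delta^{3/2}$ after dividing by $r\approx s$. Choosing $\Delta=C\ve$ with $C$ sufficiently large closes the bootstrap and produces $|\sn r|_g\les \ve$.

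For (\ref{3.28.3.16}) in $Z^s$, the non-negativity $n-\varpi\ge 0$ is automatic from the second identity in (\ref{3.31.4}): $|\nab r|_\bg=n$, and Cauchy--Schwarz then gives $\varpi=\l\bN,\nab r\r_\bg\le |\bN|_\bg\,|\nab r|_\bg=n$. For the upper bound, the exact identity $|\sn r|_g^2=n^2-\varpi^2=(n-\varpi)(n+\varpi)$ valid in $Z^s$, combined with the already established $|\sn r|_g^2\les \ve$ and $n+\varpi\ges 1$ (since $\varpi\ge 1/2$ and $n\ge 1/2$ for $r\ge 2$), yields $0\le n-\varpi\les \ve$.

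The main obstacle is the $1/r$ singularity in the source of (\ref{4.4.9}) together with the degeneracy of polar coordinates at $\bo$: a direct Gr\"onwall attempt on $|\sn r|_g$ would produce a logarithmic divergence, while a naive division by $r$ after integration would lose all control near $\bo$. Both issues are resolved by the substitution $\phi_A=r\sn_A r$, which converts the singular transport into a regular ODE with $\phi_A(0)=0$; closing the bootstrap then requires the cubic absorption $(1-\varpi)|\Sigma N|_g\les \Delta^{3/2}$, which is where the quadratic structure of the source (via the $(1-\varpi)$ factor multiplying the leading $1/r$ contribution to $\Sigma N^{j'}$) is essential.
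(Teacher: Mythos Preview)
Your approach coincides with the paper's: both run a bootstrap along the radial integral curves of $\bN$ from $\bo$ using the transport equation~(\ref{4.4.9}), absorbing the $\varpi^2/r$ coefficient with a weight. Your substitution $\phi_A=r\,\sn_A r$ is precisely the application of Lemma~\ref{tsp1} with weight $r$; the paper integrates in the parameter $r_\flat$ with lapse $\cir{a}\approx 1$ rather than arclength, and bootstraps on $|\sn r|_g<\Delta_0^{1/2}$ rather than on $|1-\varpi^2|\le\Delta$, but these are equivalent via the algebraic identity (\ref{3.31.15}) you recorded. Your treatment of (\ref{3.28.3.16}) via the second identity in (\ref{3.31.4}) is likewise the paper's argument.

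There is one genuine gap, however: the initialization of the bootstrap. You write ``by continuity from $\bo$ where $\varpi\to 1$'', but this limit is not automatic. The vector field $\bN$ is the $g$-unit normal to the level sets of $\rho$ on $\Sigma_T$; these level sets are determined by the hyperboloidal structure emanating from $\O$ below, not by the Euclidean coordinate $r$ on $\Sigma_T$. There is no a~priori reason the $\rho$-spheres should be centered at $\bo$ in the sense that $\bN$ aligns with $\ne=\p_r$ as one approaches $\bo$. The paper handles this via Proposition~\ref{ini}, whose proof (through Lemma~\ref{4.14.7.16}) performs a local expansion of $\l\bN,\nab s\r$ at $\bo$ comparing $\bN$ first to the geodesic-normal $N'$ from $\bo$, and then $N'$ to $\ne$. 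That argument supplies the seed $\limsup_{q\to\bo}(|\sn r|_g^2+|\varpi-g(\p_r,\p_r)^{-1/2}|)\les\ve$ without which the bootstrap cannot start. Your bound $\phi_A(0)=0$ only uses boundedness of $|\sn r|$, which is indeed free, but the hypothesis $\varpi\ge 1/2$ near $\bo$ (needed for $r\approx s$ and for the error estimates) requires the nontrivial content of Proposition~\ref{ini}.
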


The above result will be proved simultaneously with the result which  improves (\ref{4.14.24.16}).
The latter is presented in Proposition \ref{4.9.11}.

We first give some basics for $\sn r$ and $\omega$ on $\{t = T\}$. Let $h^{ij} = g^{ij} - \delta^{ij}$.
In view of
\begin{equation*} %\label{3.31.14}
g^{ij}\p_i r \p_j r=\delta^{ij} \p_i r \p_j r+h^{ij} \p_i r \p_j r
\end{equation*}
we have
\begin{equation}\label{3.31.15}
\varpi^2 +|\sn r|_g^2=1+h^{ij}\p_i r \p_j r.
\end{equation}
Hence with $|g^{ij}-\delta^{ij}|\le \ve$ sufficiently small there holds
\begin{equation}\label{3.31.16}
\varpi^2+|\sn r|^2_g\le 1.5.
\end{equation}

\begin{proposition}\label{ini}
At the center $\bo=(T, 0)$ there holds
\begin{equation}\label{inii3.31}
\limsup_{q\rightarrow(T,0)} \left(|\sn r|^2_g +|\varpi-g(\p_r, \p_r)^{-\f12}|\right)(q)\les \ve
\end{equation}
where the metric around $\bo\in \Sigma_T$ is written by Euclidian  polar coordinates,
\begin{equation*} %\label{4.21.4.16}
g_{ij}dx^i dx^j=g(\p_r, \p_r) dr^2+\cir{\ga}_{AB}d\omega^A d\omega^B
\end{equation*}
with $\cir{\ga}$ being the induced metric on $S_{T,r}$.
\end{proposition}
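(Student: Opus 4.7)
The approach rests on the observation that $\bo$ is a critical point of both $\rho|_{\Sigma_T}$ and $\bb^{-1}T|_{\Sigma_T}$, at which each attains the value $\rho(\bo)$, and that the Hessian of $\rho$ at $\bo$ can be identified with the second fundamental form $k$ of $\H_{\rho(\bo)}$ restricted to the common tangent plane $T_\bo\Sigma_T=T_\bo\H_{\rho(\bo)}$. I would proceed via a second-order Taylor expansion around $\bo$ in Euclidean Cartesian coordinates $(x^1,x^2,x^3)$ centered at $\bo$.

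First, since $\Ga$ is the integral curve of $-\bT$ through $\bo$ parametrized by proper time from $\O$, one has $\fB|_\bo=-\Ga'(T)=\bT|_\bo$ and $\bb^{-1}t(\bo)=\rho(\bo)$. Thus $\bd\rho|_\bo=-\fB|_\bo=-\bT|_\bo$ is $\bg$-orthogonal to $\Sigma_T$, giving $\nab\rho(\bo)=0$; the same reasoning applied to $\bb^{-1}t=-\rho\l\fB,\bT\r$ yields $\nab(\bb^{-1}T)(\bo)=0$. Using $\bd_i\bd_j\rho=-\bd_i\fB_j$, the tangential Hessian of $\rho$ at $\bo$ equals $-k_{ij}(\bo)$ up to an $O(\ve)$ Gauss-type correction involving $\pib$ that is controlled by the local bound (\ref{lpt1}). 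Proposition \ref{4.27.1.26}, applied at $\bo$ where $t/\rho=1+O(\ve)$ by (\ref{3.21.16}) and (\ref{b1}), gives $k_{ij}(\bo)=\rho(\bo)^{-1}g_{ij}(\bo)+O(\ve)$. Consequently
\begin{equation*}
\rho(q)=\rho(\bo)-\frac{|x|^2}{2\rho(\bo)}+O(\ve|x|^2)+O(|x|^3).
\end{equation*}

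On $\Sigma_T$ we have the exact identities $\tir^2=(\bb^{-1}T)^2-\rho^2$ and $\rf^2=\rho(\bo)^2-\rho^2$; combining with Lemma \ref{cmpr} (which supplies $|\tir/\rf-1|\les\ve$) and the expansion of $\rho$ yields
\begin{equation*}
\tir(q)=|x|\bigl(1+O(\ve)\bigr)+O(|x|^3),\qquad \bb^{-1}T(q)=\rho(\bo)+O(\ve|x|^2)+O(|x|^3).
\end{equation*}
Differentiating the Taylor expansion of $\rho$ gives $\nab\rho(q)=-\rho(\bo)^{-1}x^i\p_i+O(\ve|x|)+O(|x|^2)$, while $|\nab\rho|_g=\tir/\rho=|x|/\rho(\bo)\cdot(1+O(\ve)+o(1))$ as $q\to\bo$. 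Dividing,
\begin{equation*}
\bN(q)=-\frac{\nab\rho}{|\nab\rho|_g}(q)=\frac{x^i}{|x|}\p_i+O(\ve)+o(1)=\ne+O(\ve)+o(1).
\end{equation*}
Therefore $\varpi(q)=\bN(r)=\bN^i x^i/|x|=1+O(\ve)+o(1)$; since (\ref{4.1.2}) gives $g(\p_r,\p_r)^{-1/2}=1+O(\ve)$, we obtain $|\varpi-g(\p_r,\p_r)^{-1/2}|\les\ve+o(1)$. Finally, (\ref{3.31.15}) written in polar coordinates reads $|\sn r|_g^2=g(\p_r,\p_r)^{-1}-\varpi^2=O(\ve)+o(1)$. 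Taking $\limsup_{q\to\bo}$ eliminates the $o(1)$ terms and completes the proof.

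The delicate step is the identification of the tangential Hessian of $\rho|_{\Sigma_T}$ at $\bo$ with $-k(\bo)$ modulo $O(\ve)$: one has to use both that $T_\bo\Sigma_T=T_\bo\H_{\rho(\bo)}$ (a consequence of $\fB|_\bo=\bT|_\bo$) and that the Gauss-type discrepancy contributed by $\pib$, together with the error furnished by Proposition \ref{4.27.1.26}, is genuinely of size $\ve$ rather than borderline. This is what drives the final $\ve$-bound.
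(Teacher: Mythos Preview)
Your argument is correct and non-circular: the inputs you invoke (Proposition~\ref{4.27.1.26}, Lemma~\ref{cmpr}, the local bounds (\ref{lpt1}) and (\ref{4.1.2})) are all established independently of Proposition~\ref{ini}. The Hessian identification $\nab^2\rho|_\bo=-k|_\bo-\pib|_\bo$ is exactly right, since $T_\bo\Sigma_T=T_\bo\H_{\rho(\bo)}$ and $\bT\rho|_\bo=1$, and the Christoffel correction $\Ga^k_{ij}\p_k\rho$ vanishes at $\bo$ because $\p_k\rho|_\bo=0$. One cosmetic remark: your use of Lemma~\ref{cmpr} to get $\tir\approx|x|$ is not actually needed, since $|\nab\rho|_g$ can be read off directly from your expansion of $\nab\rho$; and in the last line it is cleaner to invoke (\ref{3.31.15}) in Cartesian form, $|\sn r|_g^2=1+h^{ij}\p_ir\p_jr-\varpi^2=O(\ve)+o(1)$, rather than pass through polar coordinates (which would require the absence of cross terms $g_{rA}$).

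The route, however, is genuinely different from the paper's. The paper introduces an auxiliary \emph{geodesic} foliation $\{S_s\}$ of $\Sigma_T$ centered at $\bo$, with radial normal $N'$, and writes $\bN=\cos\varphi\,N'+Y$ with $Y$ tangent to $S_s$. It then shows $\limsup_{s\to0}(|Y|^2+(1-\cos\varphi))\les\ve$ by expanding the scalar $-\nab_is\,\nab^i\rho$ to first order along geodesics (Lemma~\ref{4.14.7.16}), which again produces $k(\bo)+\pib(\bo)$ as the leading coefficient. Since $N'(\bo)$ is automatically the $g$-unit radial vector, $\varpi'(\bo)=g(\p_r,\p_r)^{-1/2}$, and the conclusion follows from $\varpi=\cos\varphi\,\varpi'+Y(r)$. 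Your direct second-order Taylor expansion of $\rho|_{\Sigma_T}$ in Cartesian coordinates reaches the same endpoint more economically, bypassing the geodesic foliation entirely; the paper's detour through $N'$ buys a cleaner separation between ``$\bN$ versus the intrinsic radial direction'' and ``the intrinsic radial direction versus $\p_r$'', which may be conceptually useful elsewhere but is not needed for this proposition.
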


We postpone the proof of Proposition \ref{ini} to the end of this subsection.

\begin{proof}[Proof of Proposition \ref{angur}]
Due to  Proposition \ref{ini}, we can  make an auxiliary bootstrap assumption
\begin{equation}\label{4.1.3}
|\sn r|_g(q)<\Delta_0^\f12,\quad  \forall\, q\in \{t=T\}\cap \I^+(\O)
\end{equation}
where $0<\dn<\frac{1}{4}$ is a small constant to be chosen. We will improve it to be
\begin{equation}\label{4.1.4}
|\sn r|_g \le C \left(\ve+\Delta_0^\frac{3}{2}\right)
\end{equation}
for some universal constant $C>0$. Then as long as $C\ve^\f12\le\Delta_0^{\f12}\le 1/(2\sqrt{C})$
and $0<\ep <1/4$, we can obtain
$$
|\sn r|_g \le \frac{1}{4}\Delta_0^\f12+ \ep^\f12 \Delta_0^\f12 \le \frac{3}{4} \Delta_0^\f12.
$$

We now prove (\ref{4.1.4}).
By (\ref{4.1.3}), (\ref{3.31.15}) and (\ref{3.31.16}), we have
\begin{equation}\label{4.1.5}
|1-\varpi^2|\les \dn+\ve, \quad \varpi^2\les 1,\quad |1-\varpi|\les \dn+\ve,
\end{equation}
where we employed $|g^{ij}-\delta^{ij}|\les \ve$ in (\ref{4.1.2}). Similarly, by using (\ref{3.31.4}) we have
\begin{equation}\label{4.1.6}
|\Sigma N|_e\les (\dn+\ve)^\f12.
\end{equation}
With $G$ denoting the right hand side of (\ref{4.4.9}), we now employ (\ref{3.25.2.16}), (\ref{4.4.9}),
(\ref{inii3.31}), and a similar argument as Lemma \ref{tsp1} to obtain
\begin{align*}
|\sn r|&\les \frac{1}{r}\int_0^{ \rf}\left( r |G|+\sn_A r\varpi (1-\varpi)\right) \cir{a} d r_\flat'\\
&\les \frac{1}{r}\int_0^\rf r' \left(|\Ga|+{r'}^{-1}(|h|+|1-\varpi|(|\sn_A r|+|\Sigma N|_e))\right) d r_\flat'
\end{align*}
where we employed (\ref{4.14.3.16}) and (\ref{4.14.25.16}).
By using (\ref{4.14.24.16}), (\ref{4.1.2}), (\ref{4.1.5}), (\ref{4.1.3}) and (\ref{4.1.6}), we have
\begin{equation*} %\label{4.1.8}
|\sn r|\les \ve+(\dn+\ve)\left((\dn+\ve)^\f12+\Delta_0^\f12\right)\les \Delta_0^\frac{3}{2}+\ve,
\end{equation*}
which gives (\ref{4.1.4}) as desired. Thus (\ref{3.31.17}) is proved.
(\ref{3.31.18}) follows immediately as a consequence of (\ref{3.31.4}) and (\ref{3.31.17}).  (\ref{3.28.3.16}) follows as consequence of (\ref{3.31.4}) and (\ref{3.31.17}).
\end{proof}

Next, we will prove (\ref{4.14.24.16}), which is  a  comparison estimate between the Euclidean radius
function $r$ on the initial slice and the intrinsic radius function $\rf$.  To begin with, we derive
the following  transport equation.

\begin{lemma}\label{4.5.5}
On $\I^+(\O)\cap \{t=T\}$ there holds
%\fB(\frac{\varsigma}{r}-1)&=\frac{\varsigma\bb^{-1}t}{\tir^3} (\bd_\fB \bT, \fB)\label{cmr_2}\\
%\bN(\frac{1}{\tir}-\frac{1}{r})&+\frac{1}{\tir^2}-\frac{1}{r^2}=\frac{\varpi-1}{r^2}-\frac{1}{\tir}(\frac{\bb^{-1}t}{\rho}\l \bd_\bN \bT, %\bN\r+\frac{(\bb^{-1}t)^2}{\rho^2} \ckk k_{\Nb \Nb})\label{4.5.11}\\
\begin{align}
\varpi \p_r(\rf-r)&=\frac{\tir}{\rf}-\varpi-\frac{\tir}{\rf} \left(|\Sigma N|^2_e+\Sigma N^i \bN^j h_{ij}\right).\label{3.28.1.16}
\end{align}
\end{lemma}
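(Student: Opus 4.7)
The plan is to reduce the displayed identity to a purely algebraic manipulation of inner products, after first identifying the gradient of $\rf$ on $\Sigma_T$ with an explicit multiple of $\bN$. The key geometric observation is that on the initial slice $\{t=T\}$ the quantity $\tf=(\bb^{-1}t)(\Ga(t))$ is a function of $t$ alone and is therefore a \emph{constant} over $\Sigma_T$. Since $\rf=\sqrt{\tf^2-\rho^2}$, the level sets of $\rf$ on $\Sigma_T$ coincide with the level sets of $\rho$, namely the spheres $S_{T,\rho}$. Consequently $\nab \rf$ is parallel to the unit normal $\bN$ of $S_{T,\rho}$ in $\Sigma_T$, and a direct computation using $|\nab\rho|_g=\tir/\rho$ (proved just after (\ref{split})) together with the chain rule gives
\begin{equation*}
\nab \rf = \frac{\tir}{\rf}\,\bN.
\end{equation*}

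Next I would translate the left-hand side. Since $\p_r=\ne$ and $\ne(r)=1$, one has
\begin{equation*}
\varpi\,\p_r(\rf-r)=\varpi\,\ne(\rf)-\varpi=\frac{\tir}{\rf}\,\varpi\,\l\ne,\bN\r_g-\varpi,
\end{equation*}
where in the last step I used $\ne(\rf)=\l\ne,\nab\rf\r_g$ and the identification of $\nab \rf$ above. Now I use the Euclidean radial decomposition (\ref{neuc}), in the form $\varpi\,\ne=\bN-\Sigma N$, to rewrite
\begin{equation*}
\varpi\,\l\ne,\bN\r_g=\l\bN-\Sigma N,\bN\r_g=1-\l\Sigma N,\bN\r_g.
\end{equation*}

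The remaining step is to identify the inner product $\l\Sigma N,\bN\r_g$ with $|\Sigma N|_e^2+\Sigma N^i\bN^j h_{ij}$. This is an immediate consequence of $g_{ij}=\delta_{ij}+h_{ij}$ together with the fact that $\Sigma N$ is Euclidean-tangent to the level sets of $r$, so $\l\Sigma N,\ne\r_e=0$; indeed,
\begin{equation*}
\l\Sigma N,\bN\r_g=\l\Sigma N,\bN\r_e+h_{ij}\Sigma N^i\bN^j=\l\Sigma N,\Sigma N+\varpi\ne\r_e+h_{ij}\Sigma N^i\bN^j=|\Sigma N|_e^2+h_{ij}\Sigma N^i\bN^j.
\end{equation*}
Substituting back gives (\ref{3.28.1.16}). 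There is no real obstacle here beyond keeping track of which metric is used in each inner product; the only nontrivial point is the observation that $\tf$ is a constant on $\Sigma_T$, which is what collapses the level-set geometry of $\rf$ onto that of $\rho$ and unlocks the explicit formula $\nab\rf=(\tir/\rf)\bN$.
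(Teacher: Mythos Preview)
Your proof is correct and follows essentially the same approach as the paper's own proof: both hinge on the observation that $\tf$ is constant on $\Sigma_T$, so that $\nab\rf$ is a multiple of $\bN$, and both conclude by expanding $\l\Sigma N,\bN\r_g$ via $g_{ij}=\delta_{ij}+h_{ij}$ and $\l\Sigma N,\ne\r_e=0$. The only cosmetic difference is that the paper splits $\varpi\,\ne(\rf)=\bN(\rf)-\Sigma N(\rf)$ and evaluates $\Sigma N(\rf)$ through a short detour via $\fB_\mu=-\p_\mu\rho$ and the decomposition (\ref{fbtn}), whereas you compute $\nab\rf=\frac{\tir}{\rf}\bN$ once and pair directly with $\varpi\ne$; the two routes are equivalent.
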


\begin{proof}
We first employ (\ref{4.5.1}) and Lemma \ref{frames} to obtain
\begin{equation*} %\label{3.26.1.16}
\bN(\rf)=-\frac{\rho}{\rf}\bN(\rho)=\frac{\tir}{\rf}.
\end{equation*}
This together with (\ref{neuc}) and (\ref{4.5.1}) gives
\begin{align*}
\varpi \p_r \rf&=\bN\rf-\Sigma N^\mu \p_\mu \rf=\frac{\tir}{\rf}+\Sigma N^\mu\frac{\rho}{\rf}\p_\mu \rho\\
&=\frac{\tir}{\rf}-\frac{\rho}{\rf}\Sigma N^i \fB_i=\frac{\tir}{\rf}-\frac{\rho}{\rf}\frac{\tir}{\rho} \Sigma N^i \bN^j g_{ij}
\end{align*}
where we used (\ref{fb1}) and (\ref{fbtn}) to obtain the last identity. By using (\ref{neuc}) again,
\begin{align*}
\varpi \p_r(\rf-r)&=\frac{\tir}{\rf}-\varpi-\frac{\tir}{\rf}\Sigma N^i\bN^j(h_{ij}+\delta_{ij})\\
&=\frac{\tir}{\rf}-\varpi-\frac{\tir}{\rf} \left(\Sigma N^i (\varpi\ne^j+\Sigma N^j)\delta_{ij}+\Sigma N^i\bN^jh_{ij} \right)
\end{align*}
which gives (\ref{3.28.1.16}) due to $\Sigma N^i \ne^j\delta_{ij}=0$.
\end{proof}

\begin{proposition}\label{4.9.11}
On the region $\Sigma_T\cap \I^+(\O)$ there hold
\begin{align}
\left|1-\frac{r}{\tir}\right|+\left|\frac{\rf}{r}-1\right|\les \ve, \qquad
\left|\frac{\tir}{r}-n^{-1}\right|\les \ve. \label{5.3.1.16}
 \end{align}
\end{proposition}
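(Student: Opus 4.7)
The plan is to prove the three estimates via a bootstrap argument centered on the assumption (\ref{4.14.24.16}), which is precisely what Proposition \ref{angur} requires in order to give quantitative control of $\sn r$, $1-\varpi^2$ and $|\Sigma N|_e$. Concretely, I will maintain the bootstrap
\[
\left|\frac{\rf}{r}-1\right|<\frac{1}{2} \qquad \text{on } \Sigma_T\cap\I^+(\O),
\]
and improve it to $|\rf/r-1|\les \ve$ by integrating the transport equation (\ref{3.28.1.16}) outward from the center $\bo$ along the Euclidean radial direction $\p_r$.

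Under the bootstrap, Proposition \ref{angur} supplies $|\sn r|_g^2+|1-\varpi^2|\les \ve$ and $|\Sigma N|_e\les \ve^{1/2}$, and (\ref{4.1.2}) gives $|h_{ij}|\les \ve$. Since $\varpi\to 1$ at $\bo$ by Proposition \ref{ini} and $|1-\varpi|\les\ve$, the function $\varpi$ is bounded below by $1/2$ say. Rewriting (\ref{3.28.1.16}) as
\begin{equation*}
\p_r(\rf-r)=\varpi^{-1}\left[\left(\frac{\tir}{\rf}-\varpi\right)-\frac{\tir}{\rf}\bigl(|\Sigma N|_e^2+\Sigma N^i\bN^j h_{ij}\bigr)\right],
\end{equation*}
I bound the bracket using Lemma \ref{cmpr}, which gives $|\tir/\rf-1|\les\ve$ hence $\tir/\rf\approx 1$ and $|\tir/\rf-\varpi|\les|\tir/\rf-1|+|1-\varpi|\les\ve$, and using $|\Sigma N|_e^2\les\ve$ together with $|\Sigma N|_e|h|\les\ve^{3/2}$. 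Therefore $|\p_r(\rf-r)|\les\ve$. Since both $\rf$ and $r$ vanish at $\bo$, integration of this inequality along the radial ray $\omega=$ const from $0$ to $r$ yields $|\rf-r|\les\ve\, r$, which improves the bootstrap. A separate small check is that the integration variable $r$ and the parameter $\rf$ are commensurable along the ray (which is automatic under the bootstrap since $\p_r \rf = \varpi^{-1}(\tir/\rf)+O(\ve)$ is positive and bounded), so the integration makes sense and the constants are universal.

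Once $|\rf/r-1|\les\ve$ is established, the first identity in (\ref{5.3.1.16}) follows by combining it with Lemma \ref{cmpr}:
\[
\left|\frac{r}{\tir}-1\right|\le \left|\frac{r}{\rf}-1\right|\frac{\rf}{\tir}+\left|\frac{\rf}{\tir}-1\right|\les\ve.
\]
For the second identity, I restrict to $Z^s\cap\{t=T\}$ where the metric is Schwarzschild, so $n^{-2}=(r+2M)/(r-2M)$ and hence $|1-n^{-1}|\les M/r\les M$. Since $M<\ve$ by the smallness of the initial data in Theorem \ref{2.15.1}, the triangle inequality $|\tir/r-n^{-1}|\le|\tir/r-1|+|1-n^{-1}|\les \ve$ closes the argument.

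The main obstacle I anticipate is ensuring that the bootstrap can actually be initiated and closed near the center $\bo$, where $r,\rf\to 0$ simultaneously and $\varpi$ approaches $1$ only in the limit. This is handled by Proposition \ref{ini}, which provides the base point behavior, together with the fact that the right-hand side of the transport equation (\ref{3.28.1.16}) is bounded uniformly (not just pointwise small) near $\bo$, so the integral $\int_0^r|\p_{r'}(\rf-r')|\,dr'$ behaves linearly in $r$ with coefficient $O(\ve)$. The rest of the argument is routine transport analysis combined with the algebraic identities already built up in Lemmas \ref{cmpr} and \ref{4.5.5}.
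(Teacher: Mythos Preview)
Your approach is essentially the same as the paper's: integrate the transport equation (\ref{3.28.1.16}) in $r$ using the bounds from Proposition \ref{angur} (valid under the bootstrap (\ref{4.14.24.16})) and Lemma \ref{cmpr}, then combine with (\ref{4.5.17}) to control $|1-r/\tir|$. The paper writes this as a one-line integral bound and closes $|1-r/\tir|$ via the algebraic identity $1-\frac{r}{\tir}=\frac{\tir-\rf}{\tir}+\frac{\rf-r}{r}+(\frac{r}{\tir}-1)\frac{\rf-r}{r}$, which is equivalent to your triangle-inequality step.

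There is one small gap. For the second estimate $|\tir/r-n^{-1}|\les\ve$ you restrict to $Z^s\cap\{t=T\}$ and invoke the Schwarzschild form of $n$. But the proposition is stated on all of $\Sigma_T\cap\I^+(\O)$, which includes the interior region $\{r<2\}$ where the metric is not Schwarzschild. The fix is immediate: the paper uses the last estimate in (\ref{lpt1}) (equivalently the last estimate in (\ref{3.21.16})), namely $|n-1|\les\ve$ throughout $\I^+(\O)\cap\{t\le C^*T\}$, so $|1-n^{-1}|\les\ve$ holds everywhere on $\Sigma_T\cap\I^+(\O)$, and then $|\tir/r-n^{-1}|\le|\tir/r-1|+|1-n^{-1}|\les\ve$ without any restriction to $Z^s$.
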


\begin{proof}
By using  (\ref{3.28.1.16}),  (\ref{3.31.18}), (\ref{3.31.17}) and (\ref{4.5.17}), we have
\begin{equation*} %\label{3.28.2.16}
\left|\frac{\rf}{r}-1\right|
\les r^{-1}\int_0^r \left(\left|\frac{\tir}{\rf}-1\right|+|\varpi-1|+|\Sigma N|_e^2+|\Sigma N|_e \c| h|\right) dr\les \ve
\end{equation*}
Due to $1-\frac{r}{\tir}=\frac{\tir-\rf}{\tir}+\frac{\rf-r}{r}+(\frac{r}{\tir}-1)\frac{\rf-r}{r}$,
the above estimate and (\ref{4.5.17}) then shows that $|1-\frac{r}{\tir}|\les \ve$.
Hence the first estimate in (\ref{5.3.1.16}) is proved. This together with the last estimate in (\ref{lpt1})  implies
(\ref{5.3.1.16}) immediately.
 \end{proof}

\begin{corollary}\label{4.10.cmp}
On $\I^+(\O)\cap \Sigma_T$ there hold
\begin{equation}\label{iniop}
\begin{split}
|u-(T-r)|\les \ve, \\
\quad |u-\hat u|\les \ve \mbox{ in } Z^s.
\end{split}
\end{equation}
\end{corollary}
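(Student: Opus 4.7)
The strategy is to reduce both inequalities to algebraic identities on $\Sigma_T$ and then quote the radial comparison estimates of Proposition \ref{4.9.11} together with the lapse estimates of Proposition \ref{prl_1}. Since $t = T$ on the initial slice, the definition $u = \bb^{-1} t - \tir$ in (\ref{opti}) gives
\begin{align*}
u - (T - r) &= T(\bb^{-1} - 1) + (r - \tir), \\
u - \hat u &= T(\bb^{-1} - 1) + (\gamma(r) - \tir) \quad \text{in } Z^s,
\end{align*}
where I used $\hat u = T - \gamma(r)$ on $\Sigma_T$.

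First I would handle the common lapse error $T(\bb^{-1} - 1)$. Splitting $\bb^{-1} - 1 = (\bb^{-1} - n) + (n - 1)$, one bound comes from (\ref{3.21.bi}), which was established in the proof of Proposition \ref{prl_1} and gives $|\bb^{-1} - n| \les \ve \tau \les \ve T$ on $\I^+(\O) \cap \{t \le C^\ast T\}$; the other from (\ref{3.21.16}), which gives $|n - 1| \les \ve$. Combined with the fact that $T$ is a universal constant of order $1$, this yields $|T(\bb^{-1} - 1)| \les \ve$.

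Next I would estimate $r - \tir$. Proposition \ref{4.9.11} provides $|1 - r/\tir| \les \ve$ on $\Sigma_T \cap \I^+(\O)$, and since $\tir \le \bb^{-1} T \les T \les 1$ there, this immediately gives $|r - \tir| \les \ve\, \tir \les \ve$, which proves the first inequality in (\ref{iniop}).

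For the second inequality, the only remaining term is $\gamma(r) - \tir$ on $\Sigma_T \cap Z^s$. I would write $\gamma(r) - \tir = (\gamma(r) - r) + (r - \tir)$; the second summand has already been bounded by $\ve$. For the first summand, recall $\gamma(r) = r + 4M \ln(r - 2M)$, so $|\gamma(r) - r| \le 4M |\ln(r-2M)|$. On $\Sigma_T \cap \I^+(\O)$ the Euclidean radius $r$ is uniformly bounded above (by a universal constant, since $\tir \le \bb^{-1} T \les 1$ and $r \approx \tir$), while on $Z^s$ one has $r \ge 2$ so $r - 2M \ge 2 - 2M \ges 1$. Hence $|\ln(r-2M)| \les 1$ throughout $\Sigma_T \cap Z^s$, giving $|\gamma(r) - r| \les M \les \ve$ by the smallness assumption on $M$ in Theorem \ref{2.15.1}. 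This closes the argument.

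There is no serious obstacle here: the work was already done in Propositions \ref{prl_1} and \ref{4.9.11}, and the only conceptual point is recognising that the logarithmic correction in the tortoise coordinate $\gamma(r)$ is absorbed by the smallness of $M$ because $r$ is bounded on the compact slice $\Sigma_T \cap \I^+(\O)$.
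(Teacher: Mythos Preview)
Your proof is correct and follows essentially the same route as the paper: both decompose $u-(T-r)$ into $(r-\tir)$ plus the lapse error $T(\bb^{-1}-1)$, bound the first via Proposition \ref{4.9.11} and the second via Proposition \ref{prl_1}. Your treatment of the lapse term is slightly more direct (splitting $\bb^{-1}-1=(\bb^{-1}-n)+(n-1)$) than the paper's chain through $\tf$, and you spell out the bound $|\gamma(r)-r|=4M|\ln(r-2M)|\les M\les \ve$ on $\Sigma_T\cap Z^s$, which the paper leaves as ``a direct consequence''; but these are expository differences, not a different method.
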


\begin{proof}
Using the definition of $u$ we can write $u -(t-r)=r-\tir+(\bb^{-1} t-t)$.
Since Proposition \ref{prl_1} (1) implies $\tir\le \bb^{-1}t\les t$, we have from Proposition \ref{4.9.11} that
$|r-\tir | \les \ve \tir\les \ve t$. By using Proposition \ref{prl_1} with
$t = T$ and  $|n(\bo)-1|\le \ve$ we also have
\begin{equation*} %\label{4.15.13.16}
|\bb^{-1}t-t|\les |\bb^{-1}t-\tf|+|\bb^{-1}(0,t)-n((0,t))|t+|n(0,t)-1|t\les \ve.
\end{equation*}
Therefore
$$
|u-(T-r)| \le |r -\tir| + |\bb^{-1} t -t| \les \ve
$$
which shows the first inequality in (\ref{iniop}). The second inequality in (\ref{iniop}) follows as a direct consequence.
\end{proof}

Finally we prove Proposition \ref{ini}. For this purpose, we will resort to geodesic foliation in
a neighborhood of $\bo=(T,0)$. Let us first introduce the geometric set-up.

On $(\Sigma_T, g)$, we denote  the geodesic distance function by $s$, relative to which the geodesic from $\bo$ has unit velocity. Hence,
\begin{equation}\label{4.14.2.16}
g^{ij}\nab_i s \nab_j s=1.
\end{equation}
Let $i_0>0$ be the radius of injectivity of $\bo$ on $\Sigma_T$ and let $B(\bo, \ep)$ be the open geodesic ball
with radius $\ep$, where $0<\ep<i_0$. Then $B(\bo, \ep)=\cup_{0\le s<\ep}S_s$, where $S_s$ denotes the level set of $s$.
The metric $g$ on $B(\bo, \ep)$ can be written as
\begin{equation}\label{4.14.1.16}
ds^2+\ga'_{AB} d\omega^A d \omega^B
\end{equation}
where $\ga'$ is the induced metric on $S_s$ and $(\omega^1, \omega^2) $ are local coordinates on $\mathbb{S}^2$.
Clearly, $\nab s$ is the outward  unit normal of the foliation of $S_s$, which is denoted by $N'$.
Due to (\ref{4.14.2.16}) we have
\begin{equation}\label{4.14.5.16}
\nab_{N'} {N'}=0
\end{equation}

For any $q\in B(\bo, \ep)$, there exists a unique distance minimizing geodesic $\ti{\Ga}(s)$
connecting $q$ to $\bo$. Noting that with
\begin{equation*}
N_0:=\frac{d}{ds}\ti{\Ga}(0) \quad \mbox{ with  }  |N_0|_{g(\bo)}=1
\end{equation*}
we can write $q= \exp_\bo(s N_0)$. A point $q\in B(\bo, \ep)$ can  be regarded as a point on $S_{T,\rho}$
with unit normal $N$ as well as a point on $S_s$ with the unit normal $N'$, verifying  $N'|_{s=0}=N_0$.

At any $q\in B(\bo, \ep)$,  we introduce  the following decomposition
\begin{equation}\label{4.14.4.16}
\bN=\cos \varphi N'+Y
\end{equation}
where $\varphi\in [0,\frac{\pi}{2}]$ and $Y$ is a vectorfield tangent to $S_s$ at $q$.
By direct checking, $|Y|_g=\sin \varphi$. Here $\cos \varphi$ and $Y$ at the center $\bo$ are
understood as the limits when the point $q$ approaches $\bo$ along the geodesic $\exp_{\bo}(s N_0)$
with $s\rightarrow 0$.

%Let us first give an important lemma.

\begin{lemma}\label{4.14.7.16}
Let $N_0 \in \T_{\bo} \Sigma_T$ be any unit vector. Then there hold
\begin{equation}\label{4.14.6.16}
\limsup_{s\rightarrow 0} \left(1-\cos \varphi(\exp_\bo (s N_0)) \right)
=\limsup_{s\rightarrow 0} \left(1-\frac{s\rho}{\tir}(\frac{1}{\rho(\bo)}+\ckk k_{N_0 N_0}+\pib_{N_0 N_0})\right),
\end{equation}
\begin{align}
&\limsup_{s\rightarrow 0}\left(|Y|^2+(1-\cos\varphi)\right)(\exp_\bo (s N_0))\les \ve, \label{4.14.20.16}\\
&\left|\frac{s}{\tir}-1\right|\les \ve \quad \mbox{ on } B(\bo, \ep). \label{4.14.14.16}
\end{align}
where $s$ is the normalized geodesic distance  to $\bo$ verifying (\ref{4.14.2.16}).
\end{lemma}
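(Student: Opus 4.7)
My plan is to extract the leading behavior of $\cos\varphi$ along the geodesic $\exp_\bo(sN_0)$ via a Hessian computation for $\rho$ on $\Sigma_T$, and then establish the three assertions in turn.

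First I would derive the exact identity
\[
\cos\varphi = -\frac{\rho\, N'(\rho)}{\tir} \qquad \text{along } \exp_\bo(sN_0),
\]
starting from $\bN = -\nab\rho/|\nab\rho|_g$ together with the relation $|\nab\rho|_g = \tir/\rho$ obtained in the computation preceding (\ref{4.1.2.16}); here $N'(\rho) = d\rho/ds$ is the directional derivative along the geodesic.

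Next I would compute $\mathrm{Hess}_g(\rho)$ at $\bo$ by a radial limit. For $q \neq \bo$, since $\nab\rho = -(\tir/\rho)\bN$ with $\nab_\bN\bN \perp \bN \parallel \nab\rho$,
\[
\mathrm{Hess}_g(\rho)(\bN,\bN)(q) = \bN(\bN\rho) - (\nab_\bN\bN)(\rho) = -\frac{\bN(\tir)}{\rho} - \frac{\tir^2}{\rho^3}.
\]
From $\tir^2 = (\bb^{-1}t)^2 - \rho^2$, $\bN t = 0$, and (\ref{4.5.6}), I obtain
\[
\bN(\tir) = 1 + \bb^{-1}t\Bigl(\frac{\bb^{-1}t}{\rho}\ckk k_{\Nb\Nb} + \pib_{\bN\bN}\Bigr),
\]
which tends to $\mu(\hat N) := 1 + \rho(\bo)\bigl(\ckk k_{\hat N\hat N} + \pib_{\hat N\hat N}\bigr)|_\bo$ as $q \to \bo$, where $\hat N := \lim \bN$ along the approach direction. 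By continuity of $\mathrm{Hess}_g(\rho)$ as a smooth tensor, and since the map $N_0 \mapsto \hat N(N_0)$ is close to the identity (by Proposition \ref{4.27.1.26} and Theorem \ref{5.13.1.16}) and hence surjective onto the unit sphere of $T_\bo\Sigma_T$, the identity $\mathrm{Hess}_g(\rho)(v,v)|_\bo = -\mu(v)/\rho(\bo)$ extends by homogeneity and polarization to every unit $v$. Since $\bo$ is a critical point of $\rho|_{\Sigma_T}$ (as $|\nab\rho|_g|_\bo = 0$), Taylor expansion gives $\rho(s) = \rho(\bo) - \mu(N_0)s^2/(2\rho(\bo)) + O(s^3)$, so $d\rho/ds = -\mu(N_0)s/\rho(\bo) + O(s^2)$. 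Substituting into the identity for $\cos\varphi$ yields
\[
\cos\varphi = \frac{s\rho}{\tir}\Bigl(\frac{1}{\rho(\bo)} + \ckk k_{N_0 N_0} + \pib_{N_0 N_0}\Bigr) + o(1), \qquad s \to 0,
\]
and taking $1 - (\cdot)$ and $\limsup$ of both sides produces (\ref{4.14.6.16}).

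For (\ref{4.14.20.16}), Proposition \ref{4.27.1.26} and Theorem \ref{5.13.1.16} give $|\ckk k|, |\pib| \les \ve$, hence $|\mu - 1| \les \ve$; combined with (\ref{4.14.14.16}) below, this forces $s\rho/\tir = 1 + O(\ve)$ in the limit, so $1 - \cos\varphi \les \ve$, and $|Y|^2 = 1 - \cos^2\varphi \le 2(1 - \cos\varphi)$ yields the bound on $|Y|^2$. For (\ref{4.14.14.16}), I would combine Lemma \ref{cmpr} (giving $|\tir/\rf - 1| \les \ve$) with the comparison $|s/\rf - 1| \les \ve$ on $B(\bo, \ep)$, obtained by matching the metric form (\ref{4.14.3.16}) with $\cir{a} \approx 1$ from (\ref{4.14.25.16}) against the geodesic polar form (\ref{4.14.1.16}): the radial lapses agree up to $O(\ve)$ and the angular parts agree at leading order at $\bo$, so integrating along a radial curve gives $|s - \rf| \les \ve\,\rf$.

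The main obstacle is the careful treatment of direction-dependent limits at $\bo$, where $\bN$, $\tir$, and $a$ are undefined or critical; each formal identity must be accompanied by an error estimate that vanishes as $s \to 0$, justifying the $\limsup$ formulation and the polarization step in recovering the full Hessian from its radial values.
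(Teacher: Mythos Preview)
Your opening identity $\cos\varphi=-\rho N'(\rho)/\tir$ is exactly the paper's starting point (there written as $f:=\frac{\tir}{\rho}\cos\varphi=-\nab_i s\,\nab^i\rho$), and your overall strategy of Taylor expanding $\rho$ along the geodesic is the same. However, two steps are not adequately justified.

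\emph{The Hessian step.} You evaluate $\mathrm{Hess}_g(\rho)$ at the direction $\bN$, take the limit $q\to\bo$, and then need to transfer the result to the direction $N_0$ via surjectivity of $N_0\mapsto\hat N(N_0)$. Your justification that this map is ``close to the identity by Proposition~\ref{4.27.1.26} and Theorem~\ref{5.13.1.16}'' is circular: those results bound $\hk$ and $\pib$, but the relation between $\bN$ and $N'$ is precisely governed by $\cos\varphi$, which is what you are computing. (Surjectivity does hold, but by a topological argument: the Gauss map of each sphere $S_{T,\rho_1}$ near $\bo$ has degree one.) The paper avoids this detour entirely by computing $\nab_{N'}f$ directly at a general point, using the spacetime decomposition $\nab^i\rho=\Pi^\a_{\a'}\bd^{\a'}\rho\, e^i_\a$; this yields $\nab_{N'}f=\frac{\bb^{-1}t}{\rho}\pib_{N'N'}+\l\bd_{N'}\fB,N'\r$, a formula in which $N'$ (not $\bN$) appears, so the limit $N'\to N_0$ is unambiguous.

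\emph{The comparison $s\approx\rf$.} Saying ``the radial lapses agree up to $O(\ve)$'' and ``integrating along a radial curve'' is not enough, because the two radial directions $\bN$ and $N'$ differ, and the discrepancy is precisely $\varphi$. The paper's argument is a genuine two-sided length comparison: for the upper bound, integrate $\p_\rf s=\cir{a}\cos\varphi\le\cir{a}$ along the $\rf$-curve through $q$; for the lower bound, use that $\Ga'$ is length-minimizing and bound its length below by $\int_0^{\rf}\cir{a}|_{\Ga'}\,d\rf'$. Both ingredients (the bound $\cos\varphi\le 1$ and the minimizing property of $\Ga'$) are essential and missing from your sketch.
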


\begin{proof}
Let us first consider (\ref{4.14.6.16}).
For  $q=\exp_{\bo} (sN_0)$ consider
\begin{equation*} %\label{4.14.8.16}
f(q):=\frac{\tir}{\rho}\cos \varphi=-\nab_i s\nab^i \rho.
\end{equation*}
We proceed by locally expanding round $\bo$  the above function as follows,
\begin{equation}\label{4.14.9.16}
f(q)=f(\bo)+(\nab_{N'} f)(\bo) s+O(s^2).
\end{equation}
Note that $f(\bo)=0$ due to $\tir=0$  at $\bo$. Now we calculate the term $(\nab_{N'} f)(0)$.
By using (\ref{4.14.5.16}) we have
\begin{align*} %\label{4.14.10.16}
\nab_{N'} f=-\nab_{N'}\nab_i s\c \nab^i \rho-\nab_i s\nab_{N'}\nab^i \rho=- N'_i  \nab_{N'} \nab^i \rho.
\end{align*}
Note that
\begin{align*}
\bd_\mu \nab^i \rho &=\bd_\mu (\Pi_{\a'}^\a \bd^{\a'}\rho)e_\a^i
=e_\a^i \left(\bd_\mu (\bT^\a \bT_{\a'}+\delta^\a_{\a'})\bd^{\a'} \rho + \Pi_{\a'}^\a \bd_\mu \bd^{\a'}\rho\right)\nn\\
&=-e_\a^i \bd_\mu \bT^\a \l \bT, \fB\r+e_\a^i \bT^\a \bd_\mu \bT_{\a'} \bd^{\a'}\rho +\bd_\mu \bd^{\a'}\rho \Pi_{\a'}^\a e_\a^i \nn\\
&=-e_\a^i \bd_\mu \bT^\a \l \bT, \fB\r+\bd_\mu \bd^{\a'}\rho\Pi_{\a'}^\a e_\a^i, %\label{4.14.11.16}
\end{align*}
where for the last equality we used $e_\a^i \bT^\a=0$. Therefore, from the above two equations we obtain
\begin{align*} %\label{4.14.12.16}
\nab_{N'} f&=N'_i {N'}^\mu  (e_\a^i \bd_\mu \bT^\a \l \bT, \fB\r -\bd_\mu \bd^{\a'}\rho\Pi_{\a'}^\a e_\a^i) \\
&=\frac{\bb^{-1}t}{\rho}\pib_{N'N'}+\l \bd_{N'}\fB, N'\r.
\end{align*}
Hence
\begin{equation*} %\label{4.14.13.16}
\lim_{s\rightarrow 0}\nab_{N'} f(\exp_{\bo}(s N_0))=\pib_{N_0N_0}+\l \bd_{N_0}\fB, N_0\r.
\end{equation*}
Combining this with (\ref{4.14.9.16}) gives (\ref{4.14.6.16}).

Now consider (\ref{4.14.14.16}).  We first prove  there holds for a fixed constant $C>0$ that
\begin{equation}\label{4.14.15.16}
(1-C\ve)\rf\le s\le (1+C\ve) \rf
\end{equation}
 with the help of the argument in \cite[(14.0.7.a)]{CK}.  For $q=(\rf, \omega)\in B(\bo, \ep)$, in view of (\ref{4.14.3.16})
 and (\ref{4.14.4.16}) we have
\begin{equation}\label{4.14.16.16}
s(q)=\int_0^\rf \p_\rf s d\rf'=\int_0^{\rf} \nab_\bN s \cir{a} d\rf' =\int_0^\rf \cos \varphi \cir{a} d\rf' \le \int_0^\rf \cir{a}|_{\Ga} d\rf'
\end{equation}
where $\Ga$ is the integral path of $\p_\rf$ with the angular variable $\omega\in {\mathbb S}^2$ fixed.
On the other hand, let $\Ga'$ be the distance minimizing geodesic connecting $\bo$ to $q$. Then
\begin{equation}\label{4.14.17.16}
s(q)=\int_{\Ga'} \nab_{N'}s ds' =\int_0^\rf (\cir{a}^2+\ga_{AB} \frac{d\omega^A}{d \rf}\frac{d\omega^B}{d\rf})^\f12 |_{\Ga'} d\rf'\ge \int_0^\rf \cir{a}|_{\Ga'} d\rf'.
\end{equation}
By using (\ref{4.5.17}), (\ref{4.14.16.16}) and (\ref{4.14.17.16}) we thus obtain (\ref{4.14.15.16}).
(\ref{4.14.14.16}) can be obtained by using (\ref{4.14.15.16}), (\ref{4.5.17}) and $\frac{s}{\tir}-1=\frac{s}{\rf} \frac{\rf}{\tir}-1$.

Finally (\ref{4.14.20.16}) can be obtained using (\ref{4.14.6.16}), (\ref{lhk1}), (\ref{lpt1}), $|Y|_g=\sin \varphi$ and $T\approx 1$.
\end{proof}

\begin{proof}[Proof of Proposition \ref{ini}]
 We first introduce the decomposition of $N'$ with the Euclidian radial normal $\ne$, in the same way as (\ref{neuc}).
\begin{equation*} %\label{4.14.18.16}
N'=\varpi' \ne+\Sigma N'
\end{equation*}
and
\begin{equation}\label{4.14.22.16}
\varpi'(\bo)=g(\p_r,\p_r)^{-\f12}.
\end{equation}
With the help  of the above decomposition,
\begin{equation}\label{4.14.19.16}
\varpi=\bN(r)=\cos\varphi N'(r)+Y(r)=\cos \varphi \varpi' +Y(r).
\end{equation}
Note that,  similar  to (\ref{3.31.15}) we have
\begin{equation*}
{\varpi'}^2+|\sn' r|_{g}^2 =1+h^{ij}\p_i r \p_j r
\end{equation*}
where $\sn'$ is the Levi-Civita connection of the induced metric $\ga'$ on $S_s$.  In view of (\ref{4.14.22.16}) and (\ref{4.1.2}),
this implies
\begin{equation*} %\label{4.14.21.16}
\limsup_{s\rightarrow 0}|\sn' r|_{g}^2(\exp_{\bo} (s N_0))\les \ve.
\end{equation*}
In view of $Y(r)=\sn' r(Y)$, this together with (\ref{4.14.20.16}) implies that
 \begin{equation*} %\label{4.14.23.16}
\limsup_{s\rightarrow 0}|Y(r)|(\exp_\bo (s N_0))\les \ve.
 \end{equation*}
By combining this estimates with (\ref{4.14.20.16}), (\ref{4.14.22.16}) with (\ref{4.14.19.16}),
we can obtain the second part in (\ref{inii3.31}), due to
\begin{equation*}
\varpi-g(\p_r, \p_r)^{-\f12}=\cos \varphi (\varpi'-g(\p_r, \p_r)^{-\f12})+(\cos \varphi-1) g(\p_r, \p_r)^{-\f12}+Y(r).
\end{equation*}
With  the help of (\ref{3.31.15}), the other part follows as an immediate consequence.
\end{proof}

\subsection{The intrinsic geometry in $Z^s$}

Next we give the main result of this section, which lies in the core of analysis in this paper in $Z^s$.

\begin{theorem}[Main estimates]\label{4.10.6}
In $Z^s\subset\I^+(\O)$, there hold $t\approx\tir\approx r$ and
\begin{align}
&0\le n-\varpi\les \ve \bt^{-4}, \label{4.9.8}\\
&|\sn r|+|\Sigma N|_\bg\les\ve^\f12 \bt^{-2}, \label{4.9.16}\\
& \bt^2 |k_{\Nb A}|+\bt\rho \left|(\hat k, \tr k-\frac{3}{\rho})\right|\les \ve, \label{2.20.1.16}\\
&|u-\hat u|\les\ve, \label{3.28.4.16}\\
&\bt\left|\frac{\tir}{r}-n^{-1}\right|\les \ve, \label{1.4.1.16}
\end{align}
where $\bt=t+1$.
\end{theorem}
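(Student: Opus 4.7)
The plan is to run a bootstrap argument throughout $Z^s \cap \{\rho \le \rho_*\}$, propagating the initial-slice bounds from $\{t=T\}$ (Propositions \ref{angur}, \ref{4.9.11}, Corollary \ref{4.10.cmp} and \ref{4.27.1.26}) along the timelike distance-maximizing geodesics $\sf{p}$. The key geometric fact making this work is Remark \ref{4.9.2.16}: for any $p \in Z^s$, the portion $\sf{p} \cap \{t \ge T\}$ stays entirely in $Z^s$, so along such geodesics $\bg$ is Schwarzschild, $\pib \equiv 0$, $\phi \equiv 0$, $\bR_{\a\b} = 0$, and Proposition \ref{3.18.1.16} provides sharp pointwise bounds on the Weyl components (in particular $|\widehat{\bR}_{\fB i \fB j}| \lesssim \ve \bt^{-3}$, with Schwarzschild main term $O(M r^{-3})$). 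Combining $\bb^{-1}\approx 1$, $t\approx\tau$ from Proposition \ref{prl_1} with the a-priori bound $r^{-1} \le 2/(t - \hat u_1)$ from (\ref{4.9.6}) gives $r \gtrsim t$ in $Z^s$; together with the bootstrap on $\tir/r - n^{-1}$ (controlled in Step 4 below) and the initial bound $|\tir/r - n^{-1}| \lesssim \ve$ from Proposition \ref{4.9.11}, this produces $t \approx \tir \approx r$.

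The centerpiece of the proof is the decay of $n-\varpi$ from the transport equation (\ref{bvarpi}). Writing $1 - n^{-2}\varpi^2 = n^{-2}(n-\varpi)(n+\varpi)$, equation (\ref{bvarpi}) becomes a linear ODE for $f = n-\varpi$ of the schematic form
\begin{equation*}
\fB f + \frac{2\tir(r-M)(n+\varpi)}{n^2 \rho (r+2M)^2}\, f = \frac{2M}{n^2(r+2M)^2}\left(\frac{\tir}{\rho}\varpi + \frac{\bb^{-2}t^2}{\rho\tir}(n+\varpi)\right) f.
\end{equation*}
For large $r$ the left-hand coefficient is $\approx 4\tir/(\rho r)$ while the right-hand side is $O(M r^{-2}\tir/\rho)\, f$. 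Reparametrizing along $\sf{p}$ by $t$ via $d\rho = \rho (\bb^{-1} n^{-1} t)^{-1}\, dt$ from (\ref{3.20.1}) and using $\bb^{-1}n^{-1} \approx 1$, $\tir\approx r \approx t$, the ODE integrates to
\begin{equation*}
\log \frac{(n-\varpi)(p)}{(n-\varpi)|_{\{t=T\}}} \approx -\int_T^{t(p)} \frac{4}{r} \, dt' + O(M),
\end{equation*}
and since $r \approx t'$ in $Z^s$ this produces the sharp $t^{-4}$ decay. The initial bound $0 \le n-\varpi \lesssim \ve$ from (\ref{3.28.3.16}) together with the linearity and positivity of the leading coefficient preserves the sign. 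This yields (\ref{4.9.8}). The $|\sn r|$ and $|\Sigma N|_\bg$ bounds (\ref{4.9.16}) are then immediate consequences via (\ref{3.31.9}) and (\ref{3.31.4}): $|\Sigma N|_\bg^2 = 1 - n^{-2}\varpi^2 = n^{-2}(n-\varpi)(n+\varpi) \lesssim \ve t^{-4}$, hence the $\ve^{1/2} \bt^{-2}$ decay.

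For (\ref{1.4.1.16}), I would apply Lemma \ref{tsp1} to (\ref{cmr_1}) with weight $\upsilon = \tau/\rho$. The forcing terms are: $\pib_{\bN\bN} = 0$; $\bN(\log n) = O(M/r^2) \lesssim t^{-2}$ from (\ref{3.31.13}); and $\tir^2 (n-\varpi)/(r^2\rho) = O(\ve t^{-4}/\rho)$ by Step 2, which is integrable in $\rho$ along $\sf{p}$ against the weight and gains a $t^{-1}$ factor. The initial bound $|\tir/r - n^{-1}| \lesssim \ve$ at $t=T$ from Proposition \ref{4.9.11} provides the starting datum. For (\ref{3.28.4.16}), I integrate (\ref{4.3.3s}) along the Schwarzschild outgoing null generator $\hat L$ (on which $\hat u$ is constant): the RHS contains $(n-\varpi)$ which by Step 2 is integrable in $t$, plus $u$ times $k_{A\Nb}$, $\ckk k_{\Nb\Nb}$, $n\l\bd_\bT\bT,\bN\r = n\, \bN\log n$; using the $k$-bounds below, the $O(M/r^2)$ bound on $\bN\log n$, and $u \lesssim \bt$, all pieces are integrable in $t$ from $T$, and Corollary \ref{4.10.cmp} provides the initial value $|u - \hat u| \lesssim \ve$ on $\Sigma_T$.

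Finally, the $k$-estimates (\ref{2.20.1.16}) follow by applying Lemma \ref{tsp1} to the structure equations (\ref{3.14.1}), (\ref{2.20.3.16})--(\ref{2.20.5.16}), with initial data at $\{t=T\}$ controlled by Proposition \ref{4.27.1.26}. The forcing involves $\widehat{\bR}_{\fB i \fB j}$ (controlled by Proposition \ref{3.18.1.16}, with Schwarzschild $\varrho$ contribution $O(M r^{-3})$), quadratic terms $\Ab \cdot \Ab$ (absorbed by bootstrap), and $\zb$-terms controlled via (\ref{zba}): with $\pib = 0$ in $Z^s$, $\zb_A = -(\bb^{-1}t/\tir)\l\bd_\bT\bT, e_A\r - \l\bd_\bN\bT, e_A\r = -(\bb^{-1}t/\tir)\sn_A \log n$, which by (\ref{4.16.2}) and $|\sn r|$ bounds gives $|\zb| \lesssim \ve^{1/2} r^{-2}$. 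Choosing weights $\tau^m/\rho^m$ with $m = 2$ in (\ref{2.20.4.16}) produces the sharp $\bt^{-2}$ decay for $\hat k_{\Nb A}$; weights designed to extract $1/(t\rho)$ decay give the trace and $\hat k$ bounds. The main obstacle is the simultaneous closure of the bootstrap: the $k$-estimates use $\zb$-control which needs $|\sn r|$ decay from Step 3, which uses the $n-\varpi$ decay of Step 2, which uses $r\approx t$, which uses Step 4, which uses the $k$-estimates of Step 6 — so the bootstrap hierarchy must be set up with weaker preliminary assumptions (e.g. $r \ge t/2$, $|n-\varpi| \le \dn$, $|\hat k_{\Nb A}| \le \dn$) and each step must only improve the constants in estimates that it depends on.
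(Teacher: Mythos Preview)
Your approach is essentially the paper's: a bootstrap in $Z^s$ that improves $n-\varpi$ via (\ref{bvarpi}), then $|\sn r|$ and $|\Sigma N|$ via (\ref{12.10.26})--(\ref{3.31.4}), then $|u-\hat u|$ via (\ref{4.3.3s}) along $\hat L$, then $\tir/r-n^{-1}$ via (\ref{cmr_1}), and finally the $k$-estimates via (\ref{3.14.1}), (\ref{2.20.3.16})--(\ref{2.20.5.16}) after establishing the curvature bounds of Propositions \ref{3.18.1.16}--\ref{1.2.1.16}. The order and the ingredients match.

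There is one genuine gap in your treatment of (\ref{3.28.4.16}). You write ``$u\lesssim\bt$, all pieces are integrable in $t$,'' but this is false for the terms $u\,n\l\bd_\bT\bT,\bN\r$ and $u\,\ckk k_{\Nb\Nb}\cdot(u\varpi/\rho)$ in (\ref{4.3.3s}): with only $u\lesssim t$ and $|\bN\log n|\lesssim M r^{-2}\lesssim\ve t^{-2}$ you get an $\ve t^{-1}$ integrand, which diverges. The paper closes this by including $|u-\hat u|\le 10\dn t^\delta$ among the bootstrap assumptions (see (\ref{4.10.7})), which in $Z^s$ (where $\hat u\le\hat u_1\lesssim 1$) yields $u\lesssim 1+\dn t^\delta$; with this bound the $\bN\log n$ term integrates to $O(\ve)$ and the worst $\ckk k_{\Nb\Nb}$ term becomes $\int \dn\,u\,t'^{\delta-2}\,dt'\lesssim\dn$, using also $\ub\gtrsim t$. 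So you must add a bootstrap hypothesis on $|u-\hat u|$ and use the resulting near-boundedness of $u$; the weak bound $u\lesssim\bt$ does not suffice. A smaller point: your $|\zb|\lesssim\ve^{1/2}r^{-2}$ undersells what you actually get --- from (\ref{4.16.2}) and (\ref{4.9.16}) one has $|\sn\log n|\lesssim M r^{-2}\cdot\ve^{1/2}t^{-2}\lesssim\ve t^{-4}$, hence $|\zb|\lesssim\ve t^{-4}$ as in (\ref{2.20.14.16}); this sharper decay is what makes the $\zb$-terms harmless in the $k$-transport equations. Finally, your dependency diagram overstates the circularity: $r\approx t$ follows directly from (\ref{4.9.6}) and the \emph{bootstrap assumption} (\ref{4.9.9}) on $\tir/r-n^{-1}$, not from the improved estimate, so no cycle arises.
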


\begin{proof}
%Before proving (\ref{1.4.1.16}), we first prove a weaker estimate in $Z^s$.
%\begin{equation}
%|\frac{\tir}{r}-1|\les \ep \bt^{-1}\ln\bt\label{4.9.7}
%\end{equation}
Let us make in $Z^s$ the bootstrap assumptions
\begin{align}
&|u-\hat u|\le 10\dn  t^\delta, \qquad \forall\, t> T \mbox{ and }\hat u\le {\hat u}_1, \label{4.10.7}\\
&\left|\frac{\tir}{r}-n^{-1}\right|\le \dn t^{-\delta},\qquad 0\le n-\varpi\le\dn t^{-\delta}, \label{4.9.9}\\
&t^{1-\d}\rho \left|(\hk, \tr k-\frac{3}{\rho})\right|\le \dn, \label{12.28.19.15}
\end{align}
where $0<\d<\frac{1}{10}$ and $C_0 \ve \le \dn\le \frac{1}{4T}$ are small numbers to be chosen later, here
$C_0$ is the universal constant in (\ref{3.28.3.16}), (\ref{iniop}), (\ref{lhk1})  and (\ref{5.3.1.16}).
$n-\varpi\ge 0$ follows directly from the second identity in (\ref{3.31.4}).
 To complete the proof, we need to show
 \begin{align}
 &|u-\hat u|\le C\ve^\f12(\dn+\ve^\f12)+6\dn T^\d, \label{4.25.1.16}\\
&0\le n-\varpi\le C \ve t^{-4}, \label{4.9.15}\\
&t\left|\frac{\tir}{r}-n^{-1}\right|\le C \ve, \label{5.3.2.16}\\
&t\rho \left|(\hk, \tr k-\frac{3}{\rho})\right|\le  C(\ve+\dn^2) \label{5.3.3.16}
\end{align}
which are improvement over (\ref{4.10.7})- (\ref{12.28.19.15}) whence choosing $\dn=\min\{2C\ve, \frac{1}{4} T^{-1}\}$.
This choice can be achieved since we can choose $\ve<\frac{1}{8T C}$. Here the universal
constant $C>C_0$ and $\ve$ is sufficiently small such that $C\ve^\f12<\f12$. With the completion
of (\ref{4.25.1.16})-(\ref{5.3.3.16}), we can obtain (\ref{4.9.8})-(\ref{1.4.1.16}) except the first
estimate in (\ref{2.20.1.16}). In the sequel, we prove (\ref{4.25.1.16})-(\ref{5.3.2.16}).

For $q\in Z^s$, $\sf{q}$ intersects $\{t=T\}$ at $q'$, which is in $\{r\ge 2\}\cap \Sigma_T$.
We will employ transport equations along the segment of $\sf{q}=\{\exp_\O(\rho V)\}$ from $q'$
to $q$ with $V$ determined by $q$, and initial data given in Proposition \ref{angur} and Proposition \ref{4.27.1.26}.
We will frequently employ in $Z^s$ the relations
\begin{equation}\label{4.9.12}
r\approx \tir\approx t, \quad \bb^{-1}\approx 1, \quad t\approx \tau, \quad   \varpi\approx n\approx 1
\end{equation}
which follow from  Proposition \ref{prl_1}, (\ref{4.9.9}) and  (\ref{4.9.6}).

Let us  employ  (\ref{bvarpi})  with the initial data given in (\ref{3.28.3.16}) in Proposition \ref{angur}.
We can write (\ref{bvarpi}) as
\begin{align}\label{4.9.13}
\fB (n -\varpi) + \frac{4}{\rho} (n-\varpi)  = (H_1 + H_2) (n-\varpi),
\end{align}
where
\begin{align*}
H_1 & = \frac{4}{\rho}-2 \frac{\tir}{\rho} \frac{r-M}{(r+2M)^2} n^{-2} (n + \varpi),\\
H_2 & = \frac{2M}{n^2(r+2M)^2} \left(\frac{\tir}{\rho} \varpi + \frac{\bb^{-2} t^2}{\rho \tir}(n+\varpi)\right).
\end{align*}
On $\Sigma_T\cap Z^s$ we can infer from (\ref{3.28.3.16}) and $\tau\approx T$ that
\begin{equation*}
\tau^4(n-\varpi)\les \ve, \quad \mbox{if } t=T.
\end{equation*}
Thus, with $q'=\exp_\O(\rho_0 V)$, by assuming
\begin{equation}\label{4.9.14}
\int_{\rho_0}^\rho\left(|H_1|+|H_2|\right) d\rho'\les \dn+\ve,
\end{equation}
we can apply Lemma \ref{tsp1} to $H=H_1, H_2$ and $\upsilon=(\frac{\tau}{\rho})^4$  for the equation (\ref{4.9.13})
to obtain
\begin{equation*}
(n-\varpi) \les \tau^{-4}\ve\approx t^{-4}\ve.
\end{equation*}
Here, to obtain the last inequality, we employed (\ref{4.9.12}).

It remains to prove (\ref{4.9.14}) with the help of (\ref{4.9.9}),  (\ref{4.9.12}) and $0<M\le\ve$.
Using $n^2 = (r-2M)/(r+2M)$ we can write
\begin{equation*} %\label{4.10.1}
H_1 = \frac{2\tir}{\rho}\frac{r-M}{r^2-4M^2}(n-\varpi)- \frac{4nr}{\rho} \left(\frac{\tir}{r}-n^{-1}\right) \frac{r-M}{r^2-4M^2}
+\frac{4}{\rho}\frac{(r-4M)M}{r^2-4M^2}.
\end{equation*}
By using $r\approx r\pm 2M$ and $n\approx 1$, we have
\begin{equation}\label{h1sb}
|H_1|\les \frac{1}{\rho}\left(\left|\frac{\tir}{r}-n^{-1}\right|+\frac{M }{r}\right)
+\frac{\tir}{\rho}\frac{(n-\varpi)}{r}.
\end{equation}
Similarly, we can bound $H_2$ by
\begin{equation*} %\label{h2sb}
|H_2|\les \frac{M\tir}{\rho(r+2M)^2}\left(1+\frac{t^2}{\tir^2}\right) \les  \frac{M\tir}{\rho(r+2M)^2}
\approx \frac{M\tir}{\rho r^2}\approx \frac{M}{\rho r}.
\end{equation*}
Symbolically, this term has already appeared  in $H_1$. Thus, it suffices to consider only the
types of terms in (\ref{h1sb}). By using (\ref{3.20.1}), (\ref{4.9.12}) and the first assumption
in (\ref{4.9.9}), we have
\begin{align}\label{4.10.2}
\int_{\rho_0}^\rho \frac{1}{\rho'} \left|\frac{\tir}{r}-n^{-1}\right| d\rho'
\les \dn \int_{T}^t \frac{1}{\bb^{-1}n^{-1} t'}\l t'\r^{-\d} dt'\les \dn.
\end{align}
Similarly,
\begin{align*}
\int_{\rho_0}^\rho \frac{1}{\rho'}\left(\frac{M}{r}+(n-\varpi)\right) d\rho'
& \les \int_T^ t \frac{1}{\bb^{-1}n^{-1}t'}\left(\frac{M}{r}+(n-\varpi)\right) d t'\\
&\les (\ve+\dn) \int_T^t {t'}^{-(1+\d)} d t'\les  \ve+\dn,
\end{align*}
where we employed the second assumption in (\ref{4.9.9}) as well. This ends the proof of
(\ref{4.9.14}) and thus  (\ref{4.9.15}) is proved.
(\ref{4.9.16}) follows as a direct consequence of (\ref{4.9.15}), (\ref{12.10.26})  and
the second identity in (\ref{3.31.4}).

%Next we write (\ref{cmr_1}) by using the Schwarzchild metric in the region $\{t\ge T\}\cap \I^+(\O)\backslash \itt \C^s_{\hat u_0}$
%\begin{equation}\label{cmr_1i}
%\fB( \frac{\tir}{r}-1)+\frac{1}{\rho}(\frac{\tir}{r}-1)=-\frac{1}{\rho}(\frac{\tir}{r}-1)^2+G
%\end{equation}
% We improve the first part of (\ref{4.9.9}). This can be done by applying Lemma \ref{tsp1} to (\ref{cmr_1i}) with, symbolically, %$H=\frac{1}{\rho}(\frac{\tir}{r}-1)$  and
%$G=-\frac{\bb^{-1}t}{r}(a^{-1}\pt_{\bN\bN}+\frac{\bb^{-1}t}{\rho}\l\bd_\bT \bT, \bN\r)+\frac{\tir^2}{r^2\rho}(1-n+n-\varpi)$.

%By (\ref{4.10.2}), $\int_{\rho_0}^\rho H d\rho'\les \dn$ as desired. By  Lemma \ref{tsp1} and (\ref{4.5.7})
%\begin{equation}\label{4.10.3}
%|\frac{\tir}{r}-1|\les \varsigma^{-1}(\dn+\ep)+\vs^{-1}\int_{\rho_0}^\rho {\vs'} |G| d\rho'.
%\end{equation}
%For the term $G$, by using (\ref{4.9.12}), (\ref{4.9.8}) and $|n-1|\approx \frac{1}{r}$,
%\begin{align*}
%\vs^{-1}\int_{\tau_0}^\tau {\vs'} \frac{\rho^2}{\rho^2\rho}(|1-n|+|n-\varpi|) d\rho'&\les \vs^{-1}\int_{\tau_0}^\tau {\tau'}^{-1}\vs'(|1-n|+|n-\varpi|)\\
%&\les \ep\vs^{-1}\ln \bt\les\ep t^{-1}\ln \bt
%\end{align*}
%Using (\ref{4.9.6}) and (\ref{4.9.12}) again, (\ref{4.10.3}) and  (\ref{4.9.7}) are proved.
%\end{proof}

%\begin{proof}[Proof of Proposition \ref{4.10.6} (ii)]
Next we prove (\ref{4.25.1.16}). Due to (\ref{4.10.7}) we have
\begin{equation}\label{4.17.1.16}
u\le {\hat u}_1+10\dn t^\d.
\end{equation}
We consider $Z^s$ foliated by $\cup_{\hat u\le \hat u_1} \C^s_{\hat u}$. For any point $q\in Z^s$,
we regard $q$ as a point in $\C^s_{\hat u}$  with $\hat u$ uniquely determined by $q$. There is a
unique null geodesic $\Ga'(t)$ on $\C^s_{\hat u}$, such that $\frac{d}{dt} \Ga'=\hat L$, which
intersects $\{t=T\}$ at $q'$. Note that $\hat u$ is invariant on $\C^s_{\hat u}$, with the help of
Corollary \ref{4.10.cmp} we can obtain
\begin{align}
|(u-\hat u)(q)| & =\left|(u-\hat u)(q')+\int_T^t \hat L (u-\hat u)dt'\right|\le C_0\ve+ \left|\int_T^t \hat L(u) dt'\right|\nn\\
&\le \int_T^t u\left|\sn_A r\c k_{A\Nb}+n\l \bd_\bT \bT, \bN\r+\ckk k_{\Nb\Nb}\frac{(n-\varpi)\tir-u \varpi}{\rho}\right|dt' \nn\\
& \quad \, +  C_0\ve+\int_T^t |n-\varpi| dt. \label{4.10.8}
\end{align}
From (\ref{4.9.15}) it follows that
\begin{equation*}
\int_T^t |n-\varpi| dt'\les \ve \int_T^t t'^{-4}dt'\les \ve.
\end{equation*}
In order to treat the first term on the right hand side of (\ref{4.10.8}), we will rely on (\ref{12.28.19.15})
to treat the terms on $\ckk k_{\Nb\Nb}, k_{\Nb A}$. By (\ref{4.9.16}), $\rho^2=u\ub$ and $\rho\les t$, we have
\begin{align*}
&\int_T^t u|\sn_A r\c k_{A\Nb}| dt\les \ve^\f12\int_T^t|\rho k_{A\Nb}|\rho  t'^{-3}d t'
\les \ve^\frac{1}{2} \dn\int_T^t t'^{-4+\delta}\rho dt'\les \ve^\frac{1}{2}\dn.
\end{align*}
Noting that $\bd n\approx \frac{\ve}{r^2}$ which can be obtained from (\ref{3.31.13}), we may use
(\ref{4.17.1.16}) and (\ref{4.9.12}) to derive that
\begin{align*}
&\int_T^t u n |\l \bd_\bT \bT, \bN\r|d t'\les\ve\int_T^t r^{-2}t'^\delta dt'\les \ve.
\end{align*}
Due to $\rho^2=u \ub$ and (\ref{4.9.12}), we have $\frac{u t}{\rho}\approx \rho$. Thus by using (\ref{4.9.15})
and (\ref{12.28.19.15}) we can obtain
\begin{align*}
\int_T^t \left| \ckk k_{\Nb\Nb} \frac{u(\tir+u)}{\rho}(n-\varpi)\right|d t'
&\le \int_T^t |\ckk k_{\Nb\Nb}\rho'| (n-\varpi) dt'\les \ve\dn\int_T^t t'^{-4} dt'\les \ve\dn.
\end{align*}
Since $\frac{u^2}{\rho}=\frac{\rho u}{\ub}$, we may use (\ref{12.28.19.15}), (\ref{4.17.1.16}) and (\ref{3.20.lap})
to derive that
\begin{equation*}
\int_T^t \left|\frac{u^2 n}{\rho} \ckk k_{\Nb \Nb} \right|d t'
\le 2\dn \int_T^t \frac{t'u}{\ub}{t'}^{\d-2} dt'< 4\dn T^{\d-1}(\hat u_1+10\dn T^{\d}),
\end{equation*}
where we used the property $\ub>\frac{5t}{8}$ which can be seen as follows. Indeed, in view of the first assumption
in  (\ref{4.9.9}), (\ref{3.20.lap}) and (\ref{bb2}),  we can derive, with $M$ sufficiently small so that $n(2)>\frac{4}{5}$, that
\begin{align*}
\ub&=\bb^{-1}t+\tir=(\bb^{-1}-n)t+nt+n^{-1}r+\left(\frac{\tir}{r}-n^{-1}\right)r\\
&\ge t\left(n+n^{-1}\frac{r}{t}\right)-c\ve \ln t-\frac{n(2)}{10}  t^{1-\d}\\
&\ge t\left(\frac{9}{10}n+\frac{1}{10}n^{-1}\right)-c\ve \ln t>\frac{5t}{8}
\end{align*}
where we used $\dn<\frac{1}{4T}\le \frac{n(2)}{10}$, the fact that $n(r)$ is increasing and that $c\ve$ can be
sufficiently small, we also employed (\ref{4.9.6}) to obtain
$$
\frac{r}{t}\ge  \frac{t-\hat u_1}{2t}>\f12-\frac{t-2}{2t}>\frac{1}{10}.
$$
Hence (\ref{4.25.1.16}) is proved because $\hat u_1=T-\ga(2)\le T-2$ and $10\dn T^\d<\f12 T$ due to $T>5$ and $4\dn T<1$.

Next, we prove (\ref{5.3.2.16}).
 %In view of (\ref{3.20.lap}) and (\ref{neuc}),
% \begin{equation}\label{1.4.5.16}
%\bN \log n =\f12 \varpi\p_r(\log (r-2M)-\log (r+2M))=-\frac{2M\varpi}{(r-2M)(r+2M)}.
% \end{equation}
Since $\pib_{\bN\bN}$ in (\ref{cmr_1}) vanishes since we consider $Z^s$ only, we can rewrite (\ref{cmr_1}) as
\begin{equation}\label{1.4.6.16}
\fB \left(\frac{\tir}{r}-n^{-1}\right)+\frac{1}{\rho}\left(\frac{\tir}{r}-n^{-1}\right)
=-H \left(\frac{\tir}{r}-n^{-1}\right)+G,
 \end{equation}
where
\begin{align*}
G = \frac{\tir^2}{r^2 \rho} (n-\varpi) - \frac{\rho}{r} \bN (\log n), \qquad
H = \frac{n}{\rho} \left(\frac{\tir}{r}-n^{-1}\right) + \frac{\tir}{\rho} \bN(\log n).
\end{align*}
By using (\ref{3.20.1}), (\ref{3.31.13}), (\ref{4.9.12}) and (\ref{4.9.9}), we can derive that
\begin{align}\label{1.4.9.16}
\int^\rho_{\rho_0}|H |d\rho'&\les \int_{T}^t \frac{\rho'}{\bb^{-1}t'}\frac{\tir}{\rho'}\bN \log n d t'
+\int_{\rho_0}^\rho \frac{n}{\rho'} \left(\frac{\tir}{r}-n^{-1}\right) d\rho' \nn \\
&\les\int_{T}^t\ve\bt^{-2} dt'+\int_{\rho_0}^\rho\dn \l \rho'\r^{-1} \bt^{-\d} d\rho'\les \ve+\dn.
\end{align}
Next we consider the term $G$. Note that in $Z^s$ where $\hat u\le \hat u_1\le  T$, we can
use (\ref{4.25.1.16}) to obtain
\begin{equation}\label{1.4.7.16}
0\le u\les T+\dn+\ve\les 1.
\end{equation}
Thus we may use $\ub \approx t$, $r \approx t\approx \tau$  and (\ref{1.4.7.16}) to infer that
\begin{equation}\label{1.4.11.16}
\tau^{-1}\int_{\rho_0}^\rho \tau' \frac{\rho'}{r} |\bN\log n |d\rho'\les \tau^{-1}\int_T^t \frac{{\rho'}^2 \tau'}{r t'}|\bN \log n| dt'\les\ve\frac{\max{u}}{\tau} \les \ve/\tau
\end{equation}
By using (\ref{4.9.12}) and (\ref{4.9.8}) we also have
\begin{equation}\label{1.4.12.16}
\tau^{-1}\int_{\rho_0}^\rho \tau'\frac{\tir^2}{r^2\rho}(n-\varpi) d\rho'
\les \tau^{-1}\int_{T}^t \frac{\tau'}{\bb^{-1}t'} (n-\varpi) dt'\les \ve/\tau.
\end{equation}
Note that due to (\ref{5.3.1.16}) and (\ref{4.9.12}), we have $\tau  |\frac{\tir}{r}-n^{-1}|\les \ve$
on $Z^s\cap \Sigma_T$.  In view of (\ref{1.4.9.16})-(\ref{1.4.12.16}), we may use Lemma \ref{tsp1} to obtain
\begin{equation*} %\label{1.4.13.16}
\left|\frac{\tir}{r}-n^{-1}\right|\les \ve/\tau.
\end{equation*}
By using $\tau \approx t$, we can obtain (\ref{5.3.2.16}).

It remains to prove (\ref{5.3.3.16}). This will rely on (\ref{4.9.15}),  (\ref{4.9.16}) as the
consequence of (\ref{4.9.15}),  as well as (\ref{1.4.7.16}) in $Z^s$. We will divide the proof
into two steps: the first step is to control curvature components, which is presented in
Proposition \ref{1.2.1.16}; the second step is to use the obtained estimates on curvature  to
control the second fundamental form $k$.
\end{proof}

We will need the estimates on Weyl components in $Z^s$ relative to the intrinsic frame
$\{\fB, \Nb, e_A, A=1,2\}$. We will first prove Proposition  \ref{3.18.1.16}. The following result
can follow as a consequence.

\begin{proposition}\label{1.2.1.16}
For all  $t>T$   in $Z^s$
\begin{align}
\left|W(S,e_A,S, e_C)\right| &\les \ve t^{-2}, \label{2.14.1.16}\\
\rho\left|W(\fB,e_A, \fB, \Nb)\right| & \les \ve^\frac{3}{2} t^{-4},  \label{2.14.2.16}
\end{align}
\begin{equation}\label{12.31.7}
\varrho=\frac{1}{4}W(L, \Lb, L,\Lb)=n^{-4}\hat \varrho \left(1+O\left(\frac{\ve}{t^4}\right)\right).
\end{equation}
More precisely
\begin{equation}\label{2.13.1.16}
\varrho=n^{-4} \hat\varrho \left(1+\frac{3}{2}\left(n^{-2} \varpi^2-1\right)\right).
\end{equation}
\end{proposition}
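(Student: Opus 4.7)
The plan is to reduce everything to explicit algebra in the Schwarzschild orthonormal frame $\{e_{\hat t}, e_{\hat r}, e_{\hat\theta}, e_{\hat\varphi}\}$ of $(Z^s, \bg)$, exploiting that the metric is exactly Schwarzschild in $Z^s$ by Theorem \ref{2.15.1}. First I would establish Proposition \ref{3.18.1.16}, which furnishes the null-frame Weyl components relative to the intrinsic tetrad $\{L, \Lb, e_A\}$; then Proposition \ref{1.2.1.16} will follow by combining those values with the decompositions $2\rho\fB = \ub L + u\Lb$ and $2\rho\Nb = \ub L - u\Lb$ from (\ref{dcp_2}).

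For Proposition \ref{3.18.1.16}, since $\pib=0$ and $\phi\equiv 0$ in $Z^s$, one has $\bT = e_{\hat t}$ and, by (\ref{neuc}), $\bN = \varpi n^{-1} e_{\hat r} + \Sigma N$. The Schwarzschild Weyl tensor is type~D with a single independent orthonormal component $\hat\varrho_\circ := W_{\hat t\hat r\hat t\hat r} = -4M/(r+2M)^3$, obeying the selection rule that $W_{\hat a\hat b\hat c\hat d}$ vanishes unless $\{\hat a, \hat b\} = \{\hat c, \hat d\}$. Decomposing $e_A = a_A e_{\hat r} + \tilde e_A$ with $\tilde e_A$ tangent to the Schwarzschild sphere and $a_A = -(n/\varpi)\,\tilde e_A\cdot\Sigma N$ (forced by $\l e_A, \bN\r=0$), I would then carry out each contraction: the bivector identity $L\wedge\Lb = -2\varpi n^{-1} e_{\hat t}\wedge e_{\hat r} - 2\, e_{\hat t}\wedge\Sigma N$ combined with (\ref{12.10.26}) yields the exact formula (\ref{2.13.1.16}) for $\varrho$; the selection rule gives $W(e_A, \bT, e_C, \bN)=0$, which produces $\a=\ab$ with residual size $\ve^2 t^{-7}$; the analogous rule applied to $^\star W$ delivers $\sigma=0$; and $W(e_A, \bN, \bT, \bN)=0$ forces $\b=\udb$, while explicit computation identifies $\b(e_A)=-3A\,\tilde e_A\cdot\Sigma N$ with $A=M/(r+2M)^3$. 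The main estimates (\ref{4.9.8}) and (\ref{4.9.16}) from Theorem \ref{4.10.6} then deliver $|\b|,|\udb|\les\ve^{3/2}t^{-5}$, $|\a|,|\ab|\les\ve^2 t^{-7}$, and $|\varrho-\hat\varrho_\circ|\les\ve t^{-7}$, completing (\ref{3.18.0}).

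For Proposition \ref{1.2.1.16} itself, combining $\a=\ab$ and $\sigma=0$ with the null-frame identity $W(L,e_A,\Lb,e_C) + W(\Lb,e_A,L,e_C) = -2\varrho\,\ga_{AC}$ and using $S=\f12(\ub L+u\Lb)$ yields
\begin{equation*}
W(S, e_A, S, e_C) = \frac{1}{4}\bigl[(\ub^2+u^2)\a_{AC} - 2u\ub\varrho\,\ga_{AC}\bigr].
\end{equation*}
Since $u\les 1$ from (\ref{3.28.4.16}), $\ub\les t$, and the preceding null-frame bounds hold, the right-hand side is $\les\ve t^{-2}$, giving (\ref{2.14.1.16}). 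Expanding $W(\fB, e_A, \fB, \Nb)$ analogously via both decompositions and applying $\b=\udb$ with $\ub+u=2\bb^{-1}t$ produces $W(\fB,e_A,\fB,\Nb) = -(\bb^{-1}t/\rho)\,\b(e_A)$, whence (\ref{2.14.2.16}). Finally, Taylor-expanding (\ref{2.13.1.16}) using $n^{-2}\varpi^2 = 1 + O(\ve t^{-4})$ from (\ref{4.9.8}) delivers (\ref{12.31.7}).

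The hard part is the first step: the identities $\a=\ab$, $\b=\udb$, and $\sigma=0$ must emerge as exact cancellations (not merely to leading order), and the $\ve^{3/2}$ gain in $|\b|$ hinges on recognizing $\Sigma N$, rather than $\varpi-n$, as the dominant source of frame tilt that drives $\b$. Executing this requires a systematic case analysis of every contraction against the Schwarzschild pairing rule, tracking both the $\varpi n^{-1}$ and the $\Sigma N$ pieces of $\bN$ through all four Weyl slots.
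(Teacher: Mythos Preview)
Your approach is correct and reaches the same conclusions as the paper, but the route is genuinely different in one respect: the paper works throughout in the Schwarzschild \emph{null} frame $\{n^{-1}\hat L, n^{-1}\hat\Lb, \hat e_A\}$, substituting the decompositions (\ref{3.18.3.16}) and (\ref{2.14.3.16}) into each null component and expanding term by term (six terms $I_1,\ldots,I_6$ for $\ab$, five terms $J_1,\ldots,J_5$ for $\udb$, etc.), with Lemma~\ref{4.28.17.16} supplying the vanishing pattern of $W$ in that frame. You instead work in the Schwarzschild \emph{orthonormal} frame and exploit the type-D selection rule directly. Your route makes the exact identities $\a=\ab$, $\b=\udb$, $\sigma=0$ essentially one-line observations: each of $W(e_A,\bT,e_C,\bN)$, $W(e_A,\bN,\bT,\bN)$, $W(e_A,e_C,\bT,\bN)$ vanishes because one bivector slot contains $e_{\hat t}$ and the other does not. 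The paper's expansion instead yields explicit closed formulas for every piece (e.g.\ $\udb_A=-\tfrac{3}{2}n^{-6}\varpi\hat\varrho\,\sn_A r$), which is heavier but arguably more useful for the higher-order estimates needed later in \cite{Wang15}. Your observation that the $\ve^{3/2}$ gain in (\ref{2.14.2.16}) comes from $|\Sigma N|\les\ve^{1/2}t^{-2}$ rather than from $|n-\varpi|$ is exactly right and matches what the paper's explicit formula for $\udb$ delivers, though the paper records the slightly weaker bound $\ve t^{-5}$ in (\ref{3.18.0}). One minor slip: your constant in $\b(e_A)=-3A\,\tilde e_A\cdot\Sigma N$ should be $-6A$ (since $\hat\varrho_\circ=-4A$ and $\b_A=\tfrac{3}{2}\hat\varrho_\circ\langle\tilde e_A,\Sigma N\rangle$), but this does not affect the estimates.
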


The above result is crucial to prove (\ref{2.20.1.16}) in Theorem \ref{4.10.6}, which is to
control the geometry of the hyperboloidal foliation in $Z^s$, where the density of the level
set is approaching $\infty$. We remark that (\ref{2.20.1.16}), together with the control of the
second fundamental form $k$ on wave zone will imply that the radius of conjugacy is $+\infty$.
The pointwise bound on curvature components, combined with the result of radius of conjugacy,
implies that the radius of injectivity is $+\infty$. The estimate (\ref{2.20.1.16}) is also crucial
to justify the limit of Hawking mass exists, which is the main result in Section \ref{mass}.

Recalling $\hat L$ from (\ref{4.3.2}), we define a pair of null frame
\begin{equation}\label{4.25.2.16}
\hat L=\p_t+n^2 \p_r, \quad \hat \Lb=\p_t-n^2\p_r.
\end{equation}
By using (\ref{scm}), we have $\l \hat L, \hat\Lb\r=-2n^2$. This implies $\{n^{-1}\hat L, n^{-1}\hat \Lb, {\hat e}_A, A=1,2\}$
forms a canonical null tetrad in $Z^s$, where $\{\hat e_A, A=1,2\}$ is an orthonormal frame on $S_{t, \hat u}$.

Now using (\ref{4.25.2.16}), we have
\begin{equation}\label{12.10.23}
\bT=\frac{n^{-1}}{2}(\hat L+\hat \Lb), \quad n\p_r=\frac{n^{-1}}{2}(\hat L-\hat\Lb).
\end{equation}
Let $\mu=1-n^{-1}\varpi$. It then follows from (\ref{neuc}) that
\begin{align}\label{3.18.3.16}
\begin{split}
L  & =\frac{n^{-1}}{2}\left[(2-\mu)\hat L+\mu\hat \Lb\right]+\Sigma N, \\
\Lb& = \frac{n^{-1}}{2} \left[\mu \hat L+(2-\mu)\hat \Lb\right]-\Sigma N.
\end{split}
\end{align}
In $Z^s$, by using (\ref{4.16.1}) and (\ref{12.10.23}), we have
\begin{equation}\label{2.14.3.16}
e_A =\Sigma e_A + \f12 n^{-2}\sn_A r (\hat L-\hat \Lb).
\end{equation}

For future reference, let us also set
\begin{align}
\cir{\ub}=n^{-1}(\bb^{-1}t+ n^{-1} \varpi \tir), \quad \cir{u}=n^{-1}(\bb^{-1}t- n^{-1} \varpi \tir).\label{12.10.19}
\end{align}
%\begin{equation}\label{12.10.19.1}
%S=\bb^{-1}t \bT+\tir \varpi\ne+\tir \Sigma N.
%\end{equation\acute{}}
By using  (\ref{dcp_2}) and (\ref{3.18.3.16}),
\begin{equation}\label{12.31.8}
S=\rho \fB=\f12 \left(\cir{\ub} \hat L+\cir{u}\hat \Lb\right)+\tir \Sigma N.
\end{equation}

To prove Proposition \ref{3.18.1.16},  we will employ the following properties of
curvature in $Z^s$ under the canonical null tetrad $\{n^{-1}\hat L, n^{-1} \hat \Lb, \hat e_A, A=1,2\}$.

\begin{lemma}\label{4.28.17.16}
\begin{enumerate}
\item Under the null decomposition in terms of $n^{-1}\hat L=e_4, n^{-1}\hat\Lb=e_3$, in $Z^s$
the only nonvarnishing Weyl components in the list of Definition \ref{3.18.19} is
$\hat\varrho:=\frac{1}{4}W(\hat L, \hat \Lb, \hat L, \hat\Lb)$ which is given by
\begin{equation}\label{12.10.10}
n^{-4}\hat \varrho=-\frac{4 M}{(r+2M)^3}.
\end{equation}

\item As direct consequences, by using \cite[Page 149, (7.3.3c)]{CK} we have
\begin{equation}\label{4.28.16.16}
\begin{split}
& W( \hat e_A, n^{-1} \hat\Lb, \hat e_B, n^{-1} \hat L)=-n^{-4}\hat \varrho\,\delta_{A B},\\
& W(\hat e_A, \hat e_B, \hat e_C, \hat e_D)=-n^{-4}\ep_{A B}\ep_{C D}\hat \varrho,\\
& W(\hat e_A, \hat e_B, \hat e_C, n^{-1}\hat \Lb)=0; \,  W(\hat e_A, \hat e_B, \hat e_C, n^{-1} \hat L)=0.
\end{split}
\end{equation}
\end{enumerate}
\end{lemma}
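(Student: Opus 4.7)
The plan is to exploit two key simplifications that reduce the lemma to essentially standard Schwarzschild Weyl-tensor computations. First, in $Z^s$ the metric $\bg$ is Schwarzschild, which is Ricci-flat, so $\bR_{\mu\nu}=0$ and consequently the Schouten tensor vanishes; by (\ref{wy}) this gives $W_{\a\b\ga\delta}=\bR_{\a\b\ga\delta}$ throughout $Z^s$. Second, the metric (\ref{scm}) is the standard Schwarzschild metric of mass $2M$ under the radial shift $\tilde r:=r+2M$, since $n^2=\frac{r-2M}{r+2M}=1-\frac{4M}{\tilde r}$ and the area-radius of the symmetry spheres is $\tilde r=r+2M$. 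Under this identification, $\hat L=\p_t+n^2\p_r$ and $\hat\Lb=\p_t-n^2\p_r$ are precisely the (unnormalized) ingoing/outgoing null generators of the canonical double-null foliation of Schwarzschild, and $\{n^{-1}\hat L,n^{-1}\hat\Lb,\hat e_A\}$ is the canonical Christodoulou--Klainerman null tetrad.

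For the vanishing of $\a,\b,\ab,\udb,\sigma$, I would invoke spherical symmetry. Schwarzschild is invariant under the $SO(3)$ action on the symmetry spheres $S_{t,\hat u}$, and the tetrad $\{n^{-1}\hat L, n^{-1}\hat\Lb\}$ is fixed by this action. Consequently every null component of $W$ is $SO(3)$-equivariant on each sphere: $\a_{AB}$ and $\ab_{AB}$ are symmetric traceless $2$-tensors on $\mathbb S^2$, $\b_A$ and $\udb_A$ are $1$-forms, and $\sigma$ is a pseudoscalar. Since $\mathbb S^2$ carries no nontrivial $SO(3)$-invariant objects of these types, all of them must vanish. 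Alternatively one can cite the fact that Schwarzschild is Petrov type D with $\hat L,\hat\Lb$ as the repeated principal null directions, which forces the same conclusion.

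For the value of $\hat\varrho$, only the single component $\frac14 W(\hat L,\hat\Lb,\hat L,\hat\Lb)$ remains to be computed. Using the standard formula for the Weyl tensor on Schwarzschild of mass $m$ with area-radius $\tilde r$, the canonical null component equals $-\frac{2m}{\tilde r^{3}}$; applied with $m=2M$ and $\tilde r=r+2M$ this gives
\[
n^{-4}\hat\varrho \;=\; \frac14 W(n^{-1}\hat L,n^{-1}\hat\Lb,n^{-1}\hat L,n^{-1}\hat\Lb) \;=\; -\frac{4M}{(r+2M)^{3}},
\]
which is (\ref{12.10.10}). This can be verified by a direct coordinate computation of $R_{t\tilde r t\tilde r}$ from (\ref{scm}) if a reference is undesirable; the only factor to track carefully is the $n^{-1}$ normalization relating $\hat\varrho$ to the normalized $\varrho$.

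For the direct consequences (\ref{4.28.16.16}), I would apply the general Weyl-tensor reconstruction formula \cite[(7.3.3c)]{CK}, which expresses every component $W_{\a\b\ga\delta}$ in the null tetrad in terms of the six null components $\a,\b,\varrho,\sigma,\udb,\ab$. Substituting $\a=\b=\ab=\udb=\sigma=0$ collapses the formula to a single term in $\varrho=n^{-4}\hat\varrho$, yielding the first two identities of (\ref{4.28.16.16}) directly, while the last two identities reduce to $\b=0$ and $\udb=0$ respectively. The only obstacle is bookkeeping: keeping the $n^{-1}$ rescaling of the null frame consistent with the definition $\hat\varrho=\tfrac14 W(\hat L,\hat\Lb,\hat L,\hat\Lb)$ (as opposed to the normalized null vectors) so that the explicit factor $n^{-4}$ in (\ref{12.10.10}) and (\ref{4.28.16.16}) appears correctly.
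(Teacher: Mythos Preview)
Your argument is correct. For the vanishing of $\a,\b,\ab,\udb,\sigma$ the paper does essentially what you do: it simply asserts this as a fact about Schwarzschild in the canonical null tetrad (implicitly via spherical symmetry/type~D), and for part~(2) it likewise appeals directly to \cite[(7.3.3c)]{CK}.

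The one genuine difference is how $n^{-4}\hat\varrho$ is obtained. You quote the standard Schwarzschild Weyl value $-2m/\tilde r^{3}$ (with $m=2M$, $\tilde r=r+2M$), or alternatively propose a direct coordinate computation of $R_{t r t r}$. The paper instead extracts $\hat\varrho$ from the Gauss equation (\ref{gauss}): in Lemma~\ref{12.10.21}(i) it first computes, for the Schwarzschild spheres $S_{t,\hat u}$, the Gaussian curvature $K=(r+2M)^{-2}$ and the traces $\tr\,\uda{s}\chi=\frac{2}{r+2M}$, $\tfrac12\tr\,\uda{\widehat{\ }}\chib=-\frac{r-2M}{(r+2M)^2}$, noting that the traceless parts vanish; then, using the canonical null pair $\{L^s,\hat\Lb\}$ with $\l L^s,\hat\Lb\r=-2$ and the vacuum Gauss equation
\[
K+\tfrac14\,\tr\,\uda{s}\chi\,\tr\,\uda{\widehat{\ }}\chib=-n^{-4}\hat\varrho,
\]
it reads off $n^{-4}\hat\varrho=-\frac{4M}{(r+2M)^3}$. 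Your route is shorter and more standard; the paper's route is fully self-contained, reusing machinery (the Gauss equation and the second fundamental forms) already in place, and it avoids any external reference for the numerical value.
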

We will postpone the proof of  (1) in the above lemma to Lemma \ref{12.10.21} in the next section. Since it is a fact of the Schwartzchild metric itself, the proof is independent of the intrinsic hyperboloidal frame, which also means it is independent of any result in this section. In the sequel, we will constantly use Lemma \ref{4.28.17.16} without mentioning.

\begin{proof}[Proof of Proposition \ref{3.18.1.16} and Proposition \ref{1.2.1.16} ]
We first show (\ref{12.31.7}). We decompose
$
\varrho = \frac{1}{4} W(L,\Lb, L,\Lb)
$
by using (\ref{3.18.3.16}), the properties of Weyl tensor $W$ and Lemma \ref{4.28.17.16},
\begin{align}\label{1.1.7.16}
4\varrho&= W(L, \Lb, L,\Lb) & \nn \\
&=W\left(\frac{n^{-1}}{2}\left[(2-\mu)\hat L+\mu\hat \Lb\right]+ \Sigma N, \frac{n^{-1}}{2}\left[\mu\hat L+(2-\mu)\hat \Lb\right]-\Sigma N, \right.\nn\\
& \qquad \quad  \left. \frac{n^{-1}}{2}\left[(2-\mu)\hat L+\mu\hat \Lb\right] + \Sigma N,
  \frac{n^{-1}}{2}\left[ \mu\hat L+(2-\mu) \Lb\right]-\Sigma N \right)\nn\\
& = \emph{I} + 2\emph{II}+\emph{III}+\emph{IV},
\end{align}
where
\begin{align*}
\emph{I} &=\frac{n^{-4}}{16} W\left((2-\mu)\hat L+\mu\hat \Lb, \mu \hat L+(2-\mu)\hat \Lb,
  (2-\mu)\hat L+\mu\hat \Lb, \mu\hat L+(2-\mu) \hat \Lb\right), \displaybreak[0]\\
\emph{II} &= W\left(\frac{n^{-1}}{2}\left[\mu\hat L+(2-\mu)\hat \Lb\right],\Sigma N, \frac{n^{-1}}{2}\left[(2-\mu)\hat L+\mu\hat \Lb\right],
  \Sigma N\right), \displaybreak[0]\\
\emph{III} &= W\left( \Sigma N, \frac{n^{-1}}{2}\left[\mu \hat L+(2-\mu)\hat \Lb\right], \Sigma N,
  \frac{n^{-1}}{2}\left[(\mu \hat L+(2-\mu)\hat \Lb\right]\right), \displaybreak[0]\\
\emph{IV} &= W\left(\Sigma N, \frac{n^{-1}}{2}\left[(2-\mu)\hat L+\mu\hat \Lb\right], \Sigma N,
\frac{n^{-1}}{2}\left[(2-\mu)\hat L+\mu\hat \Lb\right]\right).
\end{align*}
By direct calculation, it is easy to see that
\begin{align}\label{4.23.3.16}
\emph{I}=\frac{n^{-4}}{16}\left((2-\mu)^4+\mu^4-2(2-\mu)^2\mu^2\right) W(\hat L, \hat \Lb, \hat L, \hat \Lb)
=4n^{-6}\varpi^2  \hat\varrho.
\end{align}
Noting that
\begin{align*}
\emph{II}+\emph{IV} &=W\left(\frac{n^{-1}}{2}\left[(2-\mu)\hat L+\mu\hat\Lb\right], \Sigma N, n^{-1}(\hat L+\hat \Lb), \Sigma N\right)\\
&=n^{-2}W(\hat L, \Sigma N, \hat \Lb,\Sigma N)=-n^{-4}\hat\varrho|\Sigma N|_\bg^2,\\
\emph{II}+\emph{III} &=\emph{II}+\emph{IV},
\end{align*}
we obtain, in view of the above identities and  (\ref{4.23.3.16}) that
\begin{equation}\label{1.1.8.16}
\varrho= n^{-4} \hat \varrho \left(n^{-2}\varpi^2-\f12 \left(1-n^{-2}\varpi^2\right)\right)
\end{equation}
which together with (\ref{4.9.15}) implies (\ref{12.31.7}).

We now consider  $\ab_{AC} := W(\Lb, e_A, \Lb, e_C)$. We may use (\ref{3.18.3.16}) and (\ref{2.14.3.16}) to
replace $\Lb$, $e_A$ and $e_C$ and expand it. Due to the various vanishing terms implied by Lemma \ref{4.28.17.16} we have
\begin{align*}
\ab_{AC} = I_1 + I_2 + I_3 + I_4 + I_5 +I_6,
\end{align*}
where
\begin{align*}
I_1 & = W\left(\frac{n^{-1}}{2}\left[\mu\hat L+(2-\mu)\hat \Lb\right],
   \f12 n^{-2} \sn_A r  (\hat L-\hat\Lb), \right. \displaybreak[0]\\
& \qquad \qquad \qquad  \left. \frac{n^{-1}}{2}\left[\mu\hat L+(2-\mu)\hat \Lb\right],
   \f12 n^{-2}\sn_C r\c (\hat L-\hat\Lb)\right), \displaybreak[0]\\
I_2 &=  W\left(\frac{n^{-1}}{2}\left[\mu\hat L+(2-\mu)\hat \Lb\right], \Sigma e_A,
   -\Sigma N, \f12 n^{-2}\sn_C r (\hat L- \hat \Lb)\right), \displaybreak[0]\\
I_3 &= W\left(\frac{n^{-1}}{2}\left[\mu\hat L+(2-\mu)\hat \Lb\right], \Sigma e_A,
   \frac{n^{-1}}{2}\left[\mu\hat L+(2-\mu)\hat \Lb\right],\Sigma e_C\right), \displaybreak[0]\\
I_4 &= W\left(-\Sigma N, \f12 n^{-2}\sn_A r (\hat L-\hat \Lb),
   \frac{n^{-1}}{2}\left[\mu\hat L+(2-\mu)\hat \Lb\right], \Sigma e_C\right), \displaybreak[0]\\
I_5 &= W\left(-\Sigma N, \Sigma e_A, -\Sigma N, \Sigma e_C\right), \displaybreak[0]\\
I_6 &= W\left(\Sigma N, \frac{1}{2}n^{-2} \sn_A r(\hat L-\hat \Lb), \Sigma N, \f12 n^{-2} \sn_C r(\hat L-\hat \Lb)\right).
\end{align*}
By straightforward manipulation we have
\begin{align*}
I_1 & = \frac{n^{-6}}{16 } \left(\mu^2 + 2 \mu(2-\mu) + (2-\mu)^2 \right)
  \sn_A r \sn_C r W(\hat L, \hat \Lb, \hat L, \hat \Lb)
  = n^{-6} \hat \varrho \sn_A r \sn_C r.
\end{align*}
By using  Lemma \ref{4.28.17.16}, we can derive that
\begin{align*}
I_2 &= \frac{1}{4 } n^{-3} \sn_C r \left[(2-\mu) W(\hat \Lb, \Sigma e_A, \hat L, \Sigma N)
  - \mu W(\hat L, \Sigma e_A, \hat \Lb, \Sigma N) \right]\\
& = \frac{1}{2 }(\mu-1 ) n^{-5} \hat \varrho \sn_C r
  \l \Sigma e_A, \Sigma N \r= \f12 n^{-8}\hat \varrho\varpi^2\sn_C r\sn_A r.
\end{align*}
By the similar argument we can obtain
\begin{align*}
I_3
%&= \frac{1}{4 u} (\cir{u}+\ckk{u}) (\cir{\ub}-\ckk{\ub}) \left[W(\hat L, \Sigma e_A, \hat \Lb, \Sigma e_C)
%   +W(\hat \Lb, \Sigma e_A, \hat L, \Sigma e_C)\right]\\
&= -\frac{1}{2}\mu(2-\mu)  n^{-4} \hat \varrho \l\Sigma e_A, \Sigma e_C\r
= -\f12 (1-n^{-2}\varpi^2) n^{-4}\hat \varrho\l \Sigma e_A, \Sigma e_C\r, \displaybreak[0]\\
I_4
%&= \frac{1}{4u} n^{-2} \sn_A r \left[(\cir{u}+\ckk{u}) W(\hat L, \Sigma N, \hat \Lb, \Sigma e_C)
%   -(\cir{\ub}-\ckk{\ub}) W(\hat \Lb, \Sigma N, \hat L, \Sigma e_C) \right]\\
& = -\frac{1}{4}(2-\mu-\mu) n^{-5} \hat \varrho \sn_A r \l \Sigma N, \Sigma e_C\r
 = \f12 n^{-8}\hat \varrho\varpi^2\sn_C r\sn_A r, \displaybreak[0]\\
I_5 &= -n^{-4} \hat\varrho \left(\Sigma N\wedge \Sigma e_A\right)\c \left(\Sigma N\wedge \Sigma e_C\right), \displaybreak[0]\\
I_6 &= -\frac{1}{2} n^{-4} \sn_A r \sn_C r W(\hat L, \Sigma N, \hat \Lb, \Sigma N)
= \frac{1}{2}n^{-6} \hat \varrho \sn_A r\sn_C r |\Sigma N|^2.
\end{align*}
By using the above expressions for $I_i, i=1, \ldots, 6$, we can obtain from Theorem \ref{4.10.6}
the desired estimate on $\ab$ given in (\ref{3.18.0}).

For $\a_{AC} := W(L, e_A, L, e_C)$ we may use the same argument for treating $\ab_{AC}$,
 which implies $\a_{AC}=\sum_{i=1}^6 \ti I_i$, where these $\ti I_i$ can be easily obtained from $I_i$ by swapping $\mu$ with $2-\mu$, and changing $\Sigma N$ to $-\Sigma N$.
Then it is clear that $\ti I_i=I_i$ for $i=1, \cdots 6$.
This shows that $\a=\ab$ and thus we obtain the estimate on $\a$ in (\ref{3.18.0}).

Next we consider $\udb_A := \f12 W(\Lb, e_A, L, \Lb)$. By using  (\ref{3.18.3.16}) and  (\ref{2.14.3.16}), we decompose $\udb$ as follows
\begin{align*}
2 \udb_A & = J_1 + J_2 + J_3 + J_4 +J_5,
\end{align*}
where
\begin{align*}
J_1 & = W\left(\frac{n^{-1}}{2}\left[\mu\hat L+(2-\mu)\hat \Lb\right],
\frac{1}{2} n^{-2}\sn_A r(\hat L-\hat \Lb), \right.\\
& \qquad \qquad \qquad \left. \frac{n^{-1}}{2} \left[(2-\mu)\hat L+\mu\hat \Lb\right],
 \frac{n^{-1}}{2} \left[\mu\hat L+(2-\mu)\hat \Lb\right]\right), \displaybreak[0]\\
J_2 &=  W\left(\frac{n^{-1}}{2}\left[\mu\hat L+(2-\mu)\hat \Lb\right],\Sigma e_A,
\frac{n^{-1}}{2}\left[\mu\hat L+(2-\mu)\hat\Lb\right], -\Sigma N\right), \displaybreak[0]\\
J_3 &= W\left(\frac{n^{-1}}{2}\left[\mu \hat L+(2-\mu)\hat \Lb\right], \Sigma e_A,
 \frac{n^{-1}}{2} \left[(2-\mu)\hat L+\mu \hat \Lb\right], -\Sigma N\right), \displaybreak[0]\\
J_4 &= W\left(-\Sigma N, \f12 n^{-2}\sn_A r(\hat L-\hat \Lb), \frac{n^{-1}}{2} \left[(2-\mu)\hat L+\mu\hat \Lb\right], -\Sigma N\right), \displaybreak[0]\\
J_5 &= W\left(-\Sigma N, \f12 n^{-2} \sn_A r(\hat L-\hat \Lb),  \frac{n^{-1}}{2} \left[\mu \hat L+(2-\mu) \hat\Lb\right], -\Sigma N\right).
\end{align*}
Direct calculation shows that
\begin{equation}\label{3.19.3.16}
J_1=\frac{1}{4} n^{-5} (\mu-(2-\mu)) \sn_A r W(\hat L, \hat \Lb, \hat L, \hat \Lb)=-2n^{-6}\varpi\sn_A r \hat \varrho.
\end{equation}
For $J_2$ and $J_3$, we first note that by using Lemma \ref{4.28.17.16}
\begin{align}
J_2 + J_3 &=W\left(\frac{n^{-1}}{2}\left[\mu\hat L+(2-\mu)\hat \Lb\right], \Sigma e_A, n^{-1}(\hat L+\hat \Lb), -\Sigma N\right)\nn\\
&=-\frac{n^{-2}\mu}{2}W(\hat L, \Sigma e_A, \hat \Lb, \Sigma N)+\frac{n^{-2}}{2}(\mu-2)W(\hat \Lb, \Sigma e_A, \hat L, \Sigma N)\nn\\
 &= \frac{\mu+2-\mu}{2} n^{-4}\hat\varrho \l\Sigma e_A, \Sigma N\r=- n^{-6}\varpi \hat \varrho \sn_A r.\label{3.20.2.16}
\end{align}
Similarly we can derive by using Lemma \ref{4.28.17.16} that
\begin{align*} %\label{3.20.3.16}
J_4 + J_5=W\left(-\Sigma N, \f12 n^{-2}\sn_A r(\hat L-\hat \Lb), \frac{n^{-1}}{2}(\hat L+\hat \Lb), -\Sigma N\right)=0
\end{align*}
Combining this with (\ref{3.19.3.16}) and (\ref{3.20.2.16}) we therefore obtain
\begin{equation}\label{3.20.4.16}
\udb_A=-\frac{3}{2}n^{-6}\varpi\hat \varrho\sn_A r.
\end{equation}
which together with Theorem \ref{4.10.6} shows the estimate on $\udb$ in (\ref{3.18.0}).

For $\b_A := \frac{1}{2} W(L, e_A, L, \Lb)$, we can use the similar argument to derive that
\begin{align*}
 2\b_A &= W(L, e_A, L, \Lb)\\
& = W\left(\frac{n^{-1}}{2}\left[(2-\mu)\hat L+\mu\hat \Lb\right],
   \f12 n^{-2}\sn_A r(\hat L-\hat\Lb), \right.\\
   & \qquad \qquad \qquad \left. \frac{n^{-1}}{2}\left[(2-\mu)\hat L+\mu\hat \Lb\right],
   \frac{n^{-1}}{2} \left[\mu\hat L+(2-\mu)\hat \Lb\right]\right) \displaybreak[0]\\
& \quad \, + W\left(\frac{n^{-1}}{2}\left[(2-\mu)\hat L+\mu\hat \Lb\right], \Sigma e_A,
\frac{n^{-1}}{2}\left[\mu\hat L+(2-\mu)\hat\Lb\right], -\Sigma N\right)\displaybreak[0]\\
& \quad \, + W\left(\frac{n^{-1}}{2}\left[(2-\mu)\hat L+\mu\hat\Lb\right], \Sigma e_A,
 \frac{n^{-1}}{2}\left[(2-\mu)\hat L+\mu\hat\Lb\right], -\Sigma N\right) \displaybreak[0]\\
   &\qquad \qquad-J_4-J_5\\
&= -3n^{-6} \varpi  \sn_A r\hat\varrho.
\end{align*}
Indeed, by straightforward checking, the sum of the second and the third term is the same as $J_2+J_3$, and the first term equals $J_1$. We also employed $J_4+J_5=0$ to get the last identity. This shows that $\b = \udb$.

Finally we show that $\sigma=0$. Recall $2\sigma \ep_{AC}=W_{AC34}$, we only need to show $W_{AC34}=0$.
By using (\ref{3.18.3.16}),  (\ref{2.14.3.16}) and Lemma \ref{4.28.17.16} we have
\begin{align*}
W_{AC43} &= W(e_A, e_C, L, \Lb) = \emph{I} + \emph{II} +\emph{III},
\end{align*}
where
\begin{align*}
\emph{I} &= W\left(\f12 n^{-2}\sn_A r(\hat L-\hat \Lb), \f12 n^{-2}\sn_C r(\hat L-\hat \Lb), \right.\\
& \qquad \qquad  \left.\frac{n^{-1}}{2}\left[(2-\mu)\hat L+\mu\hat \Lb\right],
\frac{n^{-1}}{2}\left[\mu\hat L+(2-\mu)\hat\Lb\right]\right), \displaybreak[0]\\
\emph{II} &= W\left(\Sigma e_A, \f12 n^{-2}\sn_C r(\hat L-\hat\Lb),
  \frac{n^{-1}}{2}\left[(2-\mu)\hat L+\mu\hat \Lb\right]+\Sigma N, \right.\\
&\qquad \qquad \left.\frac{n^{-1}}{2}\left[\mu\hat L+(2-\mu)\hat \Lb\right]-\Sigma N \right), \displaybreak[0]\\
\emph{III} &= W\left(\f12 n^{-2}\sn_A r(\hat L-\hat \Lb), \Sigma e_C,
\frac{n^{-1}}{2}\left[(2-\mu)\hat L+\mu\hat\Lb\right]+\Sigma N, \right.\\
& \qquad \qquad \left. \frac{n^{-1}}{2} \left[\mu\hat L+(2-\mu)\hat \Lb\right]-\Sigma N\right).
\end{align*}
It is clear that $\emph{I}=0$.  By direct calculation we have
\begin{align*}
\emph{II} &=\f12 n^{-2}\sn_C r W\left(\Sigma e_A, \hat L-\hat\Lb, \frac{n^{-1}}{2}\left[(2-\mu)\hat L+\mu\hat \Lb\right], -\Sigma N\right)\\
&\quad\, +\f12 n^{-2}\sn_C r W\left(\Sigma e_A,\hat L-\hat\Lb, \frac{n^{-1}}{2}\left[\mu\hat L+(2-\mu)\hat \Lb\right], -\Sigma N\right)=0
\end{align*}
since $W(\Sigma e_A, \hat L-\hat \Lb, \hat L+\hat \Lb, \Sigma N)=0$.
By the same argument we can show that $\emph{III}=0$. Hence $\sigma=0$.

Next, we prove (\ref{2.14.1.16}) and (\ref{2.14.2.16}). We note that  by  using (\ref{dcp_2})
\begin{equation*}
W(S, e_A, S, e_C)=\frac{\ub^2}{4}\a_{AC}+\frac{u^2}{4} \ab_{AC}-\frac{\rho^2}{2}\varrho \delta_{AC}, \quad \rho W(e_A, \fB, \Nb, \fB)=-\frac{1}{4} (u\udb_A+\ub\b_A).
\end{equation*}
By using (\ref{1.4.7.16}) in $Z^s$,
\begin{equation}\label{5.15.2.16}
\rho^2\les t,
\end{equation}
we therefore conclude in $Z^s\cap \{t\ge T\}$, by using the estimates  in  (\ref{3.18.0})
\begin{align*}
&|W(S, e_A,S,e_C)|\les  \ve t^{-3}(t+\ve^2 t^{-2} ) \les \ve t^{-2},\\
&|\rho W(e_A, \fB, \Nb, \fB)|\les \ve t^{-4}
\end{align*}
as desired.  Thus the proof is complete.
\end{proof}

We will employ (\ref{2.20.3.16})-(\ref{2.20.5.16}) to prove (\ref{5.3.3.16}), which will be proved
simultaneously together with the stronger estimate in (\ref{2.20.1.16}) for $k_{\Nb A}$.

\begin{proof}[Proof of (\ref{2.20.1.16}) and (\ref{5.3.3.16})]
We first note that for any point $p$ in $Z^s$,  $\sf{ p}$ is fully contained in $Z^s$ when $t\ge T$, due to Remark \ref{4.9.2.16}.

In view of (\ref{zba}) and $t\approx \tir$ in (\ref{4.9.12}), we have
\begin{equation}\label{2.20.14.16}
|\zb|\les\ve t^{-4} \mbox { if } t\ge T \mbox{ in } Z^s
\end{equation}
since $|\sn \log n|\les \ve \bt^{-4}$ due to (\ref{4.9.16}), (\ref{4.16.1}) and (\ref{3.20.lap}), as well as $ \pt_{\bi\bj}=0 $, in $Z^s$.

In view of Proposition \ref{1.2.1.16} and (\ref{5.15.2.16}), there hold in $Z^s$
\begin{equation*} %\label{2.20.6.16}
|\rho^2 \Eb_{AC},\, \rho^2 \Eb_{\Nb\Nb},\, \rho t^2 \Eb_{\Nb C}|\les \ve \bt^{-2}.
\end{equation*}
Combining this with Lemma \ref{dcp_sc} shows that
\begin{equation}\label{2.20.10.16}
|\rho^2\widehat{\bR}_{\fB\Nb \fB \Nb},\, \rho^2 \widehat{\bR}_{\fB A\fB C},\, \rho t^2 \widehat{\bR}_{\fB \Nb \fB C}|\les \ve \bt^{-2}  \mbox{ in } Z^s.
\end{equation}
Here we used the fact that $Z^s$ is a vacuum region.

In order to prove (\ref{2.20.1.16}) in $Z^s$, for $t\ge T$, in view of Proposition \ref{4.27.1.26},
we can  make the following bootstrap assumptions
\begin{equation}\label{2.20.11.16}
\left|t \rho(\hk_{\Nb \Nb}, \tr k-\frac{3}{\rho}, \hk_{AC}), t^2 \hk_{\Nb A}\right|\le \dn.
\end{equation}
As a direct consequence of (\ref{2.20.11.16}),  for any $p$ in $Z^s$, the integral along
$\sf{p}=\exp_\O(\rho V)$ from $q=\sf{p}\cap \{t=T\}$, where $V\in {\mathbb H}_1$, we have
\begin{equation}\label{2.20.16.16}
\int_{\rho(q)}^\rho \left|\tr k-\frac{3}{\rho'}\right|d\rho'\les \dn,
\end{equation}
and
\begin{equation}\label{2.20.12.16}
\left|\rho^2(\hk\hot \hk)_{\Nb\Nb}, \rho^2 (\hk\hot \hk)_{AC}, \rho t (\hk\hot \hk)_{\Nb A}, \rho^2 |\hk|^2\right|
\les \dn^2 \bt^{-2},
\end{equation}
where the definition of $\hot$ can be found in (\ref{hotd}), and (\ref{2.20.12.16}) can be obtained in view of the symbolic identities in (\ref{4.28.9.16}) and (\ref{2.20.11.16}). To obtain (\ref{2.20.16.16}), we also employed (\ref{3.20.1}) and $\bb, n\approx 1 $ in $Z^s$ due to Proposition \ref{prl_1}.

By using (\ref{2.20.14.16}), (\ref{2.20.12.16}) and (\ref{2.20.10.16}), the terms in (\ref{2.20.3.16})-(\ref{2.20.5.16}) verify
\begin{equation*} %\label{2.20.15.16}
|\rho^2 G_{\Nb \Nb}, \rho^2 G_{AC}, \rho t G_{\Nb A} |\les (\ve+\dn^2 )\bt^{-2}
\end{equation*}
We consider the transport equations (\ref{2.20.3.16})-(\ref{2.20.5.16}), which symbolically are recast below for   $\H_\rho$ tangent tensor fields
\begin{equation}\label{2.20.13.16}
\sn_\fB F+\frac{2}{3} \tr k F =G.
\end{equation}
For any $p\in Z^s$, by using Lemma \ref{tsp1} (2), (\ref{3.20.1}) and (\ref{2.20.16.16}),  we integrate (\ref{2.20.3.16})-(\ref{2.20.5.16}) along $\sf{p}$. By virtue of $\bb^{-1}\approx 1, \, t\approx \tau$ in (\ref{3.21.16}), also using Proposition \ref{4.27.1.26}, we can obtain
\begin{align*}
&|\rho \tau \hk_{\Nb\Nb}|\les\ve+ \int_T^t |{\rho'}^2  G_{\Nb\Nb}| d {t'} \les \ve+\dn^2,     \\
&|\rho \tau \hk_{AC}|\les \ve+\int_T^t |{\rho'}^2 G_{AC}| d{t'} \les  \ve+\dn^2, \\
&|\tau^2 \hk_{\Nb A} |\les \ve+ \int_T^t |\rho' t' G_{\Nb A}| d{t'} \les \ve+\dn^2.
\end{align*}
Similarly integrating (\ref{3.14.1}) along $\sf{p}$, with the help of (\ref{lhk1}) and  (\ref{2.20.12.16}),  gives
$$
\left|\rho \tau (\tr k-\frac{3}{\rho})\right|
\les\ve+ \int_T^t {\rho'}^2\left(|\bR_{\fB\fB}|+|\hk|^2 \right) d{t'}\les \ve+\dn^2
$$
where $\bR_{\fB\fB}=0$ since $Z^s$ is a vacuum region.

Due to $t\approx \tau$, we can summarize the above four estimates as
\begin{equation*}
\left|t \rho(\hk_{\Nb \Nb}, \tr k-\frac{3}{\rho}, \hk_{AC}), t^2 \hk_{\Nb A}\right|\le C(\ve+\dn^2).
\end{equation*}
With  $\dn= 3C \ve$    and $\dn <\frac{1}{2C}$, (\ref{2.20.11.16}) can be improved to be bounded by $\frac{5}{6}\dn$, since $C(\ve+\dn^2)<\frac{5}{6}\dn$ holds in this situation.
\end{proof}

\section{\bf On the region of excision}\label{wza}

In this section, we will prove Proposition \ref{12.29.2}.
For this purpose, we consider the part on $\H_{\rho_*}$ contained in the schwarzschild zone,
foliated by the optical function $\hat u$, where $1\le \hat u\le \hat u_1$. We will obtain
Proposition \ref{12.29.2} by proving the following result.

\begin{proposition}\label{12.30.1}
Let $\bar t=\frac{1}{|S|}\int_S t d\mu_{\ga_S}$  with $S=S_{\rho, {\hat u}}$ and $\ga_S$ the induced metric on $S$. There holds
\begin{equation}\label{12.25.1}
\underset{{S_{\rho, \hat u}}}{\emph{osc}} (t):=\max_{S_{\rho, \hat u}}|t-\bar t|\les \ve^\f12.
\end{equation}
As a consequence, for $\rho>T$ sufficiently large,
\begin{align}\label{12.30.2}
&t_{\max}(S_{\rho, \hat u_1})<t_{\min}(S_{\rho, \hat u}), \mbox{ if }1\le \hat u<\hat u_1.
\end{align}
If $\rho\le \rho_*,\, t_*<t<t^*$ and $\hat u>1$, then
\begin{equation}\label{12.30.3}
  \hat u(S_{t, \rho})<\hat u_1.
 \end{equation}
\end{proposition}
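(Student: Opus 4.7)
The plan is to establish the three statements in sequence, leveraging the main estimates of Section~\ref{almost}: $|\sn r| \lesssim \ve^{1/2}\bt^{-2}$ from \eqref{4.9.16}, $|u-\hat u|\lesssim \ve$ from \eqref{3.28.4.16}, $\bt|\tir/r - n^{-1}|\lesssim \ve$ from \eqref{1.4.1.16}, as well as $b^{-1}\approx n$, $\varpi\approx n$, and $t\approx \tir\approx r$ in $Z^s$ from Theorem~\ref{4.10.6}. For the oscillation bound \eqref{12.25.1}, I would pick any two points of $S_{\rho,\hat u}$, connect them by a curve $\sigma$ parametrized by arclength in the induced metric, and decompose the unit tangent as $\dot\sigma = \alpha\Nb + \beta^A e_A$ in the orthonormal frame $\{\Nb, e_A\}$ on $\H_\rho$. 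The constraint $\dot\sigma(\rho)=0$ is automatic, so the only nontrivial constraint is $\dot\sigma(\hat u)=0$; solving it and substituting into $\dot\sigma(t)=\alpha\Nb(t)$ yields
\[
\dot\sigma(t) = \frac{\Nb(t)\,\gamma'(r)}{\Nb(\hat u)}\,\sn r\cdot \beta, \qquad |\beta|_\ga\le 1.
\]

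The decisive step is the lower bound on $|\Nb(\hat u)|$. Using $\Nb(t) = (an)^{-1}=\tir/(\rho n)$ and $\Nb(r) = (b^{-1}t/\rho)\varpi$ (which uses $\bT(r)=0$ in $Z^s$), together with the Schwarzschild identity $\gamma'(r) = n^{-2}$ that follows directly from $\gamma(r)=r+4M\ln(r-2M)$ and $n^2 = (r-2M)/(r+2M)$, one obtains
\[
\rho n^2 \Nb(\hat u) = n\tir - b^{-1}t\varpi.
\]
Inserting the expansions $\tir = r/n + O(\ve)$ and $b^{-1}\varpi = n^2 + O(\ve\ln t/t)$, and using $t=\hat u+\gamma(r)$ on $\C^s_{\hat u}$, a cancellation produces $n\tir - b^{-1}t\varpi = -\hat u + O(M\ln r) + O(\ve\ln t)$; hence $|\Nb(\hat u)|\gtrsim (\hat u + M\ln r)/\rho \gtrsim 1/\rho$ for $\hat u\ge 1$. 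Combining this with $|\Nb(t)|\approx t/\rho$ and $|\sn r|\lesssim \ve^{1/2}/t^2$ gives the pointwise bound $|\dot\sigma(t)|\lesssim \ve^{1/2}/(\hat u\, t)$. Since the induced metric on $S_{\rho,\hat u}$ is $O(\ve^{1/2})$-close to the round metric of radius $\approx t$ (by \eqref{4.9.16} and \eqref{1.4.1.16}), its intrinsic diameter is $\lesssim t$, and integration along $\sigma$ yields \eqref{12.25.1}.

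For \eqref{12.30.2}, I would extract the leading asymptotic of $\bar t(\rho,\hat u)$ as $\rho\to\infty$: the identities $\rho^2 = u\ub$, $b^{-1}t = (u+\ub)/2$, together with $|u-\hat u|\lesssim \ve$ and $b^{-1}\approx n\to 1$, give $\bar t(\rho,\hat u) = \rho^2/(2\hat u) + O(1)$, hence
\[
\bar t(\rho,\hat u) - \bar t(\rho,\hat u_1) = \frac{\rho^2(\hat u_1 - \hat u)}{2\hat u\hat u_1} + O(1),
\]
which diverges as $\rho\to\infty$ for any fixed $\hat u<\hat u_1$. Combining with the oscillation bound from (i), one obtains $t_{\min}(S_{\rho,\hat u})\ge \bar t(\rho,\hat u)-C\ve^{1/2}$ and $t_{\max}(S_{\rho,\hat u_1})\le \bar t(\rho,\hat u_1)+C\ve^{1/2}$, proving \eqref{12.30.2} once $\rho$ is sufficiently large. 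The last claim \eqref{12.30.3} then follows by contradiction: if some $p\in S_{t,\rho}$ had $\hat u(p)\ge \hat u_1$, the monotonicity of $\bar t$ (decreasing in $\hat u$, non-decreasing in $\rho$) and the oscillation bound would force $t(p)\le \bar t(\rho_*,\hat u_1)+C\ve^{1/2}$, whereas (ii) applied with $\rho=\rho_*,\hat u=\hat u_0$ gives $t_*\ge \bar t(\rho_*,\hat u_0)-C\ve^{1/2}$; the gap $\bar t(\rho_*,\hat u_0)-\bar t(\rho_*,\hat u_1)\approx \rho_*^2(\hat u_1-\hat u_0)/(2\hat u_0\hat u_1)$ dominates $2C\ve^{1/2}$ for $\rho_*$ large, yielding $t(p)<t_*$, a contradiction.

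The principal obstacle is the cancellation producing $|\Nb(\hat u)|\gtrsim \hat u/\rho$. Without the precise identity $\gamma'(r) = n^{-2}$ --- which encodes that $\C^s_{\hat u}$ is a canonical optical cone of the Schwarzschild metric --- the combination $n\tir - b^{-1}t\varpi$ would be of size $t$ rather than $O(1)$, the denominator $|\Nb(\hat u)|$ would be of order $t/\rho$ rather than $1/\rho$, and the resulting bound on $|\dot\sigma(t)|$ would fail to decay in $t$, causing the oscillation of $t$ on $S_{\rho,\hat u}$ to grow with $t$ instead of being bounded by $\ve^{1/2}$.
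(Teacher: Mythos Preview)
Your overall strategy is sound and close in spirit to the paper's, but there are two genuine gaps.

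\textbf{The lower bound on $|\Nb(\hat u)|$.} Your expansion route via $\tir=r/n+O(\ve)$ and $\bb^{-1}\varpi=n^2+O(\ve\ln t/t)$ produces $n\tir-\bb^{-1}t\varpi=-\hat u+O(\ve\ln t)$, and then you assert $|\Nb(\hat u)|\gtrsim(\hat u+M\ln r)/\rho$. This does not follow: the $O(\ve\ln t)$ error has no definite sign and, since $t$ ranges up to order $\rho_*^2$ while $\ve$ must be chosen independently of $\rho_*$, it can swallow the main term $\hat u\ge 1$. The fix is purely algebraic: factor
\[
n\tir-\bb^{-1}t\varpi \;=\; n(\tir-\bb^{-1}t)+\bb^{-1}t(n-\varpi)\;=\;-nu+\bb^{-1}t(n-\varpi),
\]
which by \eqref{4.9.8} equals $-nu+O(\ve/t^3)$, hence $\Nb(\hat u)=-n^{-1}u/\rho+O(\ve/(\rho t^3))$. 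This avoids the estimate \eqref{bb2} for $\bb^{-1}$ altogether; with \eqref{3.28.4.16} giving $u\approx\hat u\ge 1$, the desired bound $|\Nb(\hat u)|\gtrsim 1/\rho$ follows uniformly in $t$. The paper arrives at the equivalent quantity from the other side: it introduces the lapse ${\dfa}^{-1}=-\l\fB,L^s\r$ of the $\hat u$-foliation of $\H_\rho$ and computes directly (Lemma~\ref{12.28.31}) that ${\dfa}^{-1}=n^{-1}u/\rho-a^{-1}n^{-2}(\varpi-n)$, from which the same cancellation structure is immediate.

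\textbf{The diameter bound.} You assert that the induced metric on $S_{\rho,\hat u}$ is $O(\ve^{1/2})$-close to a round metric of radius $\approx t$, citing \eqref{4.9.16} and \eqref{1.4.1.16}. Those estimates control $\sn r$ and $\tir/r$ on the $S_{t,\rho}$ foliation, not the induced metric on $S_{\rho,\hat u}$; the step from one to the other is not immediate. The paper instead proves a Gauss curvature estimate $|K-n^2/(r+2M)^2|\les\ve\,n^2/(r+2M)^2$ on $S_{\rho,\hat u}$ (Proposition~\ref{gcomp_1}), obtained by computing the null second fundamental forms ${}^\dag\chi,{}^\dag\chib$ relative to $({}^\dag L,{}^\dag\Lb)=(\dfa L^s,\,2\fB-\dfa L^s)$ and applying the Gauss equation \eqref{gauss}, then invokes Bonnet--Myers for $\mbox{diam}(S_{\rho,\hat u})\les r$. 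This is the substantive ingredient your argument is missing.

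Your treatment of \eqref{12.30.2} via the asymptotic $\bar t(\rho,\hat u)=\rho^2/(2\hat u)+O(1)$ is equivalent to the paper's integration of ${}^\dag\Nb(t)\,\dfa\approx\tir/u\gtrsim\rho^2$ along $\hat u$. For \eqref{12.30.3}, your contradiction argument implicitly uses the oscillation bound on $S_{\rho,\hat u(p)}$ with $\hat u(p)\ge\hat u_1$, outside the range where it was established; this is repairable by a continuity reduction to the boundary case $\hat u(p)=\hat u_1$, but the paper's route is cleaner: it uses the pointwise signs ${}^\dag\Nb(t)>0$ (from \eqref{12.08.6}) and $\bN(\hat u)=-n^{-2}\varpi<0$ to get monotonicity of $t$ in $\hat u$ on $\H_{\rho_*}$ and of $\hat u$ in $\rho$ on $\Sigma_t$, yielding $\hat u(S_{t,\rho})\le\hat u(S_{t,\rho_*})<\hat u_1$ directly.
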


Indeed, Proposition \ref{12.29.2} follows from  (\ref{12.30.3}) immediately.
In order to derive the estimate (\ref{12.25.1}), we use ${}^\dag\sn$ to denote the covariant derivative
on $S_{\rho, \hat u}$. Then
\begin{equation}\label{12.30.5.1}
\underset{S_{\rho, \hat u}}{\mbox{osc}} (t)
\le \|{}^\dag\sn t\|_{L^\infty(S_{\rho, \hat u})} \mbox{diam}(S_{\rho, \hat u}).
\end{equation}
Therefore, we need to estimate  $\|{}^\dag\sn t\|_{L^\infty(S_{\rho, \hat u})}$ and
$\mbox{diam}(S_{\rho, \hat u})$. For this purpose, we first give the geometric set-up for
the $\hat u$ foliation of the part of $\H_{\rho_*}$ in the schwarzschild zone. We define
\begin{equation}\label{12.07.1}
L^s =n^{-2} \p_t+\p_r \quad \mbox{and} \quad \Lb^s=n^{-2}\p_t-\p_r.
\end{equation}
Clearly, due to (\ref{4.25.2.16})
\begin{equation}\label{12.10.07}
\hat L=n^2 L^s, \quad \hat \Lb=n^2 \Lb^s,
\end{equation}
Thus $L^s, \Lb^s$ form a null pair and $\l L^s, \Lb^s\r = -2n^{-2}$.
 Moreover, one can use (\ref{4.3.2}) to show that $\bd_{L^s} L^s =0$.

 We define the null second fundamental forms in terms
of $ L^s $ and $\Lb^s$ in Schwarzschild zone by
\begin{equation}\label{12.10.09}
\uda{s}\chi(X, Y)=\l \bd_X L^s, Y\r; \quad\uda{s}\chib(X, Y)=\l\bd_X  \Lb^s, Y\r
\end{equation}
for any $S_{t, \hat u}$-tangent vector fields $X$ and $Y$. Similarly, we can introduce the null second
fundamental forms $\uda{{\widehat{}}}\chi$ and $\uda{{\widehat{}}}\chib$ relative to $\hat L$ and $\hat \Lb$.
We also introduce
\begin{equation}\label{12.07.2}
-{\dfa}^{-1}=\l \fB, L^s\r \quad \mbox{ and } \quad {}^\dag\!\Nb=\frac{-\nabb \hat u}{|\nabb \hat u|}
\end{equation}
which are the lapse function and the radial normal  of the $\hat u$-foliation on $\H_\rho$ respectively.
We first prove the following result which includes the estimate on $|{}^\dag\sn t|$.

\begin{lemma}\label{12.28.31}
Let $\rho$ be sufficiently large. There hold on $\H_\rho\cap \{1\le \hat u\le \hat u_1\}$ that
\begin{align}
& {\dfa}^{-1} = - a^{-1} n^{-2} (\varpi-n) + n^{-1}\frac{u}{\rho}, \label{12.07.3} \displaybreak[0]\\
& {}^\dag\!\Nb =\dfa L^s-\fB, \label{12.07.4} \displaybreak[0]\\
&\left|\dfa-\frac{n\rho}{u}\right|\les \frac{\ve}{\l\rho\r^3\bt}, \label{12.07.8} \displaybreak[0]\\
&\left|{}^\dag\!\Nb(t)-n^{-1}\frac{\tir}{\rho}\right| \les \frac{\ve}{\l\rho\r^3\bt}, \label{12.08.6} \displaybreak[0]\\
&\left|{}^\dag\sn t\right|_\bg \les \frac{\ve^\f12}{\l \rho\r^2}. \label{12.08.5}
\end{align}
\end{lemma}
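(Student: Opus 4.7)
The plan is to establish the five assertions in sequence. The first two, (\ref{12.07.3}) and (\ref{12.07.4}), are essentially algebraic consequences of the definitions; the quantitative bounds (\ref{12.07.8}), (\ref{12.08.6}), (\ref{12.08.5}) then follow by combining them with the main estimates of Theorem \ref{4.10.6}.

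For (\ref{12.07.3}) I decompose $L^s = n^{-2}\p_t + \p_r$ with respect to the frame $\{\bT, \bN, e_A\}$: using $\p_t = n\bT$ and $n^2\p_r = \sn r + \varpi\bN$ from (\ref{eun}), one has $L^s = n^{-1}\bT + n^{-2}\varpi\bN + n^{-2}\sn r$. Pairing with $\fB = (b^{-1}t/\rho)\bT + (\tir/\rho)\bN$ from (\ref{fbtn}), and using the orthogonality $\sn r \perp \bN, \bT$, produces
\[
-\langle\fB, L^s\rangle = \frac{b^{-1}t}{n\rho} - \frac{\varpi\tir}{n^2\rho} = n^{-1}\frac{u}{\rho} + a^{-1}n^{-2}(n-\varpi),
\]
where the last step uses $b^{-1}t = u + \tir$ and $a^{-1} = \tir/\rho$. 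For (\ref{12.07.4}) I verify that $\dfa L^s - \fB$ has the three defining properties of ${}^\dag\!\Nb$: (i) tangency to $\H_\rho$, via $\langle \dfa L^s - \fB, \fB\rangle = \dfa(-\dfa^{-1}) - (-1) = 0$; (ii) unit length, via $|\dfa L^s - \fB|^2 = -2\dfa\langle L^s,\fB\rangle + \langle\fB,\fB\rangle = 2 - 1 = 1$, using $\langle L^s, L^s\rangle = 0$; and (iii) outward normality to $S_{\rho,\hat u}$ in $\H_\rho$, which follows because $L^s$ is (up to sign) the metric dual of $\bd\hat u$ in the Schwarzschild zone, so every $X \in TS_{\rho,\hat u}$ satisfies both $\langle X, \fB\rangle = 0$ and $\langle X, L^s\rangle = -X(\hat u) = 0$; the outward orientation is confirmed by $(\dfa L^s - \fB)(\hat u) = -\fB(\hat u) = -\dfa^{-1} < 0$.

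For the three quantitative bounds I work from (\ref{12.07.3}) by writing $\dfa^{-1} = A + B$ with $A = n^{-1}u/\rho$ and $B = a^{-1}n^{-2}(n-\varpi)$. In the range $1 \leq \hat u \leq \hat u_1$, (\ref{3.28.4.16}) gives $u \approx 1$, while $\rho^2 = u\ub \approx \bt$ since $\ub \approx t$ in $Z^s$, so $\rho \approx \bt^{1/2}$; then (\ref{4.9.8}) yields $B/A \lesssim \ve\bt^{-3}$. The identity
\[
\dfa - \frac{n\rho}{u} = \frac{-B}{A(A+B)} = -\frac{\tir\rho(n-\varpi)}{n^2 u^2}\bigl(1 + O(\ve\bt^{-3})\bigr)
\]
is therefore of order $\ve\rho/\bt^3 \approx \ve/(\rho^3\bt)$, giving (\ref{12.07.8}). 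For (\ref{12.08.6}), direct computation gives ${}^\dag\!\Nb(t) = (\dfa L^s - \fB)(t) = n^{-2}\dfa - n^{-1}b^{-1}t/\rho$, and the identity $\rho^2 = u\ub$ combined with $\ub = b^{-1}t + \tir$ produces the exact cancellation
\[
{}^\dag\!\Nb(t) - \frac{n^{-1}\tir}{\rho} = n^{-2}\Bigl(\dfa - \frac{n\rho}{u}\Bigr),
\]
so (\ref{12.07.8}) transfers directly. Finally, for (\ref{12.08.5}), the orthogonal decomposition of $\nabb t$ in $T\H_\rho$ gives $|{}^\dag\sn t|_\bg^2 = |\nabb t|_\bg^2 - ({}^\dag\!\Nb(t))^2$; with $|\nabb t|_\bg = (an)^{-1} = n^{-1}\tir/\rho$ by (\ref{nabt}), substituting (\ref{12.08.6}) yields
\[
|{}^\dag\sn t|_\bg^2 = -2 n^{-3}\frac{\tir}{\rho}\Bigl(\dfa - \frac{n\rho}{u}\Bigr) - n^{-4}\Bigl(\dfa - \frac{n\rho}{u}\Bigr)^2;
\]
using $\tir/\rho \approx \rho$ and (\ref{12.07.8}) bounds this by $\ve/\rho^4$, so $|{}^\dag\sn t|_\bg \lesssim \ve^{1/2}/\rho^2$.

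The main subtlety is the sign consistency in $|{}^\dag\sn t|_\bg^2$: the leading term is manifestly non-negative only because $n \geq \varpi$ in Theorem \ref{4.10.6}, which forces $\dfa - n\rho/u \leq 0$. Otherwise the argument is a chain of clean algebraic identities collapsing via $\rho^2 = u\ub$, and the hypothesis ``$\rho$ sufficiently large'' enters only to bound the perturbation $B/A$ when inverting $\dfa^{-1}$.
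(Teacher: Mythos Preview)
Your proof is correct and follows essentially the same route as the paper's, with a slightly cleaner derivation of (\ref{12.08.5}) via the Pythagorean identity $|{}^\dag\sn t|_\bg^2 = |\nabb t|_\bg^2 - ({}^\dag\!\Nb(t))^2$ in place of the paper's explicit expansion of ${}^\dag\sn t$. One harmless slip: in your displayed formula for $\dfa - n\rho/u$ the denominator should be $u^2$ rather than $n^2 u^2$ (computing $-B/A^2$ the factors of $n^{-2}$ in $B$ and in $A^2$ cancel), but since $n\approx 1$ this does not affect the estimate.
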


\begin{proof}%[Proof of Lemma \ref{12.28.31}]
In what follows the metric $\bg$ is actually the Schwarzchild metric since we only consider the part
of $\H_{\rho}$ that is fully contained in the schwarzschild zone. We will frequently  use the facts
\begin{equation}\label{4.22.1.16}
u\gtrsim 1 \quad \mbox{ and } \quad \rho^2=u\ub \gtrsim \ub \gtrsim 1 \quad  \mbox{ if  }1\le  \hat u\le \hat u_1,
\end{equation}
where we used (\ref{3.28.4.16}) with sufficiently small $\ve$. By using (\ref{12.07.1}),
Lemma \ref{frames} and (\ref{eun}) we have
\begin{align*}
-{\dfa}^{-1} & = \l \fB, L^s\r =\left\l a^{-1}\bN+\frac{\bb^{-1}t}{\rho}n^{-1}\p_t, n^{-2}\p_t +\p_r \right\r \displaybreak[0]\\
&=a^{-1}\l \bN, \p_r\r + \frac{\bb^{-1}t}{\rho} n^{-3}\l \p_t, \p_t\r
=a^{-1} n^{-2} \bN(r) -\frac{\bb^{-1}t}{\rho}n^{-1} \displaybreak[0] \\
& =a^{-1} n^{-2} (\varpi-n) +\left(a^{-1}-\frac{\bb^{-1}t}{\rho}\right)n^{-1} \displaybreak[0]\\
 &=a^{-1} n^{-2} (\varpi-n) - n^{-1}\frac{u}{\rho},
\end{align*}
where we used $a^{-1} = \tilde{r}/\rho$ and $u =b^{-1} t-\tilde{r} $ in the last step. This shows (\ref{12.07.3}).

Next, we prove (\ref{12.07.8}). In view of (\ref{12.07.3}) we have
\begin{align*}
\dfa &=\frac{n\rho}{u}\frac{1}{1-\frac{\tir}{u}n^{-1}(\varpi-n)}.
\end{align*}
According to (\ref{4.9.8}) and $n\approx 1$, the term $\frac{\tir}{u}n^{-1}(\varpi-n)$ is small for sufficiently small $\ve$ so that
we can derive that
\begin{align*}
\dfa =\frac{ n\rho}{u} \left(1+O\left(\frac{\tir}{u}n^{-1} (\varpi-n)\right) \right)
 = \frac{n\rho}{u}+ O\left(\frac{\rho\tir}{u^2} (\varpi-n)\right).
\end{align*}
Note that $\rho/u^2 = \underline{u}^2/\rho^3$, $\underline{u}\approx t$ and $\tilde{r}\approx t$,
we obtain (\ref{12.07.8}) by using (\ref{4.9.8}) and (\ref{4.22.1.16}).

Notice that
\begin{equation*} %\label{12.07.5}
-\nabb \hat u=-\bd \hat u -\fB^\mu \bd_\mu \hat u \fB=L^s+\l L^s, \fB\r \fB=L^s-\dfa^{-1}\fB.
\end{equation*}
This together with the facts $\l \fB, \fB\r = -1$ and $\l L^s, L^s\r =0$ implies that $|\nabb \hat u|=\dfa^{-1}$.
Therefore ${}^\dag \!\Nb=\dfa L^s-\fB$ which is (\ref{12.07.4}).
Hence, by virtue of   (\ref{bt}), (\ref{12.07.1}), $\rho^2 = u \underline{u}$ and $\underline{u} = b^{-1} t+ \tilde{r}$,
we obtain
\begin{equation}\label{12.07.7}
\begin{split}
{}^\dag \!\Nb (t) = \dfa L^s(t)-\fB(t)=\dfa n^{-2}-\frac{\bb^{-1}n^{-1}t}{\rho}
=n^{-2}\left(\dfa-\frac{n\rho}{u}\right)+\frac{\tir}{n\rho}
\end{split}
\end{equation}
which together with (\ref{12.07.8}) shows (\ref{12.08.6}).

Finally, we consider the angular derivative ${}^\dag \sn t$ which can be written as
\begin{equation*}
{}^\dag \sn t= \bd t+\fB(t) \fB- {}^\dag \!\Nb(t)\, {}^\dag \!\Nb
= n^{-1}\left(-\bT+\frac{\bb^{-1}t}{\rho}\fB\right) - {}^\dag \!\Nb(t)\, {}^\dag\! \Nb,
\end{equation*}
where for the second equality we used (\ref{gt}) and (\ref{bt}). Therefore
\begin{align*}
\l {}^\dag \sn t, {}^\dag \sn t\r
& = n^{-2} \left\l -\bT + \frac{\bb^{-1} t}{\rho} \fB, -\bT + \frac{\bb^{-1} t}{\rho} \fB \right\r + {}^\dag \!\Nb(t)^2 \\
& \quad \, - 2 n^{-1}\, {}^\dag \!\Nb(t) \left\l  -\bT + \frac{\bb^{-1} t}{\rho} \fB, {}^\dag\!\Nb \right\r.
\end{align*}
By using (\ref{eq_1}) and noting that $\l \fB, {}^\dag\! \Nb\r = 0$ and $\l \bT, {}^\dag\!\Nb\r =-n \, {}^\dag\! \Nb(t)$, the latter of which follows from (\ref{gt}),
we have
$$
\l {}^\dag \sn t, {}^\dag \sn t\r
= n^{-2} \left(-1 + \frac{\bb^{-2} t^2}{\rho^2}\right)- {}^\dag \!\Nb(t)^2
= \frac{\tilde{r}^2}{n^2 \rho^2} - {}^\dag \!\Nb(t)^2.
$$
In view of (\ref{12.07.7}), (\ref{12.07.8}), $n\approx 1$ and $\tilde{r} \approx t$ we obtain
\begin{align*}
\l {}^\dag \sn t, {}^\dag \sn t\r = -n^{-1} \left(\dfa n^{-1}-\frac{\rho}{u}\right)
\left[n^{-1} \left(\dfa n^{-1}-\frac{\rho}{u}\right) + \frac{2\tilde{r}}{n\rho}\right]
= O\left(\frac{\ve}{\l\rho\r^4}\right)
\end{align*}
which shows (\ref{12.08.5}).
\end{proof}

Next we will derive the estimate on $\mbox{diam}(S_{\rho, \hat u})$ by estimating the
Gaussian curvature on $S_{\rho, \hat u}$.  We start from a preliminary result which includes
a proof of Lemma \ref{4.28.17.16}(1).

\begin{lemma}\label{12.10.21}
\emph{(i)} The traceless parts of $\uda{s}\chi$ and $\uda{s}\chib$ vanish with the traces given by
\begin{equation}\label{12.10.12}
\tr \, \uda{s} \chi= \frac{2}{r+2M} \quad \mbox{ and } \quad \tr \, \uda{s}\chib=-\frac{2}{r+2M}.
\end{equation}
The  Gaussian curvature $K$ on $S_{t, \hat u}$ verifies
\begin{equation}\label{12.10.13}
 K=(r+2M)^{-2}.
\end{equation}

\emph{(ii)} Relative to the null decomposition in terms of $\hat L, \hat\Lb$, in $Z^s$
the only nonvarnishing component of the Weyl tensor $W$ is $\hat\varrho=\frac{1}{4}W(\hat L, \hat \Lb, \hat L, \hat\Lb)$
with
\begin{equation*}
n^{-4}\hat \varrho=-\frac{4 M}{(r+2M)^3}.
\end{equation*}
\end{lemma}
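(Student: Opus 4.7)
The plan is to prove both parts by direct computation using the explicit form of the Schwarzschild metric in the coordinates $(t,r,\theta,\varphi)$ given by (\ref{3.20.lap}), together with the Gauss equation (\ref{gauss}) and a spherical symmetry argument.

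For part (i), I would first record the relevant Christoffel symbols of $\bg$. Since the metric is diagonal and static, with $g_{tt}$ and $g_{rr}$ depending only on $r$ and $g_{\theta\theta}=(r+2M)^2$, $g_{\varphi\varphi}=(r+2M)^2\sin^2\theta$, the only Christoffel symbols of the form $\Ga^\mu_{A B}$ or $\Ga^\mu_{A r}$ (with $A,B\in\{\theta,\varphi\}$) needed are $\Ga^\theta_{\theta r}=\Ga^\varphi_{\varphi r}=(r+2M)^{-1}$, while $\Ga^\mu_{A t}=0$ for every $\mu$. Then for any $S_{t,\hat u}$-tangent fields $X,Y$, since $n$ and $r$ are constant on $S_{t,\hat u}$, a short computation gives
\begin{equation*}
\bd_X L^s=\bd_X(n^{-2}\p_t+\p_r)=\tfrac{1}{r+2M}X \pmod{\p_t,\p_r},
\end{equation*}
and hence $\uda{s}\chi(X,Y)=(r+2M)^{-1}\ga_S(X,Y)$, with the analogous statement for $\uda{s}\chib$ having the opposite sign. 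This gives the vanishing of the traceless parts and the values in (\ref{12.10.12}). For the Gaussian curvature, the induced metric on $S_{t,\hat u}$ is exactly the round metric of area radius $r+2M$ on $\S^2$, so $K=(r+2M)^{-2}$.

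For part (ii), I would proceed in two stages. First, using that the Schwarzschild metric is spherically symmetric and the tetrad $\{\hat L,\hat \Lb\}$ is invariant under this $SO(3)$-action, each null component of $W$ relative to this tetrad must be invariant under rotations of $S_{t,\hat u}$. The only rotation-invariant trace-free symmetric $(0,2)$-tensor on $\S^2$ is zero, so $\a,\ab\equiv 0$; similarly the only invariant $1$-form vanishes, yielding $\b,\udb\equiv 0$. For the scalar $\sigma$, apply the discrete isometry $t\mapsto -t$, which swaps $\hat L$ and $\hat\Lb$ and reverses orientation on $S_{t,\hat u}$; since $\sigma$ is built from the Hodge dual $\sPs$, it changes sign, hence $\sigma\equiv 0$.

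Second, compute $\hat\varrho$ via the Gauss equation (\ref{gauss}) applied to $S_{t,\hat u}$ with the canonical null frame $\mathbb{L}=n^{-1}\hat L$, $\mathbb{\Lb}=n^{-1}\hat \Lb$, which satisfies $\l\mathbb{L},\mathbb{\Lb}\r=-2$. Since $Z^s$ is vacuum, $S_{AC}=0$. Using $\hat L=n^{2}L^s$ together with $X(n)=0$ for $X\in\T S_{t,\hat u}$, the null second fundamental forms of $\mathbb L,\mathbb \Lb$ are obtained from those of $L^s,\Lb^s$ by multiplication by $n$, so $\tr\chi=\tfrac{2n}{r+2M}$, $\tr\chib=-\tfrac{2n}{r+2M}$, and their traceless parts vanish by part (i). Plugging into (\ref{gauss}) and using $K=(r+2M)^{-2}$ yields
\begin{equation*}
(r+2M)^{-2}-\frac{n^{2}}{(r+2M)^{2}}=-n^{-4}\hat\varrho,
\end{equation*}
and since $1-n^{2}=\tfrac{4M}{r+2M}$, we recover $n^{-4}\hat\varrho=-\tfrac{4M}{(r+2M)^{3}}$. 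The only real obstacle is a bookkeeping one: being careful with the rescaling factor $n$ when converting between the tetrads $\{L^s,\Lb^s\}$, $\{\hat L,\hat\Lb\}$ and the canonical tetrad $\{n^{-1}\hat L,n^{-1}\hat\Lb\}$, and ensuring that the symmetry arguments genuinely kill every component other than $\hat\varrho$.
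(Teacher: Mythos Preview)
Your proposal is correct and follows essentially the same line as the paper: explicit Schwarzschild computations for part (i), then the Gauss equation (\ref{gauss}) to pin down $\hat\varrho$ in part (ii). A few execution differences are worth noting. For part (i) you compute the full second fundamental form via Christoffel symbols, which gives the vanishing of the traceless parts directly; the paper instead differentiates the area form $\mu_\ga=(r+2M)^2\mu_{\mathbb S^2}$ along $\hat L,\hat\Lb$ to read off the traces only, leaving the traceless part implicit. For part (ii) the paper applies (\ref{gauss}) with the mixed canonical pair $\{L^s,\hat\Lb\}$ (noting $\l L^s,\hat\Lb\r=-2$), so that $\uda{s}\chi$ and $\uda{\widehat{}}\chib$ enter without rescaling; you use the symmetric pair $\{n^{-1}\hat L,n^{-1}\hat\Lb\}$ and absorb an extra factor of $n$ into $\tr\chi,\tr\chib$. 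Both give the same arithmetic. Finally, the paper simply asserts the vanishing of the other null components as a known fact about Schwarzschild, whereas you supply the $SO(3)$/time-reversal symmetry argument; your argument is fine, though the cleaner way to kill $\sigma$ is to note that time reversal flips the spacetime volume form (hence ${}^\star W\to -{}^\star W$) while leaving $W$ and the sphere fixed.
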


\begin{proof}
(i) According to (\ref{scm}), the Gaussian curvature $K$ on $S_{t, \hat u}$ is a constant which,
by the Gauss-Bonnet theorem, is given by (\ref{12.10.13}).

Let $\ga$ be the induced metric on $S_{t, \hat u}$ and let $\mu_\ga$ be the associated area form,  we have
\begin{equation}\label{12.10.14}
\hat L \mu_\ga =n^2\tr \, \uda{s}\chi \, \mu_\ga \quad \mbox{and} \quad \hat \Lb \mu_\ga =n^2 \tr\, \uda{s}\chib \, \mu_\ga.
\end{equation}
Note that $\mu_\ga=(r+2M)^2 \mu_{\mathbb{S}^2}$, one may use (\ref{12.10.14}), (\ref{12.10.07}), (\ref{12.07.1})
and $n^2 = \frac{r-2M}{r+2M}$ to derive (\ref{12.10.12}) immediately. In particular, (\ref{12.10.12}) implies that
\begin{equation}\label{12.10.15}
\f12 \tr\,\uda{{\widehat{}}}\chib=-\frac{r-2M}{(r+2M)^2} \quad
\mbox{and} \quad \f12 \tr \, \uda{{\widehat{}}}\chi=\frac{r-2M}{(r+2M)^2}.
\end{equation}

(ii) Because $\bg$ in $Z^s$ is a Schwarzchild matric,  in terms of
the canonical null tetrad $\{n^{-1}\hat L, n^{-1}\hat \Lb, \hat e_A, A=1,2\}$, where $\{\hat e_A, A=1,2\}$ is the orthonormal frame on $S_{t, \hat u}$,  all components of the Weyl tensor $W$ in Definition \ref{3.18.19} vanish except $\hat \varrho$. We may use (\ref{gauss}) to determine $\hat \varrho$. In fact,
note that $\l L^s, \hat \Lb\r=-2$, $\{L^s, \hat \Lb, \hat e_A, A =1, 2\}$ also form a canonical null tetrad on $S_{t, \hat u}$,
where $\{\hat e_A, \hat A=1,2\}$ is an orthonormal frame therein, we may use (\ref{gauss}) to obtain
\begin{equation*}%\label{12.10.11}
K=-\frac{1}{4}\tr\, \uda{{\widehat{ }}}\chib\tr\, \uda{s}\chi+\f12 \uda{s}\chih\c \uda{{\widehat{}}}\chibh-n^{-4}\hat \varrho;
\end{equation*}
the source term in (\ref{gauss}) disappears because $Z^s$ is a vacuum region. Consequently, by using $\uda{s}\chih=0$,
(\ref{12.10.12}), (\ref{12.10.13}) and (\ref{12.10.15}), we obtain
$n^{-4}\hat \varrho =-4 M/(r+2M)^3$.
\end{proof}

 \begin{proposition}\label{gcomp_1}
For $1\le \hat u\le \hat u_1$ and $T\le \rho\le \rho_*$, let $K$ and $\emph{diam}(S_{\rho, \hat u})$ denote the Gaussian
curvature and the diameter of $S_{\rho, \hat u}$ respectively. Then
\begin{equation}\label{12.28.3}
\left|K-\frac{n^2}{(r+2M)^2}\right|\les \ve \frac{n^2}{(r+2M)^2}
\end{equation}
and
\begin{equation}\label{12.28.2}
\emph{diam}(S_{\rho, \hat u})\les r_{\max}(S_{\rho, \hat u}),
\end{equation}
where, in Schwartzchild zone, $r(p)$ denotes the coordinate value of the  point $p=(t,r, \omega), \omega\in {\mathbb S}^2$  in the standard polar coordinates, and $r_{\max}(S_{\rho, \hat u})$ denotes the maximal value of $r(p)$ over $S_{\rho, \hat u}$.
%\end{enumerate}
\end{proposition}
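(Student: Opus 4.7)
The plan is to apply the Gauss equation (\ref{gauss}) to $S_{\rho, \hat u}$ using a canonical null tetrad adapted to the intersection, combined with the explicit Schwarzschild Weyl curvature from Lemma \ref{12.10.21}(ii) and the estimates of Theorem \ref{4.10.6} and Lemma \ref{12.28.31}. For the diameter bound, I observe that the induced metric on $S_{\rho, \hat u}$ is conformally $\mathbb S^2$-round.

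\textbf{Null tetrad on $S_{\rho, \hat u}$.} Since $S_{\rho, \hat u}$ sits inside both $\C^s_{\hat u}$ and $\H_\rho$, both $\hat L$ and $\fB$ are normal to it. I set $\mathbb L = \hat L$ and $\mathbb{\Lb} = L' := -(\dfa^2/n^4)\hat L + (2\dfa/n^2)\fB$, the unique null combination of $\hat L$ and $\fB$ with $\l \hat L, L'\r = -2$ (verified using $\l \fB, \hat L\r = -n^2\dfa^{-1}$ from (\ref{12.07.2})). Together with an orthonormal frame $\{e_A'\}$ on $S_{\rho, \hat u}$, this is the canonical null tetrad used in (\ref{gauss}).

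\textbf{Second fundamental forms and Weyl source.} For $X \in TS_{\rho, \hat u} \subset T\C^s_{\hat u}$, decompose $X = \hat e_X + \lambda_X \hat L$ with $\hat e_X$ tangent to the Schwarzschild sphere $S_{t, \hat u}$. Any $Y \in TS_{\rho, \hat u}$ is $\hat L$-orthogonal, so the non-affine piece of $\bd_X \hat L$ (proportional to $\hat L$) is killed upon pairing with $Y$, giving $\chi(X,Y) = \uda{{\widehat{ }}}\chi(\hat e_X, \hat e_Y)$. Using Lemma \ref{12.10.21}(i), which states that $\uda{{\widehat{ }}}\chi$ is pure trace with value $2n^2/(r+2M)$, and the isometry $\l \hat e_X, \hat e_Y\r = \l X, Y\r = \ga'(X,Y)$, this gives $\chi = (n^2/(r+2M))\ga'$; in particular $\chih = 0$ and $\tr\chi = 2n^2/(r+2M)$. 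Because $\hat L$ and $\fB$ are both $S_{\rho, \hat u}$-normal, a direct expansion yields $\chib = -(\dfa^2/n^4)\chi + (2\dfa/n^2)k$ on $S_{\rho, \hat u}$. For the Weyl source, the decomposition (\ref{12.31.8}) of $\fB$ in the Schwarzschild null frame and Lemma \ref{12.10.21}(ii) (only $\hat\varrho$ survives) give
\[
W(\hat L, L', \hat L, L') = \beta^2 W(\hat L, \fB, \hat L, \fB) = \frac{4\dfa^2 \cir u^2}{n^4 \rho^2}\,\hat\varrho, \qquad \hat\varrho = -\frac{4Mn^4}{(r+2M)^3}.
\]

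\textbf{Assembling $K$ and the diameter.} With $\chih = 0$ and $S_{AC} = 0$ (vacuum), the Gauss equation reduces to $K = -\frac14 \tr\chi\, \tr\chib - \frac14 W$, yielding
\[
K = \frac{\dfa^2}{(r+2M)^2} - \frac{\dfa(\tr k - k_{\Nb\Nb})}{r+2M} + \frac{4M\dfa^2\cir u^2}{\rho^2(r+2M)^3},
\]
where $\tr k - k_{\Nb\Nb}$ is the 2D trace of $k$ restricted to $S_{\rho, \hat u}$. Substituting the leading values $\dfa = n\rho/u$ (Lemma \ref{12.28.31}), $\tr k - k_{\Nb\Nb} = 2/\rho$ and $\cir u = n^{-1}u$ (Theorem \ref{4.10.6} and (\ref{12.10.19})), each with $O(\ve)$ correction, using $\rho^2 = u\ub$, $\ub = nt + n^{-1}r + O(\ve)$ and the algebraic identity $un = n^2t - r$ (all to leading order), the three terms combine to
\[
K = \frac{n^2}{(r+2M)^2} + \frac{4M}{(r+2M)^2}\left(\frac{1}{r+2M} - \frac{n}{u}\right) + O\!\left(\frac{\ve}{(r+2M)^2}\right).
\]
Since $u \gtrsim 1$ in $Z^s$ by (\ref{3.28.4.16}), $M \lesssim \ve$ and $n^2 \approx 1$, the middle term and the error are both $O(\ve n^2/(r+2M)^2)$, proving (\ref{12.28.3}). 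For the diameter, parametrize $S_{\rho, \hat u}$ by $\omega \in \mathbb S^2$ using Schwarzschild polar coordinates: on $\C^s_{\hat u}$ the relation $\gamma'(r) = n^{-2}$ forces $dr = n^2 dt$, so $-n^2 dt^2 + n^{-2}dr^2 = 0$ there, and the induced metric on $S_{\rho, \hat u}$ is simply $(r(\omega)+2M)^2 d\sigma^2_{\mathbb S^2}$. Consequently $\mathrm{diam}(S_{\rho, \hat u}) \le \pi(r_{\max}+2M) \lesssim r_{\max}$, giving (\ref{12.28.2}).

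\textbf{Main obstacle.} The central difficulty is extracting the leading curvature $n^2/(r+2M)^2$ from the three Gauss-equation pieces: each of $\dfa^2/(r+2M)^2$, $\dfa(\tr k - k_{\Nb\Nb})/(r+2M)$ and $4M\dfa^2\cir u^2/(\rho^2(r+2M)^3)$ is individually much larger than the target, yet they cancel under the Schwarzschild relations $n^2(r+2M) = r - 2M$ and $un = n^2t - r$. Making the cancellation work requires tracking the $O(\ve)$ corrections in $\dfa$, $\tir$, $b^{-1}$, $\cir u$, $\tr k$, $\hk$ supplied by Theorem \ref{4.10.6} and Lemma \ref{12.28.31} and using $u \gtrsim 1$ in $Z^s$ to keep the residual $M$-scaled term within the tolerance $\ve n^2/(r+2M)^2$.
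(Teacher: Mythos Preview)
Your argument for the Gaussian curvature estimate (\ref{12.28.3}) is essentially the same as the paper's: the null pair $(\hat L, L')$ you use is just a rescaling of the paper's $({}^\dag L,{}^\dag\Lb)=(\dfa L^s,\,2\fB-\dfa L^s)$, so the computations of $\chi$, $\chib$ and $W$ agree, and the same cancellation (driven by $\dfa\approx n\rho/u$, $\tr k\approx 3/\rho$, $\tir\approx n^{-1}r$, $u\gtrsim 1$) produces the target $n^2/(r+2M)^2$ with $O(\ve)$ relative error. One notational slip: the 2D trace of $k$ on $S_{\rho,\hat u}$ is $\tr k - k({}^\dag\Nb,{}^\dag\Nb)$, not $\tr k - k_{\Nb\Nb}$, since $\Nb$ is the normal to $S_{t,\rho}$, not $S_{\rho,\hat u}$; the discrepancy sits in $\hk$, which is $O(\ve/(\rho t))$, so the estimate is unaffected.

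Your diameter argument is genuinely different from the paper's. The paper derives (\ref{12.28.2}) from (\ref{12.28.3}) via Bonnet--Myers. You instead observe that, since $S_{\rho,\hat u}\subset\C^s_{\hat u}$ and the $(t,r)$-part of the Schwarzschild metric is null along $\C^s_{\hat u}$, the induced metric on $S_{\rho,\hat u}$ is exactly $(r(\omega)+2M)^2\,d\sigma^2_{\mathbb S^2}$, whence $\mathrm{diam}\le\pi(r_{\max}+2M)$. This is more elementary and bypasses the curvature estimate entirely. To make it complete you should note that the angular projection $S_{\rho,\hat u}\to\mathbb S^2$ is a diffeomorphism: along each null generator of $\C^s_{\hat u}$ (fixed $\omega$) one has $\hat L(\rho)=-\l\hat L,\fB\r=n^2\dfa^{-1}>0$, so $\rho$ is strictly monotone and each $\omega$ meets $S_{\rho,\hat u}$ exactly once. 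The paper's Bonnet--Myers route, by contrast, would still work if $S_{\rho,\hat u}$ were not sitting inside a spherically symmetric null cone, while your shortcut exploits precisely that Schwarzschild structure.
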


\begin{proof}
(\ref{12.28.2}) follows from (\ref{12.28.3}) as an application of Bonnet-Myers theorem, see \cite{Docarmo} for instance.
Hence we only need to show (\ref{12.28.3}). We will use the Gauss equation (\ref{gauss}). For this purpose
$S_{\rho, \hat u}$ is regarded as a $2$-sphere embedded in $Z^s$ with the normal vector fields ${}^\dag L$
and ${}^\dag \Lb$ given by
\begin{equation}\label{12.28.4}
{}^\dag L=\fB+{}^\dag\Nb=\dfa L^s, \qquad {}^\dag\Lb =\fB-{}^\dag\Nb=2\fB-\dfa L^s.
\end{equation}
It is straightforward to see that
\begin{equation*} %\label{12.28.5}
\l {}^\dag L, {}^\dag \Lb\r=\l \dfa L^s, 2\fB-\dfa L^s\r=-2.
\end{equation*}
Let $\{{}^\dag e_{A}, A=1,2\}$ be an orthonormal frame on $S_{\rho, \hat u}$.  Then
$\{{}^\dag L, {}^\dag\Lb, {}^\dag e_{A}, A=1,2\}$ form a canonical null tetrad.  We define
\begin{equation*} %\label{12.28.6}
{}^\dag  \chi_{AC}:=\l \bd_{{}^\dag e_A} {}^\dag L, {}^\dag e_C\r, \qquad
{}^\dag \chib_{AC}:=\l \bd_{{}^\dag e_A} {}^\dag \Lb, {}^\dag e_C\r
\end{equation*}
We claim
\begin{equation}\label{12.28.7}
{}^\dag \chi_{AC}=\frac{1}{2} \dfa \, \tr\uda{s}\chi\delta_{AC}.
\end{equation}
To see (\ref{12.28.7}), we will decompose $\{{}^\dag e_A\}_{A=1}^2$ in terms of $ \hat L, \hat \Lb, \{\hat e_A\}_{A=1}^2$,
where $\{\hat e_A\}_{A=1}^2$ is an orthonormal frame on $S_{t, \hat u}$.

Since $\l {}^\dag e_A, {}^\dag L\r =0$, we have $\l {}^\dag e_A, \hat L \r =0$. Therefore we can
decompose ${}^\dag e_A$ uniquely as
\begin{equation*} %\label{12.28.8}
{}^\dag e_A=\Sigma {}^\dag e_A + f \hat L,
\end{equation*}
where $f$ is a scalar function and $\Sigma {}^\dag e_A$ is an $S_{t, \hat u}$-tangent vector field.
Since $\{{}^\dag e_A\}_{A=1}^2$ is orthonormal, we have
\begin{align*}
\delta_{AC}=\l {}^\dag e_A, {}^\dag e_C\r
&=\left\l \Sigma {}^\dag e_A + f \hat L, \Sigma {}^\dag e_C + f\hat L \right\r
 = \left\l \Sigma {}^\dag e_A, \Sigma {}^\dag e_C \right\r.  %\label{12.28.9}
\end{align*}
Recall that $\bd_{L^s} L^s=0$. We may use $\hat L = n^2 L^s$ in (\ref{12.10.07}) and Lemma  \ref{12.10.21}(i) to derive that
\begin{align*}
{}^\dag\chi_{AC} &=\dfa\l \bd_{{}^\dag e_A} L^s, {}^\dag e_C\r
=\dfa \left\l \bd_{\Sigma{}^\dag e_A} L^s, \Sigma {}^\dag e_C + f\hat L \right\r\nn \displaybreak[0]\\
& =\dfa \left\l \bd_{\Sigma{}^\dag e_A} L^s, \Sigma {}^\dag e_C \right\r
= \dfa \, \uda{s}\chi (\Sigma {}^\dag e_A, \Sigma {}^\dag e_C) \nn \displaybreak[0]\\
&=\frac{1}{2} \dfa \, \tr\uda{s}\chi \l \Sigma{}^\dag e_A, \Sigma{}^\dag e_C\r
=\frac{1}{2} \dfa \, \tr\uda{s}\chi \delta_{AC} %\label{12.28.11}
\end{align*}
which shows (\ref{12.28.7}).

From (\ref{12.28.7}) it follows that the traceless part of ${}^\dag \chi$ vanishes.  Combining this
with the vacuum property in $Z^s$, we can infer from (\ref{gauss}) that
\begin{equation}\label{12.28.14}
K=-\frac{1}{4} \tr{}^\dag \chi\tr{}^\dag \chib-\frac{1}{4}W({}^\dag L, {}^\dag\Lb, {}^\dag L, {}^\dag\Lb).
\end{equation}
By using (\ref{12.28.4}), the first identity in (\ref{12.31.8}), Lemma \ref{4.28.17.16} and (\ref{12.10.07}),
we can see that
\begin{align}\label{12.28.16}
&W({}^\dag L, {}^\dag \Lb, {}^\dag L, {}^\dag\Lb)=4 \left(\dfa\right)^2 W(L^s,\fB, L^s, \fB) \nn \displaybreak[0]\\
&=\frac{4\left(\dfa\right)^2}{\rho^2} W\left(L^s, \f12(\cir{u}\hat \Lb+\cir{\ub}\hat L)+\tir \Sigma N,
L^s, \f12(\cir{u}\hat \Lb+\cir{\ub}\hat L)+\tir \Sigma N \right) \nn \displaybreak[0]\\
&=\left(\frac{\cir{u}\dfa}{\rho}\right)^2 W(L^s,\hat \Lb, L^s, \hat \Lb)
=4\left(\frac{\cir{u}\dfa}{\rho} \right)^2 n^{-4}\hat \varrho.
\end{align}
By using (\ref{12.10.19}) for $\cir{u}$, (\ref{12.07.8}) for $\dfa$ and (\ref{4.9.8}) we deduce that
\begin{align*}
\frac{\cir{u}\dfa}{\rho}
& = n^{-1} \left(u + (1-n^{-1} \varpi) \tir\right) \left(\frac{n}{u}+O\left(\frac{\ve}{\l\rho\r^4 \bt}\right)\right) \displaybreak[0]\\
& = n^{-1}\left(u+O\left(\frac{\ve}{\bt^3}\right)\right) \left(\frac{n}{u}+O\left(\frac{\ve}{\l\rho\r^4\bt}\right)\right) \displaybreak[0]\\
& = 1 + O\left(\frac{\ve}{u t^3}\right)+O\left(\frac{u\ve}{\rho^4 t}\right).
\end{align*}
Since $u \ub =\rho^2$ and $\ub\approx t$, it follows that
$$
\frac{\cir{u}\dfa}{\rho} = 1 + O\left(\frac{\ve}{\l \rho\r^2 \bt^2}\right).
$$
Therefore, we may use (\ref{12.28.16}), (\ref{12.10.10}) and $M\les \ve$ to deduce that
\begin{equation}\label{12.28.24}
W({}^\dag L, {}^\dag \Lb, {}^\dag L, {}^\dag\Lb)/(\frac{n}{r+2M})^2=O \left(\frac{\ve}{r+2M}\right)
= O\left(\frac{\ve}{t}\right).
\end{equation}

Next we show that
\begin{equation}\label{12.28.17}
\tr \, {}^\dag \chib+\frac{2nu}{\rho(r+2M)}=O\left(\frac{\ve}{\l\rho\r\bt}\right).
\end{equation}
By using (\ref{12.28.7}) we have
\begin{align}
{}^\dag \chib_{AC}
& = \left\l \bd_{{}^\dag e_A} (2\fB-{}^\dag L), {}^\dag e_C \right\r
= 2 k({{}^\dag e_A, {}^\dag e_C})-{}^\dag\chi_{AC}\nn \displaybreak[0]\\
&=2 \hat k({{}^\dag e_A, {}^\dag e_C}) +\left(\frac{2}{3}\tr k - \f12\dfa \, \tr\uda{s}\chi \right) \delta_{AC}\nn \displaybreak[0]\\
&=2 \hk({{}^\dag e_A, {}^\dag e_C}) + \frac{2}{3}\left(\tr k-\frac{3}{\rho}\right)\delta_{AC}
+ \left(\frac{2}{\rho}-\f12\dfa \, \tr\uda{s}\chi\right)\delta_{AC}. \label{12.28.21}
\end{align}
In view of Lemma \ref{12.10.21}(i), (\ref{12.07.8}), $\rho^2 = u \ub$, $r \approx t$ and $M\les \ve$ we have
\begin{align*}
\frac{2}{\rho}-\f12 \dfa \tr\uda{s}\chi
& =\frac{2}{\rho}-\frac{1}{r + 2M} \left(\frac{n\rho}{u}+O\left(\frac{\ve}{\l\rho\r^3\bt}\right)\right) \displaybreak[0]\\
&=\frac{1}{\rho} \left(2-\frac{n\ub}{r+2M}\right) + O\left(\frac{\ve}{\l\rho\r^3\bt^2}\right)\\
&=\frac{2r - n\ub}{\rho(r+2M)} + O\left(\frac{\ve}{\l\rho\r \bt}\right).
\end{align*}
Recall that $\ub = u + 2 \tir$, we may use (\ref{1.4.1.16}) and $r\approx t$ to obtain
\begin{align*}
\frac{2}{\rho}-\f12 \dfa \tr\uda{s}\chi
&=\frac{-nu+ 2(r-n \tir)}{\rho(r+2M)}+ O\left(\frac{\ve}{\l\rho\r \bt} \right)
=-\frac{nu}{\rho (r+2M)}+O\left(\frac{\ve}{\l\rho\r\bt}\right).
\end{align*}
Combining this with (\ref{12.28.21}) and using the estimates
$
t\rho |(\hk, \tr k-3/\rho)|\les \ve
$
established in (\ref{2.20.1.16}), we can obtain (\ref{12.28.17}).

Note that (\ref{12.28.7}) implies $\tr \, {}^\dag \chi = \dfa \, \tr {}^\dag \uda{s}\chi$.
Therefore, by using (\ref{12.28.17}), (\ref{12.10.12}) and (\ref{12.07.8}), we derive
\begin{align}\label{12.28.22}
\tr{}^\dag \chi \c \tr{}^\dag \chib+\frac{4n^2}{(r+2M)^2}
& = \frac{4n^2}{(r+2M)^2} \left(1-\frac{\dfa u}{n\rho}\right) + \frac{\dfa}{r+2M} O\left(\frac{\ve}{\l\rho\r\bt}\right) \nn \displaybreak[0]\\
& = O\left(\frac{\ve}{\l\rho\r^2\bt^4}\right) + O\left(\frac{\ve}{\brho^2\bt}\right) \nn \displaybreak[0]\\
& = O\left(\frac{\ve}{\l\rho\r^2\bt}\right).
\end{align}
Combining (\ref{12.28.14}),  (\ref{12.28.24}), (\ref{12.28.22}), $t\approx r$, and noting that (\ref{4.22.1.16}) implies
$\rho^2\gtrsim \ub \approx t$,
  we finally obtain
\begin{equation*} %\label{12.28.25}
K-\frac{n^2}{(r+2M)^2} = \frac{n^2}{(r+2M)^2}\left(O(\frac{\ve(r+2M)}{\rho^2})+O(\frac{\ve}{t})\right)
= \frac{n^2}{(r+2M)^2} O(\ve).
\end{equation*}
Thus (\ref{12.28.3}) is proved.
\end{proof}

\begin{proof}[Proof of Proposition \ref{12.30.1}]
 Due to (\ref{12.28.2}) and $t\approx r$ in (\ref{4.9.12}), we have
$\mbox{diam}(S_{\rho,\hat u})\les r_{\max}(S_{\rho, \hat u}) \les t_{\max}(S_{\rho, \hat u})$.
Thus it follows from (\ref{12.30.5.1}) and (\ref{12.08.5}) that
\begin{equation}\label{12.30.5}
\underset{S_{\rho, \hat u}}{\mbox{osc}} (t) \les t_{\max}(S_{\rho, \hat u}) \frac{\ve^\f12}{\l \rho\r^2}.
\end{equation}
This implies that ${\mbox{osc}}_{S_{\rho, \hat u}} (t) \les \ve^{1/2} t_{\max}(S_{\rho, \hat u})$
and hence $t_{\max}(S_{\rho, \hat u}) \approx t_{\min}(S_{\rho, \hat u})$ for sufficiently small $\ve$.
Thus $\ub\approx t \approx t_{\max}(S_{\rho, \hat u})$ on $S_{\rho, \hat u}$, combined with (\ref{4.22.1.16}),
 imply that $\rho^2 = u \ub \gtrsim \ub \approx t_{max}(S_{\rho, \hat u})$.
We can obtain from (\ref{12.30.5}) that $\mbox{osc}_{S_{\rho, \hat u}} (t) \les \ve^{1/2}$
which shows (\ref{12.25.1}).

Next we prove (\ref{12.30.2}). Let $p\in S_{\rho, \hat u_1}$ be the point that $t(p)$ achieves the
maximum on $S_{\rho, \hat u_1}$ and assume that $p$ has the standard polar coordinate
$(\rho, \hat u_1, \omega_0)$ with $\omega_0\in {\mathbb S}^2$. Then for the point on $S_{\rho, \hat u}$
with polar coordinate $(\rho, \hat u, \omega_0)$ we have
\begin{equation}\label{12.30.7}
t(\rho,\hat u, \omega_0)-t_{\max}(S_{\rho, \hat u_1})=\int_{\hat u}^{\hat u_1} {}^\dag\Nb(t) \dfa d\hat u'.
\end{equation}
By using (\ref{12.08.6}) and (\ref{12.07.8}),
\begin{align*}
\dfa {}^\dag\Nb(t)
= \left(\frac{n\rho}{u}+O(\frac{\ve}{\l\rho\r^3 \bt}) \right) \left(n^{-1}\frac{\tir}{\rho}+O(\frac{\ve}{\l\rho\r^3\bt}) \right)
= \frac{\tir}{u}+O(\frac{\ve}{\l\rho\r^4}).
\end{align*}
With $1\le \hat u < \hat u_1$ and (\ref{3.28.4.16}),  we have $0<u\les 1$, which gives
$\frac{\tir}{u}\approx \frac{\ub}{u}=(\frac{\rho}{u})^2\gtrsim \rho^2$ and hence $\dfa {}^\dag\Nb(t) \ge c \rho^2 -O(\ve)$
for some universal constant $c>0$. It then follows from (\ref{12.30.7}) that
\begin{equation}\label{12.30.9}
t(\rho, \hat u, \omega_0)-t_{\max}(S_{\rho, \hat u_1})\ge(  c\rho^2-O(\ve))(\hat u_1-\hat u).
\end{equation}
By using (\ref{12.25.1}), we thus obtain
\begin{equation*}
t_{\min}(S_{\rho, \hat u})+O(\ve^\f12)\ge t(\rho, \hat u, \omega_0) \ge t_{\max}(S_{\rho, \hat u_1})+(c\rho^2-O(\ve))(\hat u_1-\hat u).
\end{equation*}
Hence we conclude that, for $\rho\ge T$ and sufficient small $\ve$, there holds, with $\hat u<\hat u_1$,  that
$t_{\min}(S_{\rho, \hat u}) > t_{\max}(S_{\rho, \hat u_1})$ which gives (\ref{12.30.2}).

Finally we show (\ref{12.30.3}). Recall that $t_* = t_{\min}(S_{\rho_*, \hat u_0})$. With $\hat u=\hat u_0$ in  (\ref{12.30.2}), it follows that
\begin{equation*}
t^*\ge t\ge   t_*>t_{\max}(S_{\rho_*, \hat u_1}).
\end{equation*}
Due to (\ref{12.08.6}), we have ${}^\dag \Nb(t)>0$. This implies on $\H_\rho$, $t$ is a decreasing function of $\hat u$.
Therefore
\begin{equation*} %\label{4.23.2.16}
\hat u(S_{t, \rho_*})<\hat u_1 \quad \mbox{ for } t_*\le t\le t^*.
\end{equation*}
Further, by using (\ref{3.25.2.16}) and (\ref{4.9.8}) we have
$$
\bN(\hat u)=-\bN(\ga(r))=-\ga'(r) \bN(r)=-n^{-2}\varpi<0
$$
which implies that on $\Sigma_t$, $\hat u$ is an increasing function of $\rho$. Consequently,
for $\rho\le \rho_*$ and $t_*<t<t^*$ we have $\hat u(S_{t, \rho}) \le \hat u(S_{t, \rho_*}) <\hat u_1$
which shows (\ref{12.30.3}).
\end{proof}

\section{\bf Hawking mass and Bondi mass}\label{mass}

In this section, we introduce the Hawking mass on $S_{t,\rho}=\Sigma_t\cap \H_\rho$ in $\I^+(\O)$
and investigate the asymptotic behavior as $t\rightarrow \infty$.

\begin{definition}\label{12.31.2}
Let $\ud r = \left(\frac{|S_{t,\rho}|}{4\pi} \right)^\f12$.
We define the Hawking mass enclosed by a $2$-surface $S_{t,\rho}$ to be
\begin{equation*}
m(t,\rho):=\frac{\ud r}{2}\left(1+\frac{1}{16\pi}\int_{S_{t,\rho}} \tr\chi \tr\chib \right).
\end{equation*}
If the Hawking mass $m(t,\rho)$ tends to a limit $M(\rho)$ as $t\rightarrow \infty$,
this limit is called the Bondi mass on $\H_\rho$.
\end{definition}

The main result of this section is the following.\begin{footnote}{The asymptotic behavior of Hawking mass along all hyperboloids does rely on the global existence of the foliation of proper time.}\end{footnote}

\begin{proposition}\label{2.9.1.16}
The Bondi mass $M(\rho)$ is well-defined  on each $\H_\rho$, and $$M(\rho)\equiv 2M.$$  More precisely there exists $t_s(\rho)$ sufficiently large so that for all $t>t_s(\rho)$,
\begin{equation}\label{12.31.4}
m(t,\rho)=2M+O(\ve^2 \bt^{-1}).
\end{equation}
\end{proposition}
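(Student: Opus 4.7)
The identity $M(\rho)\equiv 2M$ will follow from (\ref{12.31.4}) by letting $t\to\infty$, so I focus on proving (\ref{12.31.4}). The plan has three stages.

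\emph{Stage 1: Localization and reducing the Hawking mass via Gauss--Bonnet.} For fixed $\rho$ and sufficiently large $t$, I first check $S_{t,\rho}\subset Z^s$: along $\H_\rho$ we have $\tir=\sqrt{(\bb^{-1}t)^2-\rho^2}\to\infty$ and $u=\rho^2/\ub\to 0$, so by (\ref{1.4.1.16}) and (\ref{3.28.4.16}) we get $r\gtrsim t$ and $\hat u<\hat u_1$ on all of $S_{t,\rho}$. In $Z^s$ the Schouten source in (\ref{gauss}) vanishes and $\pib\equiv 0$, so the null pair $L=\bT+\bN$, $\Lb=\bT-\bN$ satisfies $\chi_{AC}=\theta_{AC}$ and $\chib_{AC}=-\theta_{AC}$; hence $\tr\chi\tr\chib=-(\tr\theta)^2$ and $\chih\cdot\chibh=-|\hat\theta|^2$. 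Combining the Gauss equation (\ref{gauss}) with $\int K\,d\mu_\ga=4\pi$ yields the clean representation
\[
m(t,\rho)=-\frac{\ud r}{16\pi}\int_{S_{t,\rho}}|\hat\theta|^2\,d\mu_\ga-\frac{\ud r}{8\pi}\int_{S_{t,\rho}}\varrho\,d\mu_\ga.
\]

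\emph{Stage 2: The clean identity $\theta=(\rho/\tir)\,k|_{\mathrm{ang}}$ and control of $|\hat\theta|^2$.} From Lemma~\ref{frames} I expand $\bN=(\bb^{-1}t/\rho)\Nb-(\tir/\rho)\fB$ to obtain $\theta_{AC}=(\bb^{-1}t/\rho)\,\ud\theta_{AC}-(\tir/\rho)\,k_{AC}$. Substituting (\ref{3.8.1}) (with $\pib=0$ in $Z^s$) and invoking the exact cancellation $(\bb^{-1}t)^2-\tir^2=\rho^2$ collapses this to
\[
\theta_{AC}=\frac{\rho}{\tir}\,k_{AC}\qquad\text{in }Z^s.
\]
A short trace computation then gives $\hat\theta_{AC}=(\rho/\tir)(\hk_{AC}+\tfrac12\hk_{\Nb\Nb}\ga_{AC})$, and the main estimates (\ref{2.20.1.16}) yield $|\hat\theta|^2\lesssim\ve^2/t^4$, so that $\ud r\int|\hat\theta|^2\lesssim\ve^2/t$ after integrating against $|S_{t,\rho}|\sim t^2$ and using $\ud r\sim t$.

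\emph{Stage 3: Extracting $2M$ from $\int\varrho$.} Proposition~\ref{1.2.1.16} (formula (\ref{2.13.1.16})), Lemma~\ref{12.10.21}(ii), and (\ref{4.9.8}) combine to give $\varrho=-4M/(r+2M)^3+O(\ve^2/t^7)$ on $S_{t,\rho}$. Let $\bar r$ denote the $\ga$-area average of $r$ on $S_{t,\rho}$. From (\ref{4.9.16}) together with Bonnet--Myers applied to the Gauss curvature $K=(r+2M)^{-2}+O(\ve/t^3)$ (which Stage~1 and Stage~2 verify) I conclude $\mathrm{diam}(S_{t,\rho})\lesssim t$ and hence $\mathrm{osc}_{S_{t,\rho}}\,r\lesssim\ve^{1/2}/t$. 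Since $\int(r-\bar r)\,d\mu_\ga=0$ by the choice of $\bar r$, Taylor-expanding $(r+2M)^{-3}$ about $\bar r$ kills the linear term on integration and leaves only a quadratic remainder of size $(\mathrm{osc}\,r)^2\cdot M/t^5\cdot|S|\sim\ve^2/t^5$. The same Taylor trick applied to $\int K\,d\mu_\ga=4\pi$ yields $\ud r=(\bar r+2M)(1+O(\ve/t))$. Combining,
\[
-\frac{\ud r}{8\pi}\int\varrho=\frac{2M\,\ud r}{\bar r+2M}\bigl(1+O(\ve/t)\bigr)+O(\ve^2/t^4)=2M+O(\ve^2/\bt),
\]
where the final step uses $M\lesssim\ve$. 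Combined with the bound from Stage~2 this yields (\ref{12.31.4}).

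\emph{Main obstacle.} The critical analytic issue is matching the sharp error $O(\ve^2\bt^{-1})$ rather than the looser $O(\ve^{3/2}\bt^{-1})$ that would result from a pointwise use of the $\ve^{1/2}$ bound on $|\sn r|$ in (\ref{4.9.16}). The resolution, which is the crux of Stage~3, is to center the Taylor expansion at the \emph{area-average} $\bar r$: the first-order deviation integrates to zero, and only the quadratic term $(\mathrm{osc}\,r)^2\sim\ve/t^2$ survives; combined with the Schwarzschild-smallness $M\lesssim\ve$ this restores the full $\ve^2$ in the final error. The other nontrivial input is the exact cancellation underlying $\theta=(\rho/\tir)k$ in Stage~2, without which the $|\hat\theta|^2$ contribution could not be closed at the required rate.
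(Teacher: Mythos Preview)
Your proof is correct and follows the same overall architecture as the paper: reduce $m(t,\rho)$ via Gauss--Bonnet and (\ref{gauss1}) to an integral of $\tfrac12\chih\cdot\chibh-\varrho$ over $S_{t,\rho}\subset Z^s$, bound the $\chih\cdot\chibh$ contribution by $O(\ve^2\bt^{-1})$ using the $Z^s$ estimates on $\hk$ from Theorem~\ref{4.10.6}, and extract $2M$ from the $\varrho$ integral using (\ref{2.13.1.16}) and $M\les\ve$.

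The one substantive difference is in Stage~3. You compare everything to the \emph{area average} $\bar r$, which forces you through the $\ve^{1/2}$ bound (\ref{4.9.16}) on $\sn r$ and then the centered Taylor trick to recover $\ve^2$ from $(\mathrm{osc}\,r)^2$. The paper instead compares to $\tir$: by (\ref{1.4.1.16}) one has $(r+2M)/\tir=n+O(\ve\bt^{-1})$ pointwise, so $(r+2M)^{-3}=(n\tir)^{-3}(1+O(\ve\bt^{-1}))$, and then Lemma~\ref{4.18.2.16} supplies $\ud r/\tir=1+O(\ve\bt^{-1})$ directly. This bypasses the $\ve^{1/2}$ bottleneck entirely and makes the ``critical analytic issue'' you flag disappear. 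Your Taylor argument is valid, but it is a self-imposed detour; using $\tir$ (whose comparison to $r$ is already $O(\ve)$ rather than $O(\ve^{1/2})$) as the reference radius is more direct. Your Stage~2 identity $\theta_{AC}=(\rho/\tir)k_{AC}$ in $Z^s$ is a clean repackaging of what the paper obtains from (\ref{1.14.7.16}) with $\pib=0$, and leads to the same bound on $|\hat\theta|^2$.
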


We will rely on crucially the estimate of the Gaussian curvature $K$ on $S_{t,\rho}$.
Let $\{e_A, A=1, 2\}$ be an orthonormal frame on $S_{t, \rho}$. We may apply (\ref{gauss})
to the canonical null tetrad $\{L, \Lb, e_A, A=1,2\}$  to obtain
\begin{equation}\label{gauss1}
K+\frac{1}{4}\tr\chi\tr\chib-\f12 \chih_{AC}\chibh_{AC}=-\varrho+\f12\ga^{AC} S_{AC},
\end{equation}
where $\varrho = \frac{1}{4} W(L, \Lb, L, \Lb)$ and, according to (\ref{schouten}),
the last term on the right hand side, if non-vanishing, can be calculated as
\begin{align}\label{1.14.4.16}
\ga^{AC}S_{AC}
& = \ga^{AC}\bd_A \phi \bd_C \phi-\frac{1}{3} \left(\bd^\mu\phi \bd_\mu\phi-\fm\phi^2\right) \nn\\
& = \frac{1}{3}\fm\phi^2+\frac{2}{3}|\sn\phi|^2+\frac{1}{3}\bd_L \phi\bd_\Lb \phi.
\end{align}

\begin{lemma}\label{4.18.2.16}
Consider the region $\I^+(\O)$. On $S_{t, \rho}$ there hold \begin{footnote}{ We employ the result in the wave zone to indicate the difference on the rate of convergence for various geometric quantities in different region.}\end{footnote}
\begin{equation}\label{1.14.5.16}
|\tir^2 K-1|\les  \left\{\begin{array}{lll}
\ve \bt^{-\f12+3\d} & \mbox{ in } \I^+(\O),\\
\ve \bt^{-1}  & \mbox{  in }Z^s,
\end{array} \right.
\end{equation}
\item \begin{equation}\label{4.30.2.16}
\left|\frac{\tir}{\ud r}-1 \right| \les \left\{ \begin{array}{lll}
\ve \bt^{-\frac{1}{2}+3\d} & \mbox{ in  } \I^{+}(\O), \\
\ve \bt^{-1}  & \mbox{ in }Z^s
\end{array}\right.
\end{equation}
and
\begin{equation}\label{1.2.2.16}
\ud{r}\approx \tir, \quad\, \emph{diam}(S_{t,\rho}) \les \tir.
\end{equation}
\end{lemma}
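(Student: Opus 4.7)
The plan is to derive the curvature estimate (\ref{1.14.5.16}) from the Gauss equation (\ref{gauss1}), and then obtain (\ref{4.30.2.16}) and (\ref{1.2.2.16}) as consequences of Gauss--Bonnet and Bonnet--Myers respectively.

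First I will compute the null second fundamental forms on $S_{t,\rho}$. Since $L=\bT+\bN$ and $\Lb=\bT-\bN$, for $X,Y$ tangent to $S_{t,\rho}$ one has $\chi(X,Y)=\theta(X,Y)-\pib(X,Y)$ and $\chib(X,Y)=-\theta(X,Y)-\pib(X,Y)$. Writing $\bN=\frac{\bb^{-1}t}{\rho}\Nb-\frac{\tir}{\rho}\fB$ from (\ref{dcp_3}), taking the $S_{t,\rho}$-tangent part of $\bd_{e_A}\bN$, and substituting (\ref{3.8.1}) for $\ud\theta_{AC}$ yields, via the algebraic identity $(\bb^{-1}t)^2-\tir^2=\rho^2$, the clean expression
\begin{equation*}
\theta_{AC}=\frac{\rho}{\tir}\,k_{AC}+\frac{\bb^{-1}t}{\tir}\,\pib_{AC}.
\end{equation*}
Combined with $\ga^{AC}\pib_{AC}=-\pib_{\bN\bN}$ (from the maximal gauge (\ref{5.13.3.16})) and $\ga^{AC}k_{AC}=\tfrac{2}{3}\tr k-\hk_{\Nb\Nb}$, this gives $\tr\chi\,\tr\chib=\pib_{\bN\bN}^{2}-(\tr\theta)^{2}$ with
\begin{equation*}
\tr\theta=\frac{\rho}{\tir}\!\left(\tfrac{2}{3}\tr k-\hk_{\Nb\Nb}\right)-\frac{\bb^{-1}t}{\tir}\pib_{\bN\bN}.
\end{equation*}
The leading piece $\tr k=3/\rho$ reproduces the Minkowski value $-\tfrac{1}{4}\tr\chi\,\tr\chib=\tir^{-2}$, matching $\tir^{2}K=1$.

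The next step is to control the deviations from this main term together with the remaining quantities on the right-hand side of (\ref{gauss1}), namely $\chih\cdot\chibh$, $\varrho$, and the matter contribution $\ga^{AC}S_{AC}$. In $Z^{s}$ I will invoke Theorem \ref{4.10.6} (giving $t\rho|\tr k-3/\rho,\hk|\les\ve$, $t^{2}|k_{\Nb A}|\les\ve$, $\pib\equiv 0$, and $\tir\approx t$), together with Proposition \ref{1.2.1.16} and the Schwarzschild formula (\ref{12.10.10}) for $\varrho$; the Schouten term vanishes in vacuum. Multiplying by $\tir^{2}\approx t^{2}$ yields $|\tir^{2}K-1|\les\ve\bt^{-1}$. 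In the general region $\I^{+}(\O)$, I will instead use the bootstrap/derived bounds of Theorem \ref{2.15.2}: the $k$-estimate (\ref{hk1}) with its extra factor $\l\rho\r^{\d}$, the $\pib$-bound (\ref{pt1}), the row for $\varrho$ in the Weyl-decay table, and the scalar-field decay (\ref{4.30.3.16}) entering through (\ref{1.14.4.16}). Using $\rho^{2}=u\ub$, $\ub\approx t$ and $\tir\approx t$, each error contributes at worst $\ve\bt^{-1/2+3\d}$ after multiplication by $\tir^{2}$.

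With (\ref{1.14.5.16}) in hand, the Gauss--Bonnet theorem $\int_{S_{t,\rho}}K\,d\mu_{\ga}=4\pi$ applied to $K=\tir^{-2}(1+\mathcal{E})$, with $|\mathcal{E}|$ of the size produced in (\ref{1.14.5.16}), gives $|S_{t,\rho}|=4\pi\tir^{2}(1+O(\mathcal{E}))$, from which both (\ref{4.30.2.16}) and the comparison $\ud r\approx\tir$ in (\ref{1.2.2.16}) follow at once. The diameter bound is then Bonnet--Myers applied with $K\gtrsim\tfrac{1}{2}\tir^{-2}$, exactly as in the proof of Proposition \ref{gcomp_1}. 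The main technical obstacle will be the wave-zone case of (\ref{1.14.5.16}): the Weyl component $\varrho$ only decays like $\ve\rho^{\d}t^{-2}u^{-1}$, so that $\tir^{2}\varrho\approx\ve\rho^{\d}/u$, which must be compared against the target $\ve\bt^{-1/2+3\d}$ by exploiting $\rho^{2}=u\ub$ and $\ub\approx t$. This borderline behavior is precisely the reason the slower rate appears outside $Z^{s}$, whereas in $Z^{s}$ the vanishing of $\pib$ and the clean Schwarzschild formula for $\varrho$ recover the sharp $\bt^{-1}$ rate.
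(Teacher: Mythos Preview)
Your derivation of the null second fundamental forms and the plan for (\ref{1.14.5.16}) are essentially the paper's argument, just organized through $\chi=\theta-\pib$, $\chib=-\theta-\pib$ and your formula $\theta_{AC}=\frac{\rho}{\tir}k_{AC}+\frac{\bb^{-1}t}{\tir}\pib_{AC}$ instead of the paper's direct use of $L=\frac{1}{\tir}(\rho\fB-u\bT)$. The resulting trace and traceless formulas coincide with the paper's (\ref{1.14.8.16})--(\ref{1.14.7.16}), and the error budget you describe is precisely the paper's Table~\ref{table1}.

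There is, however, a genuine gap in your route to (\ref{4.30.2.16}). You write $K=\tir^{-2}(1+\mathcal E)$ and then claim that Gauss--Bonnet yields $|S_{t,\rho}|=4\pi\tir^{2}(1+O(\mathcal E))$. This is only valid if $\tir$ is constant on $S_{t,\rho}$, and it is not: on $S_{t,\rho}$ both $t$ and $\rho$ are fixed, but $\bb^{-1}$ varies, so $\tir=\sqrt{\bb^{-2}t^{2}-\rho^{2}}$ varies as well. Indeed (\ref{3.6.4}) gives $\sn\tir=\bb^{-1}t\,(k_{A\Nb}+\pib_{A\bN})\neq 0$. Without control on the oscillation of $\tir$ over $S_{t,\rho}$, the integral $\int_{S_{t,\rho}}\tir^{-2}\,d\mu_\ga$ cannot be replaced by $\tir^{-2}|S_{t,\rho}|$, and your conclusion does not follow.

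The paper inserts exactly this missing step: from (\ref{1.14.5.16}) and Bonnet--Myers one first gets $\mathrm{diam}(S_{t,\rho})\les\tir_{\max}$; then (\ref{3.6.4}) together with the bounds on $\zeta_A:=k_{A\Nb}+\pib_{A\bN}$ (from (\ref{hk2}), (\ref{pt2}), (\ref{lkn}), (\ref{lpt1}) in $\I^+(\O)$ and (\ref{2.20.1.16}) in $Z^s$) gives $|\sn\tir|\les\ve\bt^{-1/2+3\d}$ (respectively $\ve\bt^{-1}$ in $Z^s$), whence $\mathrm{osc}_{S_{t,\rho}}(\tir/\overline{\tir})$ obeys the same bound. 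Only then is $\tir$ replaced by its constant average $\overline{\tir}$ in (\ref{1.14.5.16}), after which Gauss--Bonnet legitimately yields $|\overline{\tir}^{2}/\ud r^{2}-1|$ small; combined with the oscillation estimate this gives (\ref{4.30.2.16}). You should add this oscillation argument --- it is the only missing ingredient.
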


\begin{proof}
We first prove (\ref{1.14.5.16}). Recall that $\chi_{AC} = \l \bd_A L, e_C\r$ and
$\chib_{AC}= \l \bd_A \Lb, e_C\r$. By using (\ref{dcp_2}) and $\Lb=2\bT-L$ we have
$L=\frac{1}{\tir}(\rho\fB- u\bT)$ and $\Lb =\frac{1}{\tir}(\ub\bT-\rho \fB)$. Therefore
\begin{align*} % \label{10.20.5}
\chi_{AC} & %= \frac{\rho}{\tir}\l \bd_A\fB, e_C\r-\frac{u}{\tir}\l \bd_A \bT, e_C\r
= \frac{\rho}{\tir} k_{AC} +\frac{u}{\tir} \pib_{AC}
=\frac{\rho}{\tir} \left(\frac{1}{3} \tr k \d_{AC} + \hat{k}_{AC} \right) + \frac{u}{\tir} \pib_{AC},\\
\chib_{AC} & %= -\frac{\rho}{\tir}\l \bd_A\fB, e_C\r + \frac{\ub}{\tir}\l \bd_A \bT, e_C\r
= -\frac{\rho}{\tir} k_{AC} - \frac{\ub}{\tir} \pib_{AC}
= -\frac{\rho}{\tir} \left(\frac{1}{3} \tr k \d_{AC} + \hat{k}_{AC} \right) - \frac{\ub}{\tir} \pib_{AC}.
\end{align*}
By taking the trace and the traceless part of $\chi$ and $\chib$ by the induced metric $\ga_{AC}$ on $S_{t,\rho}$,
we can obtain
\begin{align}\label{1.14.8.16}
\tr\chi = \frac{\rho}{\tir} \left(\frac{2}{3} \tr k - \ud\delta \right)+\frac{u}{\tir} \delta', \quad
\tr\chib = -\frac{\rho}{\tir} \left(\frac{2}{3}\tr k - \ud\delta\right)-\frac{\ub}{\tir} \delta'
\end{align}
and
\begin{align}\label{1.14.7.16}
\begin{split}
&\chih_{AC} = \frac{\rho}{\tir} \left(\hk_{AC}+\frac{1}{2}\ud \delta \ga_{AC}\right)
+\frac{u}{\tir} \left(\pib_{AC}-\f12\delta'\ga_{AC} \right), \\
&\chibh_{AC} = -\frac{\rho}{\tir} \left(\hk_{AC}+\frac{1}{2}\ud\delta \ga_{AC}\right)
+ \frac{\ub}{\tir} \left(-\pib_{AC}+\frac{1}{2}\delta'\ga_{AC} \right),
\end{split}
\end{align}
where $\ud\delta = \hat k_{\Nb\Nb}$ and $\delta'=-\pib_{\bN\bN}$ which were introduced in Lemma \ref{nbb}.

In order to proceed further, in Table \ref{table1} we list the decay estimates of certain geometric quantities in the regions
$\I^+(\O)$ and $Z^s$ respectively which will be proved shortly.

\begin{table}[h]
\begin{center}
\begin{tabular}{ c | c | c }
		$\dum$ & $\I^+(\O) $& $Z^s$ \\
 \hline
$ \tir^2 \tr\chi \tr\chib+4 $ & $\ve \bt^{-\f12+3\d}$ & $\ve \bt^{-1}$ \\[0.8ex]
$\tir^2 \chih_{AC}\c\chibh_{AC}$   & $\ve^2 \bt^{-1+6\d}$ & $\ve^2\bt^{-2}$ \\[0.8ex]
$\varrho$ &$\ve \bt^{-\frac{5}{2}}\rho^{\frac{3\d}{2}}$&$\ve\bt^{-3}$\\[0.8ex]
$\ga^{AC}S_{AC}$& $\ve^2 \bt^{-3+\d}$& 0\\
 \hline
\end{tabular}
\end{center}
%\vskip-0.3cm
\caption{}
\label{table1}
 \end{table}
By using (\ref{gauss1}), the decay estimates in Table \ref{table1}, $\tir \les t$ and $\rho\les t$, we can obtain
(\ref{1.14.5.16}) immediately.

It remains to prove the estimates in Table \ref{table1}.

By using (\ref{1.14.8.16}), $\rho^2 = u\ub$ and $u+\ub = 2 \bb^{-1} t$ it is straightforward to derive that
\begin{align*}
\tr\chi\tr\chib
& = \left[\frac{\rho}{\tir} \left(\frac{2}{3} \tr k - \ud\delta \right)+\frac{u}{\tir} \delta'\right]
\left[-\frac{\rho}{\tir} \left(\frac{2}{3}\tr k - \ud\delta\right)-\frac{\ub}{\tir} \delta' \right]\\
& = - \frac{\rho^2}{\tir^2} \left(\frac{2}{3} \tr k - \ud\delta \right)^2 +\frac{\rho^2}{\tir^2} \left(\delta'\right)^2
- \frac{2 \bb^{-1} t \rho}{\tir^2} \delta' \left(\frac{2}{3} \tr k - \ud\delta \right)
\end{align*}
Use the symbol $\Ab$  defined at the end of Section \ref{radiald}, we can write
$$
\frac{2}{3} \tr k - \ud\delta  = \frac{2}{3} \left(\tr k -\frac{3}{\rho}\right) + \frac{2}{\rho} - \ud\delta
= \Ab + \frac{2}{\rho}.
$$
Therefore, symbolically we have
\begin{align*}
\tir^2 \tr\chi\tr\chib
& = -\rho^2 \left(\Ab + \frac{2}{\rho}\right)^2 + \rho^2 \left(\delta'\right)^2 -2 \bb^{-1}t \rho \left(\Ab + \frac{2}{\rho}\right)\delta'\\
& = -4 -4\bb^{-1}t\delta' + \rho \Ab + \bb^{-1} t\rho {\delta'}\Ab +\rho^2 \Ab\c \Ab+\rho^2 \left(\delta'\right)^2.
\end{align*}
By (\ref{hk1}), (\ref{pt1}) and  (\ref{pt2}), in $(\I^+(\O)\cap \{t\ge T\})\backslash Z^s$, we have the estimates
\begin{equation}\label{1.17.1.16}
\rho |\Ab|\les \ve \bt^{-1}\rho^\d,\quad \bt|\delta'|\les \ve \bt^{-\frac{1}{2}+3\d},
\quad\rho |\pt_{\bi\bj}|_g\les\ve \bt^{-1/2+3\delta}.
\end{equation}
For $\{t\le T\}\cap \I^+(\O)$, by using (\ref{lpt1}) and (\ref{lhk1}),
\begin{equation}\label{1.17.3.16}
\rho |\Ab |+|\pt_{\bi\bj}|_g\les \ve
\end{equation}
 By (\ref{2.20.1.16}) we have the improved estimates in $Z^s$,
\begin{equation}\label{1.17.2.16}
 \rho |\Ab| \les \ve \bt^{-1}, \quad \pt_{\bi\bj}=0
\end{equation}
  These estimates together with the fact $\bb^{-1}\approx 1$ obtained in Proposition \ref{prl_1}
show that
\begin{equation*} %\label{1.17.3.16}
|\tir^2 \tr\chi \tr\chib+4|\les\left\{\begin{array}{lll}
\ve \bt^{-\f12+3\d} &  \mbox{ in } \I^+(\O)\\
\ve \bt^{-1}  & \mbox{ in }Z^s
\end{array}\right.,
\end{equation*}
as recorded in Table \ref{table1}.

Next, by using (\ref{1.14.7.16}) we can calculate $\chih\c \chibh$ as
\begin{align*}
\chih_{AC}\c\chibh_{AC}
& = -\frac{\rho^2}{\tir^2} \left(\hk_{AC}+\frac{1}{2}\ud\delta\ga_{AC}\right)^2
-\frac{\rho^2}{\tir^2} \left(-\pib_{AC}+\frac{1}{2} \delta'\ga_{AC}\right)^2 \displaybreak[0]\\
&\quad \, + \frac{2\bb^{-1} t\rho }{\tir^2} \left(\hk_{AC}+\frac{1}{2}\ud\delta \ga_{AC}\right)
\left(-\pib_{AC}+\frac{1}{2}\delta'\ga_{AC}\right)
\end{align*}
which, in view of (\ref{6.6.3.16}), can be written symbolically  as
\begin{equation*}
\tir^2 \chih_{AC}\chibh_{AC} = \rho^2\left(\Ab^2+{\pt_{\bi\bj}}^2\right) + \rho t\Ab\c \pt_{\bi\bj}.
\end{equation*}
By using (\ref{1.17.1.16})-(\ref{1.17.2.16}), we can obtain
\begin{equation}\label{1.17.6.16}
|\tir^2 \chih_{AC}\chibh_{AC}| \les \left\{\begin{array}{lll}
\ve^2 \bt^{-1+6\d} & \mbox{ in }  \I^+(\O), \\
\ve^2\bt^{-2}  & \mbox{ in }  Z^s,
\end{array}\right.
\end{equation}

For the term of $\varrho$, by using (\ref{baloceng}) and Theorem \ref{2.15.2} (5) and Proposition \ref{1.2.1.16}, we have
\begin{equation*}
|\varrho|\les\left\{ \begin{array}{lll}
\ve \bt^{-3+\d}  & \mbox{ in } \I^+(\O) \cap \{\bb^{-1}t\le 3\rho\},\\
\ve \bt^{-\frac{5}{2}}\rho^{\frac{3\d}{2}}  & \mbox{ in } \I^+(\O) \cap \{\bb^{-1}t\ge 3\rho\}, \\
\ve \bt^{-3}  & \mbox{ in } Z^s.
\end{array}\right.
\end{equation*}

In $Z^s$ the Schouten tensor vanishes. Hence $\ga^{AC} S_{AC}=0$. By using (\ref{1.14.4.16}),
(\ref{4.30.3.16}) in Theorem \ref{2.15.2} and (\ref{baloceng}), we can also obtain
 \begin{equation*}
 \left|\ga^{AC}S_{AC}\right| \les  \ve^2 \bt^{-3+\d} \quad \mbox{ in } \I^+(\O).
 \end{equation*}
We thus obtain all the decay estimates in Table \ref{table1} and the proof of  (\ref{1.14.5.16}) is completed.

Next we prove (\ref{4.30.2.16}) and (\ref{1.2.2.16}). By (\ref{1.14.5.16}) and Bonnet-Myers theorem,  we have
$$
\mbox{ diam}(S_{t,\rho}) \les \tir_{\max}(S_{t,\rho}).
$$
Then we can obtain
\begin{equation}\label{4.17.2.16}
\underset{S_{t, \rho}}{\mbox{osc}} (\tir) \les \mbox{ diam}(S_{t,\rho})\sup_{S_{t,\rho}}|\sn \tir|
\les \tir_{\max}(S_{t, \rho}) \sup_{S_{t,\rho}}|\sn \tir|.
\end{equation}
We will use (\ref{3.6.4}) to estimate $|\sn \tir|$. To this end, we set
$\zeta_A :=k_{A\Nb}+\pib_{A\bN}$. By using the estimates (\ref{pt2}), (\ref{hk1}), (\ref{lkn}) and (\ref{lpt1}) in $\I^+(\O)$
and the estimates (\ref{2.20.1.16}) and (\ref{1.17.2.16}) in $Z^s$, we can obtain
\begin{equation*}  %\label{4.30.4.16}
|\zeta|\les\left\{ \begin{array}{lll}
\ve \bt^{-\frac{3}{2}+3\d}  & \mbox{ in  }\I^+(\O)\\
\ve \bt^{-2} & \mbox{ in } Z^s
\end{array}\right..
\end{equation*}
Thus, we may use (\ref{3.6.4}) and $\bb^{-1}\approx 1$ in Proposition \ref{prl_1} to derive that
\begin{equation*}
|\sn \tir| \les \left\{ \begin{array}{lll}
\ve \bt^{-\frac{1}{2}+3\d} & \mbox{ in  }\I^+(\O) \\
\ve \bt^{-1} & \mbox{ in }Z^s
\end{array}\right..
\end{equation*}
This, together with (\ref{4.17.2.16}), implies that
\begin{equation}\label{4.18.1.16}
\underset{S_{t, \rho}}{\mbox{osc}} \left(\frac{\tir}{\overline{\tir}}\right)
\les \left\{ \begin{array}{lll}
\ve \bt^{-\frac{1}{2}+3\d} & \mbox{ in  } \I^+(\O)\\
\ve \bt^{-1} & \mbox{ in }Z^s
\end{array}\right.
\end{equation}
where $\overline{\tir}$ denotes the average of $\tir$ over $S_{t, \rho}$. Note that, due to $\frac{\tir}{\overline{\tir}}\approx 1 $ which follows from (\ref{4.18.1.16}),
\begin{align*}
\left|\overline{\tir}^2 K-1\right|
& = \left|\tir^2 K -1\right| + \left|(\overline{\tir}^2-\tir^2) K\right|
\les \left|\tir^2 K-1\right|+\left|\frac{\tir}{\overline{\tir}}-1\right| \left|\tir^2 K\right|.
\end{align*}
In view of (\ref{1.14.5.16}) and (\ref{4.18.1.16}), we have
\begin{equation}\label{4.30.1.16}
\left|\overline{\tir}^2 K-1\right| \les \left\{ \begin{array}{lll}
\ve \bt^{-\frac{1}{2}+3\d} & \mbox{ in  } \I^+(\O), \\
\ve \bt^{-1}  & \mbox{ in }Z^s.
\end{array}\right.
\end{equation}
Note that the Gauss-Bonnet Theorem implies $\frac{1}{{\ud r}^2}
= \frac{1}{|S_{t, \rho}|} \int_{S_{t, \rho}} K d\mu_\ga$. Therefore
\begin{equation*}
\left|\frac{\overline{\tir}^2}{{\ud r}^2}- 1\right|
= \frac{1}{|S_{t,\rho}|} \left|\int_{S_{t, \rho}}\left(\overline{\tir}^2 K-1\right) d\mu_\ga\right|
\le \sup_{S_{t,\rho}} \left|\overline{\tir}^2 K- 1\right|
\end{equation*}
which, combined with  (\ref{4.30.1.16}), implies
\begin{equation*}
\left|\frac{\overline{\tir}^2}{\ud r^2}-1\right| \les \left\{ \begin{array}{lll}
\ve \bt^{-\frac{1}{2}+3\d} & \mbox{ in  } \I^+(\O), \\
\ve \bt^{-1} & \mbox{ in }Z^s.
\end{array}\right.
\end{equation*}
From this and (\ref{4.18.1.16}) we can obtain (\ref{4.30.2.16}). As an immediate consequence of (\ref{4.30.2.16}),
we can obtain (\ref{1.2.2.16}).  Hence the proof of Lemma \ref{4.18.2.16} is complete.
\end{proof}

We are ready to  prove Proposition \ref{2.9.1.16}.

\begin{proof}[Proof of Proposition \ref{2.9.1.16}]
By using (\ref{gauss1}), the Gauss-Bonnet theorem and the definition of $m(t, \rho)$ we have
\begin{equation*}
m(t, \rho) = \frac{\ud r}{8\pi} \left(4\pi+\frac{1}{4}\int_{S_{t,\rho}} \tr\chi \tr\chib d\mu_\ga\right)
= \frac{\ud r}{8 \pi} \int_{S_{t,\rho}} \left(\f12\chih\c\chibh- \varrho + \f12\ga^{AC}S_{AC}\right) d\mu_\ga.
\end{equation*}
For $t > t_{\max}(S_{\rho, \hat u_0})$, $\hat u$ decreases as $t$ increases, due to  (\ref{12.08.6}) which gives ${}^\dag \Nb(t)>0$.  This guarantees that $S_{t, \rho} \subset Z^s$ for $t > t_{\max}(S_{\rho, \hat u_0})$. Since $S_{AC}=0$ in $Z^s$, we have
\begin{equation*}
m(t, \rho) = \frac{\ud r}{8 \pi} \int_{S_{t,\rho}} \left(\f12\chih\c\chibh- \varrho \right) d\mu_\ga.
\end{equation*}
Noting that $|S_{t, \rho}|= 4\pi (\ud r)^2$, by using (\ref{1.2.2.16}) and $\tir \approx t$ we have $|S_{t, \rho}|\les t^2$.
Therefore, it follows from (\ref{1.17.6.16}) that
\begin{equation*}
\ud r \int_{S_{t,\rho}}\chih\c \chibh d\mu_\ga  = O(\ve^2 t^{-1})
\end{equation*}
for $t> t_{\max}(S_{\rho, \hat u_0})$. Consequently, we may use (\ref{12.31.7}) and (\ref{12.10.10}) to conclude
for $t> t_{\max}(S_{\rho, \hat u_0})$ that
\begin{align*}
m(t, \rho) &= O(\ve^2 t^{-1}) - \frac{\ud r}{8\pi} \int_{S_{t,\rho}} n^{-4} \hat \varrho(1+O(\ve t^{-4})) \nn\\
&=\frac{\ud r}{2\pi} M\int_{S_{t,\rho}} \frac{1}{(r+2M)^3} d\mu_\ga +  O(\ve^2 t^{-1}),
\end{align*}
where, for the second equality we used $M\les \ve$. Recall that $\tir \approx r \approx t$
and $n\approx 1$, we may use (\ref{1.4.1.16}) to obtain
\begin{align*}
\int_{S_{t, \rho}} \frac{1}{(r+ 2M)^3} d\mu_\ga
& = \int_{S_{t, \rho}} \tir^{-3} \left(n + \frac{r}{\tir} -n + \frac{2M}{\tir}\right)^{-3} d\mu_\ga\\
& = \int_{S_{t, \rho}} \tir^{-3} \left(n + O(\ve t^{-1})\right)^{-3} d\mu_\ga \\
& = \int_{S_{t, \rho}} (n \tir)^{-3} d\mu_\ga + O(\ve t^{-2}).
\end{align*}
Therefore
\begin{align*}
m(t, \rho) =\frac{\ud r}{2\pi} M\int_{S_{t,\rho}}  (n \tir)^{-3} d\mu_\ga +  O(\ve^2 t^{-1})
= \frac{M}{2\pi \ud r^2} \int_{S_{t,\rho}}  \left(\frac{\ud r}{n \tir}\right)^3 d\mu_\ga +  O(\ve^2 t^{-1}).
\end{align*}
By virtue of (\ref{4.30.2.16}), (\ref{3.20.lap}) and $M\les \ve$, we have
\begin{equation*}
\frac{\ud r}{n \tir}-1 = \left(\frac{1}{n}-1\right)+\frac{1}{n}\left(\frac{\ud r}{\tir}-1\right)
=O(\ve t^{-1}).
\end{equation*}
Consequently we can conclude that
\begin{align*}
m(t, \rho) &=\frac{M}{2\pi \ud r^2} \int_{S_{t,\rho}} \left(1 + O(\ve t^{-1})\right)^{-3} d\mu_\ga + O(\ve^2 t^{-1})\\
& = =\frac{M}{2\pi \ud r^2} \int_{S_{t,\rho}} d\mu_\ga + O(\ve^2 t^{-1})
=2 M +O(\ve^2 t^{-1})
\end{align*}
and the proof of Proposition \ref{2.9.1.16} is therefore complete.
\end{proof}

\noindent{\bf Acknowledgement.}  The results of this paper and \cite{Wang15} were reported in the workshop of  Mathematical Problems in General Relativity, in Simons Center for Geometry and Physics at Stony Brook in January  2015; in the  main conference of the program: ``General Relativity: A celebration of the 100th anniversary" in Institut Henri Poincar$\acute{e}$ in November 2015; and  were delivered as a minicourse in Oberwolfach Seminar: Recent Advances on the Global
Nonlinear Stability of Einstein Spacetimes in  May 2016. The author would like to thank the above institutes for the hospitality and the opportunities.

\end{document}